\newtheorem{theorem}{Theorem}[section]
\newtheorem{lemma}[theorem]{Lemma}
\newtheorem{corollary}[theorem]{Corollary}
\newtheorem{proposition}[theorem]{Proposition}
\newtheorem{claim}{Claim}
\theoremstyle{plain}
\newtheorem{definition}[theorem]{Definition}
\newcommand{\A}{\mathbf{A}}
\newcommand{\B}{\mathbf{B}}
\newcommand{\M}{\mathbf{M}}
\newcommand{\X}{\mathbf{X}}
\newcommand{\Y}{\mathbf{Y}}
\newcommand{\D}{\mathbf{D}}
\newcommand{\tA}{\widetilde{\mathbf{A}}}
\newcommand{\sa}{\mathbf{a}}
\newcommand{\x}{\mathbf{x}}
\newcommand{\y}{\mathbf{y}}
\newcommand{\R}{\mathbb{R}}
\newcommand{\memo}[1]{\textbf{ #1 }}
\title{Online Spectral Approximation in Random Order Streams} 
\author{
 Masataka Gohda\\ The University of Tokyo \\  \texttt{masataka\_goda@mist.i.u-tokyo.ac.jp}
  \and
  Naonori Kakimura\thanks{Supported by JST ERATO Grant Number JPMJER1201, Japan, and by JSPS KAKENHI Grant Numbers JP17K00028 and JP18H05291.}\\ Keio University\\  \texttt{kakimura@math.keio.ac.jp}
}
\begin{document}

\maketitle

\begin{abstract}
This paper studies spectral approximation for a positive semidefinite matrix in the online setting.
It is known in [Cohen \emph{et al}. APPROX 2016] that we can construct a spectral approximation of a given $n\times d$ matrix in the online setting if an additive error is allowed.
In this paper, we propose an online algorithm that avoids an additive error with the same time and space complexities as the algorithm of Cohen \emph{et al.}, and provides a better upper bound on the approximation size when a given matrix has small rank.
In addition, we consider the online random order setting where a row of a given matrix arrives uniformly at random.
In this setting, we propose time and space efficient algorithms to find a spectral approximation.
Moreover, we reveal that a lower bound on the approximation size in the online random order setting is $\Omega (d \epsilon^{-2} \log n)$, which is larger than the one in the offline setting by an $\mathrm{O}\left( \log n \right)$ factor.
%
%
\end{abstract}

\section{Introduction}
\label{section:introduction}
\emph{Spectral sparsification} is to compress the Laplacian matrix of a dense graph to the one of a sparse graph maintaining its quadratic form for an arbitrary vector. 
It was introduced by Spielman and Teng~\cite{DBLP:conf/stoc/SpielmanT04} as a generalization of cut sparsification, and they presented a nearly-linear-time algorithm for spectral sparsification.
Since then algorithms for spectral sparsification have become faster and more refined~\cite{DBLP:conf/stoc/BatsonSS09, DBLP:conf/stoc/Lee017, DBLP:journals/siamcomp/SpielmanT11} and brought a new paradigm to numerical linear algebra and spectral graph theory.
In particular, it led to efficient algorithms for several problems such as linear systems in symmetric diagonally dominant matrices~\cite{DBLP:conf/stoc/CohenKMPPRX14, DBLP:conf/focs/KoutisMP11}, maximum $s$-$t$ flow problems~\cite{DBLP:conf/soda/KelnerLOS14, DBLP:conf/soda/Peng16}, and linear programming~\cite{DBLP:conf/focs/LeeS14}.
Spectral sparsification has been generalized to positive semidefinite~(PSD) matrices~\cite{DBLP:conf/innovations/CohenLMMPS15, DBLP:conf/focs/LiMP13, DBLP:journals/corr/abs-1008-0587}, which we call \emph{spectral approximation} to distinguish from spectral sparsification.
For a matrix $\A$ in $\mathbb{R}^{n \times d}$, a matrix $\tA\in \mathbb{R}^{n' \times d}$ is called a \emph{$(1\pm \epsilon)$-spectral approximation for $\A$} if, for every $\x\in \R^d$, $\x^\top \tA^\top \tA  \x$ approximates $\x^\top \A^\top \A\x$ within a factor of $1\pm \epsilon$.
The number of rows in $\widetilde{\mathbf{A}}$, $n'$, is called the {\it approximation size}.
It is known that there exists an algorithm that returns a $(1\pm \epsilon)$-spectral approximation with approximation size $\mathrm{O} (d / \epsilon^2)$~\cite{DBLP:conf/stoc/BatsonSS09, DBLP:conf/stoc/Lee017}, while the approximation size is $\Omega (d / \epsilon^2)$ in the worst case~\cite{DBLP:conf/stoc/BatsonSS09}.


\subsection{Our Results}

Recently, spectral approximation has been studied in restrictive settings such as the semi-streaming setting and the online setting~\cite{DBLP:conf/approx/CohenMP16, DBLP:conf/soda/KyngPPS17}. 
In the line of research, this paper focuses on the online setting. 
In the online setting, a matrix $\A$ is not known in advance, and each row of $\A$ arrives one-by-one.
Each time we receive a row of $\A$, we decide irrevocably whether to choose the row for a resulting spectral approximation or not and cannot discard or reweight it later.
Cohen \emph{et al.}~\cite{DBLP:conf/approx/CohenMP16} proposed a simple algorithm for an $(\epsilon, \delta)$-spectral approximation,
where an \emph{$( \epsilon, \delta )$-spectral approximation} is a matrix $\widetilde{\mathbf{A}}$ such that $(1-\epsilon)\mathbf{A}^\top \mathbf{A} - \delta \mathbf{I} \preceq \widetilde{\mathbf{A}}^\top\widetilde{\mathbf{A}} \preceq (1+ \epsilon) \mathbf{A}^\top \mathbf{A} + \delta \mathbf{I}$. 
The approximation size is shown to be $\mathrm{O} \left( d \epsilon^{-2} \log d \log \left( \epsilon \| \mathbf{A} \|_2^2 / \delta \right) \right)$. 
They further improved the approximation size to  $\mathrm{O} \left( d \epsilon^{-2} \log \left( \epsilon \| \mathbf{A} \|_2^2 / \delta \right) \right)$ by an $\mathrm{O}(\log d)$ factor with the aid of a method to obtain linear-sized approximation~\cite{DBLP:conf/stoc/Lee017}.
This is asymptotically optimal in the sense that no algorithm based on row sampling can obtain an $( \epsilon, \delta )$-spectral approximation with $\mathrm{o} \left( d \epsilon^{-2} \log \left( \epsilon \| \mathbf{A} \|_2^2 / \delta \right) \right)$ rows~\cite{DBLP:conf/approx/CohenMP16}.



The main contributions of this paper are threefold.
First, we revisit the online spectral approximation algorithm by Cohen \emph{et al.}~\cite{DBLP:conf/approx/CohenMP16}, and remove the additional parameter $\delta$ to obtain a $(1 \pm \epsilon)$-spectral approximation.
Second, we consider the case when each row arrives uniformly at random for the online spectral approximation problem, and propose a fast and memory-efficient algorithm that achieves a $(1 \pm \epsilon)$-spectral approximation.
Finally, we reveal a lower bound on the approximation size in the online random order setting.
Let us describe our results in more detail.

\paragraph{Online setting.}
The online spectral approximation algorithm by Cohen \emph{et al.}~\cite{DBLP:conf/approx/CohenMP16} is simple and optimal with respect to the approximation size, but it entails the additive error $\delta$. 
It is not difficult to modify their algorithm to the one for finding a $(1 \pm \epsilon)$-spectral approximation by setting $\delta = \epsilon \min_i \left(\sigma_{\rm min} (\A_i)^2 \right)$, where  $\A_i$ is the matrix composed of the first $i$ rows in $\mathbf{A}$ and $\sigma_{\rm min}(\A_i)$ is the smallest non-zero singular value of $\A_i$.
However, this requires us to know some estimation of $\min_i \left(\sigma_{\rm min} (\A_i)^2 \right)$ beforehand. 
Our first result is to present spectral approximation algorithms without such prior information.
We propose two algorithms~(\cref{theorem:approximation_size_in_online_row_sampling} and \cref{theorem:approximation_size_in_online_row_sampling2}) by analogy with Cohen \emph{et al.}~\cite{DBLP:conf/approx/CohenMP16}.

To state our results, we denote 
\begin{align}
        \mu (\A) \overset{\mathrm{def}}{=} \frac{\| \mathbf{A} \|^2_2}{\min_{1 \le i \le n} \left(\sigma_{\rm min} (\A_i)^2 \right)}. 
\end{align}
Note that $\mu (\A)$ may be 1, e.g., when $\A$ is the identity matrix.
We say that an event with $\A \in \mathbb{R}^{n \times d}$ happens \emph{with high probability} if it happens with probability at least $1 - 1 / \mathrm{poly} (d)$

\begin{theorem}
\label{theorem:approximation_size_in_online_row_sampling}
    Let $\mathbf{A} \in \mathbb{R}^{n \times d}$ be a matrix of rank $r$, and $\epsilon \in (0,1/2]$ be an error parameter.
    Then, in the online setting, we can construct with high probability a $(1 \pm \epsilon)$-spectral approximation for $\mathbf{A}$ whose approximation size is $\mathrm{O} \left( \left( r \log \mu (\A) + r + \log d \right) \epsilon^{-2} \log d \right)$.
\end{theorem}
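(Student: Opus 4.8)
The plan is to run an online ridge-leverage-score sampling scheme in the spirit of Cohen \emph{et al.}~\cite{DBLP:conf/approx/CohenMP16}, modified so as to eliminate the additive error without any prior knowledge of $\min_i\sigma_{\rm min}(\A_i)^2$: rows that enlarge the current row space are kept deterministically, and the regularization level is shrunk adaptively as smaller singular values are revealed. Let $V\subseteq\R^d$ be the row space of $\A$, so $\dim V=r$, and for a subspace $W$ let $P_W$ denote the orthogonal projection onto $W$. The algorithm maintains a weighted subset $\B$ of the rows seen so far, the matrix $\B^\top\B$, the subspace $W=\mathrm{rowspace}(\B)$, and a non-increasing scalar $\gamma>0$. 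When $\sa_i$ arrives: (i) if $\sa_i\notin W$, keep $\sa_i$ with weight $1$, enlarge $W$, and, if the smallest nonzero eigenvalue of the updated $\B^\top\B$ has decreased, reset $\gamma$ to a small constant times $\epsilon$ times that eigenvalue; (ii) otherwise let $\tilde\tau_i:=\sa_i^\top(\B^\top\B+\gamma P_W)^{+}\sa_i$, set $p_i:=\min\{1,\,c\,\epsilon^{-2}(\log d)\,\tilde\tau_i\}$ for a suitable constant $c$, and with probability $p_i$ keep $\sa_i$ with weight $1/\sqrt{p_i}$. Step~(i) fires at most $r$ times and keeps the invariant $W=\mathrm{rowspace}(\A_i)$. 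Writing $\gamma_i$ for the value of $\gamma$ used at step $i$ and $\tA$ for the final $\B$, we must show $\tA$ is a $(1\pm\epsilon)$-spectral approximation with high probability and that its number of rows is as claimed.

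For correctness I would follow the matrix-martingale/stopping-time method of~\cite{DBLP:conf/approx/CohenMP16}. Conditioning on the event that $\B_{i-1}$ is an $(\epsilon,\gamma_{i-1})$-spectral approximation of $\A_{i-1}$, and using that $\gamma_{i-1}$ is a small multiple of $\sigma_{\rm min}(\A_{i-1})^2$ on the current row space, the estimate $\tilde\tau_i$ is within a constant factor of the true online ridge leverage score $\sa_i^\top(\A_{i-1}^\top\A_{i-1}+\gamma_{i-1}P_V)^{+}\sa_i$. Form the matrix martingale whose increments over the random steps are $M^{-1/2}(w_i\sa_i\sa_i^\top-\sa_i\sa_i^\top)M^{-1/2}$, where $w_i\in\{0,1/p_i\}$ is the realized weight and $M:=\A^\top\A+\gamma_n P_V$; the sampling rule forces each increment to have spectral norm $\mathrm{O}(\epsilon^2/\log d)$, and the predictable quadratic variation is $\preceq\mathrm{O}(\epsilon^2/\log d)\cdot M^{-1/2}(\A^\top\A)M^{-1/2}\preceq\mathrm{O}(\epsilon^2/\log d)\cdot P_V$ because the row contributions sum to $\A^\top\A\preceq M$. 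Matrix Freedman then gives $\|M^{-1/2}(\tA^\top\tA-\A^\top\A)M^{-1/2}\|\le\epsilon$ with probability $1-1/\mathrm{poly}(d)$, i.e.\ $(1-\epsilon)\A^\top\A-\gamma_n P_V\preceq\tA^\top\tA\preceq(1+\epsilon)\A^\top\A+\gamma_n P_V$, and a stopping-time argument discharges the conditioning while simultaneously certifying that every prefix sparsifier $\B_j$ is an $(\epsilon,\gamma_j)$-approximation of $\A_j$. Consequently $\gamma_n=\Theta(\epsilon)\min_j\sigma_{\rm min}(\B_j)^2=\Theta(\epsilon)\min_j\sigma_{\rm min}(\A_j)^2\le\epsilon\,\sigma_{\rm min}(\A_n)^2$; since all rows lie in $V$ and $\A^\top\A\succeq\sigma_{\rm min}(\A_n)^2P_V$, the term $\gamma_n P_V$ is absorbed into $\mathrm{O}(\epsilon)\A^\top\A$, and rescaling $\epsilon$ yields a genuine $(1\pm\epsilon)$-spectral approximation.

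For the size bound, step~(i) contributes at most $r$ rows, and for step~(ii) we have $\mathbb{E}[\#\mathrm{kept}]\le\sum_i p_i\le c\,\epsilon^{-2}(\log d)\sum_i\min\{1,\tilde\tau_i\}$. Since $\gamma_{i-1}\ge\gamma_n$ and $\B_{i-1}\approx\A_{i-1}$, one has $\tilde\tau_i=\mathrm{O}(\tau_i)$ with $\tau_i:=\sa_i^\top(\A_{i-1}^\top\A_{i-1}+\gamma_n P_V)^{+}\sa_i$, so it suffices to bound $\sum\min\{1,\tau_i\}$ over the type-(ii) steps; using $\min\{1,x\}=\mathrm{O}(\ln(1+x))$ for $x\le1$ and $\#\{i:\tau_i>1\}=\mathrm{O}(\sum_i\ln(1+\tau_i))$, this reduces to bounding $\sum_{\text{(ii)}}\ln(1+\tau_i)$. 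Each type-(ii) term equals the increase, caused by adding $\sa_i\sa_i^\top$, of the pseudo-determinant of $\A_i^\top\A_i+\gamma_n P_V$ on $V$; summing over all steps, the at most $r$ type-(i) steps already account for $\sum_j\ln\frac{\nu_j+\gamma_n}{\gamma_n}$ out of the total increase $\sum_j\ln\frac{\lambda_j+\gamma_n}{\gamma_n}$, where $\lambda_j\le\|\A\|_2^2$ are the eigenvalues of $\A^\top\A$ on $V$ and $\nu_j\ge\min_l\sigma_{\rm min}(\A_l)^2$ is the eigenvalue with which direction $j$ is born. Hence $\sum_{\text{(ii)}}\ln(1+\tau_i)\le\sum_{j=1}^r\ln\frac{\lambda_j+\gamma_n}{\nu_j+\gamma_n}$, and the key cancellation is that $\nu_j+\gamma_n=\Omega(\gamma_n/\epsilon)=\Omega(\min_l\sigma_{\rm min}(\A_l)^2)$, so each summand is at most $\ln\big(\|\A\|_2^2/\min_l\sigma_{\rm min}(\A_l)^2\big)+\mathrm{O}(1)=\ln\mu(\A)+\mathrm{O}(1)$; thus $\sum_{\text{(ii)}}\ln(1+\tau_i)=\mathrm{O}(r\log\mu(\A)+r)$. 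Therefore $\sum_i p_i=\mathrm{O}((r\log\mu(\A)+r)\epsilon^{-2}\log d)$, and adding the $r$ deterministic rows, the additional $\mathrm{O}(\log d)$ term in the leverage-score sum that the high-probability matrix-Freedman analysis needs at every prefix, and a Chernoff bound concentrating $\#\mathrm{kept}$ around $\sum_i p_i$, gives the stated approximation size.

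I expect the main obstacle to be the coupling between the adaptively shrinking regularization $\gamma_i$ and the two analyses. On the concentration side one must verify that whitening by the \emph{final} value $\gamma_n$ is legitimate even though the algorithm samples with the larger values $\gamma_{i-1}$ --- it is, because $\A^\top\A+\gamma_n P_V\preceq\A^\top\A+\gamma_{i-1}P_V$ only shrinks the martingale increments --- and that the leverage scores, computed against $\B_{i-1}$ rather than $\A_{i-1}$, remain faithful throughout the stream; both are handled by re-running the stopping-time argument of~\cite{DBLP:conf/approx/CohenMP16} with the additive term $\gamma_n P_V$ replacing a fixed $\delta\mathbf{I}$. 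On the size side, the delicate point is the cancellation above: $\gamma_n$ must be pegged to $\epsilon\cdot\min_l\sigma_{\rm min}(\A_l)^2$ so that it is small enough to erase the additive error yet large enough that the determinant potential, restricted to the $r$-dimensional row space and started at the eigenvalue with which each direction is born, telescopes to $\mathrm{O}(r\log\mu(\A)+r)$ rather than to a bound carrying an extra $\log(1/\epsilon)$ or $\log n$ factor.
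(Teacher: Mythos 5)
Your construction is genuinely different from the paper's: you keep an adaptively shrinking ridge term $\gamma\approx\epsilon\min_{l}\sigma_{\min}(\A_l)^2$ restricted to the current row space, whereas the paper removes regularization altogether by sampling with the \emph{relative leverage score} $\tau_i^{\tA_{i-1}}(\A)=\sa_i^\top\bigl(\bigl(\begin{smallmatrix}\tA_{i-1}\\ \sa_i^\top\end{smallmatrix}\bigr)^\top\bigl(\begin{smallmatrix}\tA_{i-1}\\ \sa_i^\top\end{smallmatrix}\bigr)\bigr)^+\sa_i$, which automatically equals $1$ on rank-increasing rows, and measures the size via the pseudo-determinant $\mathrm{Det}(\tA_i^\top\tA_i)$. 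The skeleton is the same (deterministic retention of the $r$ rank-increasing rows, matrix Freedman for correctness, a determinant potential calibrated by $\min_l\sigma_{\min}(\A_l)^2$ for the size), and your correctness half is sound: the needed lower bound on the sampling probability, $\tilde\tau_i\gtrsim\sa_i^\top(\A^\top\A+\gamma_nP_V)^+\sa_i$, follows from $\B_{i-1}^\top\B_{i-1}+\gamma_{i-1}P_W\preceq(1+\mathrm{O}(\epsilon))\A^\top\A$, and that only uses the one-sided additive bound $\B_{i-1}^\top\B_{i-1}-\A_{i-1}^\top\A_{i-1}\preceq\epsilon\A^\top\A$ that the single frozen-martingale event supplies at every prefix simultaneously.

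The gap is in the size bound, at the step ``$\tilde\tau_i=\mathrm{O}(\tau_i)$ since $\B_{i-1}\approx\A_{i-1}$.'' This inequality goes the other way from correctness: it needs a \emph{lower} bound $\B_{i-1}^\top\B_{i-1}+\gamma P_W\succeq c\,(\A_{i-1}^\top\A_{i-1}+\gamma P_W)$, i.e., a multiplicative prefix approximation, at every one of the $n$ type-(ii) steps. The frozen-martingale event only yields $\B_{i-1}^\top\B_{i-1}\succeq\A_{i-1}^\top\A_{i-1}-\epsilon\A^\top\A$, and the additive term $\epsilon\A^\top\A$ can dwarf $\A_{i-1}^\top\A_{i-1}+\gamma_{i-1}P_W$ early in the stream; and a union bound over all $n$ prefixes costs $n/\mathrm{poly}(d)$, which is not ``high probability'' in $d$ when $n\gg\mathrm{poly}(d)$. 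The paper's proof is structured precisely to dodge this: its potential is the pseudo-determinant of the \emph{sampled} matrix $\tA_k^\top\tA_k$, so the key inequality $\mathbb{E}\left[\exp\left(\tilde l_k/8-\delta_k\right)\mid\tA_{k-1}\right]\le1$ holds conditionally with no reference to $\A_{k-1}$, and the prefix approximation is invoked only at the $r=\mathrm{O}(d)$ rank-increasing steps (Corollary 3.4 of Section 3), where the union bound is affordable; Markov's inequality then converts the supermartingale bound into the high-probability estimate $\sum_i\tilde l_i=\mathrm{O}(r\log\mu(\A)+r+\log d)$. Your telescoping identity $\sum_i\ln(1+\tau_i)=\sum_j\ln\frac{\lambda_j+\gamma_n}{\gamma_n}$ and the cancellation against the birth eigenvalues are fine as a bound on the \emph{deterministic} scores; what is missing is a legitimate transfer from those to the scores your algorithm actually uses, and that transfer should be done the paper's way (potential on $\B_i$ itself) rather than through $\A_{i-1}$.
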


\begin{theorem}
\label{theorem:approximation_size_in_online_row_sampling2}
    Let $\mathbf{A} \in \mathbb{R}^{n \times d}$ be a matrix of rank $r$, and $\epsilon \in (0,1)$ be an error parameter.
    Then, in the online setting, we can construct a $(1 \pm \epsilon)$-spectral approximation for $\mathbf{A}$
    whose approximation size is $\mathrm{O} \left( r \epsilon^{-2} \log \mu (\A) + r \epsilon^{-2} \right)$ in expectation.
\end{theorem}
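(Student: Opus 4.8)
The plan is to adapt the online row-sampling algorithm of Cohen \emph{et al.}~\cite{DBLP:conf/approx/CohenMP16} in two ways: (i) replace their fixed additive parameter $\delta$ by a ridge parameter $\lambda$ that is lowered adaptively while the stream is read, so that no advance knowledge of $\min_i\sigma_{\rm min}(\A_i)^2$ is needed; and (ii) replace the matrix-Chernoff concentration step by the online, ridge-regularized form of the linear-sized spectral-sparsification technique of Lee~\cite{DBLP:conf/stoc/Lee017}, which is exactly what removes the extra $\log d$ factor present in \cref{theorem:approximation_size_in_online_row_sampling}. Concretely, maintain the current sample $\B$ (the kept rows, suitably reweighted). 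When a row $\sa$ arrives: if $\sa$ does not lie in the row space of $\B$, keep it with weight $1$ --- this happens at most $r$ times and contributes $\mathrm{O}(r)$ rows; otherwise compute the ridge leverage score $\sa^\top(\B^\top\B+\lambda\mathbf{I})^{-1}\sa$ for the current ridge $\lambda$ and decide irrevocably whether to keep (and how to reweight) $\sa$ by the rule of~\cite{DBLP:conf/approx/CohenMP16} with the oversampling factor $\mathrm{O}(\epsilon^{-2})$ from~\cite{DBLP:conf/stoc/Lee017} in place of $\mathrm{O}(\epsilon^{-2}\log d)$. The ridge follows a geometrically decreasing schedule: initialize $\lambda=\Theta(\|\sa_1\|^2)$ and halve it whenever the smallest nonzero singular value of the current $\B$ drops to at most $\lambda$. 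Because all rank-increasing rows are kept, the row space of $\B$ equals that of the prefix seen so far, so this test is a faithful proxy (up to the current multiplicative error) for ``$\sigma_{\rm min}(\A_i)^2\le\lambda$'' and is evaluable from $\B$ alone.

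For correctness, I would first invoke the standard fact that, with high probability, every ridge leverage score computed from $\B$ is within a constant factor of the true online ridge leverage score relative to $\A$, so the analysis may proceed with the true scores. Within a single phase (fixed ridge $\lambda_j$) the kept rows form a multiplicative $(1\pm\epsilon)$-approximation of the corresponding ridge-regularized Gram matrix by the online form of Lee's argument~\cite{DBLP:conf/stoc/Lee017}; composing the $\mathrm{O}(\log\mu(\A))$ phases via transitivity of spectral approximation, a union bound, and a constant rescaling of $\epsilon$, gives a global multiplicative $(1\pm\epsilon)$-approximation of $\A^\top\A+\lambda^\ast\mathbf{I}$, where $\lambda^\ast$ is the final ridge. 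Since the ridge-regularized guarantee already carries the extra $\epsilon$, this yields $(1-\epsilon)\A^\top\A-\epsilon\lambda^\ast\mathbf{I}\preceq\tA^\top\tA\preceq(1+\epsilon)\A^\top\A+\epsilon\lambda^\ast\mathbf{I}$. Finally, the phase rule forces $\lambda^\ast\le(1+\mathrm{O}(\epsilon))\sigma_{\rm min}(\A)^2$, so $\epsilon\lambda^\ast\mathbf{I}\preceq\mathrm{O}(\epsilon)\,\A^\top\A$ on the row space of $\A$ while both sides vanish off it (every kept row lies in that space); hence $\tA$ is a $(1\pm\mathrm{O}(\epsilon))$-spectral approximation for $\A$, and rescaling $\epsilon$ finishes correctness.

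For the size bound, the expected number of kept rows is $\mathrm{O}(\epsilon^{-2})$ --- with no $\log d$, thanks to~\cite{DBLP:conf/stoc/Lee017} in place of a matrix Chernoff bound --- times the sum of the truncated ridge leverage scores, plus the at most $r$ rank-increasing rows. That sum is controlled by the usual telescoping of log-determinants, $\sum_i\min\bigl(1,\sa_i^\top\M_{i-1}^{-1}\sa_i\bigr)=\mathrm{O}\!\left(\log\frac{\det\M_n}{\det\M_0}\right)+\mathrm{O}(r\log\mu(\A))$, with $\M_i=\B_i^\top\B_i+\lambda_i\mathbf{I}$, where: the determinant is taken over the at-most-$r$-dimensional row space of $\A$ (this is where the $d$ in Cohen \emph{et al.}'s estimate becomes $r$), and each of the $\mathrm{O}(\log\mu(\A))$ phase transitions is charged an extra $\mathrm{O}(r)$ because halving $\lambda$ shrinks $\det\M$ by a factor at most $2^r$. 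Since $\lambda$ decreases from $\Theta(\|\sa_1\|^2)=\mathrm{O}(\|\A\|_2^2)$ down to $\lambda^\ast=\Theta\bigl(\min_i\sigma_{\rm min}(\A_i)^2\bigr)$ and $\min_i\sigma_{\rm min}(\A_i)^2\le\|\sa_1\|^2$, we get $\log(\det\M_n/\det\M_0)=\mathrm{O}(r\log\mu(\A))$; hence the sum of scores is $\mathrm{O}(r\log\mu(\A)+r)$ and the expected approximation size is $\mathrm{O}(r\epsilon^{-2}\log\mu(\A)+r\epsilon^{-2})$.

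The main obstacle is the adaptively shrinking ridge. Lowering $\lambda$ mid-stream means that rows seen while $\lambda$ was still large were kept at a lower rate than the final, stricter guarantee with ridge $\lambda^\ast$ nominally demands, so one must argue the sample collected so far is not deficient --- essentially that a ridge-$\lambda_{\rm old}$ approximation, combined with the rows subsequently drawn at the finer ridge, still certifies a ridge-$\lambda_{\rm new}$ approximation. The key is that a drop in $\sigma_{\rm min}$ of a prefix (hence a ridge decrease) is caused by a newly appearing ``weak'' direction, which enters only through a rank-increasing row that we keep exactly; along every other direction $\B$ was already adequate at the then-current resolution, and along the newly weak direction the leverage scores of subsequent rows are large, so those rows are kept. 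Making this accounting precise --- and simultaneously verifying that $\sigma_{\rm min}(\B_i)$ tracks $\sigma_{\rm min}(\A_i)$ closely enough for the phase trigger to fire at the right moment --- is the crux; once it is in place, the two displays above assemble into the claimed bound.
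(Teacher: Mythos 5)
There is a genuine gap, and in fact two concrete ones. First, the step ``replace the oversampling factor $\mathrm{O}(\epsilon^{-2}\log d)$ by $\mathrm{O}(\epsilon^{-2})$, thanks to Lee, while still sampling with ridge leverage scores computed from the current sample $\B$'' is not a valid move. The $\log d$ in the martingale/Chernoff analysis is the dimension factor in front of the exponential tail bound; the only known way to remove it in the online setting is the BSS barrier-potential method, and that method requires the sampling probability to be tied to the gaps to explicitly maintained upper and lower barriers, i.e.\ $p_i=\min\bigl(c_\mathrm{U}\sa_i^\top(\mathbf{X}^\mathrm{U}_{i-1})^+\sa_i+c_\mathrm{L}\sa_i^\top(\mathbf{X}^\mathrm{L}_{i-1})^+\sa_i,1\bigr)$ as in \cref{algorithm:optimal} --- not a ridge leverage score of $\B$. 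With your probabilities and only $\mathrm{O}(\epsilon^{-2})$ oversampling, neither matrix Chernoff nor Freedman yields the multiplicative guarantee. Second, the phase-stitching you yourself identify as ``the crux'' is left open, and the surrounding frame is also wrong: chaining the $\mathrm{O}(\log\mu(\A))$ per-phase guarantees ``via transitivity'' costs a factor $(1\pm\epsilon)^{\mathrm{O}(\log\mu(\A))}$, not a constant rescaling of $\epsilon$; what is actually needed is a single global potential argument over the whole stream with time-varying probabilities, which is precisely what is missing. (Your construction would also only succeed with high probability, whereas \cref{theorem:approximation_size_in_online_row_sampling2} as proved gives a deterministic approximation guarantee with expected size.)

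The paper's route avoids the adaptive ridge entirely, which is why none of these issues arise there. \cref{algorithm:optimal} runs one online BSS process in which the barriers and the gap matrices are allowed to be singular and are handled through pseudo-inverses, so the sandwich $\mathbf{B}_i^\mathrm{L}\preceq\tA_i^\top\tA_i\preceq\mathbf{B}_i^\mathrm{U}$ is established deterministically for every $i$, with no phases to compose. The expected size is then controlled by the monotonicity argument of \cref{clm:1}, which shows $\mathbb{E}[p_i]\le\frac{10}{\epsilon^2}\tau_i^{\mathbf{A}_{i-1}}(\A)$ despite the probabilities depending on the random history, and finally $\sum_i\tau_i^{\mathbf{A}_{i-1}}(\A)=\mathrm{O}(r\log\mu(\A)+r)$ follows by telescoping the matrix pseudo-determinant lemma over the deterministic prefixes $\A_i^\top\A_i$ on their at-most-$r$-dimensional row space. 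Your log-determinant telescoping is in the right spirit and gets the arithmetic right, but it is applied to the random matrices $\B_i^\top\B_i+\lambda_i\mathbf{I}$ and presupposes exactly the unproven link between your sampling probabilities and the true online leverage scores; replacing it with the paper's combination of \cref{lemma:optimal_appendix} and the pseudo-determinant bound is what closes the argument.
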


We remark that, using the same instance as in Theorem 5.1 of Cohen \emph{et al.}~\cite{DBLP:conf/approx/CohenMP16}, it turns out that the approximation size in \cref{theorem:approximation_size_in_online_row_sampling2} is asymptotically optimal.
That is, no algorithm can obtain a $( 1\pm \epsilon)$-spectral approximation with $\mathrm{o} \left( r \epsilon^{-2} \log \mu (\A) + r \epsilon^{-2} \right)$ rows~(see \cref{thm:onlein_lb}).

Let us compare our algorithms with $( \epsilon, \delta )$-spectral approximation algorithms in \cite{DBLP:conf/approx/CohenMP16}.
If we set $\delta = \epsilon \min_i \sigma_{\rm min} (\A_i)^2$, then their online algorithms return a $(1\pm \epsilon)$-spectral approximation whose sizes are 
$\mathrm{O}\left( d \epsilon^{-2} \log d \log \mu (\A) \right)$ and $\mathrm{O}\left( d \epsilon^{-2} \log  \mu (\A) \right)$, respectively.
Therefore, \cref{theorem:approximation_size_in_online_row_sampling} and \cref{theorem:approximation_size_in_online_row_sampling2} give better upper bounds on the approximation size as $r \ll d$. 
Note that the approximation size of \cite{DBLP:conf/approx/CohenMP16} always depends on $d$ due to the regularizing factor $\delta$.
Note also that the running time and the space complexity are the same.

The framework of our first algorithm is similar to Cohen \emph{et al.}~\cite{DBLP:conf/approx/CohenMP16}: Each time we receive a row, we compute a score of the arriving row with a matrix we have at the moment, that measures the importance of the arriving row.
Then we decide whether to sample the row or not based on the score.
Cohen \emph{et al.}~\cite{DBLP:conf/approx/CohenMP16} used the \emph{online ridge leverage score}, assuming that the current matrix~(together with $\delta\mathbf{I}$) is nonsingular.
On the other hand, we introduce a new score called a \emph{relative leverage score}, that handles a singular current matrix directly.
To analyze relative leverage scores for singular matrices, we exploit the pseudo-determinant~\cite{pseudo18, pseudo14}, which is of independent interest.

Cohen \emph{et al.}~\cite{DBLP:conf/approx/CohenMP16} improved the approximation size by an $\mathrm{O}\left( \log d \right)$ factor, based on a technique to obtain linear-sized approximation introduced in \cite{DBLP:conf/focs/LeeS15a}.
The linear-sized approximation technique was originally developed in~\cite{DBLP:conf/stoc/BatsonSS09}, and later made randomized to obtain a faster algorithm~\cite{DBLP:conf/stoc/Lee017}.
We combine the analysis in \cref{theorem:approximation_size_in_online_row_sampling} with the randomized one to obtain \cref{theorem:approximation_size_in_online_row_sampling2}.
Since the proof to combine them is almost identical to that of Cohen \emph{et al.}~\cite{DBLP:conf/approx/CohenMP16}, it is given in \cref{section:appendix}.

\paragraph{Online random order setting.}
The approximation sizes in \cref{theorem:approximation_size_in_online_row_sampling} and \cref{theorem:approximation_size_in_online_row_sampling2} depend on $\mu \left( \mathbf{A} \right)$.
Thus when $\mu \left( \mathbf{A} \right)$ exceeds $\mathrm{poly}\left( n \right)$, the approximation size can be $\omega \left( r~\mathrm{poly} (\log n, \epsilon) \right)$. 
In fact, there exists a matrix $\mathbf{A}$ such that we have to sample all the rows in $\mathbf{A}$ to construct a $(1\pm\epsilon)$-spectral approximation in the online setting.
Moreover, in our algorithms as well as algorithms in \cite{DBLP:conf/approx/CohenMP16}, we need to compute a Moore-Penrose pseudo-inverse to evaluate the score in each iteration, and thus the running time is not efficient.
In fact, even if we exploit the Sharman-Morrison formula for the Moore-Penrose pseudo-inverses, it takes $\mathrm{O}( n d^2 )$ time in total. 
Our second and third contributions are to study the online random order setting to break these difficulties.

In the online random order setting, each row in an input matrix $\A$ comes in exactly once according to a certain random permutation in addition to the online setting. 
More formally, we are given a family of row vectors $\X$.
Let $\mathcal{A}(\mathbf{X})$ be a discrete uniform distribution whose element is a matrix obtained by permuting vectors in $\mathbf{X}$.
We note that $\mathcal{A}(\mathbf{X})$ has $n!$ elements.
Then the \emph{online random order setting} means that an input matrix is a random variable $\mathbf{A} \sim \mathcal{A}(\mathbf{X})$, and is given as a stream of rows in the online setting.
Algorithms in the random order setting have been analyzed for several problems such as frequency moment estimation~\cite{DBLP:conf/icalp/BravermanVWY18}, computation of the median~\cite{DBLP:conf/soda/ChakrabartiJP08}, and approximation of maximum matching in a graph~\cite{DBLP:conf/soda/KapralovKS14}, and it is often shown that randomness breaks the worst-case complexity of the online setting.

In the online random order setting, we propose a fast and memory-efficient algorithm that returns a $(1\pm\epsilon)$-spectral approximation.
The approximation size is $\mathrm{O}\left( d \epsilon^{-2} \log n \log d \right)$ for almost all rows' permutations, which is independent of $\mu (\A)$. We here denote the number of nonzero entries of a matrix $\A$ by $\mathrm{nnz} (\A)$.

\begin{theorem}
\label{theorem:improved_scaled_sampling}
    Let $\epsilon \in (0, 1/2]$ be an error parameter, and $\X$ be a family of $n$ row vectors in $\mathbb{R}^d$.
    Then there exists an algorithm in the online random order setting such that $\A \sim \mathcal{A}(\X)$ satisfies the following with high probability: The algorithm returns a $(1 \pm \epsilon)$-spectral approximation for $\mathbf{A}$ with $\mathrm{O}\left( d \epsilon^{-2} \log n \log d \right)$ rows with high probability.
    It consumes $\mathrm{O}\left( \mathrm{nnz}(\mathbf{A}) \log n + \left( d^\omega + d^2 \log n \right) \log^2 n \right.$ $\left. \log d \right)$ time and stores $\mathrm{O}\left( d \log n \log d \right)$ rows as the working memory and $\mathrm{O}\left( d \epsilon^{-2} \log n \log d \right)$ rows as the output memory.
\end{theorem}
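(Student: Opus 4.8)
The plan is to adapt the online relative-leverage-score sampler behind \cref{theorem:approximation_size_in_online_row_sampling} in two ways: run it at a coarse constant error to get a cheap-to-maintain surrogate for the prefix, and use the random order both to bound the output size (making it independent of $\mu(\A)$) and to make the per-row work essentially $\mathrm{O}(\mathrm{nnz}(\sa_i)\log n)$. Concretely, the algorithm keeps a $(1\pm\tfrac12)$-spectral approximation $\B$ of the prefix $\A_i$ seen so far, produced by running the relative-leverage-score sampler at error $\tfrac12$; by the size analysis below, in random order $\B$ has $\mathrm{O}(d\log n\log d)$ rows with high probability, and this is the working memory (note it does not depend on $\epsilon$). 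For each arriving row $\sa_i$ it computes an \emph{online relative leverage score} $\ell_i$ of $\sa_i$ against $\B$ rather than against $\A_{i-1}^\top\A_{i-1}$; since $\B$ is a constant-factor spectral approximation the two scores agree up to a constant, and $\sa_i$ is placed in the output independently with probability $\min(1,\,c\,\epsilon^{-2}\log d\cdot\ell_i)$, with the usual safeguards of always keeping a row whose score is $\Omega(1)$ and of keeping all of the first $\mathrm{O}(d\log d)$ rows outright. It remains to bound the number of sampled rows and to account for time and storage.

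For the size bound I would estimate $\mathbb E\big[\sum_i\min(\ell_i,1)\big]$ and then upgrade to a high-probability statement. The expectation is controlled by the pseudo-determinant telescoping used to prove \cref{theorem:approximation_size_in_online_row_sampling}: up to absolute constants, $\sum_i\min(\ell_i,1)$ is at most $\mathrm{O}(r)$ plus the quantity $\log\mathrm{pdet}(\X^\top\X)-\log\mathrm{pdet}(\A_{\tau_1}^\top\A_{\tau_1})-\sum_{i\text{ rank-increasing}}\log\|P_{\ker\A_{i-1}}\sa_i\|_2^2$, where $\tau_1$ is the first arrival and $P_{\ker\A_{i-1}}$ projects onto the kernel of $\A_{i-1}^\top\A_{i-1}$. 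In random order the first term combines with $\mathbb E\big[-\log\|\sa_{\tau_1}\|_2^2\big]=-\tfrac1n\sum_j\log\|\sa_j\|_2^2$ into a quantity invariant under rescaling the rows, the possibly very negative \emph{new-mass} terms $\log\|P_{\ker\A_{i-1}}\sa_i\|_2^2$ are exactly cancelled by the corresponding factors inside $\mathrm{pdet}(\X^\top\X)$, and the arrival index of each of the (at most $r$) rank-determining rows is uniform on $[n]$, so each contributes at most $\log n-\mathrm{O}(1)$ in expectation; altogether $\mathbb E\big[\sum_i\min(\ell_i,1)\big]=\mathrm{O}(r\log n)=\mathrm{O}(d\log n)$. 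Since each increment is bounded, a Freedman/Bernstein bound for functions of a uniformly random permutation --- together with the concentration of the sampler's own coins already present in the proof of \cref{theorem:approximation_size_in_online_row_sampling} --- upgrades this to $\sum_i\min(\ell_i,1)=\mathrm{O}(d\log n)$ with high probability, and the matrix Chernoff step with oversampling factor $\mathrm{O}(\epsilon^{-2}\log d)$ then produces a $(1\pm\epsilon)$-spectral approximation with $\mathrm{O}(d\epsilon^{-2}\log n\log d)$ rows with high probability.

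For the running time, the score of $\sa_i$ equals $\|\M\sa_i\|_2^2$ with $\M=(\B^\top\B)^{+/2}$, and an $\mathrm{O}(1)$-factor estimate of all of these numbers suffices, so we may instead score $\sa_i$ by $\|\mathbf G\M\sa_i\|_2^2$ for a fixed $\mathrm{O}(\log n)\times d$ Johnson--Lindenstrauss matrix $\mathbf G$. We process the stream in batches, recomputing $\M$ and the product $\mathbf G\M$ (and, if needed, recompressing $\B$ to its row budget) only at a geometric sequence of times and when $\B$ has grown appreciably, so that within a batch every score remains a valid $\mathrm{O}(1)$-approximation; one checks that the number of such refreshes is $\mathrm{O}(\log^2 n\log d)$ in total. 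Each refresh costs $\mathrm{O}(d^\omega)$ for the pseudo-inverse and matrix square root and $\mathrm{O}(d^2\log n)$ for forming $\mathbf G\M$; inside a batch, the cost per row is $\mathrm{O}(\mathrm{nnz}(\sa_i)\log n)$ for reading $\sa_i$ and applying $\mathbf G\M$, which also serves the coarse online update of $\B$. Summing over all rows and all refreshes yields time $\mathrm{O}\big(\mathrm{nnz}(\A)\log n+(d^\omega+d^2\log n)\log^2 n\log d\big)$, working memory $\mathrm{O}(d\log n\log d)$ rows, and output memory $\mathrm{O}(d\epsilon^{-2}\log n\log d)$ rows.

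The main obstacle is the expectation bound on $\sum_i\min(\ell_i,1)$, together with the interplay of several sources of randomness. One must control all $r$ eigendirections of $\X^\top\X$ simultaneously, argue that the random arrival times of the rank-determining rows are not concentrated near $1$, and check that the \emph{new-mass} terms cancel against $\mathrm{pdet}(\X^\top\X)$ even when $\X$ is arbitrarily badly conditioned --- this is precisely where the relative-leverage-score / pseudo-determinant machinery of \cref{theorem:approximation_size_in_online_row_sampling} has to be exploited beyond its worst-case use. In addition, the scores actually used depend on the random permutation, on the sampler's coins, and on the random, adaptively chosen surrogate $\B$ and its recompressions; so the martingale argument must first condition on all recompressions being good $\mathrm{O}(1)$-approximations and only then run over the remaining coins, and one must verify both that this conditioning preserves the martingale structure and that the refresh schedule can be made to simultaneously keep every score an $\mathrm{O}(1)$-approximation and fire only $\mathrm{O}(\log^2 n\log d)$ times.
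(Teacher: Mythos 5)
Your algorithmic skeleton is essentially the paper's \cref{algorithm:improved_scaled_sampling}: maintain a cheap $(1\pm 1/2)$-approximation of the prefix as a surrogate for scoring, oversample by $\epsilon^{-2}\log d$, use a Johnson--Lindenstrauss sketch of $(\B^\top\B)^{+}$ refreshed only at a logarithmic number of times, and prove correctness via the matrix-martingale argument. The time and space accounting also matches. The correctness step and the complexity step are therefore fine in outline.

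The genuine gap is in your approximation-size bound, which is the heart of the theorem and the only place where the random order is actually used. You propose to bound $\mathbb{E}\bigl[\sum_i\min(\ell_i,1)\bigr]$ by taking the expectation, over the random permutation, of the pseudo-determinant telescoping bound from \cref{lemma:sum_leverage_score}. That expectation is \emph{not} $\mathrm{O}(d\log n)$ in general. Already for $d=1$, take $\X$ to consist of one vector of norm $1$ and $n-1$ parallel vectors of norm $M$ with $\log M = n^2$. With probability $1/n$ the unit vector arrives first, and then your quantity $\log\mathrm{Det}(\X^\top\X)-\log\mathrm{Det}(\A_{\tau_1}^\top\A_{\tau_1})$ equals roughly $\log n+2\log M$; hence its expectation is at least $\tfrac{2}{n}\log M=2n\gg d\log n$, and the rescaling-invariance you invoke does not repair this (the quantity $\log\mathrm{Det}(\X^\top\X)-\tfrac1n\sum_j\log\|\x_j\|_2^2$ is exactly this expectation). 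The true value of $\sum_i\min(\ell_i,1)$ in this instance is $\mathrm{O}(\log n)$ --- the pseudo-determinant inequality is simply too loose on badly scaled inputs, because the single step in which the huge mass enters contributes $\Theta(\log M)$ to $\sum_i\delta_i$ while contributing only $\mathrm{O}(1)$ to $\sum_i\ell_i$. For the same reason your claim that each rank-determining row ``contributes at most $\log n-\mathrm{O}(1)$ in expectation'' fails: the first arrival among the rows spanning a given direction is uniform over those rows, but the logarithm of the mass ratio averaged over that uniform choice is unbounded. So the target bound $\mathbb{E}\bigl[\sum_i\min(\ell_i,1)\bigr]=\mathrm{O}(d\log n)$ is correct, but it cannot be reached by averaging the worst-case pseudo-determinant bound over permutations.

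The ingredient the paper uses instead, and which your argument is missing, is \cref{theorem:uniform_sampling_leverage_score_overestimate}: if $\M$ is a uniformly random $m$-row subsample of an $n'$-row matrix, then the leverage scores of all $n'$ rows computed \emph{against $\M$} sum to $\mathrm{O}(n'd/m)$ with high probability. Partitioning the stream into geometrically doubling blocks makes each prefix a uniform half-sample of the next, so each block contributes $\mathrm{O}(d)$ to $\|\widetilde{\mathbf{l}}\|_1$ and the $\mathrm{O}(\log n)$ blocks give $\mathrm{O}(d\log n)$; this bound is scale-invariant and comes with high probability directly, avoiding both the loose pseudo-determinant step and the permutation-martingale concentration you would otherwise need. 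You would need to replace your size-bound argument with this (or an equivalent scale-free estimate) for the proof to go through.
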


We note that there are two kinds of randomness: random permutation in an input and random sampling in algorithms.

For the proof of \cref{theorem:improved_scaled_sampling}, we first present a simpler algorithm with less efficient time and space complexity~(\cref{theorem:scaled_sampling}).
The idea of our algorithm is simple.
Recall that our algorithm in \cref{theorem:approximation_size_in_online_row_sampling} computes the relative leverage score with a current matrix, and samples an arriving row based on that score.
The time-consuming part is to compute a pseudo-inverse to obtain the relative leverage score each time a current matrix is updated.
In the proposed algorithm, we keep using the same matrix to compute the relative leverage scores for consecutive rows in a batch, which reduces the number of computing a pseudo-inverse.
The correctness of the algorithm consists of three parts: (i) the output is a $(1 \pm \epsilon)$-spectral approximation, (ii) the approximation size is bounded, and (iii) the algorithm runs in desired time and space.
The first part (i) can be shown with a matrix martingale.
The proof is similar to Cohen \emph{et al.}~\cite{DBLP:conf/approx/CohenMP16}, but we need careful analysis due to the fact that we use the relative leverage score with a matrix~(which depends on results of previous samples).
Note that (i) holds independently of row permutations, that is, (i) holds for any $\A \sim \mathcal{A}(\X)$.
The randomness of row permutations is exploited to prove (ii) and (iii).
We make use of the result in~\cite{DBLP:conf/innovations/CohenLMMPS15} that a matrix obtained by sampling rows uniformly at random from an input matrix gives a good approximation of leverage scores.
Owing to this fact, we prove that the number of computing the Moore-Penrose pseudo-inverses used for the relative leverage scores is reduced to $\mathrm{O}(\log n)$, keeping the approximation size small.
This simple algorithm, with further observations to reduce time complexity, yields \cref{theorem:improved_scaled_sampling}.
Moreover, we also prove that the simple algorithm, together with a semi-streaming algorithm~\cite{DBLP:conf/soda/KyngPPS17}, can reduce the working memory space to $\mathrm{O} \left( d \log d \right)$ rows, which does not depend on $\epsilon$ and $n$ (See \cref{theorem:improved_scaled_sampling_less_memory}).

We remark that the approximation size can be improved to $\mathrm{O}\left( d \epsilon^{-2} \log n \right)$ using the algorithm in \cref{theorem:approximation_size_in_online_row_sampling2}, although the running time and the space complexity are much less efficient (See \cref{theorem:optimal_random_order_streams}). 


\paragraph{Lower bound in the online random order setting.}

On the other hand, we obtain a lower bound for the online random order setting. 
We prove that any algorithm that selects rows in the online random order setting and returns a $(1 \pm \epsilon)$-spectral approximation must sample $\Omega \left( d \epsilon^{-2} \log n \right)$ rows with high probability, and thus the algorithm in \cref{theorem:optimal_random_order_streams} is asymptotically optimal in terms of the approximation size.
Since the lower bound on the approximation size in the offline setting is $\Omega (d\epsilon^{-2})$, the online random order setting suffers an additional $\log n$ factor.

\begin{theorem}
\label{theorem:optimal}
  Let $\epsilon \in (0, 1)$ be an error parameter. 
  Let $R$ be an algorithm that samples rows in the online random order setting and returns a $(1 \pm \epsilon)$-spectral approximation. 
  Then there exists a family of row vectors $\mathbf{X}$ in $\mathbb{R}^d$ such that $R$ with an input $\mathbf{A}\sim \mathcal{A}(\X)$ returns $\Omega \left( d \epsilon^{-2} \log n \right)$ rows with high probability.
\end{theorem}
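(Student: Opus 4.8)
The plan is to exhibit a single bad family $\mathbf{X}$ by a Yao-type averaging argument: it suffices to fix a distribution $\mathcal{D}$ over families of $n$ rows in $\mathbb{R}^d$ and show that every \emph{deterministic} online algorithm $R$ that always returns a valid $(1\pm\epsilon)$-spectral approximation must, with high probability over $\mathbf{X}\sim\mathcal{D}$ and the permutation $\mathbf{A}\sim\mathcal{A}(\mathbf{X})$, keep $\Omega(d\epsilon^{-2}\log n)$ rows; a good deterministic $\mathbf{X}$ then lies in the support of $\mathcal{D}$. For $\mathbf{X}$ I would use a multi-scale Batson--Spielman--Srivastava gadget: $\Theta(\log n)$ ``levels'', where level $t$ consists of $\Theta(d/\epsilon^2)$ rows in $\mathbb{R}^d$ that by themselves require $\Omega(d/\epsilon^2)$ reweighted rows to be $(1\pm\epsilon)$-approximated (this is the source of the $\epsilon^{-2}$ factor, and is essentially the hard instance behind the offline $\Omega(d\epsilon^{-2})$ bound and behind Cohen \emph{et al.}'s online bound). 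The scales and multiplicities across levels would be calibrated so that, under a uniformly random order, each level $t$ is ``seen in isolation'': the matrix formed by all rows that have arrived by the time level $t$ finishes is a $(1\pm O(\epsilon))$-multiple of the Gram matrix of level $t$ alone, so that from the algorithm's viewpoint level $t$ could just as well be the whole input.

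The core step is to turn this into a counting lower bound. Fix $R$, condition on level $t$ being seen in isolation, and let $p_t$ be its completion time. Consider the truncated input $\mathbf{X}^{(t)}$ consisting of the rows that arrive by time $p_t$. Since $R$ is online and, crucially, \emph{cannot reweight rows it has already selected}, its selections and the weights it has irrevocably assigned during the first $p_t$ rows of a permutation of $\mathbf{X}$ coincide with its behaviour on $\mathbf{X}^{(t)}$; hence the rows it has committed to by time $p_t$, with their assigned weights, must already be a valid $(1\pm\epsilon)$-spectral approximation for $\mathbf{X}^{(t)}$. Because $(\mathbf{A}^{(t)})^\top\mathbf{A}^{(t)}$ is a near-multiple of level $t$'s Gram matrix and level $t$ is BSS-hard, this forces $R$ to have selected $\Omega(d/\epsilon^2)$ rows \emph{from level $t$} by time $p_t$. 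The levels are disjoint and there are $\Theta(\log n)$ of them, so summing over levels yields $\Omega(d\epsilon^{-2}\log n)$ selected rows. To obtain the high-probability statement rather than just an expectation bound, I would design the construction so that the ``seen in isolation'' event for each individual level holds with probability $1-1/\mathrm{poly}(d)$ (strengthening the per-level analysis, e.g.\ by slack in the scale gaps), and then take a union bound over the $\Theta(\log n)$ levels.

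The main obstacle is the construction itself: making ``seen in isolation'' hold for a constant fraction of the levels \emph{simultaneously} even though the permutation is random and the algorithm may know $n$. A naive multi-scale instance fails here --- with geometrically separated levels, an online algorithm can wait until it has seen a few of the largest-magnitude rows, deduce which level dominates $\mathbf{A}^\top\mathbf{A}$, and then merge everything into a single $O(d/\epsilon^2)$-size sample with an appropriate global weight; moreover geometric weights form a convergent series, so only $O(\log(1/\epsilon))$ levels ever matter for the true answer. Defeating this requires the levels to be (i) close enough in scale, or made indistinguishable by a random perturbation inside $\mathcal{D}$, that $R$ cannot cheaply learn the dominant scale from a prefix, yet (ii) incomparable enough that a sample tuned to one level is useless for another, and (iii) arranged so that a random prefix genuinely looks like one complete level. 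The delicate random-order combinatorics establishing (iii) for many levels at once, together with the indistinguishability in (i), is where I expect the real difficulty to lie; given earlier results, the BSS-hardness of each gadget, the coupling with $\mathbf{X}^{(t)}$, and the final union bound are comparatively routine.
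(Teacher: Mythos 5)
Your proposal correctly identifies the overall shape of the bound ($\Theta(\log n)$ stages, each forcing $\Omega(d\epsilon^{-2})$ samples via a BSS-type offline lower bound, plus irrevocability of the online selections), but the load-bearing step --- the construction of $\mathbf{X}$ and the ``seen in isolation'' property --- is exactly the part you leave open, and the route you sketch for it (scale-separated levels, indistinguishability via a Yao distribution, random-order combinatorics making a prefix look like one complete level) is both unresolved and unnecessary. As you yourself observe, geometrically separated levels are vulnerable to an algorithm that learns the dominant scale from a prefix, and it is not clear the isolation property can be made to hold for $\Theta(\log n)$ levels simultaneously. So as written the argument has a genuine gap at its core.

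The paper's proof avoids all of this with a homogeneous instance: $\mathbf{X}^\ast$ consists of $n/\binom{d}{2}$ parallel copies of each edge of $K_d$. Then for any checkpoint $s\ge D=d^4\epsilon^{-2}\log d$, a hypergeometric tail bound per edge plus a union bound shows that the prefix $\mathbf{A}_{s}$ is w.h.p.\ a $(1\pm\epsilon)$-spectral approximation of $\mathbf{B}_{\alpha K_d}$ with $\alpha=s/\binom{d}{2}$ --- no isolation is needed because \emph{every} sufficiently long prefix is automatically (a scaled copy of) the hard instance. The per-stage counting then comes not from attributing selected rows to disjoint levels, but from telescoping: at consecutive checkpoints $s_i=2^iD$ the algorithm's outputs satisfy $(1-3\epsilon)\mathbf{L}_{\alpha_i K_d}\preceq\widetilde{\mathbf{A}}_{s_i}^\top\widetilde{\mathbf{A}}_{s_i}\preceq(1+3\epsilon)\mathbf{L}_{\alpha_i K_d}$, and since selections are irrevocable, the difference $\widetilde{\mathbf{A}}_{s_{i+1}}^\top\widetilde{\mathbf{A}}_{s_{i+1}}-\widetilde{\mathbf{A}}_{s_i}^\top\widetilde{\mathbf{A}}_{s_i}$ is precisely the Gram matrix of the rows newly selected in that window and must be a $(1\pm 9\epsilon)$-approximation of $\mathbf{L}_{\alpha_i K_d}$ (using $\alpha_{i+1}-\alpha_i=\alpha_i$), forcing $\Omega(d\epsilon^{-2})$ \emph{new} rows per doubling window by the offline lower bound for the complete graph. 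This subtraction step is what defeats the ``merge everything into one sample'' attack you were worried about, and it removes the need for any Yao averaging, scale separation, or indistinguishability argument. If you replace your multi-level gadget with this single instance and your isolation claim with the telescoping of Gram matrices at the checkpoints $s_i$, the rest of your outline goes through.
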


We define the worst instance to be the incidence matrix of a graph on $d$ vertices such that every pair of vertices has $n/\binom{d}{2}$ parallel edges.
Then the key ingredient is that there exists an integer $D$ such that, if we sample $D$ rows uniformly at random from the instance, then the corresponding matrix is a $(1\pm \epsilon)$-spectral approximation for a weighted complete graph with high probability.
Since a weighted complete graph on $d$ vertices requires $\Omega (d\epsilon^{-2})$ rows for a $(1\pm \epsilon)$-spectral approximation, this implies that we need to sample $\Omega (d\epsilon^{-2})$ rows while $D$ rows arrive.
By dividing the rows of $\A$ into $\mathrm{O}(\log n)$ parts with a geometric series $D, 2D, 4D, \dots$, this yields \cref{theorem:optimal}.

\subsection{Related work.}

The difficulty of spectral approximation in restrictive settings such as the semi-streaming setting and the online setting lies in that the probability of sampling a row becomes dependent on which rows are sampled so far.
In the standard spectral approximation~(without any restriction), 
we are given all the rows of a matrix $\mathbf{A}$ in advance.
Then we can sample each row by setting a sampling probability with $\A$.
The matrix Chernoff bound provides an exponential tail bound, which ensures that the output $\widetilde{\mathbf{A}}$ is a $(1\pm\epsilon)$-spectral approximation.
However, in the semi-streaming or the online setting,
the probability that an algorithm returns a spectral approximation is no longer bounded by the matrix Chernoff bound due to the dependencies of the sampled rows. 
We need to analyze carefully how the output $\widetilde{\mathbf{A}}$ is constructed in the process.
Cohen \emph{et al.}~\cite{DBLP:conf/approx/CohenMP16} and Kyng \emph{et al.}~\cite{DBLP:conf/soda/KyngPPS17} made use of a matrix martingale and its exponential tail bound, in which we are allowed to have mutually dependent random variables.
The analysis with a matrix martingale is also used in other papers such as \cite{DBLP:conf/focs/CohenKKPPRS18, DBLP:conf/focs/KyngS16, DBLP:conf/focs/KyngS18}.
We also employ it in our settings.

In the dynamic setting, we aim to maintain a spectral approximation under row insertions and deletions.
For a special case where $\A$ is the Laplacian matrix, Abraham \emph{et al}. \cite{DBLP:conf/focs/AbrahamDKKP16} showed that we can construct a $(1\pm \epsilon)$-spectral approximation such that the amortized update time per insertion or deletion is $\mathrm{O} \left( \mathrm{poly} \left( \log d, \epsilon \right) \right)$, which was later de-amortized by \cite{DBLP:conf/soda/BernsteinFH19}.
Their algorithms maintain spanners in a graph, which is a different approach from our algorithms based on sampling rows in the online setting.
When $\A$ is the Laplacian matrix, 
we can also obtain a spectral approximation in the dynamic semi-streaming setting~\cite{DBLP:conf/focs/KapralovLMMS14,DBLP:journals/corr/abs-1903-12150}.

There exist other research directions such as a spectral sketch~\cite{DBLP:conf/innovations/AndoniCKQWZ16, DBLP:conf/soda/JambulapatiS18} and spectral sparsification for a generalization of undirected graphs~\cite{DBLP:journals/corr/abs-1905-01495, DBLP:conf/focs/CohenKKPPRS18, DBLP:conf/stoc/CohenKPPRSV17, DBLP:conf/soda/SomaY19}.
For a matrix $\mathbf{A} \in \mathbb{R}^{d \times d}$, a \emph{spectral sketch} is a function $f$ such that $(1 - \epsilon) \mathbf{x}^\top \mathbf{A} \mathbf{x} \preceq f \left( \mathbf{x} \right) \preceq (1 + \epsilon) \mathbf{x}^\top \mathbf{A} \mathbf{x}$ for every vector $\mathbf{x}$.
If $\mathbf{A}$ is the Laplacian matrix, it is known that there is a nearly-linear time algorithm which returns a spectral sketch with $\mathrm{O} \left( d / \epsilon \right)$ bits, which is better than $\Omega \left( d / \epsilon^2 \right)$ bits of a $(1\pm \epsilon)$-spectral sparsifier~\cite{DBLP:conf/soda/JambulapatiS18}.
Recently, spectral sparsification has been extended to directed graphs~\cite{DBLP:conf/focs/CohenKKPPRS18, DBLP:conf/stoc/CohenKPPRSV17} and hypergraphs~\cite{DBLP:journals/corr/abs-1905-01495, DBLP:conf/soda/SomaY19}. 
\subsection{Organization}

The paper is organized as follows.
In \cref{section:preliminaries}, we define the leverage score and the relative leverage score, and discuss their properties.
In \cref{section:original}, we give a $(1 \pm \epsilon)$-spectral approximation algorithm in the online setting~(\cref{theorem:approximation_size_in_online_row_sampling}). 
In \cref{section:main}, we develop algorithms in the online random order setting~(\cref{theorem:improved_scaled_sampling}). 
In \cref{section:optimality}, we prove a lower bound on the approximation size in the online random order setting~(\cref{theorem:optimal}). 
In \cref{section:appendix}, we present asymptotically optimal algorithms in the online setting and the online random order setting based on techniques by Cohen \emph{et al}. \cite{DBLP:conf/approx/CohenMP16} (\cref{theorem:approximation_size_in_online_row_sampling2}, \cref{theorem:optimal_random_order_streams}). 
In this paper, we often use some facts about positive semidefinite ordering of matrices in the proof, which are discussed in \cref{app:pseudoinverse} for completeness.

\section{Preliminaries}
\label{section:preliminaries}




\subsection{Spectral Approximation}

We define the {\it $(1 \pm \epsilon)$-spectral approximation}.
Recall that, for two symmetric matrices $\A$, $\B$, we denote $\A \preceq \B$ if $\B-\A$ is a positive semidefinite~({\rm PSD}) matrix.
\begin{definition} [Spectral Approximation]
  Let $\mathbf{A} \in \mathbb{R}^{n \times d}$ be a matrix and $\epsilon \in (0,1)$ be an error parameter.
  We say that $\widetilde{\mathbf{A}} \in \mathbb{R}^{m \times d}$ is a $(1 \pm \epsilon)$-spectral approximation for $\mathbf{A}$ if
  \begin{displaymath}
    (1-\epsilon)\mathbf{A}^\top \mathbf{A} \preceq \widetilde{\mathbf{A}}^\top \widetilde{\mathbf{A}} \preceq (1+\epsilon)\mathbf{A}^\top \mathbf{A}.
  \end{displaymath}
\end{definition}
Notice that, by the Courant-Fischer theorem, each eigenvalue of $\widetilde{\mathbf{A}}^\top \widetilde{\mathbf{A}}$ approximates the corresponding one of $\A^\top\A$ within a factor of $1 \pm \epsilon$.

For an edge-weighted graph $G$ with $d$ vertices and $n$ edges,  
we denote the incidence matrix for $G$ by $\mathbf{B}_G \in \mathbb{R}^{n \times d}$, and the Laplacian matrix of $G$ by $\mathbf{L}_G \in \mathbb{R}^{d \times d}$.
Thus $\mathbf{L}_G = \mathbf{B}^\top_G\mathbf{B}_G$ holds.
A  $(1 \pm \epsilon)$-spectral approximation for $\mathbf{B}_G$ is called a \emph{$(1\pm\epsilon)$-spectral sparsifier of $G$} .

In spectral approximation algorithms, the {\it leverage score} plays a major role, which is defined as below.
For a matrix $\A$, the $i$-th row is denoted by $\mathbf{a}^\top_i$, and the Moore-Penrose pseudo-inverse of $\A$ is denoted by $\A^+$.

\begin{definition} [Leverage Score]
  Let $\mathbf{A} \in \mathbb{R}^{n \times d}$ be a matrix.
  For $i\in \{1,2,\dots, n\}$, the \emph{leverage score $\tau_i(\mathbf{A})$} is defined to be 
  \begin{displaymath}
    \tau_i(\mathbf{A}) \overset{\mathrm{def}}{=}
    \mathbf{a}_i^\top \left( \mathbf{A}^\top \mathbf{A} \right)^+ \mathbf{a}_i.
  \end{displaymath}
\end{definition}
When an input matrix is the Laplacian matrix of a graph, the leverage score with respect to an edge $e = (a,b)$ is equivalent to the effective resistance between $a$ and $b$ in the graph~(see e.g., \cite{DBLP:journals/siamcomp/SpielmanS11}). We show properties of the leverage score.
\begin{lemma}
\label{lemma:property_leverage_score}
  For a matrix $\mathbf{A} \in \mathbb{R}^{n \times d}$, we have the following properties:
  \begin{enumerate}
      \item[\textup{(i)}] $0 \le \tau_i \left( \mathbf{A} \right) \le 1$ for any $i\in \{1,2,\dots, n\}$.
      \item[\textup{(ii)}] $\sum_{i=1}^n \tau_i \left( \mathbf{A} \right) = \mathrm{rank} \left( \mathbf{A} \right)$.
  \end{enumerate}
\end{lemma}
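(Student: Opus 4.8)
The plan is to prove the two properties of the leverage score directly from its definition $\tau_i(\A) = \sa_i^\top (\A^\top\A)^+ \sa_i$ by reducing everything to an orthogonal projection matrix. The key observation is that $\A (\A^\top\A)^+ \A^\top$ is the orthogonal projection onto the column space of $\A$ (equivalently, the row space of $\A^\top$); call it $\mathbf{P}$. Then $\tau_i(\A) = \sa_i^\top (\A^\top\A)^+ \sa_i = (\mathbf{P})_{ii}$, i.e., the leverage score is exactly the $i$-th diagonal entry of $\mathbf{P}$. Once this identification is made, both parts follow from standard facts about projection matrices.

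First I would establish the identity $\tau_i(\A) = \mathbf{P}_{ii}$ where $\mathbf{P} = \A(\A^\top\A)^+\A^\top$. This uses that the $i$-th row of $\A$ is $\sa_i^\top$, so the $(i,i)$ entry of $\A(\A^\top\A)^+\A^\top$ is precisely $\sa_i^\top(\A^\top\A)^+\sa_i$. To see that $\mathbf{P}$ is an orthogonal projection, one checks $\mathbf{P}^2 = \mathbf{P}$ (using the pseudo-inverse identity $(\A^\top\A)(\A^\top\A)^+(\A^\top\A) = \A^\top\A$, together with the fact that the column space of $\A^\top$ coincides with that of $\A^\top\A$) and $\mathbf{P}^\top = \mathbf{P}$. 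For part (i), since $\mathbf{P}$ is an orthogonal projection it is PSD with $\mathbf{0} \preceq \mathbf{P} \preceq \mathbf{I}$, so every diagonal entry satisfies $0 \le \mathbf{P}_{ii} \le 1$; hence $0 \le \tau_i(\A) \le 1$. (Alternatively, $\mathbf{P}_{ii} = \|\mathbf{P} e_i\|^2 \ge 0$ and $\mathbf{P}_{ii} = e_i^\top \mathbf{P} e_i \le e_i^\top e_i = 1$ because $\mathbf{I} - \mathbf{P}$ is also PSD.)

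For part (ii), I would use that the trace of an orthogonal projection equals the dimension of the subspace onto which it projects: $\sum_{i=1}^n \tau_i(\A) = \sum_{i=1}^n \mathbf{P}_{ii} = \operatorname{tr}(\mathbf{P}) = \operatorname{rank}(\mathbf{P})$. Since $\mathbf{P}$ projects onto the column space of $\A$, its rank equals $\operatorname{rank}(\A)$, which gives the claim. (One can also argue via $\operatorname{tr}(\A(\A^\top\A)^+\A^\top) = \operatorname{tr}((\A^\top\A)^+\A^\top\A)$ by cyclicity of trace, and $(\A^\top\A)^+(\A^\top\A)$ is the orthogonal projection onto the row space of $\A$, whose trace is again $\operatorname{rank}(\A)$.)

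I do not anticipate a serious obstacle here; the only point requiring mild care is handling the pseudo-inverse rather than a genuine inverse, so that one must invoke the Moore-Penrose identities and the equality of the relevant column/row spaces instead of naively cancelling $\A^\top\A$ with its inverse. These facts are exactly the kind of elementary PSD-ordering and pseudo-inverse statements the paper defers to \cref{app:pseudoinverse}, so I would cite them there and keep this proof to a few lines.
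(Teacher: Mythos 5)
Your proposal is correct. For part (ii) your argument is essentially identical to the paper's: the paper also writes $\sum_i \tau_i(\mathbf{A}) = \mathrm{tr}\left( \mathbf{A} (\mathbf{A}^\top\mathbf{A})^+ \mathbf{A}^\top \right) = \mathrm{tr}\left( (\mathbf{A}^\top\mathbf{A})^+ \mathbf{A}^\top\mathbf{A} \right) = \mathrm{rank}(\mathbf{A})$, i.e.\ exactly the cyclicity-of-trace route you mention as your parenthetical alternative. For part (i) you diverge slightly: you view $\tau_i(\mathbf{A})$ as the $i$-th diagonal entry of the orthogonal projection $\mathbf{P} = \mathbf{A}(\mathbf{A}^\top\mathbf{A})^+\mathbf{A}^\top$ and use $\mathbf{O} \preceq \mathbf{P} \preceq \mathbf{I}$, whereas the paper bounds $\tau_i(\mathbf{A}) \le \mathbf{a}_i^\top (\mathbf{a}_i\mathbf{a}_i^\top)^+ \mathbf{a}_i = 1$ by comparing $(\mathbf{A}^\top\mathbf{A})^+$ against the pseudo-inverse of the single rank-one term $\mathbf{a}_i\mathbf{a}_i^\top$ (an instance of the PSD-ordering monotonicity in \cref{lemma:pseudoinverse}). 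Both are standard and equally rigorous; your projection viewpoint has the small advantage of handling (i) and (ii) with one object and avoiding the degenerate case $\mathbf{a}_i = \mathbf{0}$ that the rank-one comparison must implicitly set aside, while the paper's version previews the pseudo-inverse comparison machinery it reuses throughout. No gaps; just make sure you justify $\mathbf{P}^2 = \mathbf{P}$ via the Moore--Penrose identity $\mathbf{X}^+\mathbf{X}\mathbf{X}^+ = \mathbf{X}^+$ (or, as you sketch, via the fact that $\mathbf{A}^\top\mathbf{A}(\mathbf{A}^\top\mathbf{A})^+$ fixes $\mathrm{Im}(\mathbf{A}^\top)$) rather than cancelling naively.
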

\begin{proof}
\textbf{(i)}
Since $\mathbf{x}^\top \left( \mathbf{A}^\top \mathbf{A} \right)^+ \mathbf{x} \le \mathbf{x}^\top \left( \mathbf{a}_i \mathbf{a}_i^\top \right)^+ \mathbf{x}$ holds for all $\mathbf{x} \in \mathrm{Im} \left( \mathbf{a}_i \mathbf{a}_i^\top \right)$, we have
\begin{displaymath}
    \tau_i \left( \mathbf{A} \right) \le \mathbf{a}_i^\top \left( \mathbf{a}_i \mathbf{a}_i^\top \right)^+ \mathbf{a}_i = \mathrm{tr} \left( \mathbf{a}_i^\top \left( \mathbf{a}_i \mathbf{a}_i^\top \right)^+ \mathbf{a}_i \right) = \mathrm{tr} \left( \left( \mathbf{a}_i \mathbf{a}_i^\top \right)^+ \mathbf{a}_i \mathbf{a}_i^\top \right) = 1.
\end{displaymath}
Also $\tau_i \left( \mathbf{A} \right) \geq 0$ as $\left( \mathbf{A}^\top \mathbf{A} \right)^+ \succeq \mathbf{O}$. 
Hence \textbf{(i)} holds.

\textbf{(ii)} 
It follows that
\begin{displaymath}
    \sum_{i=1}^n \tau_i \left( \mathbf{A} \right) = \mathrm{tr} \left( \mathbf{A} \left( \mathbf{A}^\top \mathbf{A} \right)^+ \mathbf{A}^\top \right) = \mathrm{tr} \left( \left( \mathbf{A}^\top \mathbf{A} \right)^+ \mathbf{A}^\top \mathbf{A} \right) = \mathrm{rank} \left( \mathbf{A} \right).
\end{displaymath}
\end{proof}
The leverage score indicates how important the corresponding row is.
In fact, the following theorem asserts that we can construct a spectral approximation with small approximation size via simple random sampling based on the leverage score.
More precisely, if we are given a \emph{leverage score overestimate}, that is, a vector $\mathbf{u}\in\mathbb{R}^n$ such that, for all $i$, $\tau_i(\mathbf{A}) \leq u_i$, then the number of rows in $\left( 1\pm 1 / \sqrt{\theta} \right)$-spectral approximation is bounded by $\mathrm{O}\left( \theta \| \mathbf{u} \|_1 \log d\right)$.

\begin{theorem} [Spectral Approximation via Row Sampling \cite{DBLP:conf/innovations/CohenLMMPS15}]
\label{theorem:row_sampling}
  Let $\theta$ be an error parameter, and $c$ be a fixed positive constant.
  Let $\mathbf{A} \in \mathbb{R}^{n \times d}$ be a matrix, and $\mathbf{u} \in \mathbb{R}^n$ be a leverage score overestimate.
  We define a sampling probability $p_i = \min \left( \theta \cdot u_i c \log d, 1  \right)$~\textup{($i=1,\dots, n$)}, and construct a random diagonal matrix $\mathbf{S}$ whose $i$-th diagonal element is  $1/\sqrt{p_i} \text{ with probability $p_i$}$ and $0$ otherwise.
  Then, with probability at least $1 - d^{-c/3}$, $\mathbf{SA}$ is a $\left( 1 \pm 1/\sqrt{\theta} \right)$-spectral approximation for $\mathbf{A}$ such that $\mathbf{SA}$ contains at most $\sum_i p_i \le \theta \| \mathbf{u} \|_1 c \log d$ non-zero rows.
\end{theorem}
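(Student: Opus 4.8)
Let me think about how to prove this. We're told it's from Cohen et al. (2015), "Uniform Sampling for Matrix Approximation" or similar. This is a standard matrix Chernoff bound argument.

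The statement: given leverage score overestimates $u_i \geq \tau_i(\mathbf{A})$, sample row $i$ with probability $p_i = \min(\theta u_i c \log d, 1)$, include it with weight $1/\sqrt{p_i}$ if sampled. Then $\mathbf{SA}$ is a $(1 \pm 1/\sqrt{\theta})$-spectral approximation with high probability, and has at most $\sum_i p_i \leq \theta \|u\|_1 c \log d$ nonzero rows.

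The proof approach:

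1. **Reduce to isotropic position.** WLOG consider the "whitened" rows. Define $\mathbf{v}_i = (\mathbf{A}^\top\mathbf{A})^{+/2} \mathbf{a}_i$ projected onto $\text{Im}(\mathbf{A}^\top\mathbf{A})$. Then $\sum_i \mathbf{v}_i \mathbf{v}_i^\top = \Pi$ (the projection onto the image), which acts as identity on the relevant subspace. Note $\|\mathbf{v}_i\|^2 = \tau_i(\mathbf{A}) \leq u_i$. The spectral approximation condition $(1-\epsilon)\mathbf{A}^\top\mathbf{A} \preceq \mathbf{A}^\top\mathbf{S}^2\mathbf{A} \preceq (1+\epsilon)\mathbf{A}^\top\mathbf{A}$ is equivalent (after conjugating by $(\mathbf{A}^\top\mathbf{A})^{+/2}$) to $(1-\epsilon)\Pi \preceq \sum_i s_i^2 \mathbf{v}_i\mathbf{v}_i^\top \preceq (1+\epsilon)\Pi$, where $s_i = 1/\sqrt{p_i}$ with probability $p_i$, else $0$.

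2. **Set up the matrix Chernoff bound.** Define random matrices $\mathbf{X}_i = s_i^2 \mathbf{v}_i\mathbf{v}_i^\top$. These are independent, PSD, with $\mathbb{E}[\mathbf{X}_i] = \mathbf{v}_i\mathbf{v}_i^\top$, so $\sum_i \mathbb{E}[\mathbf{X}_i] = \Pi$, which has $\lambda_{\max} = \lambda_{\min} = 1$ on the image. The key norm bound: when $p_i < 1$, $\|\mathbf{X}_i\| \leq \frac{1}{p_i}\|\mathbf{v}_i\|^2 = \frac{\tau_i}{p_i} \leq \frac{u_i}{\theta u_i c\log d} = \frac{1}{\theta c \log d}$. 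When $p_i = 1$, $\mathbf{X}_i = \mathbf{v}_i\mathbf{v}_i^\top$ is deterministic with norm $\tau_i \leq 1$; but actually $\tau_i \leq 1/(\theta c \log d)$ would need care — handle the $p_i=1$ rows as a fixed part (they only help concentration; split $\Pi = \Pi_{\text{fixed}} + \Pi_{\text{rand}}$ and apply the bound to the random part, noting the fixed part is reproduced exactly).

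3. **Apply matrix Chernoff.** With $R = \frac{1}{\theta c \log d}$ the uniform upper bound on $\|\mathbf{X}_i\|$, $\mu_{\min} = \mu_{\max} = 1$ (dimension of the subspace $\leq d$), the matrix Chernoff inequality gives
$$\Pr\left[\lambda_{\max}\left(\textstyle\sum_i \mathbf{X}_i\right) \geq 1 + \epsilon\right] \leq d \cdot \exp\left(-\frac{\epsilon^2}{3R}\right) = d \cdot \exp\left(-\frac{\theta c \epsilon^2 \log d}{3}\right),$$
and symmetrically for $\lambda_{\min} \leq 1 - \epsilon$. Plugging $\epsilon = 1/\sqrt{\theta}$ gives failure probability $\leq d \cdot d^{-c/3} = d^{1-c/3}$... hmm, need to double-check the constant matches $d^{-c/3}$ in the statement — likely just a matter of absorbing constants or the precise matrix Chernoff constant, I'd tune $c$ accordingly.

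4. **Bound the number of rows.** The number of nonzero rows of $\mathbf{SA}$ is $\sum_i \mathbf{1}[\text{row } i \text{ sampled}]$, which has expectation $\sum_i p_i \leq \sum_i \theta u_i c\log d = \theta\|u\|_1 c\log d$. The "at most $\sum_i p_i$" in the statement should be read as the expected count (or a w.h.p. bound via a scalar Chernoff on the sum of independent indicators — but actually the statement says "contains at most $\sum_i p_i \leq \ldots$", so it's really bounding $\mathbb{E}[\text{nnz rows}] = \sum_i p_i$).

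**Main obstacle.** The genuinely delicate point is handling rows with $p_i = 1$: these are deterministic and must be reproduced exactly, so one splits $\mathbf{A}^\top\mathbf{A}$ into the contribution from $p_i=1$ rows (kept verbatim) and the rest, and only applies concentration to the latter — verifying that $(1-\epsilon)\mathbf{A}^\top\mathbf{A} \preceq \mathbf{A}^\top\mathbf{S}^2\mathbf{A}$ still follows requires the error to be measured relative to the full $\Pi$, not just the random part, which works because adding the deterministic PSD part only tightens both inequalities. The rest is a routine invocation of the matrix Chernoff bound with the leverage-score-based norm control. Let me also double check: do we even need the $p_i = 1$ split for the upper-tail? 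For $\lambda_{\max}$, deterministic terms are fine inside the sum; the issue is really that the per-term norm bound $R$ must hold uniformly, and a $p_i=1$ term has norm $\tau_i$ which could exceed $\frac{1}{\theta c \log d}$, breaking the clean bound — hence the split. I would present this carefully but briefly.
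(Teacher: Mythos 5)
The paper does not prove this statement: it is imported verbatim from Cohen \emph{et al}.~\cite{DBLP:conf/innovations/CohenLMMPS15} and used as a black box, so there is no in-paper proof to compare against. Judged on its own, your sketch is the standard argument and is essentially sound: whitening by $\left(\mathbf{A}^\top\mathbf{A}\right)^{+/2}$, the norm bound $\tau_i/p_i\le 1/(\theta c\log d)$ for rows with $p_i<1$, a matrix concentration inequality, and the observation that $\sum_i p_i$ bounds the expected number of sampled rows.

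Two loose ends, both of which you partially flag. First, once you split off the $p_i=1$ rows, the multiplicative matrix Chernoff bound applied to the random part alone is the wrong tool: the lower-tail bound is controlled by $\lambda_{\min}$ of the \emph{expectation of the random part}, i.e.\ $\lambda_{\min}(\Pi-\Pi_{\mathrm{fixed}})$, which can be $0$, making that tail vacuous. The clean fix is to note that the deterministic terms cancel in the deviation, $\sum_i \mathbf{X}_i-\Pi=\sum_{i:p_i<1}\left(\mathbf{X}_i-\mathbb{E}[\mathbf{X}_i]\right)$, and to apply an additive (Bernstein-type) matrix bound to this zero-mean sum, using the variance bound $\sum_i\mathbb{E}\left[(\mathbf{X}_i-\mathbb{E}[\mathbf{X}_i])^2\right]\preceq R\,\Pi$ with $R=1/(\theta c\log d)$; the conclusion $\|\sum_i\mathbf{X}_i-\Pi\|_2\le 1/\sqrt{\theta}$ then yields the two-sided spectral bound directly. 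Second, the failure probability you obtain is $\mathrm{O}(d)\cdot d^{-c/3}$ rather than $d^{-c/3}$; as you say, this is absorbed by the constant $c$ (and is consistent with how the theorem is used in the paper, where only ``with high probability'' matters). With these two points made precise, the proof is complete.
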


For example, suppose that we are given a \emph{constant leverage score overestimate}, that is, a vector $\mathbf{u}\in\mathbb{R}^n$ such that, for all $i$, $\tau_i(\mathbf{A}) \leq u_i \le \beta \tau_i(\mathbf{A})$ for some constant $\beta$.
Then the above theorem implies that, by setting $\theta = \epsilon^{-2}$, we can obtain a $(1 \pm \epsilon)$-spectral approximation such that it has $\mathrm{O}\left( r \epsilon ^{-2} \log d \right)$ rows, where $r=\mathrm{rank}(\A)$, as $\sum_i u_i\le \beta \sum_i \tau_i(\mathbf{A})\leq \beta r$ by \cref{lemma:property_leverage_score}.



\subsection{Spectral Approximation in the Online Setting}


In the online setting, a matrix $\mathbf{A}$ is given as a stream of rows, and we receive a row sequentially.
Each time the $i$-th row $\mathbf{a}_i$ arrives, we irrevocably decide whether $\mathbf{a}_i$ is sampled or not.
We cannot access the whole matrix $\A$ in each decision, and thus we cannot compute a standard leverage score.
We introduce a variant of the leverage score, called the \emph{relative leverage score}, that can be computed with the matrix we have at the moment~(corresponding to a matrix $\mathbf{B}$ in the definition below). 
This gives a leverage score overestimate.


\begin{definition} [Relative Leverage Score]
  Let $\mathbf{A}, \mathbf{B} \in \mathbb{R}^{n \times d}$ be matrices.
  For $i\in\{1,2,\dots, n\}$, the \emph{relative leverage score $\tau_i^{\mathbf{B}}(\mathbf{A})$} is defined as follows:
\begin{displaymath}
  \tau_i^{\mathbf{B}}(\mathbf{A}) \overset{\mathrm{def}}{=}
  \mathbf{a}_i^\top \left(
  \begin{pmatrix} \mathbf{B} \\ \mathbf{a}_i^\top \end{pmatrix} ^\top \begin{pmatrix} \mathbf{B} \\ \mathbf{a}_i^\top \end{pmatrix}
  \right)^+ \mathbf{a}_i.
\end{displaymath}
\end{definition}
\noindent
We note that, if $\B$ is a submatrix consisting of rows of $\A$, then $\tau_i^{\mathbf{B}}(\mathbf{A})\geq \tau_i (\mathbf{A})$ holds.


The relative leverage score can be rewritten as follows.
For vectors $\x$ and $\y$, we denote $\x \perp \y$ if $\x^\top \y = 0$.
For a vector $\x$ and a linear subspace $W$, $\x \perp W$ means that $\x \perp \y$ for all $\y \in W$.

\begin{lemma}
\label{lemma:relative_leverage_score_tractable}
For $i\in\{1,2,\dots, n\}$, it holds that
    \begin{displaymath}
        \tau_i^{\mathbf{B}}(\mathbf{A}) = \left\{
        \begin{array}{cl}
          \frac{\mathbf{a}_i^\top \left( \mathbf{B}^\top \mathbf{B} \right)^+ \mathbf{a}_i}{\mathbf{a}_i^\top \left( \mathbf{B}^\top \mathbf{B} \right)^+ \mathbf{a}_i + 1} & \mathrm{if}\  \mathbf{a}_i \perp \mathrm{Ker} (\mathbf{B}), \\
          1 & \mathrm{otherwise.} \\
        \end{array}
    \right.
    \end{displaymath}
\end{lemma}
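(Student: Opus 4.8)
The plan is to write $\M \overset{\mathrm{def}}{=} \B^\top\B + \mathbf{a}_i\mathbf{a}_i^\top$, so that by definition $\tau_i^{\B}(\A)=\mathbf{a}_i^\top\M^+\mathbf{a}_i$, and then to analyze this rank-one update of $\B^\top\B$ separately in the two cases. Throughout I would use the standard facts (collected in \cref{app:pseudoinverse}) that for symmetric PSD matrices $\mathbf{P},\mathbf{Q}$ one has $\mathrm{Ker}(\mathbf{P})=\mathrm{Im}(\mathbf{P})^\perp$ and $\mathrm{Im}(\mathbf{P}+\mathbf{Q})=\mathrm{Im}(\mathbf{P})+\mathrm{Im}(\mathbf{Q})$, and that $\M^+$ has image $\mathrm{Im}(\M)$, kernel $\mathrm{Im}(\M)^\perp$, and that $\M\M^+$ is the orthogonal projection onto $\mathrm{Im}(\M)$. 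Since $\|\B\x\|_2^2=\x^\top\B^\top\B\x$, we have $\mathrm{Ker}(\B)=\mathrm{Ker}(\B^\top\B)$, hence the condition ``$\mathbf{a}_i\perp\mathrm{Ker}(\B)$'' is equivalent to $\mathbf{a}_i\in\mathrm{Im}(\B^\top\B)$.

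\textbf{Case $\mathbf{a}_i\in\mathrm{Im}(\B^\top\B)$.} Here $\mathrm{Im}(\M)=\mathrm{Im}(\B^\top\B)+\mathrm{span}(\mathbf{a}_i)=\mathrm{Im}(\B^\top\B)=:V$. I would restrict everything to the subspace $V$, on which $\B^\top\B$ acts invertibly with inverse $(\B^\top\B)^+|_V$, and observe that $\M|_V$ is a genuine rank-one update of an invertible operator (using $\mathbf{a}_i\in V$). The classical Sherman--Morrison formula then applies on $V$, and since both $\M^+$ and $(\B^\top\B)^+$ vanish on $V^\perp$, it lifts to the operator identity
\[ \M^+ = (\B^\top\B)^+ - \frac{(\B^\top\B)^+\mathbf{a}_i\mathbf{a}_i^\top(\B^\top\B)^+}{1+\mathbf{a}_i^\top(\B^\top\B)^+\mathbf{a}_i}. \]
Writing $\tau:=\mathbf{a}_i^\top(\B^\top\B)^+\mathbf{a}_i$ and multiplying by $\mathbf{a}_i^\top$ on the left and $\mathbf{a}_i$ on the right yields $\tau_i^{\B}(\A)=\tau-\tau^2/(1+\tau)=\tau/(1+\tau)$, the first branch.

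\textbf{Case $\mathbf{a}_i\not\perp\mathrm{Ker}(\B)$.} Decompose $\mathbf{a}_i=\mathbf{p}+\mathbf{q}$ with $\mathbf{p}\in\mathrm{Im}(\B^\top\B)$ and $\mathbf{q}\in\mathrm{Ker}(\B^\top\B)$, $\mathbf{q}\neq\mathbf{0}$. Since $\mathbf{a}_i\in\mathrm{Im}(\B^\top\B)+\mathrm{span}(\mathbf{a}_i)=\mathrm{Im}(\M)$, the vector $\mathbf{v}:=\M^+\mathbf{a}_i$ satisfies $\M\mathbf{v}=\M\M^+\mathbf{a}_i=\mathbf{a}_i$. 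Setting $t:=\mathbf{a}_i^\top\mathbf{v}=\tau_i^{\B}(\A)$, this gives $\B^\top\B\,\mathbf{v}=\M\mathbf{v}-(\mathbf{a}_i^\top\mathbf{v})\mathbf{a}_i=(1-t)\mathbf{a}_i$. The left side lies in $\mathrm{Im}(\B^\top\B)$, hence is orthogonal to $\mathbf{q}$; taking the inner product of both sides with $\mathbf{q}$ and using $\mathbf{p}\perp\mathbf{q}$ gives $0=(1-t)\|\mathbf{q}\|_2^2$, so $t=1$, the second branch.

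The main obstacle is the book-keeping in the first case: the Sherman--Morrison identity for Moore--Penrose pseudo-inverses fails for an arbitrary rank-one update and holds only when the update does not enlarge the column space, so the crux is to verify carefully that $\mathbf{a}_i\in\mathrm{Im}(\B^\top\B)$ forces $\mathrm{Im}(\M)=\mathrm{Im}(\B^\top\B)$ and to justify the reduction to the invertible restriction on $V$; the rest is routine linear algebra with pseudo-inverses of symmetric PSD matrices.
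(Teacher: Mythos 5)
Your proof is correct. Case 1 is the paper's argument: the paper simply invokes the Sherman--Morrison formula for Moore--Penrose pseudo-inverses (its \cref{prop:sherman-morrison_Moore-Penrose}, cited from Meyer), whereas you re-derive that formula by restricting to $V=\mathrm{Im}(\B^\top\B)$ where the update is a genuine rank-one perturbation of an invertible operator; the content is the same, and your identification of the hypothesis $\mathbf{a}_i\in\mathrm{Im}(\B^\top\B)$ as exactly what keeps the column space from growing is precisely the point of that proposition. Case 2 is where you genuinely diverge: the paper picks a nonzero vector $\mathbf{u}\in\mathrm{Im}(\M)\cap\mathrm{Im}(\B^\top\B)^\perp$ (which exists because the rank jumps by one), applies $\mathbf{u}=\M^+\M\mathbf{u}=\M^+\mathbf{a}_i\mathbf{a}_i^\top\mathbf{u}$, and reads off $\tau_i^{\B}(\A)=1$ from the resulting fixed-point equation $\mathbf{a}_i^\top\mathbf{u}=\tau_i^{\B}(\A)\,\mathbf{a}_i^\top\mathbf{u}$ together with $\mathbf{a}_i^\top\mathbf{u}\neq 0$. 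You instead apply the projection identity to $\mathbf{a}_i$ itself, obtaining $\B^\top\B\,\mathbf{v}=(1-t)\mathbf{a}_i$ for $\mathbf{v}=\M^+\mathbf{a}_i$, and pair against the kernel component $\mathbf{q}$ of $\mathbf{a}_i$. Your version is slightly more self-contained (the nondegeneracy you need, $\mathbf{q}\neq\mathbf{0}$, is immediate from the case hypothesis, whereas the paper must separately argue $\mathbf{a}_i^\top\mathbf{u}\neq 0$), while the paper's version isolates the new image direction explicitly. One small bookkeeping remark: the facts you attribute to \cref{app:pseudoinverse} (e.g.\ $\mathrm{Im}(\mathbf{P}+\mathbf{Q})=\mathrm{Im}(\mathbf{P})+\mathrm{Im}(\mathbf{Q})$ for PSD matrices) are not actually stated there; they are standard, but you would need to prove or cite them separately.
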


To prove \cref{lemma:relative_leverage_score_tractable}, we consider the perturbation of Moore-Penrose pseudo-inverses under the rank-1 update operation given in \cite{meyer1973generalized}.
\begin{proposition}[Sherman-Morrison Formula for Moore-Penrose pseudo-inverse \cite{meyer1973generalized}]
\label{prop:sherman-morrison_Moore-Penrose}
Let $\mathbf{A} \in \mathbb{R}^{n \times n}$ be a $\mathrm{PSD}$ matrix, $\mathbf{u} \in \mathbb{R}^n$ be a vector, and $k$ be a real-valued multiplier. If $\mathbf{u} \perp \mathrm{Ker}(\mathbf{A})$, then we have
\begin{displaymath}
    \left( \mathbf{A} + k \mathbf{u}\mathbf{u}^\top \right)^+ = \mathbf{A}^+ - \frac{k \mathbf{A}^+ \mathbf{u} \mathbf{u}^\top \mathbf{A}^+}{1 + k \mathbf{u}^\top \mathbf{A}^+ \mathbf{u}}.
\end{displaymath}
\end{proposition}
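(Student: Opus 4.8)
The plan is to reduce everything to the Sherman--Morrison identity of \cref{prop:sherman-morrison_Moore-Penrose}. Write $\mathbf{M} := \mathbf{B}^\top\mathbf{B}$, which is symmetric PSD with $\mathrm{Ker}(\mathbf{M}) = \mathrm{Ker}(\mathbf{B})$, and observe that the matrix inside the pseudo-inverse in the definition of $\tau_i^{\mathbf{B}}(\mathbf{A})$ equals $\begin{pmatrix}\mathbf{B}\\\mathbf{a}_i^\top\end{pmatrix}^\top\begin{pmatrix}\mathbf{B}\\\mathbf{a}_i^\top\end{pmatrix} = \mathbf{M} + \mathbf{a}_i\mathbf{a}_i^\top$, so that $\tau_i^{\mathbf{B}}(\mathbf{A}) = \mathbf{a}_i^\top(\mathbf{M}+\mathbf{a}_i\mathbf{a}_i^\top)^+\mathbf{a}_i$. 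Since $\mathbf{M}$ is symmetric, the hypothesis $\mathbf{a}_i\perp\mathrm{Ker}(\mathbf{B})$ is equivalent to $\mathbf{a}_i\in\mathrm{Im}(\mathbf{M})$; I would split on this dichotomy, matching the two branches of the statement.

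In the case $\mathbf{a}_i\perp\mathrm{Ker}(\mathbf{B})$, \cref{prop:sherman-morrison_Moore-Penrose} applies with $\mathbf{u}=\mathbf{a}_i$ and multiplier $k=1$, giving $(\mathbf{M}+\mathbf{a}_i\mathbf{a}_i^\top)^+ = \mathbf{M}^+ - \frac{\mathbf{M}^+\mathbf{a}_i\mathbf{a}_i^\top\mathbf{M}^+}{1+\mathbf{a}_i^\top\mathbf{M}^+\mathbf{a}_i}$. Conjugating by $\mathbf{a}_i$ and writing $t := \mathbf{a}_i^\top\mathbf{M}^+\mathbf{a}_i = \mathbf{a}_i^\top(\mathbf{B}^\top\mathbf{B})^+\mathbf{a}_i$, this collapses to $\tau_i^{\mathbf{B}}(\mathbf{A}) = t - \frac{t^2}{1+t} = \frac{t}{1+t}$, which is exactly the first branch.

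In the case $\mathbf{a}_i\not\perp\mathrm{Ker}(\mathbf{B})$, let $\mathbf{q}$ be the orthogonal projection of $\mathbf{a}_i$ onto $\mathrm{Ker}(\mathbf{B}) = \mathrm{Ker}(\mathbf{M})$; then $\mathbf{q}\neq\mathbf{0}$ and, by orthogonality of the complement, $\mathbf{a}_i^\top\mathbf{q} = \|\mathbf{q}\|_2^2 > 0$. Setting $\mathbf{N} := \mathbf{M}+\mathbf{a}_i\mathbf{a}_i^\top$ and using $\mathbf{M}\mathbf{q}=\mathbf{0}$, we get $\mathbf{N}\mathbf{q} = (\mathbf{a}_i^\top\mathbf{q})\mathbf{a}_i = \|\mathbf{q}\|_2^2\,\mathbf{a}_i$, so $\mathbf{a}_i = \mathbf{N}\mathbf{z}$ with $\mathbf{z} := \mathbf{q}/\|\mathbf{q}\|_2^2$; in particular $\mathbf{a}_i\in\mathrm{Im}(\mathbf{N})$. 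Since $\mathbf{N}$ is symmetric, $\mathbf{N}^+\mathbf{N}$ is the orthogonal projection onto $\mathrm{Im}(\mathbf{N})$ and fixes $\mathbf{a}_i$, hence $\tau_i^{\mathbf{B}}(\mathbf{A}) = \mathbf{a}_i^\top\mathbf{N}^+\mathbf{a}_i = \mathbf{a}_i^\top(\mathbf{N}^+\mathbf{N})\mathbf{z} = \mathbf{a}_i^\top\mathbf{z} = \mathbf{a}_i^\top\mathbf{q}/\|\mathbf{q}\|_2^2 = 1$, the second branch.

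The algebra in the first case is routine; the delicate point is the second case, where one must argue---via the standard image/kernel identities for Moore--Penrose pseudo-inverses collected in \cref{app:pseudoinverse}---that $\mathbf{a}_i$ lies in the image of the (typically singular) matrix $\mathbf{N}$ and is therefore fixed by $\mathbf{N}^+\mathbf{N}$, so that the quadratic form degenerates to $1$. As a sanity check, both branches are consistent with $0 \le \tau_i^{\mathbf{B}}(\mathbf{A}) \le 1$, which holds by \cref{lemma:property_leverage_score}\,(i) applied to the matrix $\begin{pmatrix}\mathbf{B}\\\mathbf{a}_i^\top\end{pmatrix}$.
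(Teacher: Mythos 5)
There is a fundamental mismatch here: the statement you were asked to prove is \cref{prop:sherman-morrison_Moore-Penrose} itself, the Sherman--Morrison identity for the Moore--Penrose pseudo-inverse, but what you have written is a proof of \cref{lemma:relative_leverage_score_tractable}, the two-case formula for the relative leverage score --- and the first case of your argument explicitly invokes \cref{prop:sherman-morrison_Moore-Penrose} as a black box. As a proof of the proposition this is circular (or simply off-target): nothing in your text establishes the identity $\left( \mathbf{A} + k \mathbf{u}\mathbf{u}^\top \right)^+ = \mathbf{A}^+ - k \mathbf{A}^+ \mathbf{u} \mathbf{u}^\top \mathbf{A}^+ / \left(1 + k \mathbf{u}^\top \mathbf{A}^+ \mathbf{u}\right)$. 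Note that the paper does not prove this proposition either; it is quoted from Meyer's 1973 work on generalized inverses of modified matrices, so there is no ``paper proof'' to match --- but a blind proof attempt still has to actually argue the claim.

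A correct argument would run as follows. Since $\mathbf{A}$ is symmetric PSD and $\mathbf{u} \perp \mathrm{Ker}(\mathbf{A})$, i.e.\ $\mathbf{u} \in \mathrm{Im}(\mathbf{A})$, both $\mathbf{A}$ and $\mathbf{A} + k\mathbf{u}\mathbf{u}^\top$ map $\mathrm{Im}(\mathbf{A})$ into itself and annihilate $\mathrm{Ker}(\mathbf{A}) = \mathrm{Im}(\mathbf{A})^\perp$. On the invariant subspace $\mathrm{Im}(\mathbf{A})$ the matrix $\mathbf{A}$ is invertible with inverse $\mathbf{A}^+$, so the classical Sherman--Morrison formula applies there (one needs $1 + k\mathbf{u}^\top\mathbf{A}^+\mathbf{u} \neq 0$, which is automatic for $k \ge 0$ and is implicit in the paper's uses for $k<0$); one then verifies that the candidate $\mathbf{C} := \mathbf{A}^+ - k\mathbf{A}^+\mathbf{u}\mathbf{u}^\top\mathbf{A}^+/(1+k\mathbf{u}^\top\mathbf{A}^+\mathbf{u})$ has image and kernel equal to $\mathrm{Im}(\mathbf{A})$ and $\mathrm{Ker}(\mathbf{A})$, and that $(\mathbf{A}+k\mathbf{u}\mathbf{u}^\top)\mathbf{C}$ is the orthogonal projection onto $\mathrm{Im}(\mathbf{A})$, which together give the Penrose axioms. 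As a side remark, your write-up is a sound proof of \cref{lemma:relative_leverage_score_tractable}: the first case matches the paper's, and the second case is arguably cleaner than the paper's, since you exhibit an explicit preimage $\mathbf{z} = \mathbf{q}/\|\mathbf{q}\|_2^2$ of $\mathbf{a}_i$ under $\mathbf{N}$ rather than arguing through an abstract vector of $\mathrm{Im}(\mathbf{N})$ orthogonal to $\mathrm{Im}(\mathbf{B}^\top\mathbf{B})$. But it is a proof of the wrong statement.
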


\begin{proof}[Proof of \cref{lemma:relative_leverage_score_tractable}]
If $\mathbf{a}_i \perp \mathrm{Ker}( \mathbf{B} )$, by \cref{prop:sherman-morrison_Moore-Penrose}, we obtain
    \begin{align}
    \tau_i^{\mathbf{B}}(\mathbf{A})
    &= \mathbf{a}_i^\top \left( \begin{pmatrix} \mathbf{B} \\ \mathbf{a}_i^\top \end{pmatrix} ^\top \begin{pmatrix} \mathbf{B} \\ \mathbf{a}_i^\top \end{pmatrix} \right)^+ \mathbf{a}_i \nonumber \\
    &= \mathbf{a}_i^\top \left( \left(\mathbf{B}^\top \mathbf{B} \right)^+ - \frac{\left(\mathbf{B}^\top \mathbf{B} \right)^+ \mathbf{a}_i \mathbf{a}_i^\top \left(\mathbf{B}^\top \mathbf{B} \right)^+}{1 + \mathbf{a}_i^\top \left(\mathbf{B}^\top \mathbf{B} \right)^+ \mathbf{a}_i} \right) \mathbf{a}_i \nonumber \\
    &= \frac{\mathbf{a}_i^\top \left( \mathbf{B}^\top \mathbf{B} \right)^+ \mathbf{a}_i}{\mathbf{a}_i^\top \left( \mathbf{B}^\top \mathbf{B} \right)^+ \mathbf{a}_i + 1}.
\end{align}

Next suppose that $\mathbf{a}_i \not\perp \mathrm{Ker}( \mathbf{B} )$.
Then $\dim \left( \mathrm{Im} \left( \mathbf{B}^\top \mathbf{B} + \mathbf{a}_i \mathbf{a}_i^\top \right) \right)$ is exactly one larger than $\dim \left( \mathrm{Im} \left( \mathbf{B}^\top \mathbf{B} \right) \right)$.
Hence there exists a nonzero vector $\mathbf{u} \in \mathrm{Im} \left( \mathbf{B}^\top \mathbf{B} + \mathbf{a}_i \mathbf{a}_i^\top \right)$ such that $\mathbf{u}$ belongs to the orthogonal complement of $\mathrm{Im} \left( \mathbf{B}^\top \mathbf{B} \right)$.
By the definition of the pseudo-inverse, we have
\begin{displaymath}
    \mathbf{u} = \left( \begin{pmatrix} \mathbf{B} \\ \mathbf{a}_i^\top \end{pmatrix} ^\top \begin{pmatrix} \mathbf{B} \\ \mathbf{a}_i^\top \end{pmatrix} \right)^+ \left( \begin{pmatrix} \mathbf{B} \\ \mathbf{a}_i^\top \end{pmatrix} ^\top \begin{pmatrix} \mathbf{B} \\ \mathbf{a}_i^\top \end{pmatrix} \right) \mathbf{u}
    = \left( \begin{pmatrix} \mathbf{B} \\ \mathbf{a}_i^\top \end{pmatrix} ^\top \begin{pmatrix} \mathbf{B} \\ \mathbf{a}_i^\top \end{pmatrix} \right)^+ \mathbf{a}_i \mathbf{a}_i^\top \mathbf{u}.
\end{displaymath}
Multiplying $\mathbf{a}_i^\top$ from the left side, we have
\begin{displaymath}
    \mathbf{a}_i^\top \mathbf{u} = \mathbf{a}_i^\top \left( \begin{pmatrix} \mathbf{B} \\ \mathbf{a}_i^\top \end{pmatrix} ^\top \begin{pmatrix} \mathbf{B} \\ \mathbf{a}_i^\top \end{pmatrix} \right)^+ \mathbf{a}_i \mathbf{a}_i^\top \mathbf{u}
    = \tau_i^{\mathbf{B}} ( \mathbf{A} ) \mathbf{a}_i^\top \mathbf{u}.
\end{displaymath}
As $\mathbf{a}_i^\top \mathbf{u} \neq 0$, we obtain $\tau_i^{\mathbf{B}} ( \mathbf{A} ) = 1$.
\end{proof}

From \cref{lemma:relative_leverage_score_tractable}, $\tau_i^{\mathbf{B}}\left( \mathbf{A} \right)$ can be computed with matrix $\left( \mathbf{B}^\top \mathbf{B} \right)^+$ and $\mathbf{a}_i$. Furthermore, we have $0 \le \tau_i^\mathbf{B} ( \mathbf{A} ) \le 1$, and $\tau_i^\mathbf{B} (\A)$ is equal to $1$ if and only if $\mathbf{a}_i \not \perp \mathrm{Ker}(\mathbf{B})$.
\section{$(1 \pm \epsilon)$-spectral approximation in the online setting}
\label{section:original}

Cohen \emph{et al}.~\cite{DBLP:conf/approx/CohenMP16} presented an algorithm for an $(\epsilon, \delta)$-spectral approximation based on sampling with online ridge leverage scores. In this section, we design an algorithm with relative leverage scores that returns a $(1\pm\epsilon)$-spectral approximation with high probability.
Our algorithm gives a better upper bound on the approximation size.

\subsection{Algorithm}
We describe an algorithm that returns a $(1 \pm \epsilon)$-spectral approximation for a given {\rm PSD} matrix $\A$ as \cref{algorithm:online_row_sampling}. 
In \cref{algorithm:online_row_sampling}, $\tA_i$ is a matrix we have sampled until the end of the $i$-th iteration.
In the $i$-th iteration, we determine a sampling probability $p_i$ with the relative leverage score $\tau_i^{\tA_{i-1}}(\A)$, and append the arriving row $\mathbf{a}_i$ to $\tA_{i-1}$ with probability $p_i$ to obtain $\tA_i$.
This step can be computed with only $\tA_{i-1}$ and $\sa_i$.
In the end, the algorithm returns $\tA_{n}$.

\begin{algorithm}[htbp]
\caption{$\mathsf{Online Row Sampling} \left( \mathbf{A}, \epsilon \right)$}
\label{algorithm:online_row_sampling}
\begin{algorithmic}
\STATE{{\bf Input:} a matrix $\mathbf{A} \in \mathbb{R}^{n \times d}$, an error parameter $\epsilon \in (0,1/2]$.}
\STATE{{\bf Output:} a $(1 \pm \epsilon)$-spectral approximation for $\mathbf{A}$.}
\STATE{Define $c = 3 \epsilon^{-2} \log d $.}
\STATE{$\widetilde{\mathbf{A}}_0 \leftarrow \mathbf{O}$.}
\FOR{$i = 1, \dots, n$}
  \STATE{$\tilde{l}_i \leftarrow \min\left((1+\epsilon) \tau_i^{\widetilde{\mathbf{A}}_{i-1}}\left( \mathbf{A} \right), 1\right)$.}
    \STATE{$p_i \leftarrow \min \left( c\tilde{l}_i, 1 \right)$.}
    \STATE{$\widetilde{\mathbf{A}}_i \leftarrow
    \left\{
        \begin{array}{cl}
            \begin{pmatrix} \widetilde{\mathbf{A}}_{i-1} \\ \mathbf{a}_i^\top /\sqrt{p_i} \end{pmatrix} & \text{with probability $p_i$,} \\
            \widetilde{\mathbf{A}}_{i-1} & \text{otherwise.} \\
        \end{array}
    \right. $}
\ENDFOR
\RETURN $\widetilde{\mathbf{A}}_n.$
\end{algorithmic}
\end{algorithm}

\subsection{Analysis}
\label{subsection:online_row_sampling_analysis}

In this section, we prove that \cref{algorithm:online_row_sampling} satisfies the conditions of \cref{theorem:approximation_size_in_online_row_sampling} with high probability: \cref{algorithm:online_row_sampling} returns a $(1 \pm \epsilon)$-spectral approximation, and its approximation size is $\mathrm{O} \left( \left( r \log \mu (\A) + r + \log d \right) \epsilon^{-2} \log d \right)$.

It turns out that the analysis with a matrix martingale by Cohen \emph{et al}.~\cite{DBLP:conf/approx/CohenMP16} can be adapted to show that $\tA_n$ is a $(1 \pm \epsilon)$-spectral approximation.
We remark that a matrix martingale is also used in the proof of \cref{lemma:analysis_random_order_streams} later, where we describe it in more detail.

\begin{lemma}[cf Lemma 3.3 in \cite{DBLP:conf/approx/CohenMP16}]
\label{lemma:analysis_online_setting}
In \cref{algorithm:online_row_sampling}, with high probability, the following holds:
$\widetilde{\mathbf{l}}=(\widetilde{l}_i)$ is a leverage score overestimate, i.e., 
    $\widetilde{l}_i \geq \tau_i(\mathbf{A})$ 
for $1 \le i \le n$, and $\widetilde{\mathbf{A}}_n$ is a $(1 \pm \epsilon)$-spectral approximation for $\mathbf{A}$.
\end{lemma}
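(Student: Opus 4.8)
The plan is to run a matrix martingale argument in the spirit of Lemma~3.3 of Cohen \emph{et al}.~\cite{DBLP:conf/approx/CohenMP16}, but tracking the relative leverage score instead of the online ridge leverage score. First I would argue inductively over $i$ that, conditioned on the event that $\widetilde{\mathbf{A}}_{i-1}$ is a $(1\pm\epsilon)$-spectral approximation for $\mathbf{A}_{i-1}$ (the first $i-1$ rows), the quantity $\widetilde{l}_i$ overestimates $\tau_i(\mathbf{A})$. The key inequality is that $\tau_i^{\widetilde{\mathbf{A}}_{i-1}}(\mathbf{A})\ge \tau_i^{\mathbf{A}_{i-1}}(\mathbf{A})/(1+\epsilon)$ whenever $\widetilde{\mathbf{A}}_{i-1}^\top\widetilde{\mathbf{A}}_{i-1}\succeq (1-\epsilon)\mathbf{A}_{i-1}^\top\mathbf{A}_{i-1}$: this follows from monotonicity of the map $\mathbf{M}\mapsto \mathbf{a}_i^\top(\mathbf{M}+\mathbf{a}_i\mathbf{a}_i^\top)^+\mathbf{a}_i$ under the PSD order together with the closed form in \cref{lemma:relative_leverage_score_tractable} (one has to check the kernel condition $\mathbf{a}_i\perp\mathrm{Ker}(\widetilde{\mathbf{A}}_{i-1})$ is compatible, using the spectral-approximation containment of images; if $\mathbf{a}_i$ is \emph{not} perpendicular to the kernel then $\widetilde{l}_i=1\ge\tau_i(\mathbf{A})$ trivially). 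Combined with $\tau_i^{\mathbf{A}_{i-1}}(\mathbf{A})\ge \tau_i(\mathbf{A})$ (noted after the definition of the relative leverage score) and the monotonicity $\tau_i(\mathbf{A}_{i-1})\ge\tau_i(\mathbf{A})$, the factor $(1+\epsilon)$ in the definition of $\widetilde{l}_i$ exactly cancels the loss, giving $\widetilde{l}_i\ge\tau_i(\mathbf{A})$.

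Next I would set up the martingale that certifies the spectral approximation. Define, as in \cite{DBLP:conf/approx/CohenMP16}, the error matrix after step $i$ by comparing $(\mathbf{A}^\top\mathbf{A})^{+/2}\widetilde{\mathbf{A}}_i^\top\widetilde{\mathbf{A}}_i(\mathbf{A}^\top\mathbf{A})^{+/2}$ to its mean; each step adds a mean-zero increment (because $\mathbf{a}_i$ is kept with probability $p_i$ and rescaled by $1/\sqrt{p_i}$) whose spectral norm is controlled by $\widetilde{l}_i/p_i\le 1/c=\epsilon^2/(3\log d)$ on the event $p_i<1$ (and the increment is deterministic, hence harmless, when $p_i=1$). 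The predictable quadratic variation is bounded by $\sum_i (\widetilde{l}_i/p_i)\cdot(\text{leverage-type term})$, which telescopes into something like $\|\widetilde{\mathbf{l}}\|_1/c$; invoking the matrix Freedman / martingale Bernstein inequality then yields that $\|(\mathbf{A}^\top\mathbf{A})^{+/2}(\widetilde{\mathbf{A}}_n^\top\widetilde{\mathbf{A}}_n-\mathbf{A}^\top\mathbf{A})(\mathbf{A}^\top\mathbf{A})^{+/2}\|_2\le\epsilon$ with probability $1-d^{-\Omega(1)}$, i.e. $\widetilde{\mathbf{A}}_n$ is a $(1\pm\epsilon)$-spectral approximation. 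The induction on the overestimate property and the martingale concentration are intertwined via a union bound over the $n$ steps (or, more cleanly, via a stopping-time argument that freezes the process the first time the invariant fails), exactly as in \cite{DBLP:conf/approx/CohenMP16}; I would phrase it as a single stopped martingale so the dependence of $\widetilde{l}_i$ on past randomness causes no circularity.

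The main obstacle is the coupling between "$\widetilde{l}_i$ is an overestimate'' and "$\widetilde{\mathbf{A}}_n$ is a spectral approximation'': the overestimate guarantee at step $i$ needs the invariant that the partial matrix is already a good approximation, but that invariant is itself the conclusion we are trying to establish. Resolving this requires the standard device of defining the martingale to track the \emph{clipped} or \emph{stopped} process — stop sampling (or switch to deterministic behaviour) as soon as the spectral-approximation invariant would be violated — so that the increments are bounded by construction, apply the concentration bound to the stopped process, and then argue the stopping time is $n$ with high probability. A secondary technical point, genuinely new compared to \cite{DBLP:conf/approx/CohenMP16}, is that $\widetilde{\mathbf{A}}_{i-1}^\top\widetilde{\mathbf{A}}_{i-1}$ can be singular, so the rank-1 update and the monotonicity arguments must be carried out with Moore--Penrose pseudo-inverses; here I would lean on \cref{lemma:relative_leverage_score_tractable}, \cref{prop:sherman-morrison_Moore-Penrose}, and the PSD-ordering facts for pseudo-inverses collected in \cref{app:pseudoinverse}, checking in particular that the image of $\widetilde{\mathbf{A}}_{i-1}$ is contained in that of $\mathbf{A}_{i-1}$ so that all the pseudo-inverse inequalities are applied on the correct subspace. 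The approximation-size bound $\mathrm{O}((r\log\mu(\mathbf{A})+r+\log d)\epsilon^{-2}\log d)$ I would defer to a separate lemma, since it comes from bounding $\|\widetilde{\mathbf{l}}\|_1$ via a pseudo-determinant telescoping argument rather than from the martingale itself.
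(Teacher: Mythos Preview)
Your overall approach is the paper's: a stopped matrix martingale on $(\mathbf{A}^\top\mathbf{A})^{+/2}(\widetilde{\mathbf{A}}_j^\top\widetilde{\mathbf{A}}_j-\mathbf{A}_j^\top\mathbf{A}_j)(\mathbf{A}^\top\mathbf{A})^{+/2}$, Freedman's inequality for concentration, and the overestimate holding on the event the process has not yet stopped. The paper omits the argument here and points to its detailed proof of \cref{lemma:analysis_random_order_streams}, which is exactly this.

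That said, three details in your write-up are off and would need correcting. First, the invariant you condition on in the first paragraph---``$\widetilde{\mathbf{A}}_{i-1}$ is a $(1\pm\epsilon)$-spectral approximation for $\mathbf{A}_{i-1}$''---is \emph{not} what the martingale controls: $\|\mathbf{Y}_{i-1}\|_2<\epsilon$ only yields $\widetilde{\mathbf{A}}_{i-1}^\top\widetilde{\mathbf{A}}_{i-1}-\mathbf{A}_{i-1}^\top\mathbf{A}_{i-1}\preceq\epsilon\,\mathbf{A}^\top\mathbf{A}$, i.e.\ error measured against the \emph{final} matrix. Fortunately this weaker invariant suffices: adding $\mathbf{a}_i\mathbf{a}_i^\top$ and using $\mathbf{A}_i^\top\mathbf{A}_i\preceq\mathbf{A}^\top\mathbf{A}$ gives $\widetilde{\mathbf{A}}_{i-1}^\top\widetilde{\mathbf{A}}_{i-1}+\mathbf{a}_i\mathbf{a}_i^\top\preceq(1+\epsilon)\mathbf{A}^\top\mathbf{A}$, hence $(1+\epsilon)\tau_i^{\widetilde{\mathbf{A}}_{i-1}}(\mathbf{A})\ge\tau_i(\mathbf{A})$ directly via \cref{lemma:pseudoinverse}. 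Second, you state the overestimate follows ``whenever $\widetilde{\mathbf{A}}_{i-1}^\top\widetilde{\mathbf{A}}_{i-1}\succeq(1-\epsilon)\mathbf{A}_{i-1}^\top\mathbf{A}_{i-1}$'', but the relative leverage score is \emph{decreasing} in $\mathbf{B}^\top\mathbf{B}$, so the overestimate requires an \emph{upper} bound on $\widetilde{\mathbf{A}}_{i-1}^\top\widetilde{\mathbf{A}}_{i-1}$, not a lower one. Third, the predictable quadratic variation is not $\|\widetilde{\mathbf{l}}\|_1/c$ but simply $\preceq\tfrac{1}{c}\mathbf{I}$: once $\widetilde{l}_j\ge\mathbf{u}_j^\top\mathbf{u}_j$ holds, each term is $\preceq\tfrac{1}{c}\mathbf{u}_j\mathbf{u}_j^\top$ and $\sum_j\mathbf{u}_j\mathbf{u}_j^\top\preceq\mathbf{I}$. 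No control of $\|\widetilde{\mathbf{l}}\|_1$ enters the Freedman step; that quantity is bounded separately in \cref{lemma:sum_leverage_score}, exactly as you note at the end.
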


The above lemma does not guarantee that $\tA_i$ is a $(1 \pm \epsilon)$-spectral approximation for \emph{all} $i=1,\dots, n$, with high probability.
However, by taking a union bound, it holds that $\mathrm{O}(d)$ of $\tA_i$'s are  $(1 \pm \epsilon)$-spectral approximations with high probability.
This fact, which is summarized as below, will be used to bound the approximation size later.
\begin{corollary}
\label{corollary:analysis_online_setting}
    In \cref{algorithm:online_row_sampling}, it holds with high probability that
    \begin{equation}\label{equation:analysis_online_setting_approximation}
        (1-\epsilon) \mathbf{A}_i^\top \mathbf{A}_i \preceq \widetilde{\mathbf{A}}_i^\top \widetilde{\mathbf{A}}_i \preceq (1+\epsilon) \mathbf{A}_i^\top \mathbf{A}_i
    \end{equation}
for all $i$ such that $\mathbf{a}_{i+1} \not \perp \mathrm{Ker} \left( \widetilde{\mathbf{A}}_i \right)$.
\end{corollary}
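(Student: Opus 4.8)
The plan is to reduce the corollary to \cref{lemma:analysis_online_setting} by a union bound over a \emph{deterministic} set of at most $d$ indices. The starting observation is that any row expanding the current image is sampled with probability one: if $\mathbf{a}_i\not\perp\mathrm{Ker}(\widetilde{\mathbf{A}}_{i-1})$, then $\tau_i^{\widetilde{\mathbf{A}}_{i-1}}(\mathbf{A})=1$ by \cref{lemma:relative_leverage_score_tractable}, so $\widetilde{l}_i=1$ and $p_i=\min(c,1)=1$ because $c=3\epsilon^{-2}\log d\ge 1$ for $\epsilon\le 1/2$. Using this I would prove by induction on $i$ that $\mathrm{Im}(\widetilde{\mathbf{A}}_i^\top)=\mathrm{Im}(\mathbf{A}_i^\top)$ holds with probability one: at step $i$, if $\mathbf{a}_i\in\mathrm{Im}(\widetilde{\mathbf{A}}_{i-1}^\top)=\mathrm{Im}(\mathbf{A}_{i-1}^\top)$ then neither row space changes, whether or not $\mathbf{a}_i$ is sampled; otherwise $\mathbf{a}_i$ is sampled deterministically, and both $\mathrm{Im}(\widetilde{\mathbf{A}}_i^\top)$ and $\mathrm{Im}(\mathbf{A}_i^\top)$ equal the previous row space plus $\mathrm{span}(\mathbf{a}_i)$. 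Taking orthogonal complements gives $\mathrm{Ker}(\widetilde{\mathbf{A}}_i)=\mathrm{Ker}(\mathbf{A}_i)$ for every $i$.

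Hence the set $S=\{\,i:\mathbf{a}_{i+1}\not\perp\mathrm{Ker}(\widetilde{\mathbf{A}}_i)\,\}$ equals $\{\,i:\mathbf{a}_{i+1}\not\perp\mathrm{Ker}(\mathbf{A}_i)\,\}$, which depends only on the input and is exactly the set of steps at which the rank strictly increases; in particular $|S|=\mathrm{rank}(\mathbf{A})=r\le d$. Fix $i\in S$. Since \cref{algorithm:online_row_sampling} is online, its first $i$ iterations on input $\mathbf{A}$ are identical (under the same random bits) to its complete execution on input $\mathbf{A}_i$ with the same $\epsilon$, so $\widetilde{\mathbf{A}}_i$ is distributed as the output of the algorithm on $\mathbf{A}_i$. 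Applying \cref{lemma:analysis_online_setting} with input $\mathbf{A}_i$ --- the ambient dimension is still $d$, so the failure probability is $1/\mathrm{poly}(d)$ --- yields $(1-\epsilon)\mathbf{A}_i^\top\mathbf{A}_i\preceq\widetilde{\mathbf{A}}_i^\top\widetilde{\mathbf{A}}_i\preceq(1+\epsilon)\mathbf{A}_i^\top\mathbf{A}_i$ with probability at least $1-1/\mathrm{poly}(d)$. A union bound over the at most $d$ elements of $S$ (enlarging the polynomial) then gives \eqref{equation:analysis_online_setting_approximation} for all $i\in S$ with high probability, which is the claim.

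I expect the delicate point to be establishing that $S$ is deterministic, i.e.\ that the sampled rows never omit a direction of $\mathrm{Im}(\mathbf{A}_i^\top)$. This is what lets us restrict the union bound to $|S|\le d$ indices; a naive union bound over all $n$ prefixes $\mathbf{A}_1,\dots,\mathbf{A}_n$ would be worthless when $n$ is superpolynomial in $d$, and indeed the displayed inequality fails for arbitrary $i$. The inequality $c\ge 1$ --- equivalently, that a row with relative leverage score $1$ is always kept --- is precisely the mechanism ensuring $\mathrm{Im}(\widetilde{\mathbf{A}}_i^\top)=\mathrm{Im}(\mathbf{A}_i^\top)$, and hence that $S$ coincides with the rank-increase points of $\mathbf{A}$.
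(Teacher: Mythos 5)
Your proof is correct and follows essentially the same route as the paper's: restrict attention to the at most $\mathrm{rank}(\mathbf{A})\le d$ rank-increase indices, apply \cref{lemma:analysis_online_setting} to each prefix $\mathbf{A}_i$, and take a union bound. You additionally spell out two points the paper leaves implicit — that rows with $\mathbf{a}_i\not\perp\mathrm{Ker}(\widetilde{\mathbf{A}}_{i-1})$ are kept with probability one, so $\mathrm{Ker}(\widetilde{\mathbf{A}}_i)=\mathrm{Ker}(\mathbf{A}_i)$ and the index set is deterministic, and that the first $i$ iterations coincide with a full run on $\mathbf{A}_i$ — both of which are needed and correctly justified.
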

\begin{proof}
    The number of $i$ such that $\mathbf{a}_i \not \perp \mathrm{Ker} \left( \mathbf{A}_{i-1} \right)$ is equal to $\mathrm{rank}(\mathbf{A}) = \mathrm{O} (d)$. 
    It follows from \cref{lemma:analysis_online_setting} that, for a given positive integer $i$, the probability that the relation \eqref{equation:analysis_online_setting_approximation} holds is at least $1-1/d^c$ for a fixed positive constant $c \ge 2$.
    By taking a union bound, the probability that  the relation \eqref{equation:analysis_online_setting_approximation} holds for all $i$ such that $\mathbf{a}_{i+1} \not \perp \mathrm{Ker} \left( \widetilde{\mathbf{A}}_i \right)$ is at least $1-1/d^{c-1}$.
    Thus the desired inequalities hold simultaneously with high probability.
\end{proof}

In what follows, we will show that the approximation size is bounded.
Since $\widetilde{\mathbf{l}}=(\widetilde{l}_i)$ is a leverage score overestimate by \cref{lemma:analysis_online_setting}, it follows from \cref{theorem:row_sampling} that $\widetilde{\mathbf{A}}_n$ has $\mathrm{O}\left( \epsilon^{-2} \| \widetilde{\mathbf{l}} \|_1 \log d\right)$ rows where we set $\theta = \epsilon^{-2}$. 
Hence it suffices to bound $ \| \widetilde{\mathbf{l}} \|_1 = \sum_{i=1}^n \widetilde{l}_i$.
We evaluate it with the pseudo-determinant.

\begin{definition} [Pseudo-Determinant]
\label{definition:pseudo-determinant}
  Let $\mathbf{A} \in \mathbb{R}^{n \times n}$ be a square matrix. The \emph{pseudo-determinant} of $\mathbf{A}$, $\mathrm{Det}(\mathbf{A})$, is defined as the product of its non-zero eigenvalues. Note that $\mathrm{Det}(\mathbf{O})$ is defined as $1$ for convenience.
\end{definition}
The next lemma characterizes the pseudo-determinant, which is often regarded as the definition of the pseudo-determinant.
\begin{lemma}[Characterization of the Pseudo-Determinant \cite{pseudo18}]\label{def:pseudo}
  Let $\mathbf{A} \in \mathbb{R}^{n \times n}$ be a square matrix of rank $r$. 
  Then it holds that
  \begin{displaymath}
    \mathrm{Det}(\mathbf{A}) = \lim_{\delta \rightarrow 0} \frac{\det \left(\mathbf{A} + \delta \mathbf I \right)}{\delta^{n-r}}.
  \end{displaymath}
\end{lemma}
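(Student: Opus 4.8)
The plan is to evaluate $\det(\mathbf{A} + \delta\mathbf{I})$ explicitly through the characteristic polynomial of $\mathbf{A}$ and then let $\delta \to 0$. Let $\lambda_1, \dots, \lambda_n$ be the eigenvalues of $\mathbf{A}$, listed with algebraic multiplicity, so that $\det(\lambda\mathbf{I} - \mathbf{A}) = \prod_{i=1}^n (\lambda - \lambda_i)$. Substituting $\lambda = -\delta$ and cancelling the factor $(-1)^n$ on both sides gives
\begin{displaymath}
  \det(\mathbf{A} + \delta\mathbf{I}) = \prod_{i=1}^n (\delta + \lambda_i),
\end{displaymath}
a polynomial in $\delta$ of degree $n$ whose order of vanishing at $\delta = 0$ equals the number of indices $i$ with $\lambda_i = 0$.

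Next I would count those zero eigenvalues. For every matrix considered in this paper --- symmetric, in fact of the form $\mathbf{B}^\top\mathbf{B}$ --- the algebraic and geometric multiplicities of the eigenvalue $0$ coincide, so $0$ occurs exactly $\dim\mathrm{Ker}(\mathbf{A}) = n - r$ times among the $\lambda_i$. Hence
\begin{displaymath}
  \lim_{\delta \to 0}\frac{\det(\mathbf{A} + \delta\mathbf{I})}{\delta^{\,n-r}} = \lim_{\delta \to 0}\prod_{i \,:\, \lambda_i \ne 0} (\delta + \lambda_i) = \prod_{i \,:\, \lambda_i \ne 0} \lambda_i = \mathrm{Det}(\mathbf{A})
\end{displaymath}
by \cref{definition:pseudo-determinant}. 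Equivalently, the limit simply reads off the coefficient of the lowest-degree term $\delta^{n-r}$ of $\det(\mathbf{A}+\delta\mathbf{I})$, which is the $r$-th elementary symmetric polynomial of the $\lambda_i$, i.e.\ the product of the nonzero eigenvalues; and when $\mathbf{A} = \mathbf{O}$ one has $r = 0$ and the ratio is identically $1$, consistent with the convention $\mathrm{Det}(\mathbf{O}) = 1$.

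The only delicate point --- and the part I expect to be the main obstacle --- is the identity ``multiplicity of the eigenvalue $0$ equals $n - r$'', which can fail for a defective matrix (a nilpotent Jordan block of rank $n-1$ already violates it), so the lemma should be read as restricted to matrices whose zero eigenvalue is non-defective, in particular all symmetric matrices, which is the only case used later. Under that hypothesis the computation above is the whole proof; alternatively one may take the displayed limit as the definition of $\mathrm{Det}$ and derive the ``product of nonzero eigenvalues'' description in the symmetric case from the very same factorisation.
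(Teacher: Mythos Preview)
The paper does not supply its own proof of this lemma; it is stated with a citation to \cite{pseudo18} and used as a black box. Your argument via the factorisation $\det(\mathbf{A}+\delta\mathbf{I})=\prod_i(\delta+\lambda_i)$ is correct and is the standard one. Your caveat about defective matrices is well taken: as written, the lemma requires that the algebraic multiplicity of the zero eigenvalue equal $n-r$, which holds for diagonalisable (in particular symmetric) matrices but fails for, e.g., a nonzero nilpotent block. Since every application of \cref{def:pseudo} in the paper is to a matrix of the form $\mathbf{B}^\top\mathbf{B}$, this restriction is harmless here, and your proof covers exactly the cases that are needed.
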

The pseudo-determinant is a similar notion to the determinant, but it does not inherit all the properties of the determinant.
For example, $\mathrm{Det}(\mathbf{A})\mathrm{Det}(\mathbf{B})$ is not always equal to $\mathrm{Det}(\mathbf{AB})$ for square matrices $\A, \mathbf{B}$.
However, we can prove a weaker version of the matrix determinant lemma~(\cref{lemma:det}) for pseudo-determinant as follows.
\begin{lemma}[Matrix Determinant Lemma] \label{lemma:det}
  Let $\mathbf{A} \in \mathbb{R}^{n \times n}$ be a nonsingular matrix, and $\mathbf{u} \in \mathbb{R}^n$ be a vector. Then it holds that
  \begin{displaymath}
    \det \left( \mathbf{A} +\mathbf{u}\mathbf{u}^\top \right) = \det(\mathbf{A}) \left( 1 +\mathbf{u}^\top \mathbf{A}^{-1}\mathbf{u} \right).
  \end{displaymath}
\end{lemma}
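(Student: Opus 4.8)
The plan is to reduce the identity to a single evaluation of an auxiliary $(n+1)\times(n+1)$ matrix by computing its determinant via two different block eliminations (Schur complements), using the hypothesis that $\mathbf{A}$ is invertible.

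Concretely, I would introduce
\[
  \mathbf{M} = \begin{pmatrix} \mathbf{A} & -\mathbf{u} \\ \mathbf{u}^\top & 1 \end{pmatrix} \in \mathbb{R}^{(n+1)\times(n+1)},
\]
and evaluate $\det(\mathbf{M})$ in two ways. First, eliminating the bottom-left block $\mathbf{u}^\top$ using the nonsingular block $\mathbf{A}$ gives the block-triangular factorization $\mathbf{M} = \left(\begin{smallmatrix} \mathbf{I} & 0 \\ \mathbf{u}^\top \mathbf{A}^{-1} & 1 \end{smallmatrix}\right)\left(\begin{smallmatrix} \mathbf{A} & -\mathbf{u} \\ 0 & 1 + \mathbf{u}^\top \mathbf{A}^{-1}\mathbf{u} \end{smallmatrix}\right)$, whence $\det(\mathbf{M}) = \det(\mathbf{A})\bigl(1 + \mathbf{u}^\top \mathbf{A}^{-1}\mathbf{u}\bigr)$. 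Second, eliminating the top-right block $-\mathbf{u}$ using the (trivially invertible) scalar block $1$ gives $\det(\mathbf{M}) = 1 \cdot \det\bigl(\mathbf{A} - (-\mathbf{u})(1)^{-1}\mathbf{u}^\top\bigr) = \det(\mathbf{A} + \mathbf{u}\mathbf{u}^\top)$. Equating the two expressions yields the claimed identity.

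An equivalent route, if one prefers to avoid block matrices, is to write $\mathbf{A} + \mathbf{u}\mathbf{u}^\top = \mathbf{A}\bigl(\mathbf{I} + \mathbf{A}^{-1}\mathbf{u}\mathbf{u}^\top\bigr)$, so that $\det(\mathbf{A} + \mathbf{u}\mathbf{u}^\top) = \det(\mathbf{A})\det\bigl(\mathbf{I} + (\mathbf{A}^{-1}\mathbf{u})\mathbf{u}^\top\bigr)$, and then invoke the rank-one fact $\det(\mathbf{I} + \mathbf{v}\mathbf{w}^\top) = 1 + \mathbf{w}^\top\mathbf{v}$ (which itself follows because $\mathbf{v}\mathbf{w}^\top$ has eigenvalue $\mathbf{w}^\top\mathbf{v}$ on $\mathrm{span}(\mathbf{v})$ and $0$ on its complement, or from the same block-matrix trick applied with $\mathbf{A} = \mathbf{I}$) with $\mathbf{v} = \mathbf{A}^{-1}\mathbf{u}$ and $\mathbf{w} = \mathbf{u}$.

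There is essentially no obstacle here: the only points requiring care are that the first elimination step needs $\mathbf{A}$ invertible (which is assumed) and the second needs the scalar $1$ invertible (immediate), so both factorizations are valid as exact identities and no perturbation or limiting argument is needed. The remaining computations are routine block-matrix algebra.
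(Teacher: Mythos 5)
Your proof is correct. The paper states this lemma as a standard known fact and does not prove it, so there is no in-paper argument to compare against; your double Schur-complement evaluation of the bordered $(n+1)\times(n+1)$ matrix is the classical proof, both block factorizations are exact under the stated hypothesis that $\mathbf{A}$ is nonsingular, and the alternative route via $\det(\mathbf{I}+\mathbf{v}\mathbf{w}^\top)=1+\mathbf{w}^\top\mathbf{v}$ is equally valid.
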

\begin{lemma} [Matrix Pseudo-Determinant Lemma]
\label{lemma:Det}
  Let $\mathbf{A} \in \mathbb{R}^{n \times n}$ be a symmetric matrix of rank $r$, and $\mathbf{u} \in \mathbb{R}^n$ be a vector satisfying that $\mathbf{u} \perp \mathrm{Ker}(\mathbf{A})$. Then it holds that
  \begin{equation}\label{eqn:matrix_pseudo_det_lemma}
    \mathrm{Det} \left( \mathbf{A}+\mathbf{u}\mathbf{u}^\top \right) = \mathrm{Det}(\mathbf{A}) \left( 1+\mathbf{u}^\top \mathbf{A}^+\mathbf{u} \right).
  \end{equation}
\end{lemma}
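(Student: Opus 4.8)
The plan is to reduce the pseudo-determinant identity to the ordinary matrix determinant lemma (\cref{lemma:det}) by a limiting argument, exactly in the spirit of the characterization in \cref{def:pseudo}. Fix $\delta > 0$ small enough that $\mathbf{A} + \delta\mathbf{I}$ is nonsingular. Since $\mathbf{u}\mathbf{u}^\top$ is a rank-one update, \cref{lemma:det} applied to $\mathbf{A} + \delta\mathbf{I}$ gives
\begin{equation*}
  \det\left(\mathbf{A} + \mathbf{u}\mathbf{u}^\top + \delta\mathbf{I}\right) = \det\left(\mathbf{A} + \delta\mathbf{I}\right)\left(1 + \mathbf{u}^\top (\mathbf{A} + \delta\mathbf{I})^{-1}\mathbf{u}\right).
\end{equation*}
Dividing both sides by $\delta^{n-r}$ and letting $\delta \to 0$, the left-hand side tends to $\mathrm{Det}(\mathbf{A} + \mathbf{u}\mathbf{u}^\top)$ and the factor $\det(\mathbf{A}+\delta\mathbf{I})/\delta^{n-r}$ tends to $\mathrm{Det}(\mathbf{A})$, both by \cref{def:pseudo}, so it remains only to handle $\mathbf{u}^\top(\mathbf{A}+\delta\mathbf{I})^{-1}\mathbf{u}$.

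For that factor I would diagonalize $\mathbf{A} = \sum_{j=1}^r \lambda_j \mathbf{v}_j\mathbf{v}_j^\top$ with orthonormal eigenvectors, extending $\{\mathbf{v}_j\}_{j\le r}$ to an orthonormal basis whose remaining vectors span $\mathrm{Ker}(\mathbf{A})$. The hypothesis $\mathbf{u} \perp \mathrm{Ker}(\mathbf{A})$ means $\mathbf{u}$ has no component along those remaining vectors, so $\mathbf{u}^\top(\mathbf{A}+\delta\mathbf{I})^{-1}\mathbf{u} = \sum_{j=1}^r (\mathbf{v}_j^\top\mathbf{u})^2/(\lambda_j + \delta)$, which converges to $\sum_{j=1}^r (\mathbf{v}_j^\top\mathbf{u})^2/\lambda_j = \mathbf{u}^\top\mathbf{A}^+\mathbf{u}$ as $\delta\to 0$. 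This is precisely the step where $\mathbf{u}\perp\mathrm{Ker}(\mathbf{A})$ is essential: without it the sum would carry a $1/\delta$ term and diverge. Combining the three limits yields \eqref{eqn:matrix_pseudo_det_lemma}.

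The one subtlety — and the step I expect to require the most care — is the appearance of $n-r$, rather than $n - \mathrm{rank}(\mathbf{A}+\mathbf{u}\mathbf{u}^\top)$, in the denominator when invoking \cref{def:pseudo} for the left-hand side: one must first check $\mathrm{rank}(\mathbf{A}+\mathbf{u}\mathbf{u}^\top) = r$. Since $\mathbf{u}\perp\mathrm{Ker}(\mathbf{A})$ forces $(\mathbf{A}+\mathbf{u}\mathbf{u}^\top)\mathbf{x} = \mathbf{0}$ for every $\mathbf{x}\in\mathrm{Ker}(\mathbf{A})$, we get $\mathrm{rank}(\mathbf{A}+\mathbf{u}\mathbf{u}^\top) \le r$; for the reverse inequality one compares the lowest-order terms in $\delta$ on the two sides of the displayed identity, the right-hand side behaving like $\mathrm{Det}(\mathbf{A})\left(1+\mathbf{u}^\top\mathbf{A}^+\mathbf{u}\right)\delta^{n-r}$, so when this coefficient is nonzero the left-hand side has the same order in $\delta$, giving $\mathrm{rank}(\mathbf{A}+\mathbf{u}\mathbf{u}^\top)=r$. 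For the {\rm PSD} matrices to which the lemma is actually applied this is immediate: $\mathbf{A}+\mathbf{u}\mathbf{u}^\top\succeq\mathbf{A}$ forces the kernels to coincide, and $1+\mathbf{u}^\top\mathbf{A}^+\mathbf{u}\ge 1>0$ automatically. Alternatively, one could avoid the limit altogether: restrict both $\mathbf{A}$ and $\mathbf{A}+\mathbf{u}\mathbf{u}^\top$ to the invariant subspace $\mathrm{Im}(\mathbf{A})=\mathrm{Ker}(\mathbf{A})^\perp$ — invariant precisely because $\mathbf{u}\in\mathrm{Im}(\mathbf{A})$ — where $\mathbf{A}$ becomes an honest nonsingular operator, apply \cref{lemma:det} there directly, and use that the pseudo-determinant and pseudo-inverse both respect this orthogonal restriction.
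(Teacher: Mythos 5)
Your proposal is correct and follows essentially the same route as the paper: perturb to $\mathbf{A}+\delta\mathbf{I}$, apply the ordinary matrix determinant lemma, use $\mathbf{u}\perp\mathrm{Ker}(\mathbf{A})$ to kill the $1/\delta$ contributions in $\mathbf{u}^\top(\mathbf{A}+\delta\mathbf{I})^{-1}\mathbf{u}$, and pass to the limit via the characterization of the pseudo-determinant. Your explicit discussion of why $\mathrm{rank}(\mathbf{A}+\mathbf{u}\mathbf{u}^\top)=r$ is a bit more careful than the paper, which simply asserts it; as you note, for the {\rm PSD} matrices to which the lemma is actually applied this is immediate.
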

\begin{proof}
  Suppose that the eigenvalues of $\mathbf{A}$ are $\lambda_1, \dots, \lambda_n$
  such that $| \lambda_1 | \geq \dots \geq | \lambda_n | $.
  Then $\lambda_i \neq 0$ for $1 \le i \le r$ and $\lambda_i = 0$ for $r+1 \le i \le n$.
  Moreover, $\mathbf{A}$ is decomposed as $\sum_{i=1}^{n} \lambda_i \mathbf{v}_i \mathbf{v}_i^\top$, where $\{ \mathbf{v}_i \}$ is a family of orthonormal eigenvectors. 
  Set a real value $\delta$ such that $0 < | \delta | < | \lambda_r |$.
  It follows that $\A+\delta \mathbf{I}$ is nonsingular, and $(\A+\delta \mathbf{I})^{-1}=\sum_{i=1}^{n} (\lambda_i + \delta)^{-1} \mathbf{v}_i \mathbf{v}_i^\top$.
  By \cref{lemma:det}, 
  \begin{align}
    \det \left( \mathbf{A} + \delta \mathbf{I} +\mathbf{u}\mathbf{u}^\top \right)
    &= \det \left( \mathbf{A} + \delta \mathbf{I} \right) \left( 1+\mathbf{u}^\top \left(\mathbf{A} + \delta \mathbf{I} \right) ^{-1}\mathbf{u}\right) \nonumber \\
    &= \det \left( \mathbf{A} + \delta \mathbf{I} \right) \left( 1+\mathbf{u}^\top \left( \sum_{i=1}^{n} (\lambda_i + \delta)^{-1} \mathbf{v}_i \mathbf{v}_i^\top \right)\mathbf{u} \right). \nonumber
  \end{align}
  Since $\mathbf{v}_i \in \mathrm{Ker}(\mathbf{A})$ for $r+1 \le i \le n$, we have $\mathbf{v}_i^\top \mathbf{u}=0$ for $r+1 \le i \le n$.
  This implies that
  \begin{align}
    \det \left( \mathbf{A} + \delta \mathbf{I} +\mathbf{u}\mathbf{u}^\top \right) = \det \left( \mathbf{A} + \delta \mathbf{I} \right) \left( 1+\mathbf{u}^\top \left( \sum_{i=1}^{r} (\lambda_i + \delta)^{-1} \mathbf{v}_i \mathbf{v}_i^\top \right)\mathbf{u} \right). \nonumber
  \end{align}
  In addition, as $\mathbf{u} \perp \mathrm{Ker}(\mathbf{A})$, $\mathrm{rank}\left( \mathbf{A}+\mathbf{u}\mathbf{u}^\top \right) = r$ holds. 
  Therefore, we obtain
  \begin{displaymath}
      \frac{\det \left(\mathbf{A} +\mathbf{u}\mathbf{u}^\top + \delta \mathbf{I} \right) }{\delta^{n-\mathrm{rank}\left( \mathbf{A}+\mathbf{u}\mathbf{u}^\top \right)}}
    = \frac{\det \left( \mathbf{A} + \delta \mathbf{I} \right)}{\delta^{n-r}} \left( 1+\mathbf{u}^\top \left( \sum_{i=1}^{r} (\lambda_i + \delta)^{-1} \mathbf{v}_i \mathbf{v}_i^\top \right)\mathbf{u} \right).
  \end{displaymath}
  As $\delta$ approaches $0$, we obtain \cref{eqn:matrix_pseudo_det_lemma} by \cref{def:pseudo}.
\end{proof}

When $\mathbf{u} \not\perp \mathrm{Ker} (\mathbf{A})$, we evaluate $\mathrm{Det} \left(\mathbf{A} +\mathbf{u}\mathbf{u}^\top \right)$ in a different way.
The lemma below is proved using a well-known fact~(\cref{lemma:eigenvalue_perturbation}).

\begin{lemma}
\label{lemma:Detequation:ieq}
Let $\mathbf{A} \in \mathbb{R}^{n \times n}$ be a {\rm PSD} matrix of rank $r$, and $\mathbf{u} \in \mathbb{R}^n$ be a vector such that $\mathbf{u} \not \perp \mathrm{Ker}(\mathbf{A})$. 
Then we have
  \begin{displaymath}
      \mathrm{Det} \left(\mathbf{A} + \mathbf{u} \mathbf{u}^\top\right) \geq \lambda_{\min}\left( \mathbf{A} + \mathbf{u} \mathbf{u}^\top \right) \mathrm{Det}(\mathbf{A}),
  \end{displaymath}
where $\lambda_{\min}(\X)$ for a matrix $\X$ is the smallest non-zero eigenvalue. 
\end{lemma}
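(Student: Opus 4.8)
The plan is to reduce the inequality to the elementary monotonicity of eigenvalues under a {\rm PSD} perturbation, once the rank of $\mathbf{A}+\mathbf{u}\mathbf{u}^\top$ has been pinned down. Throughout I would write $\mathbf{B} = \mathbf{A} + \mathbf{u}\mathbf{u}^\top$ and list eigenvalues in decreasing order, $\lambda_1(\cdot) \ge \cdots \ge \lambda_n(\cdot)$.

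First I would determine $\mathrm{rank}(\mathbf{B})$. For any $\mathbf{x} \in \mathbb{R}^n$ we have $\mathbf{x}^\top \mathbf{B} \mathbf{x} = \mathbf{x}^\top \mathbf{A}\mathbf{x} + (\mathbf{u}^\top \mathbf{x})^2$, a sum of two nonnegative terms, so $\mathbf{B}\mathbf{x} = \mathbf{0}$ exactly when $\mathbf{A}\mathbf{x} = \mathbf{0}$ and $\mathbf{u}^\top \mathbf{x} = 0$; that is, $\mathrm{Ker}(\mathbf{B}) = \mathrm{Ker}(\mathbf{A}) \cap \{\mathbf{u}\}^\perp$. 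The hypothesis $\mathbf{u} \not\perp \mathrm{Ker}(\mathbf{A})$ says exactly that $\mathrm{Ker}(\mathbf{A}) \not\subseteq \{\mathbf{u}\}^\perp$, so intersecting with the hyperplane $\{\mathbf{u}\}^\perp$ drops the dimension by one: $\dim\mathrm{Ker}(\mathbf{B}) = \dim\mathrm{Ker}(\mathbf{A}) - 1$, hence $\mathrm{rank}(\mathbf{B}) = r+1$. Consequently $\lambda_1(\mathbf{B}), \dots, \lambda_{r+1}(\mathbf{B})$ are the nonzero eigenvalues of $\mathbf{B}$ (all strictly positive), the remaining ones vanish, and in particular $\lambda_{\min}(\mathbf{B}) = \lambda_{r+1}(\mathbf{B})$.

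Next, since $\mathbf{u}\mathbf{u}^\top \succeq \mathbf{O}$ we have $\mathbf{B} \succeq \mathbf{A}$, and so by the Courant-Fischer min-max characterization (the eigenvalue-perturbation fact cited as \cref{lemma:eigenvalue_perturbation}) one gets $\lambda_i(\mathbf{B}) \ge \lambda_i(\mathbf{A})$ for every $i$; in particular $\lambda_i(\mathbf{B}) \ge \lambda_i(\mathbf{A}) > 0$ for $1 \le i \le r$. I would then finish by expanding the pseudo-determinants according to \cref{definition:pseudo-determinant} and comparing termwise:
\begin{displaymath}
  \mathrm{Det}(\mathbf{B}) = \prod_{i=1}^{r+1}\lambda_i(\mathbf{B}) = \lambda_{r+1}(\mathbf{B}) \prod_{i=1}^{r}\lambda_i(\mathbf{B}) \ge \lambda_{r+1}(\mathbf{B}) \prod_{i=1}^{r}\lambda_i(\mathbf{A}) = \lambda_{\min}(\mathbf{B})\,\mathrm{Det}(\mathbf{A}),
\end{displaymath}
which is the asserted bound.

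There is essentially no deep obstacle here; the only points requiring care are (i) getting $\mathrm{rank}(\mathbf{B}) = r+1$ right, since this is what makes $\lambda_{r+1}(\mathbf{B})$ the smallest \emph{nonzero} eigenvalue that appears on the right-hand side, and (ii) checking that every eigenvalue entering the termwise product comparison is strictly positive, so that $\prod_{i\le r}\lambda_i(\mathbf{B}) \ge \prod_{i\le r}\lambda_i(\mathbf{A})$ follows factor by factor and no degenerate cancellation occurs.
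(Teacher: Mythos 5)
Your argument is correct and follows essentially the same route as the paper's proof: establish $\mathrm{rank}(\mathbf{A}+\mathbf{u}\mathbf{u}^\top)=r+1$, invoke \cref{lemma:eigenvalue_perturbation} for the termwise eigenvalue comparison, and identify $\lambda_{r+1}$ with $\lambda_{\min}$. Your kernel-intersection justification of the rank claim is a detail the paper merely asserts, so no gap.
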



\begin{lemma}
\label{lemma:eigenvalue_perturbation}
    Let $\mathbf{A}, \B \in \mathbb{R}^{n \times n}$ be symmetric matrices such that $\mathbf{A} \preceq \mathbf{B}$. 
    Define the $i$-th largest eigenvalue of $\mathbf{A}$ as $\lambda_i \in \mathbb{R}$ and the the $i$-th largest eigenvalue of $\B$ as $\mu _i \in \mathbb{R}$. 
    Then we have $\lambda_i \le \mu_i$ for $1 \le i \le n$.
\end{lemma}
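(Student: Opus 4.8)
The plan is to derive this as the standard Weyl monotonicity statement directly from the Courant--Fischer min--max characterization of eigenvalues. Recall that for a symmetric matrix $\mathbf{M} \in \mathbb{R}^{n \times n}$ whose $i$-th largest eigenvalue is $\nu_i$, Courant--Fischer gives
\[
  \nu_i \;=\; \max_{\substack{V \subseteq \mathbb{R}^n \\ \dim V = i}} \ \min_{\substack{\mathbf{x} \in V \\ \|\mathbf{x}\| = 1}} \mathbf{x}^\top \mathbf{M} \mathbf{x}.
\]
This is the only ingredient I would use, so the proof is essentially one line of reasoning once the formula is written down.

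First I would fix an index $i \in \{1, \dots, n\}$ and an arbitrary $i$-dimensional subspace $V \subseteq \mathbb{R}^n$. The hypothesis $\mathbf{A} \preceq \mathbf{B}$ means precisely that $\mathbf{x}^\top \mathbf{A} \mathbf{x} \le \mathbf{x}^\top \mathbf{B} \mathbf{x}$ for every $\mathbf{x} \in \mathbb{R}^n$; in particular this holds for the unit vector in $V$ that minimizes the left-hand side, hence
\[
  \min_{\substack{\mathbf{x} \in V \\ \|\mathbf{x}\| = 1}} \mathbf{x}^\top \mathbf{A} \mathbf{x} \;\le\; \min_{\substack{\mathbf{x} \in V \\ \|\mathbf{x}\| = 1}} \mathbf{x}^\top \mathbf{B} \mathbf{x}.
\]
Since this inequality holds for every $i$-dimensional $V$, taking the maximum over all such $V$ on both sides preserves it, and applying the Courant--Fischer formula to each side yields $\lambda_i \le \mu_i$. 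As $i$ was arbitrary, the claim follows.

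I do not expect a genuine obstacle here, as the statement is a textbook fact; the only point requiring care is to state the min--max formula with the convention matching the lemma's indexing (the $i$-th \emph{largest} eigenvalue corresponds to a maximum over $i$-dimensional subspaces, i.e.\ a $\max$-$\min$), rather than the dual $\min$-$\max$ form over $(n-i+1)$-dimensional subspaces. If one preferred to avoid quoting Courant--Fischer, an alternative route is Weyl's inequality $\lambda_i(\mathbf{A}) \le \lambda_i(\mathbf{B}) + \lambda_1(\mathbf{A} - \mathbf{B})$ together with the observation that $\mathbf{B} - \mathbf{A} \succeq \mathbf{O}$ forces $\lambda_1(\mathbf{A} - \mathbf{B}) \le 0$; but the direct argument above is shorter and self-contained.
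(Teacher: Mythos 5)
Your proof is correct. The paper's own argument is the self-contained version of exactly this reasoning: it decomposes $\mathbf{A}=\sum_i\lambda_i\mathbf{v}_i\mathbf{v}_i^\top$ and $\mathbf{B}=\sum_i\mu_i\mathbf{w}_i\mathbf{w}_i^\top$, sets $V_k=\mathrm{span}(\mathbf{v}_1,\dots,\mathbf{v}_k)$ and $W_k=\mathrm{span}(\mathbf{w}_k,\dots,\mathbf{w}_n)$, uses the dimension count $\dim(V_k\cap W_k)\ge 1$ to extract a unit vector $\mathbf{z}$ with $\lambda_k\le\mathbf{z}^\top\mathbf{A}\mathbf{z}\le\mathbf{z}^\top\mathbf{B}\mathbf{z}\le\mu_k$, and explicitly remarks that the lemma ``can also be proved by the Courant--Fischer theorem directly''---which is precisely your route. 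So you are quoting as a black box the min--max characterization whose proof the paper unrolls; your version is shorter, the paper's is self-contained. Both are fine.

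One small wording issue in your argument: to get $\min_{\mathbf{x}\in V,\|\mathbf{x}\|=1}\mathbf{x}^\top\mathbf{A}\mathbf{x}\le\min_{\mathbf{x}\in V,\|\mathbf{x}\|=1}\mathbf{x}^\top\mathbf{B}\mathbf{x}$ you should instantiate $\mathbf{x}^\top\mathbf{A}\mathbf{x}\le\mathbf{x}^\top\mathbf{B}\mathbf{x}$ at the minimizer of the \emph{right}-hand side (call it $\mathbf{y}$), since then $\min_V\mathbf{x}^\top\mathbf{A}\mathbf{x}\le\mathbf{y}^\top\mathbf{A}\mathbf{y}\le\mathbf{y}^\top\mathbf{B}\mathbf{y}=\min_V\mathbf{x}^\top\mathbf{B}\mathbf{x}$; instantiating at the minimizer of the left-hand side, as you wrote, bounds $\min_V\mathbf{x}^\top\mathbf{A}\mathbf{x}$ by a quantity that is only $\ge\min_V\mathbf{x}^\top\mathbf{B}\mathbf{x}$. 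The inequality you need is of course the elementary fact that pointwise domination passes to minima, so this is a slip in justification rather than a gap in the proof.
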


\begin{proof}[Proof of \cref{lemma:Detequation:ieq}]
  If $\mathbf{u} \not \perp \mathrm{Ker}(\mathbf{A})$,  then 
  $\mathrm{rank} (\mathbf{A} + \mathbf{u} \mathbf{u}^\top) = r+1$. By \cref{lemma:eigenvalue_perturbation}, the largest $r$ eigenvalues of $\mathbf{A} + \mathbf{u} \mathbf{u}^\top$ are greater than or equal to those of $\mathbf{A}$, respectively, since $\mathbf{A} + \mathbf{u} \mathbf{u}^\top \succeq \A$. The $(r+1)$-st eigenvalue of $\mathbf{A} + \mathbf{u} \mathbf{u}^\top$ is equal to $\lambda_{\min}\left( \mathbf{A} + \mathbf{u} \mathbf{u}^\top \right)$. Thus we obtain $\mathrm{Det} \left(\mathbf{A} + \mathbf{u} \mathbf{u}^\top \right) \geq \lambda_{\min}\left( \mathbf{A} + \mathbf{u} \mathbf{u}^\top \right) \mathrm{Det}(\mathbf{A})$.
\end{proof}

Using \cref{lemma:Det} and \cref{lemma:Detequation:ieq}, we bound the sum of $\widetilde{l}_i$'s.
\begin{lemma}
\label{lemma:sum_leverage_score}
In \cref{algorithm:online_row_sampling}, we have with high probability
\begin{displaymath}
    \sum_{i=1}^n \widetilde{l}_i = \mathrm{O} \left( r \log \mu (\A) + r + \log d \right).
\end{displaymath}
\end{lemma}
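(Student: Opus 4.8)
The plan is to relate the sum $\sum_{i=1}^n \widetilde{l}_i$ to a telescoping product of pseudo-determinants of the matrices $\widetilde{\A}_0^\top\widetilde{\A}_0 \preceq \widetilde{\A}_1^\top\widetilde{\A}_1 \preceq \cdots \preceq \widetilde{\A}_n^\top\widetilde{\A}_n$, exploiting that the relative leverage score $\tau_i^{\widetilde{\A}_{i-1}}(\A)$ is exactly the quantity appearing in the matrix pseudo-determinant lemma. First I would split the index set $\{1,\dots,n\}$ into two parts: the ``new-direction'' indices $N = \{i : \sa_i \not\perp \mathrm{Ker}(\widetilde{\A}_{i-1})\}$ and the rest $M = \{1,\dots,n\}\setminus N$. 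By \cref{lemma:relative_leverage_score_tractable} we have $\tau_i^{\widetilde{\A}_{i-1}}(\A) = 1$ on $N$, so $\widetilde{l}_i \le 1$ contributes at most $|N| \le \mathrm{rank}(\A) = r$ to the sum (here I use that each such $i$ genuinely increases the rank of $\widetilde{\A}_i$, exactly as in the proof of \cref{corollary:analysis_online_setting}). So $\sum_{i\in N}\widetilde{l}_i = \mathrm{O}(r)$, and the work is to bound $\sum_{i\in M}\widetilde{l}_i$.

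For $i \in M$, we have $\sa_i \perp \mathrm{Ker}(\widetilde{\A}_{i-1})$. If the row is not sampled, $\widetilde{l}_i$ may still be nonzero, but then $\widetilde{\A}_i = \widetilde{\A}_{i-1}$ and there is nothing to telescope against; however, $\widetilde{l}_i = \min((1+\epsilon)\tau_i^{\widetilde{\A}_{i-1}}(\A),1)$ and $\tau_i^{\widetilde{\A}_{i-1}}(\A) = \frac{\sa_i^\top (\widetilde{\A}_{i-1}^\top\widetilde{\A}_{i-1})^+ \sa_i}{1 + \sa_i^\top (\widetilde{\A}_{i-1}^\top\widetilde{\A}_{i-1})^+ \sa_i} \le \sa_i^\top (\widetilde{\A}_{i-1}^\top\widetilde{\A}_{i-1})^+ \sa_i = \tau_i(\widetilde{\A}_{i-1}^{(i)})$-type expression. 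The standard trick (as in Cohen et al.) is to bound $\widetilde{l}_i$ by something proportional to $\log\bigl(1 + \sa_i^\top(\widetilde{\A}_{i-1}^\top\widetilde{\A}_{i-1})^+\sa_i\bigr)$ (using $x/(1+x) \le \log(1+x)$ and $\min(\cdot,1)$), and then, crucially, to note that whether or not $\sa_i$ is sampled, we can compare $\widetilde{\A}_i^\top\widetilde{\A}_i$ from below: when sampled, $\widetilde{\A}_i^\top\widetilde{\A}_i = \widetilde{\A}_{i-1}^\top\widetilde{\A}_{i-1} + \tfrac1{p_i}\sa_i\sa_i^\top$. Using \cref{lemma:Det} on the conditional expectation (or directly on the sampled-chain, replacing $1/p_i$ by $1$ in a lower bound since $p_i\le1$), $\log \mathrm{Det}(\widetilde{\A}_i^\top\widetilde{\A}_i) - \log\mathrm{Det}(\widetilde{\A}_{i-1}^\top\widetilde{\A}_{i-1}) = \log(1 + \tfrac1{p_i}\sa_i^\top(\widetilde{\A}_{i-1}^\top\widetilde{\A}_{i-1})^+\sa_i) \ge \log(1 + \sa_i^\top(\widetilde{\A}_{i-1}^\top\widetilde{\A}_{i-1})^+\sa_i) \gtrsim \tau_i^{\widetilde{\A}_{i-1}}(\A) \ge \widetilde{l}_i/(1+\epsilon)$ — but this only accounts for sampled rows. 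The honest way to handle unsampled rows is a martingale/expectation argument: condition on the history, note $\mathbb{E}[\log\mathrm{Det}(\widetilde{\A}_i^\top\widetilde{\A}_i) - \log\mathrm{Det}(\widetilde{\A}_{i-1}^\top\widetilde{\A}_{i-1}) \mid \mathcal{F}_{i-1}] = p_i \log(1 + \tfrac1{p_i}\sa_i^\top(\cdots)^+\sa_i) \ge \tfrac1{1+\epsilon}\min((1+\epsilon)\tau_i^{\widetilde{\A}_{i-1}}(\A),1)\cdot c / c$ — more carefully, $p_i \log(1+\tfrac1{p_i}z) \ge \min(z, p_i\log(1/p_i)+\cdots)$; the clean inequality is $p_i\log(1+z/p_i)\ge \log(1+z)$ when $p_i \le 1$, hence in expectation the drift is at least $\widetilde{l}_i/(1+\epsilon)$ up to constants. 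Summing, $\mathbb{E}[\log\mathrm{Det}(\widetilde{\A}_n^\top\widetilde{\A}_n)] \gtrsim \sum_i \widetilde{l}_i$, and a matrix-martingale concentration (or a direct Markov argument, since $\log\mathrm{Det}$ is a sum of bounded-increment terms) upgrades this to a high-probability bound; alternatively, I would invoke \cref{corollary:analysis_online_setting} which says $\widetilde{\A}_i^\top\widetilde{\A}_i$ is a $(1\pm\epsilon)$-spectral approximation of $\A_i^\top\A_i$ at every ``new-direction'' step, sandwiching $\mathrm{Det}(\widetilde{\A}_n^\top\widetilde{\A}_n)$ between constants times $\mathrm{Det}(\A^\top\A)$.

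It remains to bound $\log\mathrm{Det}(\widetilde{\A}_n^\top\widetilde{\A}_n)$ from above in terms of $\mu(\A)$. On the event of \cref{corollary:analysis_online_setting}, $\widetilde{\A}_n^\top\widetilde{\A}_n \preceq (1+\epsilon)\A^\top\A$, so every nonzero eigenvalue of $\widetilde{\A}_n^\top\widetilde{\A}_n$ is at most $(1+\epsilon)\|\A\|_2^2$, giving $\log\mathrm{Det}(\widetilde{\A}_n^\top\widetilde{\A}_n) \le r\log((1+\epsilon)\|\A\|_2^2)$. For the lower-order handling I also need a lower bound on $\mathrm{Det}$, or equivalently I track the increments only from when $\widetilde{\A}_i$ first attains full rank $r$; the smallest nonzero eigenvalue is controlled by $\min_i \sigma_{\min}(\A_i)^2$ via \cref{lemma:Detequation:ieq} applied at the $r$ rank-increasing steps (each contributes a factor $\ge \lambda_{\min} \ge \min_i\sigma_{\min}(\A_i)^2$ up to the $(1\pm\epsilon)$ distortion), yielding $\mathrm{Det}(\widetilde{\A}_n^\top\widetilde{\A}_n) \ge (\text{const})^r (\min_i\sigma_{\min}(\A_i)^2)^r$. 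Combining, $\sum_i \widetilde{l}_i = \mathrm{O}(r) + \mathrm{O}(1)\cdot\log\frac{\mathrm{Det}(\widetilde{\A}_n^\top\widetilde{\A}_n)}{(\text{const})^r(\min_i\sigma_{\min}(\A_i)^2)^r} = \mathrm{O}\bigl(r\log\mu(\A) + r\bigr)$, and the extra $\log d$ term comes from the failure probability / the $c = 3\epsilon^{-2}\log d$ clipping in the definition of $p_i$ (an index with $p_i = 1$ but small $\widetilde{l}_i$ can occur only $\mathrm{O}(\log d)$-many times per direction, or is absorbed by the concentration slack). The main obstacle I anticipate is the bookkeeping for \emph{unsampled} rows with nonzero $\widetilde{l}_i$: the pseudo-determinant only grows when we actually append a row, so the cleanest route is the conditional-expectation/martingale drift argument with the inequality $p_i\log(1+z/p_i)\ge\log(1+z)$, combined with a Freedman-type tail bound to pass from expectation to high probability — getting the constants and the $\log d$ term to line up exactly is the delicate part.
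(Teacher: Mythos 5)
Your overall strategy coincides with the paper's: split off the $r$ rank-increasing indices, relate the remaining $\widetilde{l}_i$ to the telescoping increments $\delta_i=\log\mathrm{Det}(\tA_i^\top\tA_i)-\log\mathrm{Det}(\tA_{i-1}^\top\tA_{i-1})$ via \cref{lemma:Det}, handle unsampled rows by conditioning on the history, sandwich $\log\mathrm{Det}(\tA_n^\top\tA_n)$ from above by $r\log((1+\epsilon)\|\A\|_2^2)$ and from below through \cref{lemma:Detequation:ieq} together with \cref{corollary:analysis_online_setting}, and charge the $+\log d$ to the concentration slack. Two steps need repair. First, the ``clean inequality'' $p\log(1+z/p)\ge\log(1+z)$ for $p\le1$ is false: the map $p\mapsto p\log(1+z/p)$ has derivative $\log(1+z/p)-\frac{z/p}{1+z/p}\ge0$, so it is \emph{increasing} in $p$ and therefore $p\log(1+z/p)\le\log(1+z)$, tending to $0$ as $p\to0$. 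The drift bound you want is still true, but for a different reason: when $p_i<1$ the algorithm sets $p_i=c\widetilde{l}_i$ with $c=3\epsilon^{-2}\log d$, and writing $z_i=\sa_i^\top(\tA_{i-1}^\top\tA_{i-1})^+\sa_i$ one has $z_i/p_i=(1+z_i)/(c(1+\epsilon))=\mathrm{O}(1/c)$, so $\log(1+z_i/p_i)\ge(1-\mathrm{O}(1/c))z_i/p_i$ and the expected increment is at least $(1-\mathrm{O}(1/c))z_i=\Omega(\widetilde{l}_i)$. It is the largeness of $c$, not monotonicity in $p$, that rescues the unsampled rows.

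Second, a bound on $\mathbb{E}[\delta_k]$ alone does not let you conclude: the summands $\widetilde{l}_i/8-\delta_i$ take both signs, so Markov must be applied to $\exp\left(\sum_i(\widetilde{l}_i/8-\delta_i)\right)$, which requires the multiplicative supermartingale property $\mathbb{E}\left[\exp(\widetilde{l}_k/8-\delta_k)\mid\tA_{k-1}\right]\le1$; and a generic bounded-increment tail bound does not apply, because at the $r$ rank-increasing steps $\delta_k$ is only bounded below by $\log\left((1-\epsilon)\lambda_{\min}(\A_k^\top\A_k)\right)$, which is exactly where the $r\log\mu(\A)$ term originates. The paper establishes the exponential-moment bound directly, computing $\mathbb{E}[e^{-\delta_k}]=p_k(1+z_k/p_k)^{-1}+(1-p_k)\le1-\widetilde{l}_k/4$ and pairing it with $e^{\widetilde{l}_k/8}\le1+\widetilde{l}_k/4$. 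With that substitution your outline goes through, and the $\log d$ is then just the Markov threshold $\exp(r(1-\xi)+\log d)$ giving failure probability $1/d$, as in the second of your two guesses.
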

\begin{proof}
    In the proof, we assume that $\widetilde{\mathbf{A}}_n$ is a $(1 \pm \epsilon)$-spectral approximation for $\mathbf{A}$, and the relation \eqref{equation:analysis_online_setting_approximation} holds for all $i$ such that $\mathbf{a}_{i+1} \not \perp \mathrm{Ker} \left( \widetilde{\mathbf{A}}_i \right)$.
    They follow with high probability from \cref{lemma:analysis_online_setting} and \cref{corollary:analysis_online_setting}.
    
    Define $\delta_i$ as
    \begin{displaymath}
        \delta_i \overset{\mathrm{def}}{=} \log \left( \mathrm{Det} \left( \widetilde{\mathbf{A}}_i^\top \widetilde{\mathbf{A}}_i \right) \right) - \log \left( \mathrm{Det} \left( \widetilde{\mathbf{A}}_{i-1}^\top \widetilde{\mathbf{A}}_{i-1} \right) \right)
    \end{displaymath}
    for $1\le i\le n$.
    We observe that 
    \[
    \sum_{i=1}^n \delta_i = \log \left( \mathrm{Det} \left( \widetilde{\mathbf{A}}_n^\top \widetilde{\mathbf{A}}_n \right) \right).
    \]

    Set $\xi = \log \left( \underset{1 \le i \le n}{\min}\  \lambda_{\min} \left( \mathbf{A}_i^\top \mathbf{A}_i \right) \right)$.
    We will first show that, for any $k\geq 0$, 
    \begin{align}
        \label{equation:diff}
        \mathbb{E} \left[ \exp \left( \sum_{i=1}^k \left( \frac{\widetilde{l}_i}{8} - \delta_i \right) \right) \right] \leq \exp \left( \mathrm{rank} \left( \widetilde{\mathbf{A}}_k \right) \left( 1 - \xi \right) \right),
    \end{align}
    which means that the difference between $\sum_{i=1}^k \widetilde{l}_i/8$ and $\log \left( \mathrm{Det} \left( \widetilde{\mathbf{A}}_k^\top \widetilde{\mathbf{A}}_k \right) \right)$ is at most $\mathrm{rank} \left( \widetilde{\mathbf{A}}_k \right) \left( 1 - \xi \right)$ in expectation.
    Since the inequality \eqref{equation:diff} holds trivially when $k = 0$, we suppose that $k\geq 1$ and the inequality \eqref{equation:diff} holds for $k-1$.
    We consider two cases separately: the case when $\mathbf{a}_k \perp \mathrm{Ker} \left( \widetilde{\mathbf{A}}_{k-1} \right)$ and the other case, conditioned on that we know $\tA_{k-1}$.
    
    Suppose that $\mathbf{a}_k \perp \mathrm{Ker} \left( \widetilde{\mathbf{A}}_{k-1} \right)$.
    Then it holds by \cref{lemma:Det} that
    \[
    \delta_k=
    \begin{cases}
    \log \left(1 + \frac{\mathbf{a}_k^\top \left( \widetilde{\mathbf{A}}_{k-1}^\top \widetilde{\mathbf{A}}_{k-1} \right)^+ \mathbf{a}_k}{p_k} \right) & \text{with probability } p_k\\
    0 & \text{with probability } 1-p_k.
    \end{cases}
    \]
    Hence, since $\tilde{l}_k\leq 1$, we see 
    \begin{align}
        \mathbb{E} \left[ \left. \exp \left( \frac{\widetilde{l}_k}{8} - \delta_k \right) \right| \widetilde{\mathbf{A}}_{k-1} \right] 
        &= \exp \left( \frac{\widetilde{l}_k}{8} \right) \left( p_k \cdot \left(1 + \frac{\mathbf{a}_k^\top \left( \widetilde{\mathbf{A}}_{k-1}^\top \widetilde{\mathbf{A}}_{k-1} \right)^+ \mathbf{a}_k}{p_k} \right)^{-1} + \left(1 - p_k \right) \cdot 1 \right) \nonumber \\
        &\le \left( 1 + \frac{\widetilde{l}_k}{4} \right) \left( p_k \cdot \left(1 + \frac{\mathbf{a}_k^\top \left( \widetilde{\mathbf{A}}_{k-1}^\top \widetilde{\mathbf{A}}_{k-1} \right)^+ \mathbf{a}_k}{p_k} \right)^{-1} + \left(1 - p_k \right) \cdot 1 \right) \nonumber\\
        &\le \left( 1 + \frac{\widetilde{l}_k}{4} \right) \left( p_k \cdot \left( 1 + \frac{\tau_k^{\widetilde{\mathbf{A}}_{k-1}}( \mathbf{A} )}{p_k} \right)^{-1} + \left(1 - p_k \right) \cdot 1 \right),
        \label{equation:siki1}
    \end{align}
    where the last inequality follows since $\mathbf{a}_k^\top \left( \widetilde{\mathbf{A}}_{k-1}^\top \widetilde{\mathbf{A}}_{k-1} \right)^+ \mathbf{a}_k\geq \tau_k^{\widetilde{\mathbf{A}}_{k-1}}( \mathbf{A} )$ by \cref{lemma:pseudoinverse}.
    If $p_k < 1$, then $p_k= c \widetilde{l}_k$, and hence $\widetilde{l}_k = (1 + \epsilon) \tau_k^{\widetilde{\mathbf{A}}_{k-1}}(\A)$.
    Hence we obtain
    \begin{displaymath}
         \left( 1 + \frac{\tau_k^{\widetilde{\mathbf{A}}_{k-1}}( \mathbf{A} )}{p_k} \right)^{-1}
        = \left(1 + \frac{1}{c(1+\epsilon)} \right)^{-1}
        \le 1 - \frac{1}{4c}
    \end{displaymath}
    as $\epsilon\le 1/2$. 
    Substituting it into the inequality \eqref{equation:siki1}, we have
    \begin{align*}
        \mathbb{E} \left[ \left. \exp \left( \frac{\widetilde{l}_k}{8} - \delta_k \right) \right| \tA_{k-1} \right]
        &\le \left( 1 + \frac{\widetilde{l}_k}{4} \right) \left( c \widetilde{l}_k \cdot \left(1 - \frac{1}{4c} \right) + \left( 1 - c \widetilde{l}_k \right) \cdot 1 \right) \nonumber \\
        &= \left( 1 + \frac{\widetilde{l}_k}{4} \right) \left( 1 - \frac{\widetilde{l}_k}{4} \right) \le 1. 
    \end{align*}
    On the other hand, if $p_k = 1$, since $\widetilde{l}_k\le (1 + \epsilon) \tau_k^{\widetilde{\mathbf{A}}_{k-1}}( \mathbf{A} ) \le 2\tau_k^{\widetilde{\mathbf{A}}_{k-1}}( \mathbf{A} )$, it follows from the inequality \eqref{equation:siki1} that 
    \begin{align*}
        \mathbb{E} \left[ \left. \exp \left( \frac{\widetilde{l}_k}{8} - \delta_k \right) \right| \tA_{k-1} \right]
        &\le \left( 1 + \frac{\widetilde{l}_k}{4} \right) \left(1 + \tau_k^{\widetilde{\mathbf{A}}_{k-1}}( \mathbf{A} ) \right)^{-1} \nonumber \\
        &\le  \left( 1 + \frac{\widetilde{l}_k}{4} \right) \left(1 + \frac{\widetilde{l}_k}{2} \right)^{-1} \le 1. 
    \end{align*}
    In summary, we have $\mathbb{E} \left[ \left . \exp \left( \widetilde{l}_k / 8 - \delta_k \right) \right| \tA_{k-1} \right] \le 1$.
    Therefore, 
    we obtain the following inequality:
    \begin{align}
        \mathbb{E} \left[ \exp \left( \sum_{i=1}^{k} \left( \frac{\widetilde{l}_i}{8} - \delta_i \right) \right) \right]
        &
        = \mathbb{E} \left[ \mathbb{E} \left[ \left. \exp \left( \frac{\widetilde{l}_k}{8} - \delta_k \right) \right| \tA_{k-1} \right] \exp \left( \sum_{i=1}^{k-1} \left( \frac{\widetilde{l}_i}{8} - \delta_i \right) \right) \right] \nonumber \\
        &\le \mathbb{E} \left[ \exp \left( \sum_{i=1}^{k-1} \left( \frac{\widetilde{l}_i}{8} - \delta_i \right) \right) \right]
        \le \exp \left( \mathrm{rank} \left( \widetilde{\mathbf{A}}_{k-1} \right) \left( 1 - \xi \right) \right), \label{equation:siki4}
    \end{align}
    where the last inequality follows from the induction hypothesis.
    Since $\mathrm{rank} \left( \widetilde{\mathbf{A}}_{k} \right)=\mathrm{rank} \left( \widetilde{\mathbf{A}}_{k-1} \right)$ as $\mathbf{a}_k \perp \mathrm{Ker} \left( \widetilde{\mathbf{A}}_{k-1} \right)$, the inequality \eqref{equation:diff} holds for $k$.
    
    Next suppose that $\mathbf{a}_k \not \perp \mathrm{Ker} \left( \widetilde{\mathbf{A}}_{k-1} \right)$.
    In this case, $\widetilde{l}_k = 1$ and $p_k = 1$. 
    Hence, by \cref{lemma:Detequation:ieq}, we have
    \[
    \delta_k \geq \log \lambda_{\min} \left( \widetilde{\mathbf{A}}_{k-1}^\top \widetilde{\mathbf{A}}_{k-1} + \mathbf{a}_k \mathbf{a}^\top_k\right).
    \]
    In addition, since we assume the relation \eqref{equation:analysis_online_setting_approximation} for $k$, we obtain $(1 - \epsilon) \mathbf{A}_k^\top \mathbf{A}_k \preceq \widetilde{\mathbf{A}}_{k-1}^\top \widetilde{\mathbf{A}}_{k-1} + \mathbf{a}_k \mathbf{a}_k^\top$. 
    This implies that $e^{\delta_k}\geq \lambda_{\min} \left( \widetilde{\mathbf{A}}_{k-1}^\top \widetilde{\mathbf{A}}_{k-1} + \mathbf{a}_k \mathbf{a}^\top_k \right)\geq (1 - \epsilon) \lambda_{\min} \left( \mathbf{A}_k^\top \mathbf{A}_k \right)$.
    Hence we obtain
    \begin{align}
        \mathbb{E} \left[ \left. \exp \left( \frac{\widetilde{l}_k}{8} - \delta_k \right) \right| \widetilde{\mathbf{A}}_{k-1} \right] 
        &\le \left( 1 + \frac{\widetilde{l}_k}{4} \right) e^{-\delta_k}\nonumber \\
        &\le \frac{5}{4} \cdot \frac{1}{(1 - \epsilon) \lambda_{\min} \left( \mathbf{A}_k^\top \mathbf{A}_k \right)} \nonumber \\
        &\le \exp \left( 1 - \xi \right). \nonumber
    \end{align}
    Similarly to the previous case \eqref{equation:siki4}, we have by the induction hypothesis 
    \begin{align*}
        \mathbb{E} \left[ \exp \left( \sum_{i=1}^{k} \left( \frac{\widetilde{l}_i}{8} - \delta_i \right) \right) \right]
        &\le \exp \left( 1 - \xi \right) \mathbb{E} \left[ \exp \left( \sum_{i=1}^{k-1} \left( \frac{\widetilde{l}_i}{8} - \delta_i \right) \right) \right]\nonumber \\
        & \le \exp \left(\left( \mathrm{rank} \left( \widetilde{\mathbf{A}}_{k-1} \right) +1 \right) \left( 1 - \xi \right) \right). 
    \end{align*}
    Since $\mathrm{rank} \left( \widetilde{\mathbf{A}}_{k} \right)=\mathrm{rank} \left( \widetilde{\mathbf{A}}_{k-1} \right)+1$ as $\mathbf{a}_k \not \perp \mathrm{Ker} \left( \widetilde{\mathbf{A}}_{k-1} \right)$, the inequality \eqref{equation:diff} holds for $k$.
    
    Therefore, the inequality \eqref{equation:diff} holds under the assumption that $\widetilde{\mathbf{A}}_n$ is a $(1 \pm \epsilon)$-spectral approximation for $\mathbf{A}$ and the relation \eqref{equation:analysis_online_setting_approximation} holds for all $i$ such that $\mathbf{a}_{i+1} \not \perp \mathrm{Ker} \left( \widetilde{\mathbf{A}}_i \right)$.
    Using Markov's inequality to the inequality \eqref{equation:diff} when $k=n$, we have
    \begin{align*}
        \mathbb{P} \left[ \sum_{i=1}^{n} \left( \frac{\widetilde{l}_i}{8} - \delta_i \right) > r(1 -  \xi) + \log d \right]
        & = 
        \mathbb{P} \left[ \exp \left( \sum_{i=1}^{n} \frac{\widetilde{l}_i}{8} - \delta_i \right) > \exp \left( r(1 -  \xi) + \log d \right) \right] \\
        &\leq \frac{ \exp \left( r \left( 1 - \xi \right) \right) }{ \exp \left( r(1 -  \xi) + \log d \right)}
        \leq \frac{1}{d}.
    \end{align*}
    Therefore, $\sum_{i=1}^{n} \left( \frac{\widetilde{l}_i}{8} - \delta_i \right) \leq r(1 -  \xi) + \log d$ is satisfied with high probability.

    Since $\mathrm{Det} \left( \widetilde{\mathbf{A}}_n^\top \widetilde{\mathbf{A}}_n \right) \le \left( (1+\epsilon) \left( \| \mathbf{A} \|_2^2 \right) \right)^r$, we have $\sum_i \delta_i = \log \mathrm{Det} \left( \widetilde{\mathbf{A}}_n^\top \widetilde{\mathbf{A}}_n \right) \le r +  r \log \| \mathbf{A} \|_2^2$.
    Hence, with high probability, it holds that
    \begin{align}
        \sum_{i=1}^n \widetilde{l}_i &\le 8 \left( \sum_{i=1}^n \delta_i + r(1 - \xi) + \log d \right) \nonumber \\
        &\le 16 r + 8r \log \frac{\| \mathbf{A} \|_2^2}{\underset{1 \le i \le n}{\min}\  \lambda_{\min} \left( \mathbf{A}_i^\top \mathbf{A}_i \right)} + \log d = \mathrm{O} \left( r \log \mu (\A) + r + \log d \right).\nonumber 
    \end{align}
\end{proof}


\begin{proof}[Proof of \cref{theorem:approximation_size_in_online_row_sampling}]
    By \cref{lemma:analysis_online_setting}, \cref{algorithm:online_row_sampling} returns a $(1 \pm \epsilon)$-spectral approximation with high probability.
    Moreover, since $\widetilde{\mathbf{l}}=(\widetilde{l}_i)$ is a leverage score overestimate, 
    it follows from \cref{theorem:row_sampling} that $\widetilde{\mathbf{A}}_n$ has $\mathrm{O}\left(\epsilon^{-2} \| \widetilde{\mathbf{l}} \|_1  \log d\right)$ non-zero rows where we set $\theta = \epsilon^{-2}$. 
    Since \cref{lemma:sum_leverage_score} implies that $\| \widetilde{\mathbf{l}} \|_1 = \mathrm{O}\left(r \log \mu (\A) + r + \log d \right)$, the approximation size is with high probability $\mathrm{O}\left( \left( r \log \mu (\A) + r + \log d \right) \epsilon^{-2} \log d \right)$.
\end{proof}

For the case when an input matrix is the incidence matrix of a graph, 
the upper bound in \cref{theorem:approximation_size_in_online_row_sampling} can be simplified as follows.

\begin{corollary}
Let $G$ be a simple, edge-weighted graph whose largest and smallest weights are $w_\mathrm{max}$ and $w_\mathrm{min}$, respectively.
Let $\mathbf{A}$ be its incidence matrix of rank $r$.
    Then \cref{algorithm:online_row_sampling} returns, with high probability, a $(1 \pm \epsilon)$-spectral approximation for $\mathbf{A}$ with $\mathrm{O}\left( \left( r \log \left(w_\mathrm{max} / w_\mathrm{min}\right) + r + \log d \right) \right.$ $\left. \epsilon^{-2} \log d \right)$ edges.
\end{corollary}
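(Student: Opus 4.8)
The plan is to derive the corollary from \cref{theorem:approximation_size_in_online_row_sampling} by estimating $\mu(\mathbf{A})$ for an incidence matrix. Since $\mathbf{A}$ is the incidence matrix of $G$ we have $\mathbf{A}^\top\mathbf{A}=\mathbf{L}_G$, and for every prefix $\mathbf{A}_i$ (the first $i$ arriving rows) we have $\mathbf{A}_i^\top\mathbf{A}_i=\mathbf{L}_{G_i}$, where $G_i$ is the edge-weighted subgraph of $G$ formed by the first $i$ arriving edges. Hence $\|\mathbf{A}\|_2^2=\lambda_{\max}(\mathbf{L}_G)$ and $\sigma_{\min}(\mathbf{A}_i)^2=\lambda_{\min}^{\neq 0}(\mathbf{L}_{G_i})$, so it suffices to bound $\lambda_{\max}(\mathbf{L}_G)$ from above and $\min_{1\le i\le n}\lambda_{\min}^{\neq 0}(\mathbf{L}_{G_i})$ from below.

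The upper bound is immediate: writing $G^{\mathrm u}$ for the underlying simple unweighted graph, $\mathbf{L}_G\preceq w_{\mathrm{max}}\mathbf{L}_{G^{\mathrm u}}\preceq w_{\mathrm{max}}\mathbf{L}_{K_d}=w_{\mathrm{max}}(d\mathbf{I}-\mathbf{J})\preceq w_{\mathrm{max}}\,d\,\mathbf{I}$, where the hypothesis that $G$ is simple is used so that $G^{\mathrm u}\subseteq K_d$; thus $\|\mathbf{A}\|_2^2\le w_{\mathrm{max}}d$. For the lower bound, $G_i$ is again simple, $\mathbf{L}_{G_i}\succeq w_{\mathrm{min}}\mathbf{L}_{G_i^{\mathrm u}}$, and $\mathbf{L}_{G_i}$ and $\mathbf{L}_{G_i^{\mathrm u}}$ share the same kernel, so $\lambda_{\min}^{\neq 0}(\mathbf{L}_{G_i})\ge w_{\mathrm{min}}\,\lambda_{\min}^{\neq 0}(\mathbf{L}_{G_i^{\mathrm u}})$; it therefore remains to prove $\lambda_{\min}^{\neq 0}(\mathbf{L}_H)=\Omega(1/d^2)$ for an arbitrary simple graph $H$ on at most $d$ vertices. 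Pick a connected component $C$ of $H$ whose restricted Laplacian has the smallest positive eigenvalue $\lambda=\lambda_{\min}^{\neq 0}(\mathbf{L}_H)$, let $k\le d$ be its number of vertices, and let $\mathbf{x}$ (supported on $C$) be a corresponding unit eigenvector; since $\mathbf{x}$ is a unit vector on $k$ coordinates with $\mathbf{x}\perp\mathbf{1}$, some two vertices $u,v$ satisfy $(x_u-x_v)^2\ge 1/k$. Keeping only the $\lambda$-term in the spectral expansion of $\mathbf{L}_H^{+}$ gives $(x_u-x_v)^2\le \lambda\,(\mathbf{e}_u-\mathbf{e}_v)^\top\mathbf{L}_H^{+}(\mathbf{e}_u-\mathbf{e}_v)=\lambda\,R_{\mathrm{eff}}(u,v)\le \lambda\,(k-1)$, where $R_{\mathrm{eff}}(u,v)$ is at most the resistance of any $u$--$v$ path and hence at most $k-1$. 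Thus $\lambda\ge 1/(k(k-1))\ge 1/d^2$, and so $\min_i\sigma_{\min}(\mathbf{A}_i)^2\ge w_{\mathrm{min}}/d^2$.

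Combining the two bounds gives $\mu(\mathbf{A})\le w_{\mathrm{max}}d/(w_{\mathrm{min}}/d^2)=(w_{\mathrm{max}}/w_{\mathrm{min}})\,d^{3}$, hence $\log\mu(\mathbf{A})=\mathrm{O}\!\left(\log(w_{\mathrm{max}}/w_{\mathrm{min}})+\log d\right)$, and substituting this into \cref{theorem:approximation_size_in_online_row_sampling} yields the claimed approximation size. The one genuinely nontrivial ingredient is the uniform lower bound on $\sigma_{\min}(\mathbf{A}_i)$ over \emph{all} prefixes: a prefix $G_i$ can be arbitrarily sparse (a long path, for instance), so no estimate depending only on $G$ suffices, and disconnected prefixes must be handled as well — both are dealt with by the componentwise Fiedler-value estimate above. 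Everything else is a direct application of \cref{theorem:approximation_size_in_online_row_sampling}.
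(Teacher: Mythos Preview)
Your proof is correct and follows essentially the same approach as the paper: bound $\mu(\mathbf{A})$ via $\lambda_{\max}(\mathbf{L}_G)=\mathrm{O}(d\,w_{\mathrm{max}})$ and $\lambda_{\min}^{\neq 0}(\mathbf{L}_{G_i})=\Omega(w_{\mathrm{min}}/d^2)$, then invoke \cref{theorem:approximation_size_in_online_row_sampling}. The paper simply cites the Fiedler-value lower bound from \cite{DBLP:journals/siammax/SpielmanT14} (Lemma~6.1), whereas you give a self-contained proof via effective resistance and are more explicit that the bound must hold uniformly over all prefixes $G_i$; this extra care is welcome but does not change the argument.
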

\begin{proof}
    The largest eigenvalue of the Laplacian matrix is $\mathrm{O}(d \omega_{\mathrm{max}})$. It is known that the smallest non-zero eigenvalue of the Laplacian matrix is $\mathrm{\Omega}(\omega_{\mathrm{min}}/ d^2)$ (from \cite{DBLP:journals/siammax/SpielmanT14}, Lemma 6.1). 
    By applying these facts to \cref{theorem:approximation_size_in_online_row_sampling}, we obtain $m = \mathrm{O}\left( \left( r \log \left(w_\mathrm{max} / w_\mathrm{min}\right) + r + \log d \right) \epsilon^{-2} \log d \right)$.
\end{proof}

Regarding a lower bound on the approximation size in the online setting, we show that in order to construct a $(1 \pm \epsilon)$-spectral approximation $\Omega \left(r \epsilon^{-2} \log \mu (\A) + r\epsilon^{-2} \right)$ rows have to be sampled in the worst case.
This can be shown in the same way as Theorem 5.1 in \cite{DBLP:conf/approx/CohenMP16}. We omit the proof.

\begin{theorem}\label{thm:onlein_lb}
  Let $\epsilon \in (0, 1)$ be an error parameter. Let $R$ be an algorithm that samples rows in the online setting and returns a $(1 \pm \epsilon)$-spectral approximation with probability at least 1/2.
  Then there exists  a matrix $\mathbf{A}$ of rank $r$ in $\mathbb{R}^{n \times d}$ such that $R$ samples $\Omega \left( r \epsilon^{-2} \log \mu (\A) + r\epsilon^{-2} \right)$ rows in expectation.
\end{theorem}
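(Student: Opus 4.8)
The plan is to follow the lower-bound construction behind Theorem 5.1 of Cohen \emph{et al.}~\cite{DBLP:conf/approx/CohenMP16}, building a single matrix $\mathbf{A}$ out of geometrically rescaled copies of a hard offline instance. Fix a base $C$ (a sufficiently large constant) and an integer $L$, and let $\mathbf{B}\in\mathbb{R}^{\binom{r+1}{2}\times(r+1)}$ be the incidence matrix of the complete graph $K_{r+1}$; it has rank exactly $r$, and by the worst-case lower bound for the complete graph~\cite{DBLP:conf/stoc/BatsonSS09} every $(1\pm\epsilon)$-spectral approximation of $\mathbf{B}$ has $\Omega(r\epsilon^{-2})$ rows. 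Define $\mathbf{A}$ to be the vertical concatenation of $L$ \emph{blocks}, where block $\ell$ consists of the rows of $C^{\ell}\mathbf{B}$ (ordered so that a spanning path is formed early), presented in the stream after blocks $1,\dots,\ell-1$. Then $\mathbf{A}$ has rank $r$, and a direct computation gives $\|\mathbf{A}\|_2^2=\Theta(C^{2L}r)$ and $\min_{i}\sigma_{\min}(\mathbf{A}_i)^2=\Theta(C^{2}/\mathrm{poly}(r))$ (the minimum being attained on a short prefix of block $1$), so that $\log\mu(\mathbf{A})=\Theta(L)$ as soon as $L=\Omega(\log r)$. Choosing $L=\Theta(\log\mu(\mathbf{A}))$ for the target value of $\mu$ then realizes the desired instance, while for small targets of $\mu$ block $L$ alone already forces $\Omega(r\epsilon^{-2})$ samples.

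The heart of the argument is to use the online nature of $R$ together with its correctness on \emph{every prefix} of the stream. For each $\ell$, the prefix $\mathbf{A}^{(\ell)}$ formed by blocks $1,\dots,\ell$ is itself a legal input; since $R$'s decisions on those blocks depend only on what it has already seen, they are distributed identically (under a coupling of the internal randomness) whether the true input is $\mathbf{A}^{(\ell)}$ or $\mathbf{A}$. As $R$ returns a $(1\pm\epsilon)$-spectral approximation of $\mathbf{A}^{(\ell)}$ with probability at least $1/2$ and $(\mathbf{A}^{(\ell)})^\top\mathbf{A}^{(\ell)}=\bigl(\sum_{j\le\ell}C^{2j}\bigr)\mathbf{B}^\top\mathbf{B}=(1\pm O(C^{-2}))\,C^{2\ell}\mathbf{B}^\top\mathbf{B}$, the rows sampled from blocks $1,\dots,\ell$ must, with probability at least $1/2$, sum (as $\sum\widetilde{\mathbf{a}}\widetilde{\mathbf{a}}^\top$) to a $(1\pm\epsilon)$-approximation of $C^{2\ell}\mathbf{B}^\top\mathbf{B}$ up to the $1+O(C^{-2})$ factor. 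The key claim is that the rows sampled inside block $\ell$ \emph{alone} already form a $(1\pm O(\epsilon))$-approximation of $C^{2\ell}\mathbf{B}^\top\mathbf{B}$: the rows from blocks $j<\ell$ are ``wrongly scaled'', their expected total contribution $\sum_{j<\ell}C^{2j}\mathbf{B}^\top\mathbf{B}$ is only an $O(C^{-2})$ fraction of $C^{2\ell}\mathbf{B}^\top\mathbf{B}$, and a matrix-martingale concentration argument (as in Cohen \emph{et al.}, using that $R$ cannot store a row with a wildly large weight without violating correctness on an earlier prefix) shows this contribution stays close to its small mean in the positive-semidefinite order; subtracting it from a $(1\pm\epsilon)(1+O(C^{-2}))$-approximation of $C^{2\ell}\mathbf{B}^\top\mathbf{B}$ still leaves, after rescaling by a constant, a $(1\pm O(\epsilon))$-approximation of $C^{2\ell}\mathbf{B}^\top\mathbf{B}$. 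The offline lower bound for the complete graph then forces $\Omega(r\epsilon^{-2})$ sampled rows in block $\ell$ on an event of constant probability, hence $\mathbb{E}[N_\ell]=\Omega(r\epsilon^{-2})$ where $N_\ell$ is the number of rows $R$ samples within block $\ell$.

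Finally, since under the coupling $N_\ell$ is unchanged whether the input is $\mathbf{A}$ or any longer stream, the total expected number of rows $R$ returns on input $\mathbf{A}$ equals $\sum_{\ell=1}^{L}\mathbb{E}[N_\ell]=\Omega(r\epsilon^{-2}L)=\Omega(r\epsilon^{-2}\log\mu(\mathbf{A}))$, which together with the $\Omega(r\epsilon^{-2})$ term yields the stated bound and shows (using Theorem~5.1 of Cohen \emph{et al.} for the construction itself) that the estimate of \cref{theorem:approximation_size_in_online_row_sampling2} is asymptotically tight. The main obstacle is precisely the probabilistic bookkeeping in the middle step: because $R$ is only guaranteed to succeed with probability $1/2$ on each prefix, one cannot simply intersect the success events of consecutive prefixes, and controlling the ``wrongly scaled'' earlier blocks requires showing their accumulated contribution concentrates around its (small) mean in the positive-semidefinite order—not merely in trace—under adaptively chosen, a priori unbounded sampling probabilities. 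This is exactly the delicate part of the Cohen \emph{et al.} analysis, and it is why we follow their Theorem~5.1 and omit the remaining details.
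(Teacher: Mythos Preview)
Your proposal takes essentially the same approach as the paper: both defer to Theorem~5.1 of Cohen~\emph{et al.}~\cite{DBLP:conf/approx/CohenMP16}, stacking geometrically rescaled copies of a hard offline instance (the incidence matrix of $K_{r+1}$) and arguing that each of the $L=\Theta(\log\mu(\mathbf{A}))$ blocks forces $\Omega(r\epsilon^{-2})$ samples via the offline lower bound. The paper in fact omits the proof entirely with that citation, so your sketch of the construction, the computation $\log\mu(\mathbf{A})=\Theta(L)$, and the per-block coupling argument already goes beyond what the paper provides; your candid acknowledgment that the ``wrongly scaled earlier blocks'' step is the delicate point (and your deferral to Cohen~\emph{et al.}\ for it) matches the paper's own stance.
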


\section{Fast $(1 \pm \epsilon)$-approximation in the online random order setting}
\label{section:main}

In Sections~\ref{section:main} and \ref{section:optimality}, we focus on the online random order setting.
Recall that, in the online random order setting, 
we are given a family of row vectors $\mathbf{X} = \{ \mathbf{x}^\top_1, \mathbf{x}^\top_2, \dots, \mathbf{x}^\top_n \}$ in $\mathbb{R}^d$.
An input matrix $\A$ is chosen from a discrete uniform distribution $\mathcal{A}(\mathbf{X})$ whose element is a matrix obtained by permuting row vectors of $\mathbf{X}$, and is given as a stream of rows in the online setting.

In Section \ref{subsection:scaled_sampling}, we present a simpler algorithm~(\cref{algorithm:scaled_sampling}) such that it returns a $(1 \pm \epsilon)$-spectral approximation with approximation size $\mathrm{O}(d\epsilon^{-2}\log n\log d)$, but it runs in less efficient time and space complexity.
In \cref{subsection:less_memory}, with further observations, we develop \cref{algorithm:scaled_sampling} into a less running time and space algorithm, which gives \cref{theorem:improved_scaled_sampling}.
Moreover, in \cref{subsection:less_runtime_memory}, we reduce the space complexity with a semi-streaming algorithm.

\subsection{Simple Scaled Sampling Algorithm}
\label{subsection:scaled_sampling}
We define some notations used in \cref{algorithm:scaled_sampling}. Define $K \overset{\mathrm{def}}{=} d \log d$. For convenience, we may assume that there exists $\alpha \in \mathbb{N}$ such that $n = (2^{\alpha + 1} - 1)K$. 
Thus $\alpha = \log_2 (n / K + 1) - 1$.
We also assume that $c_1 \exp \left( d \right) > n$ for some fixed positive constant $c_1$. 

In \cref{algorithm:scaled_sampling}, we divide $\A$ into blocks.
For $i\in\{0,1,\dots, \alpha\}$, the \emph{$i$-th block} is a matrix composed of $2^i K$ consecutive rows from $\mathbf{a}_{(2^i-1)K+1}$ to $\mathbf{a}_{(2^{i+1}-1)K}$.
Moreover, we denote the matrix consisting of the $0,\dots,i$-th blocks by $\mathbf{M}_i$. 
Similarly to \cref{algorithm:online_row_sampling}, we denote by $\tA_j$ a matrix we have sampled until the $j$-th row arrives, and the output is $\tA_n$.
Additionally, in the $i$-th block,
we keep a $(1 \pm \epsilon)$-spectral approximation for $\mathbf{M}_{i-1}$, denoted by $\widetilde{\mathbf{M}}_{i-1}$.
In the $i$-th block, we compute $\tau_j^{\widetilde{\mathbf{M}}_{i-1}}(\A)$ for a row $j$, and sample the $j$-th row based on it.
Thus we do not need to compute the Moore-Penrose pseudo-inverse each time, but need the Moore-Penrose pseudo-inverse of only one matrix $\widetilde{\mathbf{M}}^\top_{i-1}\widetilde{\mathbf{M}}_{i-1}$ for the $i$-th block.

\begin{algorithm}[htbp]
\caption{$\mathsf{Scaled Sampling}\left(\mathbf{A}, \epsilon \right)$}
\label{algorithm:scaled_sampling}
\begin{algorithmic}[1]
\STATE{{\bf Input:} a matrix $\mathbf{A} \in \mathbb{R}^{n \times d}$, an error parameter $\epsilon \in (0,1/2]$.}
\STATE{{\bf Output:} a $(1 \pm \epsilon)$-spectral approximation for $\mathbf{A}$.}
\STATE{Define $K = d \log d$, $c = 6 \epsilon^{-2} \log d$ and $\alpha = \log_2 (n / K + 1) - 1$.}
\STATE{$\widetilde{\mathbf{A}}_0 \leftarrow \mathbf{O}$.}
\FOR{$j = 1, \dots, K$}
    \STATE{$\widetilde{\mathbf{A}}_j \leftarrow \begin{pmatrix} \widetilde{\mathbf{A}}_{j-1} \\ \mathbf{a}_j^\top \end{pmatrix}$.}
\ENDFOR
\FOR{$i = 1, \dots, \alpha $}
  \STATE{$\widetilde{\mathbf{M}}_{i-1} \leftarrow \widetilde{\mathbf{A}}_{\left(2^i -1 \right) K}$.}
  \FOR{$j = \left( 2^i -1 \right) K + 1, \dots, \left(2^{i+1} -1 \right) K$}
    \STATE{$\tilde{l}_j \leftarrow \min\left((1+\epsilon) \tau_j^{\widetilde{\mathbf{M}}_{i-1}}\left( \mathbf{A} \right), 1\right)$.}
    \STATE{$p_j \leftarrow \min \left( c\tilde{l}_j, 1 \right)$.}
    \STATE{$\widetilde{\mathbf{A}}_j \leftarrow
    \left\{
        \begin{array}{cl}
            \begin{pmatrix} \widetilde{\mathbf{A}}_{j-1} \\ \mathbf{a}_j^\top /\sqrt{p_j} \end{pmatrix} & \text{with probability $p_j$,} \\
            \widetilde{\mathbf{A}}_{j-1} & \text{otherwise.} \\
        \end{array}
    \right. $}
  \ENDFOR
\ENDFOR
\RETURN $\widetilde{\mathbf{A}}_n.$
\end{algorithmic}
\end{algorithm}




\cref{algorithm:scaled_sampling} satisfies the following.

\begin{theorem}
\label{theorem:scaled_sampling}
    Let $\epsilon \in (0, 1/2]$ be an error parameter, and $\X$ be a family of $n$ row vectors in $\mathbb{R}^d$.
    Then $\A \sim \mathcal{A}(\X)$ satisfies the following with high probability:
    \cref{algorithm:scaled_sampling} returns a $(1 \pm \epsilon)$-spectral approximation for $\mathbf{A}$ with $\mathrm{O}\left( d \epsilon^{-2} \log n \log d \right)$ rows with high probability.
    It consumes $\mathrm{O}\left( \mathrm{nnz}(\mathbf{A}) \log n + \left( d^\omega + d^2 \log n \right) \epsilon^{-2} \log^2 n \log d \right)$ time and stores $\mathrm{O}\left( d \epsilon^{-2} \log n \log d \right)$ rows.
\end{theorem}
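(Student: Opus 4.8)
The plan is to verify the three claims of the theorem separately, mirroring the structure already used for \cref{algorithm:online_row_sampling} but now exploiting the block structure and the random order. For correctness, i.e.\ that $\widetilde{\mathbf{A}}_n$ is a $(1\pm\epsilon)$-spectral approximation, I would set up a matrix martingale exactly as in Cohen \emph{et al.}~\cite{DBLP:conf/approx/CohenMP16}: define $\mathbf{Y}_j = \left(\mathbf{A}_n^\top\mathbf{A}_n\right)^{+/2}\left(\widetilde{\mathbf{A}}_j^\top\widetilde{\mathbf{A}}_j - \mathbf{A}_j^\top\mathbf{A}_j\right)\left(\mathbf{A}_n^\top\mathbf{A}_n\right)^{+/2}$ (restricted to the image of $\mathbf{A}_n$), show it is a martingale with respect to the natural filtration, bound the per-step increments using $\widetilde{l}_j \le 1$ and $p_j \ge c\,\widetilde{l}_j$, and apply a matrix Freedman-type inequality. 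The one subtlety, and where care is needed, is that the sampling probability $p_j$ in the $i$-th block is computed from $\widetilde{\mathbf{M}}_{i-1}$ rather than from $\widetilde{\mathbf{A}}_{j-1}$; I would argue that as long as $\widetilde{\mathbf{M}}_{i-1}$ is itself a $(1\pm\epsilon)$-spectral approximation of $\mathbf{M}_{i-1}$ (which holds inductively over blocks, since $\widetilde{\mathbf{M}}_{i-1}=\widetilde{\mathbf{A}}_{(2^i-1)K}$), the score $\tau_j^{\widetilde{\mathbf{M}}_{i-1}}(\mathbf{A})$ still dominates $\tau_j(\mathbf{A})$ up to the $(1+\epsilon)$ slack, so $\widetilde{\mathbf{l}}$ remains a leverage-score overestimate and the martingale argument goes through with the constant $c=6\epsilon^{-2}\log d$ absorbing the extra factor. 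Note this part holds for \emph{every} $\mathbf{A}\sim\mathcal{A}(\mathbf{X})$, not just almost all permutations.

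For the approximation-size bound I would invoke \cref{theorem:row_sampling} with $\theta=\epsilon^{-2}$, so it suffices to bound $\|\widetilde{\mathbf{l}}\|_1=\sum_j \widetilde{l}_j$ by $\mathrm{O}(d\log n)$ \emph{with high probability over the random permutation}. Here is where the random order enters: within block $i$, each of the $2^iK$ rows is (marginally) a uniformly random row of $\mathbf{M}_i$, and $\widetilde{\mathbf{M}}_{i-1}$ is a spectral approximation of $\mathbf{M}_{i-1}$, which contains a constant fraction of the rows of $\mathbf{M}_i$. I would cite the result of~\cite{DBLP:conf/innovations/CohenLMMPS15} that uniformly sampling $\Theta(d\log d)=\Theta(K)$ rows yields constant-factor leverage-score overestimates; since $\mathbf{M}_{i-1}$ consists of $(2^i-1)K \ge K$ rows in random order, $\widetilde{\mathbf{M}}_{i-1}$ approximates $\mathbf{M}_{i-1}$ well enough that $\tau_j^{\widetilde{\mathbf{M}}_{i-1}}(\mathbf{A}) = \mathrm{O}\bigl(\tau_j(\mathbf{M}_i)\bigr)$ for the rows $j$ of block $i$. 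Then $\sum_{j\in\text{block }i}\widetilde{l}_j = \mathrm{O}\bigl(\sum_{j\in\text{block }i}\tau_j(\mathbf{M}_i)\bigr) \le \mathrm{O}\bigl(\sum_{\text{all }j}\tau_j(\mathbf{M}_i)\bigr)=\mathrm{O}(d)$ by \cref{lemma:property_leverage_score}(ii); summing over the $\alpha+1=\mathrm{O}(\log n)$ blocks gives $\|\widetilde{\mathbf{l}}\|_1=\mathrm{O}(d\log n)$, hence approximation size $\mathrm{O}(d\epsilon^{-2}\log n\log d)$. The handling of the first $K$ rows (block $0$), which are kept verbatim, contributes only $K=\mathrm{O}(d\log d)$ rows and is negligible.

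For the time and space bounds: there are $\alpha+1=\mathrm{O}(\log n)$ blocks; in each block we compute $(\widetilde{\mathbf{M}}_{i-1}^\top\widetilde{\mathbf{M}}_{i-1})^+$ once, which by the approximation-size bound is a product/pseudo-inverse of at most $\mathrm{O}(d\epsilon^{-2}\log n\log d)$ rows in $\mathbb{R}^d$, costing $\mathrm{O}(d^\omega + d^2\epsilon^{-2}\log n\log d)$ time; multiplied over $\mathrm{O}(\log n)$ blocks and accounting for recomputation as $\widetilde{\mathbf{M}}_{i-1}$ grows gives the $\mathrm{O}\bigl((d^\omega+d^2\log n)\epsilon^{-2}\log^2 n\log d\bigr)$ term. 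Evaluating each relative leverage score via \cref{lemma:relative_leverage_score_tractable} with a precomputed pseudo-inverse costs $\mathrm{O}(\mathrm{nnz}(\mathbf{a}_j))$ per row (a matrix-vector product against the $d\times d$ pseudo-inverse, or $\mathrm{O}(d^2)$ dense), so over all $n$ rows and with possible recomputations this is $\mathrm{O}(\mathrm{nnz}(\mathbf{A})\log n)$; I would fold the dense $\mathrm{O}(d^2)$-per-row cost into the already-stated terms. Space is dominated by storing $\widetilde{\mathbf{A}}_n$ and the current pseudo-inverse, i.e.\ $\mathrm{O}(d\epsilon^{-2}\log n\log d)$ rows. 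The main obstacle I anticipate is making rigorous the claim that $\widetilde{\mathbf{M}}_{i-1}$ (a \emph{weighted, spectrally-approximate} submatrix, produced by an adaptive process) gives good leverage-score overestimates for $\mathbf{M}_i$: one must combine the spectral-approximation guarantee from the martingale part with the uniform-sampling leverage-score result, and argue that the adaptivity and weighting do not break the uniform-sampling analysis — this is exactly the place where the ``careful analysis'' alluded to in the introduction is needed, and where a naive application of~\cite{DBLP:conf/innovations/CohenLMMPS15} would be insufficient.
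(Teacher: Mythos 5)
Your overall architecture (martingale for correctness, uniform-sampling leverage-score estimation for the size bound, one pseudo-inverse per block for the running time) matches the paper's, and the correctness part is essentially right: the paper handles the "score computed from $\widetilde{\mathbf{M}}_{i-1}$ rather than $\widetilde{\mathbf{A}}_{j-1}$" issue even more simply than you propose, by observing that $\widetilde{\mathbf{M}}_{i-1}$ is a prefix of $\widetilde{\mathbf{A}}_{j-1}$, so $\widetilde{\mathbf{M}}_{i-1}^\top\widetilde{\mathbf{M}}_{i-1}\preceq\widetilde{\mathbf{A}}_{j-1}^\top\widetilde{\mathbf{A}}_{j-1}$ and the relative score can only increase; combined with the stopped-martingale bound $\widetilde{\mathbf{A}}_{j-1}^\top\widetilde{\mathbf{A}}_{j-1}\preceq\mathbf{A}_{j-1}^\top\mathbf{A}_{j-1}+\epsilon\,\mathbf{A}^\top\mathbf{A}$ this gives $\widetilde{l}_j\geq\tau_j(\mathbf{A})$ for every permutation, as you want. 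However, your size bound rests on a step that is false as stated: \cref{theorem:uniform_sampling_leverage_score_overestimate} does \emph{not} give pointwise constant-factor overestimates $\tau_j^{\widetilde{\mathbf{M}}_{i-1}}(\mathbf{A})=\mathrm{O}(\tau_j(\mathbf{M}_i))$ — a uniform sample can miss a direction entirely, forcing the estimate to $1$ while the true leverage score is tiny — it only controls the $\ell_1$ norm of the overestimates. The resolution (which is exactly the ``careful analysis'' you anticipate but do not supply) is to decouple the two sources of error: apply the uniform-sampling theorem to the \emph{exact} prefix $\mathbf{M}_{i-1}$, which by the random order is a uniform sample of about half the rows of $\mathbf{M}_i$, obtaining $\sum_{j}\widehat{\tau}_{i,j}=\mathrm{O}\bigl(\tfrac{(2^{i+1}-1)K}{(2^i-1)K}\,d\bigr)=\mathrm{O}(d)$ per block (note it is the ratio $\approx 2$ that matters here, not the absolute size $K$ of the sample); then use the spectral-approximation guarantee $\widetilde{\mathbf{M}}_{i-1}\approx\mathbf{M}_{i-1}$ from the martingale part to transfer this pointwise, $\widetilde{l}_j\leq 3\,\widehat{\tau}_{i,j}$. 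Summing over the $\mathrm{O}(\log n)$ blocks and invoking \cref{theorem:row_sampling} then gives the stated size.

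The second genuine gap is in the running time. Evaluating $\mathbf{a}_j^\top\bigl(\widetilde{\mathbf{M}}_{i-1}^\top\widetilde{\mathbf{M}}_{i-1}\bigr)^+\mathbf{a}_j$ against a precomputed dense $d\times d$ pseudo-inverse costs $\Theta\bigl(d\cdot\mathrm{nnz}(\mathbf{a}_j)\bigr)$ per row, not $\mathrm{O}(\mathrm{nnz}(\mathbf{a}_j))$, so your accounting yields $\Theta(d\cdot\mathrm{nnz}(\mathbf{A}))$ total, and the ``dense $\mathrm{O}(d^2)$ per row'' term is $n d^2$, which cannot be folded into $\mathrm{O}\bigl((d^\omega+d^2\log n)\epsilon^{-2}\log^2 n\log d\bigr)$ when $n$ is large. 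To reach the claimed $\mathrm{O}(\mathrm{nnz}(\mathbf{A})\log n)$ term you need the Johnson--Lindenstrauss sketch: once per block compute $\mathbf{N}_i=\mathbf{\Pi}\,\widetilde{\mathbf{M}}_{i-1}\bigl(\widetilde{\mathbf{M}}_{i-1}^\top\widetilde{\mathbf{M}}_{i-1}\bigr)^+$ for a random $\mathrm{O}(\log n)$-row projection $\mathbf{\Pi}$, and then approximate $\tau_j^{\widetilde{\mathbf{M}}_{i-1}}(\mathbf{A})\approx\|\mathbf{N}_i\mathbf{a}_j\|_2^2$ in $\mathrm{O}(\mathrm{nnz}(\mathbf{a}_j)\log n)$ time per row. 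Without this ingredient the time bound in the statement is not met.
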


In what follows, we prove that \cref{algorithm:scaled_sampling} satisfies \cref{theorem:scaled_sampling}, that is, it returns a $(1 \pm \epsilon)$-spectral approximation for $\mathbf{A}_i$, the approximation size is small, and the algorithm runs in desired time and space, respectively.

\subsubsection{Spectral Approximation}

We prove that, with high probability, $\widetilde{\mathbf{l}}= (\widetilde{l}_i)$ is a leverage score overestimate and the algorithm returns a $(1 \pm \epsilon)$-spectral approximation for $\mathbf{A}$.
This holds independently of row permutations, that is, this holds for any $\A\sim \mathcal{A}(\X)$.
The proof uses a matrix martingale similarly to proving \cref{lemma:analysis_online_setting} and Lemma 3.3 in \cite{DBLP:conf/approx/CohenMP16}.

\begin{definition} [Matrix Martingale]
\label{definition:matrix_martingale}
     Let $\{ \mathbf{Y}_j : j = 0,1,2,\dots, n\}$ be a discrete-time stochastic process whose values are matrices of finite dimension. Then $\{\mathbf{Y}_j\}$ is a \emph{matrix martingale} if the following two conditions hold:
    \begin{itemize}
        \item[\rm (i)] $\mathbb{E} \left[ \| \mathbf{Y}_j \|_2 \right] < +\infty$,
        \item[\rm (ii)] $\mathbb{E}_{j-1} \left[ \mathbf{Y}_j \right] = \mathbf{Y}_j$.
    \end{itemize}
\end{definition}

We introduce an exponential tail bound for a matrix martingale which ensures that the final state is well-bounded under small variation.

\begin{theorem} [Freedman's Inequality for Matrix Martingale \cite{tropp11}] \label{theorem:martingale}
  Let $\{ \mathbf{Y}_j : j = 0,1,2,\dots, n\}$ be a matrix martingale with the sequence of self-adjoint $d \times d$ matrices.
  Define $\D_j \overset{\mathrm{def}}{=} \Y_j - \Y_{j-1}$ for $j=1,\dots, n$ as the difference sequence of $\{\Y_j\}$. 
  Assume that $\{\D_j\}$ is uniformly bounded as 
    $\| \mathbf{D}_j \|_2 \leq R \ \mathrm{a.s.} \ \mathrm{for}\ j=1,2,\dots,n$.
  We define $\mathbf{W}_j$ as the predictable variation process of the martingale:
  \begin{displaymath}
    \mathbf{W}_j \overset{\mathrm{def}}{=} \sum_{\ell=1}^{j} \mathbb{E}_{\ell-1} \left[ \mathbf{D}_\ell^2 \right] \  \mathrm{for} \ j=1,2,\dots,n.
  \end{displaymath}
  Then, for all $\epsilon \geq 0$ and $\sigma^2 > 0$, it holds that
  \begin{displaymath}
    \mathbb{P} \left[ \| \mathbf{Y}_n \|_2 \geq \epsilon \ and \ \| \mathbf{W}_n \|_2 \leq \sigma^2 \right] \leq d \cdot \exp \left( {- \frac{\epsilon^2/2}{\sigma^2 + R\epsilon/3}} \right).
  \end{displaymath}
\end{theorem}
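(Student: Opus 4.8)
The plan is to reproduce Tropp's argument, whose engine is the matrix Laplace transform method combined with Lieb's concavity theorem. Without loss of generality take $\Y_0=\mathbf{O}$, so that $\Y_n=\sum_{\ell=1}^n\D_\ell$ with $\mathbb{E}_{\ell-1}[\D_\ell]=\mathbf{O}$, and fix a parameter $\theta>0$. The first step is a bound on the conditional matrix cumulant generating function. Since every eigenvalue of $\D_\ell$ lies in $[-R,R]$ a.s., the scalar inequality $e^{\theta y}\le 1+\theta y+g(\theta)y^2$ for $|y|\le R$, where $g(\theta)\overset{\mathrm{def}}{=}(e^{\theta R}-\theta R-1)/R^2\ge 0$, transfers eigenvalue-wise to $e^{\theta\D_\ell}\preceq \mathbf{I}+\theta\D_\ell+g(\theta)\D_\ell^2$; taking $\mathbb{E}_{\ell-1}$ and using $\mathbb{E}_{\ell-1}[\D_\ell]=\mathbf{O}$ gives $\mathbb{E}_{\ell-1}[e^{\theta\D_\ell}]\preceq \mathbf{I}+g(\theta)\,\mathbb{E}_{\ell-1}[\D_\ell^2]$, and then $\log(\mathbf{I}+\X)\preceq\X$ yields $\log\mathbb{E}_{\ell-1}[e^{\theta\D_\ell}]\preceq g(\theta)\,\mathbb{E}_{\ell-1}[\D_\ell^2]$.

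Next I would build a supermartingale. Writing $\mathbf{S}_j=\sum_{\ell\le j}\D_\ell$, define for the fixed $\theta$
\[
G_j=\mathrm{tr}\exp\!\Big(\theta\mathbf{S}_j-\textstyle\sum_{\ell=1}^{j}\log\mathbb{E}_{\ell-1}[e^{\theta\D_\ell}]\Big).
\]
Decompose the exponent at step $j$ as an $\mathcal{F}_{j-1}$-measurable matrix $\mathbf{H}$ plus $\theta\D_j-\log\mathbb{E}_{j-1}[e^{\theta\D_j}]$. Lieb's concavity theorem states that $\X\mapsto\mathrm{tr}\exp(\mathbf{H}'+\log\X)$ is concave on positive definite $\X$ for fixed self-adjoint $\mathbf{H}'$; applying Jensen's inequality to this map with $\mathbf{H}'=\mathbf{H}-\log\mathbb{E}_{j-1}[e^{\theta\D_j}]$ and $\X=e^{\theta\D_j}$ gives $\mathbb{E}_{j-1}[G_j]\le \mathrm{tr}\exp(\mathbf{H})=G_{j-1}$. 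Since $G_0=\mathrm{tr}\,\mathbf{I}=d$, iterating yields $\mathbb{E}[G_n]\le d$. Combining this with the cgf bound from the first step and the monotonicity of $\mathrm{tr}\exp$ under $\preceq$ (a consequence of \cref{lemma:eigenvalue_perturbation}), we get $\mathbb{E}\big[\mathrm{tr}\exp\big(\theta\mathbf{S}_n-g(\theta)\mathbf{W}_n\big)\big]\le\mathbb{E}[G_n]\le d$, where $\mathbf{W}_n=\sum_\ell\mathbb{E}_{\ell-1}[\D_\ell^2]$.

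The third step converts this into a tail bound. On the event $\{\lambda_{\max}(\Y_n)\ge\epsilon\}\cap\{\|\mathbf{W}_n\|_2\le\sigma^2\}$ we have $\mathbf{W}_n\preceq\sigma^2\mathbf{I}$, hence $\theta\mathbf{S}_n-g(\theta)\mathbf{W}_n\succeq\theta\mathbf{S}_n-g(\theta)\sigma^2\mathbf{I}$, so the largest eigenvalue of $\theta\mathbf{S}_n-g(\theta)\mathbf{W}_n$ is at least $\theta\epsilon-g(\theta)\sigma^2$ and therefore $\mathrm{tr}\exp(\theta\mathbf{S}_n-g(\theta)\mathbf{W}_n)\ge\exp(\theta\epsilon-g(\theta)\sigma^2)$ there. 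Markov's inequality then gives, for every $\theta>0$,
\[
\mathbb{P}\big[\lambda_{\max}(\Y_n)\ge\epsilon\ \text{and}\ \|\mathbf{W}_n\|_2\le\sigma^2\big]\le d\exp\!\big(g(\theta)\sigma^2-\theta\epsilon\big).
\]
Finally I would optimize the exponent: using $g(\theta)\le\tfrac{\theta^2/2}{1-R\theta/3}$ for $0<\theta<3/R$ and choosing $\theta=\epsilon/(\sigma^2+R\epsilon/3)$ gives $g(\theta)\sigma^2-\theta\epsilon\le-\tfrac{\epsilon^2/2}{\sigma^2+R\epsilon/3}$, which is the claimed bound. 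The two-sided statement for $\|\Y_n\|_2=\max(\lambda_{\max}(\Y_n),\lambda_{\max}(-\Y_n))$ then follows by running the same argument on the martingale $\{-\Y_j\}$, which has the same almost-sure bound $R$ and the same predictable variation $\mathbf{W}_n$.

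I expect the only genuinely nontrivial input to be Lieb's concavity theorem and the bookkeeping needed to see that it delivers exactly the supermartingale inequality $\mathbb{E}_{j-1}[G_j]\le G_{j-1}$; the eigenvalue-wise transfer of the scalar cgf bound to matrices and the closing scalar Bernstein-type optimization of $g(\theta)\sigma^2-\theta\epsilon$ are routine and can be cited.
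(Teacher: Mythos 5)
This theorem is quoted verbatim from Tropp \cite{tropp11}; the paper offers no proof of its own, so there is nothing internal to compare against. Your reconstruction is exactly Tropp's argument — the eigenvalue-wise transfer of the scalar Bernstein bound to get $\log\mathbb{E}_{\ell-1}[e^{\theta\D_\ell}]\preceq g(\theta)\,\mathbb{E}_{\ell-1}[\D_\ell^2]$, the Lieb/Jensen supermartingale $\mathbb{E}_{j-1}[G_j]\le G_{j-1}$ with $G_0=d$, Markov on the trace exponential, and the Bernstein-type optimization of $\theta$ — and each step is correct as written.

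The only quibble is the final symmetrization: a union bound over $\{\Y_j\}$ and $\{-\Y_j\}$ yields the constant $2d$, not $d$, in front of the exponential for the two-sided event $\|\Y_n\|_2\ge\epsilon$ (Tropp's Theorem 1.2 with constant $d$ is the one-sided bound on $\lambda_{\max}(\Y_n)$). The paper's statement inherits this harmless imprecision from Cohen \emph{et al.}, and it is immaterial for any "with high probability" use, but if you state the two-sided version you should either write $2d$ or remark that the factor of $2$ is absorbed.
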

\begin{lemma}
\label{lemma:analysis_random_order_streams}
In \cref{algorithm:scaled_sampling}, for any matrix $\A \in \mathbb{R}^{n \times d}$ the following holds with high probability:
$\widetilde{\mathbf{l}}=(\widetilde{l}_j)$ is a leverage score overestimate, i.e., 
        $\widetilde{l}_j \geq \tau_j(\mathbf{A})$ 
    for $1 \le j \le n$, and $\widetilde{\mathbf{A}}_n$ is a $(1 \pm \epsilon)$-spectral approximation for $\mathbf{A}$.
\end{lemma}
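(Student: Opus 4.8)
The plan is to adapt the matrix-martingale argument used for \cref{lemma:analysis_online_setting} (and Lemma 3.3 of \cite{DBLP:conf/approx/CohenMP16}) to the blocked structure of \cref{algorithm:scaled_sampling}. The key observation is that within the $i$-th block, all the relative leverage scores $\tau_j^{\widetilde{\M}_{i-1}}(\A)$ are computed against a \emph{fixed} matrix $\widetilde{\M}_{i-1}$, which is known before the block begins. So the analysis splits into two layers: (1) assuming $\widetilde{\M}_{i-1}$ is a $(1\pm\epsilon)$-spectral approximation for $\M_{i-1}$, show that sampling in the $i$-th block with $\widetilde{\M}_{i-1}$-relative leverage scores keeps the running matrix a good approximation; and (2) chain these block-wise guarantees together by induction on $i$.

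First, I would set up the martingale. Following the usual approach, normalize by the target quadratic form: let $\mathbf{A}^\top\mathbf{A}$ (or, more carefully, an appropriate positive definite surrogate, since $\mathbf{A}$ may be singular — one restricts to $\mathrm{Im}(\mathbf{A}^\top\mathbf{A})$ and works with the pseudo-inverse) and define $\Y_j = (\mathbf{A}^\top\mathbf{A})^{+/2}\big(\widetilde{\mathbf{A}}_j^\top\widetilde{\mathbf{A}}_j - \mathbf{A}_j^\top\mathbf{A}_j\big)(\mathbf{A}^\top\mathbf{A})^{+/2}$, or a variant thereof. The difference $\D_j = \Y_j - \Y_{j-1}$ is a rank-one matrix supported on the (rescaled) $\mathbf{a}_j$; when $\mathbf{a}_j$ is sampled it contributes $(1/p_j - 1)$ times the rescaled $\mathbf{a}_j\mathbf{a}_j^\top$, otherwise $-\mathbf{a}_j\mathbf{a}_j^\top$ rescaled, and $\mathbb{E}_{j-1}[\D_j] = \mathbf{O}$ so $\{\Y_j\}$ is a genuine martingale. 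The crucial quantitative point is that, because $p_j \ge c\,\widetilde{l}_j \ge c(1+\epsilon)\tau_j^{\widetilde{\M}_{i-1}}(\A)$ (when $p_j<1$), and because $\tau_j^{\widetilde{\M}_{i-1}}(\A)$ is an \emph{overestimate} of $\tau_j(\mathbf{A})$ whenever $\widetilde{\M}_{i-1}$ spectrally dominates a submatrix of $\mathbf{A}$ on the relevant subspace, the spectral norm of each $\D_j$ is bounded (giving the parameter $R$ in Freedman's inequality) and the predictable variation $\mathbf{W}_n$ is controlled by $\sum_j \tau_j(\mathbf{A}) \le r$ up to the $1/c$ factor. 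Plugging $R = \mathrm{O}(\epsilon^2/\log d)$ and $\sigma^2 = \mathrm{O}(\epsilon^2/\log d)$ into \cref{theorem:martingale} with the threshold $\epsilon$ yields a failure probability $d\exp(-\Omega(\log d)) = 1/\mathrm{poly}(d)$; the constant $c = 6\epsilon^{-2}\log d$ is chosen to make this work. Simultaneously, one reads off from $p_j \ge \tau_j(\mathbf{A})\cdot c$ and $\widetilde{l}_j \ge \tau_j^{\widetilde{\M}_{i-1}}(\A) \ge \tau_j(\mathbf{A})$ that $\widetilde{\mathbf{l}}$ is a leverage score overestimate on the event that all the $\widetilde{\M}_{i-1}$ are themselves approximations.

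Second, I would close the induction. The event "$\widetilde{\M}_{i-1}$ is a $(1\pm\epsilon)$-spectral approximation for $\M_{i-1}$" is exactly the event that $\|\Y_{(2^i-1)K}\|_2 \le \epsilon$, which the martingale bound delivers with high probability; and $\widetilde{\M}_{i-1} = \widetilde{\mathbf{A}}_{(2^i-1)K}$ is a submatrix (in terms of row span, after unscaling) of the running sample, so on this event $\tau_j^{\widetilde{\M}_{i-1}}(\A) \ge \tau_j(\mathbf{A})$ holds for the rows of block $i$ — this is the point where we need $\widetilde{\M}_{i-1}$ to dominate, which follows from $\widetilde{\M}_{i-1}^\top\widetilde{\M}_{i-1} \succeq (1-\epsilon)\M_{i-1}^\top\M_{i-1}$ together with monotonicity of relative leverage scores. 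Since there are only $\alpha+1 = \mathrm{O}(\log n)$ blocks and we assumed $c_1\exp(d) > n$ so that $\log n = \mathrm{O}(d)$, a union bound over all blocks (and over the single final martingale bound controlling $\|\Y_n\|_2$) still leaves failure probability $\mathrm{O}(\log n)/\mathrm{poly}(d) = 1/\mathrm{poly}(d)$, which is "with high probability."

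The main obstacle I anticipate is the bookkeeping around singular matrices: $\mathbf{A}$, the blocks $\M_{i-1}$, and the samples $\widetilde{\mathbf{A}}_j$ may all be rank-deficient, so one cannot blithely write $(\mathbf{A}^\top\mathbf{A})^{-1/2}$, and one must verify that relative leverage scores against $\widetilde{\M}_{i-1}$ correctly dominate the true leverage scores even when $\mathbf{a}_j \not\perp \mathrm{Ker}(\widetilde{\M}_{i-1})$ — in that case $\tau_j^{\widetilde{\M}_{i-1}}(\A) = 1 \ge \tau_j(\mathbf{A})$ trivially (by \cref{lemma:relative_leverage_score_tractable} and \cref{lemma:property_leverage_score}(i)), so the sample always includes such rows with probability $1$, which actually simplifies matters: the rank of $\widetilde{\mathbf{A}}_j$ only grows when forced to, mirroring \cref{corollary:analysis_online_setting}. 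The second subtlety is that the relative leverage score in block $i$ is measured against $\widetilde{\M}_{i-1}$, which is a \emph{random} matrix depending on earlier coin flips; this is handled by conditioning on the filtration up to the start of block $i$ and noting that, conditioned on that, the block-$i$ scores are deterministic functions of $\widetilde{\M}_{i-1}$ — the same "careful analysis due to the dependence on previous samples" flagged in the introduction. Once these points are nailed down, the Freedman bound applies verbatim and the lemma follows.
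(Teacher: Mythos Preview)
Your martingale setup and your use of Freedman's inequality match the paper exactly, and your handling of the singular case via \cref{lemma:relative_leverage_score_tractable} is right. But your two-layer block-induction structure is both more complicated than what the paper does and contains a concrete error.

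The error is in your identification of events. You write that ``$\widetilde{\mathbf{M}}_{i-1}$ is a $(1\pm\epsilon)$-spectral approximation for $\mathbf{M}_{i-1}$'' is exactly the event $\|\mathbf{Y}_{(2^i-1)K}\|_2\le\epsilon$. This is false: the martingale $\mathbf{Y}_j$ is normalized by $(\mathbf{A}^\top\mathbf{A})^{+/2}$, not by $(\mathbf{M}_{i-1}^\top\mathbf{M}_{i-1})^{+/2}$, so $\|\mathbf{Y}_{(2^i-1)K}\|_2\le\epsilon$ only gives $\widetilde{\mathbf{M}}_{i-1}^\top\widetilde{\mathbf{M}}_{i-1}-\mathbf{M}_{i-1}^\top\mathbf{M}_{i-1}\preceq\epsilon\,\mathbf{A}^\top\mathbf{A}$, which is strictly weaker. (You also cite the lower half $\succeq(1-\epsilon)\mathbf{M}_{i-1}^\top\mathbf{M}_{i-1}$ as the source of the overestimate; it is the upper half that is needed.) The weaker inequality happens to suffice for $\widetilde{l}_j\ge\tau_j(\mathbf{A})$, so your argument can be repaired, but not as written.

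More importantly, the paper avoids the whole block-induction and union-bound layer. It runs a \emph{single} martingale over all $j$, frozen (set $\mathbf{D}_j=\mathbf{O}$) the first time $\|\mathbf{Y}_{j-1}\|_2\ge\epsilon$, and applies Freedman once. The point you are missing is the trivial deterministic inequality
\[
\widetilde{\mathbf{M}}_{i-1}^\top\widetilde{\mathbf{M}}_{i-1}\ \preceq\ \widetilde{\mathbf{A}}_{j-1}^\top\widetilde{\mathbf{A}}_{j-1},
\]
which holds because $\widetilde{\mathbf{M}}_{i-1}$ is literally the first $(2^i-1)K$ rows of $\widetilde{\mathbf{A}}_{j-1}$. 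Combined with the non-frozen condition $\|\mathbf{Y}_{j-1}\|_2<\epsilon$ (which gives $\widetilde{\mathbf{A}}_{j-1}^\top\widetilde{\mathbf{A}}_{j-1}\preceq\mathbf{A}_{j-1}^\top\mathbf{A}_{j-1}+\epsilon\,\mathbf{A}^\top\mathbf{A}$), one obtains directly
\[
\widetilde{l}_j\ \ge\ \min\Bigl((1+\epsilon)\,\mathbf{a}_j^\top\bigl((1+\epsilon)\mathbf{A}^\top\mathbf{A}\bigr)^+\mathbf{a}_j,\,1\Bigr)\ \ge\ \tau_j(\mathbf{A}),
\]
with no reference to ``$\widetilde{\mathbf{M}}_{i-1}$ approximates $\mathbf{M}_{i-1}$'' at all. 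This makes the circular dependency disappear in one line, and no union bound over $\alpha$ blocks is needed for this lemma. Your approach can be made to work, but it is a detour around a one-line observation.
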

\begin{proof}
  Let 
  \begin{displaymath}
    \mathbf{u}_j \overset{\mathrm{def}}{=} \left( \mathbf{A}^\top \mathbf{A} \right)^{+/2} \mathbf{a}_j
  \end{displaymath}
  for $1 \le j \le n$.
  We define a matrix martingale $\{ \mathbf{Y}_j \}$ recursively as follows.
  Define $\mathbf{Y}_0 = \mathbf{O}$.
  For $j\geq 1$, if $\| \mathbf{Y}_{j-1} \|_2 \geq \epsilon$, set $\mathbf{D}_j = \mathbf{O}$, and otherwise, set $\mathbf{D}_j$ to
  \begin{displaymath}
    \mathbf{D}_j = 
    \left\{
        \begin{array}{cl}
            (1/p_j - 1)\mathbf{u}_j \mathbf{u}_j^\top & \text{with probability $p_j$}, \\
            -\mathbf{u}_j \mathbf{u}_j^\top & \text{otherwise,} \\
        \end{array}
    \right.
  \end{displaymath}
  and then define $\mathbf{Y}_j = \mathbf{Y}_{j-1} + \mathbf{D}_j$.

  In what follows, we suppose that $\| \mathbf{Y}_{j-1} \|_2 < \epsilon$.
  We note by definition that $\| \mathbf{Y}_{j'} \|_2 < \epsilon$ for any $j' < j$. $\mathbf{Y}_{j}$ can be expressed as

  \begin{equation}
    \mathbf{Y}_{j} = \left(\mathbf{A}^\top \mathbf{A} \right)^{+/2} \left( \widetilde{\mathbf{A}}_{j}^\top \widetilde{\mathbf{A}}_{j} - \mathbf{A}_{j}^\top \mathbf{A}_{j} \right) \left( \mathbf{A}^\top \mathbf{A} \right)^{+/2}, \label{equation:martingale}
  \end{equation}
  which follows by induction on $j$.
  Indeed, by induction, we have
  \begin{displaymath}
      \mathbf{Y}_{j} = \left(\mathbf{A}^\top \mathbf{A} \right)^{+/2} \left( \widetilde{\mathbf{A}}_{j-1}^\top \widetilde{\mathbf{A}}_{j-1} - \mathbf{A}_{j-1}^\top \mathbf{A}_{j-1} \right) \left( \mathbf{A}^\top \mathbf{A} \right)^{+/2} + \mathbf{D}_{j}.
  \end{displaymath}
  Since we observe $\mathbf{D}_{j}=\left( \mathbf{A}^\top \mathbf{A} \right)^{+/2}\mathbf{Z}_{j}\left( \mathbf{A}^\top \mathbf{A} \right)^{+/2}$, where 
  \begin{displaymath}
      \mathbf{Z}_{j} = 
        \left\{
            \begin{array}{cl}
                 (1/p_{j} - 1) \mathbf{a}_{j} \mathbf{a}_{j}^\top  & \text{with probability $p_{j}$}, \\
                  - \mathbf{a}_{j} \mathbf{a}_{j}^\top   & \text{otherwise,} \\
            \end{array}
        \right.
  \end{displaymath}
  it holds that
    \begin{displaymath}
      \mathbf{Y}_{j} = \left( \mathbf{A}^\top \mathbf{A} \right)^{+/2}\left(\widetilde{\mathbf{A}}_{j-1}^\top \widetilde{\mathbf{A}}_{j-1} - \mathbf{A}_{j-1}^\top \mathbf{A}_{j-1}+ \mathbf{Z}_{j}\right)\left( \mathbf{A}^\top \mathbf{A} \right)^{+/2}.
  \end{displaymath}
  This is equal to \cref{equation:martingale} by the definition of $\tA_{j}$ in \cref{algorithm:scaled_sampling}, noting that $\mathbf{A}_{j}^\top \mathbf{A}_{j}=\mathbf{A}_{j-1}^\top \mathbf{A}_{j-1}+\mathbf{a}_{j} \mathbf{a}_{j}^\top$.
  Thus \cref{equation:martingale} holds. 
  
  It follows from \cref{equation:martingale} with $j=n$ that
  \begin{align}
    \| \mathbf{Y}_{n} \|_2 < \epsilon
    &\Leftrightarrow - \epsilon \mathbf{I} \prec \left(\mathbf{A}^\top \mathbf{A} \right)^{+/2} \left( \widetilde{\mathbf{A}}_n^\top \widetilde{\mathbf{A}}_n - \mathbf{A}^\top \mathbf{A} \right) \left( \mathbf{A}^\top \mathbf{A} \right)^{+/2} \prec \epsilon \mathbf{I} \nonumber \\
    &\Rightarrow - \epsilon \mathbf{A}^\top \mathbf{A} \preceq \widetilde{\mathbf{A}}_n^\top \widetilde{\mathbf{A}}_n - \mathbf{A}^\top \mathbf{A} \preceq \epsilon \mathbf{A}^\top \mathbf{A} \nonumber \\
    &\Leftrightarrow (1 - \epsilon) \mathbf{A}^\top \mathbf{A} \preceq \widetilde{\mathbf{A}}_n^\top \widetilde{\mathbf{A}}_n \preceq (1 + \epsilon) \mathbf{A}^\top \mathbf{A}.  \label{equation:spectral_approximation}
  \end{align}
  This means that $\tA_n$ is a $(1 \pm \epsilon)$-spectral approximation for $\mathbf{A}$. Later, we will show that the probability that $\| \mathbf{Y}_{n} \|_2 \geq \epsilon$ is small. 

  We next show that $\widetilde{\mathbf{l}}=(\widetilde{l}_j)$ is a leverage score overestimate, i.e., $\widetilde{l}_j \geq \tau_j(\mathbf{A})$  for $1 \le j \le n$.
  Since $\| \mathbf{Y}_{j-1} \|_2 < \epsilon$, we have by \cref{equation:martingale} that
  \[
    \widetilde{\mathbf{A}}_{j-1}^\top \widetilde{\mathbf{A}}_{j-1} - \mathbf{A}_{j-1}^\top \mathbf{A}_{j-1} \preceq \epsilon \mathbf{A}^\top \mathbf{A}.
  \]
  Hence it holds that 
  \[
    \begin{pmatrix} \widetilde{\mathbf{A}}_{j-1} \\ \mathbf{a}_j^\top \end{pmatrix} ^\top
    \begin{pmatrix} \widetilde{\mathbf{A}}_{j-1} \\ \mathbf{a}_j^\top \end{pmatrix}
    \preceq \left( \mathbf{A}_{j-1}^\top \mathbf{A}_{j-1} +\epsilon \mathbf{A}^\top \mathbf{A}\right) + \mathbf{a}_{j} \mathbf{a}_{j}^\top
    \preceq \mathbf{A}_{j}^\top \mathbf{A}_{j} + \epsilon \mathbf{A}^\top \mathbf{A}. 
  \]
  Moreover, letting $i$ satisfy $\left(2^i - 1 \right)K + 1 \leq j \leq \left(2^{i+1} - 1\right)K$,  we see that $\widetilde{\mathbf{M}}_{i-1}^\top\widetilde{\mathbf{M}}_{i-1}\preceq \tA_{j-1}^\top \tA_{j-1}$.
  Using the above inequalities and \cref{lemma:pseudoinverse}, we have
  \begin{align}
    \widetilde{l}_j
    &= \min \left( (1+\epsilon)\mathbf{a}_j^\top \left( 
    \begin{pmatrix} \widetilde{\mathbf{M}}_{i-1} \\ \mathbf{a}_j^\top \end{pmatrix} ^\top
    \begin{pmatrix} \widetilde{\mathbf{M}}_{i-1} \\ \mathbf{a}_j^\top \end{pmatrix}
    \right)^+ \mathbf{a}_j, 1 \right) \nonumber \\
    &\geq \min \left( (1+\epsilon)\mathbf{a}_j^\top \left(
    \begin{pmatrix} \widetilde{\mathbf{A}}_{j-1} \\ \mathbf{a}_j^\top \end{pmatrix} ^\top \begin{pmatrix} \widetilde{\mathbf{A}}_{j-1} \\ \mathbf{a}_j^\top \end{pmatrix}
    \right)^+ \mathbf{a}_j, 1 \right) \nonumber \\
    &\geq \min \left( (1+\epsilon) \mathbf{a}_j^\top \left( \mathbf{A}_j^\top \mathbf{A}_j + \epsilon \mathbf{A}^\top \mathbf{A} \right)^+ \mathbf{a}_j, 1 \right) \nonumber \\
    &\geq \min \left( (1+\epsilon) \mathbf{a}_j^\top \left( (1+\epsilon) \mathbf{A}^\top \mathbf{A} \right)^+ \mathbf{a}_j, 1 \right) \nonumber \\
    &\geq \mathbf{u}_j^\top \mathbf{u}_j = \tau_j (\A). \label{eqn:differencefor4.1.3_1}
  \end{align}
  Therefore, $\widetilde{l}_j \geq \tau_j(\mathbf{A})$ holds.
  
  Finally, we evaluate the probability that $\| \mathbf{Y}_{n} \|_2 \geq \epsilon$ by \cref{theorem:martingale}.
  Let $k\in\{1,\dots, n\}$ be the number such that  $\| \mathbf{Y}_k \|_2 \geq \epsilon$ for the first time.
  By the above discussion \eqref{eqn:differencefor4.1.3_1}, we see that $\widetilde{l}_j\geq \mathbf{u}_j^\top \mathbf{u}_j$ for any $j<k$.
  If $p_j < 1$, then $p_j = c \widetilde{l}_j \geq c \mathbf{u}_j^\top \mathbf{u}_j$ holds, and for such $j<k$ we obtain $\mathbf{u}_j \mathbf{u}_j^\top / p_j \preceq (1 / c) \mathbf{I}$. This implies that  $\| \mathbf{u}_j \mathbf{u}_j^\top / p_j \|_2 \le 1/c$. 
  For $j$ with $p_j = 1$, we have $\mathbf{D}_j = \mathbf{O}$.
  Therefore, the predictable variation $\mathbf{W}_{k} = \sum_{j=1}^k \mathbb{E}_{j-1}[\mathbf{D}_j^2]$ can be calculated as
    \begin{align}
      \mathbf{W}_{k}
      &= \sum_{j: p_j \neq 1} \left( p_j \left( \left( \frac{1}{p_j} - 1 \right) \mathbf{u}_j\mathbf{u}_j^\top \right)^2 + \left( 1 - p_j \right) \left( -\mathbf{u}_j\mathbf{u}_j^\top \right)^2 \right) \nonumber \\
      &= \sum_{j: p_j \neq 1} \left( \frac{1}{p_j} - 1 \right) \left( \mathbf{u}_j\mathbf{u}_j^\top \right)^2 \nonumber \\
      &\preceq \sum_{j: p_j \neq 1} \frac{\mathbf{u}_j \mathbf{u}_j^\top}{c} \nonumber \\
      &\preceq \frac{1}{c} \mathbf{I}. \nonumber
  \end{align}
  Hence $\| \mathbf{W}_k \|_2 \le 1/c$. 
  Moreover, since $\mathbf{D}_{k'}$ for any $k' > k$ is $\mathbf{O}$ by the construction of $\{\Y_j\}$,  we obtain $\| \mathbf{W}_n \|_2 = \| \mathbf{W}_k \|_2 \le 1/c$.
  Thus it always holds that $\| \mathbf{W}_n \|_2 \le 1/c$.
  Therefore, by using \cref{theorem:martingale}, we obtain
  \begin{align}
    \mathbb{P} \left[ \| \mathbf{Y}_n \|_2 \geq \epsilon \right]
    &= \mathbb{P} \left[ \text{ $\| \mathbf{Y}_n \|_2 \geq \epsilon$ and $\| \mathbf{W}_n \|_2 \leq 1/c$ } \right] \nonumber \\
    &\leq d \cdot \exp \left( - \frac{\epsilon^2 / 2}{1 / c + \epsilon / 3c} \right) \nonumber \\
    &\leq d \cdot \exp \left( - \frac{3 c \epsilon^2}{8} \right) \nonumber \\
    &\leq \frac{1}{d}. \label{martingale_bound}
  \end{align}
  Thus the probability that $\| \mathbf{Y}_n \|_2 \geq \epsilon$ is at most $1/d$. 
  If we replace $c$ with $cc'$ for any constant $c'$, the probability that $\| \mathbf{Y}_n \|_2 \geq \epsilon$ becomes at most $1/d^{c'}$.
  Thus $\| \mathbf{Y}_n \|_2 < \epsilon$ holds with high probability. Hence the desired inequalities \eqref{equation:spectral_approximation} and \eqref{eqn:differencefor4.1.3_1} for $1 \le j \le n$ hold with high probability.
\end{proof}

Since $c_1 \exp \left( d \right) > n$ for some fixed positive constant $c_1$, we can see that $\widetilde{\mathbf{M}}_i$ is a $(1 \pm \epsilon)$-spectral approximation for $\mathbf{A}_i$ for all $i$ with $1\le i\le \alpha$ with high probability, similarly to \cref{corollary:analysis_online_setting}.

\begin{corollary}
\label{corollary:analysis_random_order_streams}
    In \cref{algorithm:scaled_sampling}, for any matrix $\A \in \mathbb{R}^{n \times d}$ it holds with high probability that
    \begin{displaymath}
        (1-\epsilon) \mathbf{M}_i^\top \mathbf{M}_i \preceq \widetilde{\mathbf{M}}_i^\top \widetilde{\mathbf{M}}_i \preceq  (1+\epsilon) \mathbf{M}_i^\top \mathbf{M}_i
    \end{displaymath}
    for all $i=0,1, \dots,  \alpha - 1$.
\end{corollary}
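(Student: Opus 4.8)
The plan is to follow the template of \cref{corollary:analysis_online_setting}: apply \cref{lemma:analysis_random_order_streams} separately to each prefix $\mathbf{M}_i$ and then take a union bound over $i$. Fix $i\in\{0,1,\dots,\alpha-1\}$. The first $(2^{i+1}-1)K$ rows of $\A$ are exactly the rows of $\mathbf{M}_i$, and I would first argue that the behaviour of \cref{algorithm:scaled_sampling} on this prefix --- the block decomposition into blocks $0,\dots,i$, the matrices $\widetilde{\mathbf{M}}_0,\dots,\widetilde{\mathbf{M}}_{i-1}$, the scores $\widetilde{l}_j$ and probabilities $p_j$, and the partial outputs $\widetilde{\mathbf{A}}_j$ --- is a function of these rows and the coin flips alone. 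Indeed, by \cref{lemma:relative_leverage_score_tractable} the relative leverage score $\tau_j^{\widetilde{\mathbf{M}}_{i'-1}}(\A)$ used in block $i'$ depends only on $\widetilde{\mathbf{M}}_{i'-1}$ and $\mathbf{a}_j$, hence is unaffected by later rows. So the restricted run coincides with a full run of \cref{algorithm:scaled_sampling} on input $\mathbf{M}_i$ (for which the number of blocks is $i$), whose returned matrix is precisely $\widetilde{\mathbf{A}}_{(2^{i+1}-1)K}=\widetilde{\mathbf{M}}_i$.

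Next I would invoke \cref{lemma:analysis_random_order_streams} with $\mathbf{M}_i$ playing the role of $\A$, which gives that $\widetilde{\mathbf{M}}_i$ is a $(1\pm\epsilon)$-spectral approximation for $\mathbf{M}_i$ with high probability. Concretely, this is the same matrix-martingale argument run with the vectors $\mathbf{u}_j=(\mathbf{M}_i^\top\mathbf{M}_i)^{+/2}\mathbf{a}_j$ and stopping index $(2^{i+1}-1)K$; the only place where the identity of the ambient matrix enters is the chain of inequalities showing $\widetilde{l}_j\ge\tau_j(\cdot)$, and since $\mathbf{A}_j^\top\mathbf{A}_j\preceq\mathbf{M}_i^\top\mathbf{M}_i$ for every $j\le (2^{i+1}-1)K$ that chain goes through verbatim with $\mathbf{M}_i$ in place of $\A$, yielding $\widetilde{l}_j\ge\tau_j(\mathbf{M}_i)$ and hence the desired variation bound $\|\mathbf{W}_n\|_2\le 1/c$. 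Inflating the constant in $c$ by a factor $c'$ then drives the Freedman failure probability down to $1/d^{c'}$.

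Finally I would union bound over $i=0,\dots,\alpha-1$. The standing assumption $c_1\exp(d)>n=(2^{\alpha+1}-1)K$ gives $2^{\alpha+1}<c_1\exp(d)$, so $\alpha+1=\mathrm{O}(d)$ and there are only $\mathrm{O}(d)$ prefixes. Taking $c'\ge 2$, the probability that some $\widetilde{\mathbf{M}}_i$ fails to be a $(1\pm\epsilon)$-spectral approximation for $\mathbf{M}_i$ is at most $\alpha/d^{c'}=\mathrm{O}(1/d^{c'-1})=1/\mathrm{poly}(d)$, so all the stated inequalities hold simultaneously with high probability. I expect the only real point of care --- everything else being a routine union bound as in \cref{corollary:analysis_online_setting} --- to be the identification of the prefix run with a standalone run on $\mathbf{M}_i$ (which rests on the locality of the relative leverage score noted above), together with the bookkeeping ensuring $\alpha=\mathrm{O}(d)$ so that the union bound does not overwhelm the per-event failure probability.
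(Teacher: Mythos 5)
Your proposal is correct and matches the paper's (only sketched) argument: apply \cref{lemma:analysis_random_order_streams} to each prefix $\mathbf{M}_i$ and union bound over the $\alpha+1=\mathrm{O}(\log n)=\mathrm{O}(d)$ indices, using the standing assumption $c_1\exp(d)>n$, exactly as in \cref{corollary:analysis_online_setting}. Your extra care in identifying the prefix of the run with a standalone run on $\mathbf{M}_i$, and in checking that the martingale argument goes through with $\mathbf{M}_i$ as the ambient matrix, is sound and fills in details the paper leaves implicit.
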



\subsubsection{Approximation size}

We next argue that the approximation size is bounded in \cref{algorithm:scaled_sampling}.

In the beginning, we introduce a fascinating theorem which ensures that a good leverage score of the original matrix is computed with a matrix obtained by uniform sampling.
\begin{theorem}[Leverage Score Estimation via Uniform Sampling \cite{DBLP:conf/innovations/CohenLMMPS15}]
\label{theorem:uniform_sampling_leverage_score_overestimate}
Let $\mathbf{A} \in \mathbb{R}^{n \times d}$ be a matrix. Let S denote a uniformly random sample of $m$ rows from $\mathbf{A}$ and let $\mathbf{S} \in \mathbb{R}^{n \times n}$ be its diagonal indicator matrix (i.e. $\mathbf{S}_{ii} = 1$ for $i \in S$, $\mathbf{S}_{ii} = 0$ otherwise). Define $\widehat{\mathbf{A}} \overset{\mathrm{def}}{=} \mathbf{SA}$ and  $\widehat{\boldsymbol{\tau}}$ as
\begin{displaymath}
    \widehat{\tau}_i \overset{\mathrm{def}}{=}
    \left\{
        \begin{array}{cl}
            \min \left( \mathbf{a}_i^\top \left( \widehat{\mathbf{A}}^\top \widehat{\mathbf{A}} \right)^+ \mathbf{a}_i, 1 \right) & \mathrm{if}\ \mathbf{a}_i \perp \mathrm{Ker}(\widehat{\mathbf{A}}), \\
            1 & \mathrm{otherwise.} \\
        \end{array}
    \right.
\end{displaymath}
Then $\widehat{\boldsymbol{\tau}}$ is a leverage score overestimate, and $\| \widehat{\boldsymbol{\tau}} \|_1 = \mathrm{O} \left( n d /m \right)$ holds with high probability.
\end{theorem}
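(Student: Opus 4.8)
The plan is to establish the two assertions separately, since the overestimate property holds for \emph{every} outcome of the sample $S$, while only the bound $\| \widehat{\boldsymbol{\tau}} \|_1 = \mathrm{O}(nd/m)$ uses the randomness. For the overestimate, fix any $S$ and any index $i$. If $\mathbf{a}_i \not\perp \mathrm{Ker}(\widehat{\mathbf{A}})$, then $\widehat{\tau}_i = 1 \ge \tau_i(\mathbf{A})$ by \cref{lemma:property_leverage_score}\,(i). Otherwise $\mathbf{a}_i$ lies in $\mathrm{Im}(\widehat{\mathbf{A}}^\top \widehat{\mathbf{A}})$, and since $\widehat{\mathbf{A}}^\top \widehat{\mathbf{A}} = \sum_{j \in S} \mathbf{a}_j \mathbf{a}_j^\top \preceq \mathbf{A}^\top \mathbf{A}$, \cref{lemma:pseudoinverse} gives $\mathbf{a}_i^\top (\widehat{\mathbf{A}}^\top \widehat{\mathbf{A}})^+ \mathbf{a}_i \ge \mathbf{a}_i^\top (\mathbf{A}^\top \mathbf{A})^+ \mathbf{a}_i = \tau_i(\mathbf{A})$; as $\tau_i(\mathbf{A}) \le 1$, taking the minimum with $1$ preserves the inequality. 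Hence $\widehat{\tau}_i \ge \tau_i(\mathbf{A})$ in all cases.

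For the sum, split $\| \widehat{\boldsymbol{\tau}} \|_1 = \sum_{i \in S} \widehat{\tau}_i + \sum_{i \notin S} \widehat{\tau}_i$. For $i \in S$ the row $\mathbf{a}_i$ appears in $\widehat{\mathbf{A}}$, so $\widehat{\tau}_i = \min(\tau_i(\widehat{\mathbf{A}}), 1) = \tau_i(\widehat{\mathbf{A}})$, whence $\sum_{i \in S} \widehat{\tau}_i = \mathrm{rank}(\widehat{\mathbf{A}}) \le d$ by \cref{lemma:property_leverage_score}\,(ii). For $i \notin S$, applying \cref{lemma:relative_leverage_score_tractable} with $\mathbf{B} = \widehat{\mathbf{A}}$ shows $\mathbf{a}_i^\top (\widehat{\mathbf{A}}^\top \widehat{\mathbf{A}})^+ \mathbf{a}_i = \tau_i^{\widehat{\mathbf{A}}}(\mathbf{A}) / (1 - \tau_i^{\widehat{\mathbf{A}}}(\mathbf{A}))$ when $\mathbf{a}_i \perp \mathrm{Ker}(\widehat{\mathbf{A}})$, and a short case check (according to whether $\mathbf{a}_i \perp \mathrm{Ker}(\widehat{\mathbf{A}})$ and whether $\mathbf{a}_i^\top (\widehat{\mathbf{A}}^\top \widehat{\mathbf{A}})^+ \mathbf{a}_i$ exceeds $1$) yields $\widehat{\tau}_i \le 2\, \tau_i^{\widehat{\mathbf{A}}}(\mathbf{A})$ in every case. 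So it remains to bound $\mathbb{E}_S\bigl[\sum_{i \notin S} \tau_i^{\widehat{\mathbf{A}}}(\mathbf{A})\bigr]$. This is the crux: drawing a uniform $m$-subset $S$ together with a uniform $i \notin S$ produces the same joint law on $(S, i)$ as drawing a uniform $(m+1)$-subset $T$ with a uniform $i \in T$ and setting $S = T \setminus \{i\}$ (a direct counting check), and $\tau_i^{\widehat{\mathbf{A}}_S}(\mathbf{A}) = \tau_i(\mathbf{A}_T)$, the ordinary leverage score of row $i$ inside the $(m+1)$-row submatrix $\mathbf{A}_T$. Therefore
\begin{displaymath}
\mathbb{E}_S\Bigl[\sum_{i \notin S} \tau_i^{\widehat{\mathbf{A}}}(\mathbf{A})\Bigr]
= (n-m)\, \mathbb{E}_T\, \mathbb{E}_{i \in T}\bigl[\tau_i(\mathbf{A}_T)\bigr]
= (n-m)\, \mathbb{E}_T\Bigl[\tfrac{\mathrm{rank}(\mathbf{A}_T)}{m+1}\Bigr]
\le \frac{(n-m)d}{m+1} \le \frac{nd}{m},
\end{displaymath}
again by \cref{lemma:property_leverage_score}\,(ii). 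Combining the two pieces, $\mathbb{E}_S[\| \widehat{\boldsymbol{\tau}} \|_1] \le d + 2nd/m = \mathrm{O}(nd/m)$.

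To upgrade the expectation bound to hold with high probability, I would apply Markov's inequality to get $\| \widehat{\boldsymbol{\tau}} \|_1 = \mathrm{O}(nd/m)$ with constant probability, and then boost by repetition: draw $\Theta(\log d)$ independent samples $S^{(1)}, \dots, S^{(t)}$, form the corresponding overestimates $\widehat{\boldsymbol{\tau}}^{(1)}, \dots, \widehat{\boldsymbol{\tau}}^{(t)}$ (each valid by the first part, for every outcome), and keep the one of smallest $\ell_1$ norm; with probability $1 - 2^{-\Theta(\log d)} = 1 - 1/\mathrm{poly}(d)$ at least one has norm $\mathrm{O}(nd/m)$, and the kept vector is still a coordinatewise overestimate. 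I expect this last step to be the main obstacle: $\| \widehat{\boldsymbol{\tau}} \|_1$ is \emph{not} a bounded-difference function of $S$ (swapping a single sampled row can change $\mathrm{rank}(\widehat{\mathbf{A}})$ and flip many coordinates from tiny values up to $1$), so a one-shot concentration inequality is unavailable and one must route through independent repetition, which costs only an extra $\mathrm{O}(\log d)$ factor in the sample size and working memory — consistent with the $\log d$ factors appearing in \cref{theorem:scaled_sampling} and \cref{theorem:improved_scaled_sampling}.
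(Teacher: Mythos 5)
The paper offers no proof of this theorem — it is imported verbatim from \cite{DBLP:conf/innovations/CohenLMMPS15} — so your attempt can only be measured against the source's argument. The parts you actually prove are correct and coincide with that argument: the overestimate property follows pointwise from $\widehat{\mathbf{A}}^\top\widehat{\mathbf{A}} = \sum_{j\in S}\mathbf{a}_j\mathbf{a}_j^\top \preceq \mathbf{A}^\top\mathbf{A}$ together with \cref{lemma:pseudoinverse}; the split of $\|\widehat{\boldsymbol{\tau}}\|_1$ into $\sum_{i\in S}\widehat{\tau}_i = \mathrm{rank}(\widehat{\mathbf{A}}) \le d$ and the off-sample part, the comparison $\widehat{\tau}_i \le 2\,\tau_i^{\widehat{\mathbf{A}}}(\mathbf{A})$ via \cref{lemma:relative_leverage_score_tractable}, and the exchangeability of (uniform $m$-set, uniform row outside it) with (uniform $(m+1)$-set $T$, uniform row inside it) giving $\mathbb{E}\bigl[\sum_{i\notin S}\tau_i^{\widehat{\mathbf{A}}}(\mathbf{A})\bigr] \le (n-m)\,d/(m+1)$ are all sound, and the last is precisely the key computation in \cite{DBLP:conf/innovations/CohenLMMPS15}. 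So you have a correct proof of the overestimate property and of $\mathbb{E}\bigl[\|\widehat{\boldsymbol{\tau}}\|_1\bigr] = \mathrm{O}(nd/m)$.

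The gap is exactly where you suspect it: the theorem claims $\|\widehat{\boldsymbol{\tau}}\|_1 = \mathrm{O}(nd/m)$ \emph{with high probability} for a \emph{single} uniform sample, and neither Markov (which gives only constant failure probability) nor your boosting device delivers that. The best-of-$\Theta(\log d)$ construction proves a statement about a different estimator, not the one in the theorem; and, more damagingly for this paper, it cannot be substituted where the theorem is invoked. In \cref{lemma:analysis_random_order_streams2} the ``uniform sample'' is $\mathbf{M}_{i-1}$, the prefix of the randomly permuted input stream, which the algorithm sees exactly once and cannot independently redraw, so repetition is structurally unavailable there (and the union bound over $\mathrm{O}(\log n)$ blocks requires $1-1/\mathrm{poly}(d)$ success per block, which Markov alone cannot supply). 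Your diagnosis that $\|\widehat{\boldsymbol{\tau}}\|_1$ lacks bounded differences — a single swapped row can drop $\mathrm{rank}(\widehat{\mathbf{A}})$ and flip many coordinates to $1$ — is also correct, so McDiarmid-type concentration is genuinely blocked. Closing this requires the dedicated high-probability analysis of the source (or a check that the source proves the statement in the high-probability form used here), not a generic amplification; as a blind reconstruction, your proof establishes the in-expectation version only.
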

We see that $\widehat{\tau}_i$ is $d / m$ in average. 
This can be applied to the following analysis in the online random order setting.

\begin{lemma}\label{lemma:analysis_random_order_streams2}
    Let $\mathbf{X}$ be a family of $n$ row vectors in $\mathbf{R}^d$.
    A matrix $\A \sim \mathcal{A}(\X)$ satisfies the following with high probability:
    \cref{algorithm:scaled_sampling} returns $\widetilde{\mathbf{A}}_n$ that contains with high probability $\mathrm{O} \left( d \epsilon^{-2} \log n \log d \right)$ rows.
\end{lemma}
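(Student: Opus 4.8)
The plan is to bound the approximation size block by block, using Theorem 4.9 (leverage score estimation via uniform sampling) to control the relative leverage scores computed inside each block, and then to sum over the $\mathrm{O}(\log n)$ blocks.

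Here is the argument I would carry out. Fix a block index $i$ with $1 \le i \le \alpha$, and condition on the whole history up to the start of the $i$-th block, so that $\widetilde{\mathbf{M}}_{i-1}$ is determined. By \cref{corollary:analysis_random_order_streams}, with high probability $\widetilde{\mathbf{M}}_{i-1}$ is a $(1\pm\epsilon)$-spectral approximation for $\mathbf{M}_{i-1}$, hence $\widetilde{\mathbf{M}}_{i-1}^\top\widetilde{\mathbf{M}}_{i-1} \succeq (1-\epsilon)\mathbf{M}_{i-1}^\top\mathbf{M}_{i-1}$. By \cref{lemma:pseudoinverse} this gives, for every row $\sa_j^\top$ in the $i$-th block,
\[
  \tau_j^{\widetilde{\mathbf{M}}_{i-1}}(\A)
  = \frac{\sa_j^\top(\widetilde{\mathbf{M}}_{i-1}^\top\widetilde{\mathbf{M}}_{i-1})^+\sa_j}
         {\sa_j^\top(\widetilde{\mathbf{M}}_{i-1}^\top\widetilde{\mathbf{M}}_{i-1})^+\sa_j+1}
  \le \frac{1}{1-\epsilon}\,\sa_j^\top(\mathbf{M}_{i-1}^\top\mathbf{M}_{i-1})^+\sa_j
  \le 3\,\widehat\tau_j,
\]
where $\widehat\tau_j$ is the quantity of \cref{theorem:uniform_sampling_leverage_score_overestimate} applied to $\A$ with $\widehat{\mathbf{A}} = \mathbf{M}_{i-1}$; note that $\mathbf{M}_{i-1}$ consists of $(2^{i}-1)K$ rows chosen \emph{uniformly at random without replacement} from $\X$, which is exactly the uniform-sample situation of that theorem. (One should check that the without-replacement version is covered, or appeal to the standard fact that sampling without replacement is at least as concentrated; the paper's earlier discussion that uniform sampling and the random-order setting "are equivalent" is the relevant point.) Consequently $\sum_{j \text{ in block } i}\tau_j^{\widetilde{\mathbf{M}}_{i-1}}(\A) = \mathrm{O}\!\big(\sum_{j}\widehat\tau_j\big) = \mathrm{O}\!\big(|\text{block }i|\cdot d / (2^{i}-1)K\big) = \mathrm{O}(d)$ with high probability, using that the $i$-th block has $2^{i}K$ rows and $K = d\log d$. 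The $0$-th block contributes $K = d\log d$ rows deterministically since all of them are kept.

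Now I would combine this with the sampling rule. Inside block $i$ the expected number of sampled rows is $\sum_j p_j \le \sum_j c\,\widetilde l_j \le c(1+\epsilon)\sum_j \tau_j^{\widetilde{\mathbf{M}}_{i-1}}(\A) = \mathrm{O}(\epsilon^{-2}\log d \cdot d) = \mathrm{O}(d\epsilon^{-2}\log d)$ conditioned on the good history; a Chernoff bound over the independent Bernoulli choices in the block gives that the realized count is $\mathrm{O}(d\epsilon^{-2}\log d)$ with high probability (the $\log d$ in $c$ gives enough room, and one can boost the constant in $c$ if a higher-probability bound is wanted). Taking a union bound over the $\alpha+1 = \mathrm{O}(\log n)$ blocks — and over the high-probability events of \cref{corollary:analysis_random_order_streams} and \cref{theorem:uniform_sampling_leverage_score_overestimate} for each block — yields that the total number of rows in $\widetilde{\mathbf{A}}_n$ is $\mathrm{O}(\alpha \cdot d\epsilon^{-2}\log d) = \mathrm{O}(d\epsilon^{-2}\log n\log d)$ with high probability. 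Since $n = (2^{\alpha+1}-1)K$ we have $\alpha = \mathrm{O}(\log n)$, so this is the claimed bound.

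The main obstacle I anticipate is the interplay of dependencies: $\widetilde{\mathbf{M}}_{i-1}$ is itself random (it depends on the random sampling in earlier blocks \emph{and} on the random permutation), so \cref{theorem:uniform_sampling_leverage_score_overestimate} cannot be applied naively to $\mathbf{M}_{i-1}$ in isolation. The clean way around this is to observe that the event "$\mathbf{M}_{i-1}$ is a good leverage-score approximator of $\A$" (i.e. $\sum_j\widehat\tau_j = \mathrm{O}(d)$) depends only on \emph{which} rows of $\X$ landed in the first $(2^i-1)K$ positions, not on the algorithm's coin flips, so it holds with high probability over the permutation alone; then, conditioned on that event, the upper bound $\tau_j^{\widetilde{\mathbf{M}}_{i-1}}(\A) \le 3\widehat\tau_j$ holds \emph{deterministically} (for every admissible $\widetilde{\mathbf{M}}_{i-1}$ that is a spectral approximation of $\mathbf{M}_{i-1}$), and only the final Chernoff step uses fresh independent randomness. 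Making this conditioning precise — and in particular stating the union bound over blocks so that "with high probability" (probability $1 - 1/\mathrm{poly}(d)$, with $n \le c_1 e^d$ so $\log n = \mathrm{O}(d)$) is preserved — is the part that needs the most care; everything else is a routine combination of \cref{corollary:analysis_random_order_streams}, \cref{theorem:uniform_sampling_leverage_score_overestimate}, and a Chernoff bound.
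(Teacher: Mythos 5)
Your proposal follows essentially the same route as the paper: use \cref{corollary:analysis_random_order_streams} to replace $\widetilde{\mathbf{M}}_{i-1}$ by $\mathbf{M}_{i-1}$ up to constants, invoke \cref{theorem:uniform_sampling_leverage_score_overestimate} to get an $\mathrm{O}(d)$ bound on the leverage-score mass per block, sum over the $\mathrm{O}(\log n)$ blocks, and convert to a row count via the sampling probabilities. The one imprecision is that you apply \cref{theorem:uniform_sampling_leverage_score_overestimate} with $\mathbf{M}_{i-1}$ as a sample from all of $\X$ and then restrict the resulting $\ell_1$ bound proportionally to block $i$, which the theorem does not directly license; the paper instead treats $\mathbf{M}_{i-1}$ as a uniform sample of $\mathbf{M}_i$, so the block sum is trivially at most the full sum $\mathrm{O}\left(\frac{(2^{i+1}-1)K}{(2^i-1)K}\,d\right)=\mathrm{O}(d)$.
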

\begin{proof}
Since $\widetilde{\mathbf{l}}=(\tilde{l}_j)$ is a leverage score overestimate by \cref{lemma:analysis_random_order_streams}, it follows from \cref{theorem:row_sampling} that $\widetilde{\mathbf{A}}_n$ has $\mathrm{O}\left(\epsilon^{-2} \| \widetilde{\mathbf{l}} \|_1  \log d\right)$ non-zero rows where we set $\theta = \epsilon^{-2}$.

In the rest of the proof, we estimate $\| \widetilde{\mathbf{l}} \|_1$.
We first apply \cref{theorem:uniform_sampling_leverage_score_overestimate}, as, for all $1 \le i \le \alpha$, we can regard $\mathbf{M}_{i-1}$ as a matrix uniformly sampled from $\mathbf{M}_i$. 

Let $i\in\{0,1,\dots, \alpha\}$.
For $1\le j \le \left( 2^{i+1}-1 \right)K$, define $\widehat{\tau}_{i,j}$ to be 
\begin{displaymath}
    \widehat{\tau}_{i,j} \overset{\mathrm{def}}{=}
    \left\{
        \begin{array}{cl}
            \min \left( \mathbf{a}_j^\top \left( \mathbf{M}_{i-1}^\top \mathbf{M}_{i-1} \right)^+ \mathbf{a}_j, 1 \right) & \mathrm{if}\ \mathbf{a}_j \perp \mathrm{Ker} \left( \mathbf{M}_{i-1} \right), \\
            1 & \mathrm{otherwise.} \\
        \end{array}
    \right.
\end{displaymath}
Then $(\widehat{\tau}_{i,j})$ is a leverage score overestimate for $\mathbf{M}_{i}$ by \cref{theorem:uniform_sampling_leverage_score_overestimate}.
Moreover, it follows from \cref{theorem:uniform_sampling_leverage_score_overestimate} that, with high probability, 
\begin{align}
    \sum_{j=\left(2^i-1\right)K+1}^{\left(2^{i+1}-1\right)K} \widehat{\tau}_{i,j} \le
    \sum_{j=1}^{\left(2^{i+1}-1\right)K} \widehat{\tau}_{i,j} =
    \mathrm{O} \left( \frac{\left( 2^{i+1} - 1 \right)K}{\left( 2^i - 1 \right)K} d \right) =
    \mathrm{O} \left( d \right). \label{equation:leverage_score_overestimate}
\end{align}
By taking a union bound, \cref{equation:leverage_score_overestimate} holds for all $0 \le i \le \alpha$ with high probability.
Hence we obtain
\begin{equation}
    \sum_{i=0}^\alpha \sum_{j=\left(2^i-1\right)K+1}^{\left(2^{i+1}-1\right)K} \widehat{\tau}_{i,j} = \mathrm{O} \left(d \log n \right).\label{eq:tau_ij}
\end{equation}

We will show that $\widetilde{l}_j \le C \widehat{\tau}_{i,j}$ holds for some constant $C$ for $1 \le i \le \alpha$ and $\left( 2^i-1 \right) K + 1 \le j \le \left( 2^{i+1}-1 \right)K$, 
which implies that $\sum \widetilde{l}_j  = \mathrm{O} \left(d \log n \right)$ by \cref{eq:tau_ij}. 
If $\mathbf{a}_j \perp \mathrm{Ker} \left( \mathbf{M}_{i-1} \right)$, since we have $\left( \widetilde{\mathbf{M}}_{i-1}^\top \widetilde{\mathbf{M}}_{i-1} \right)^+ \le (1-\epsilon)^{-1} \left( \mathbf{M}_{i-1}^\top \mathbf{M}_{i-1} \right)^+$ from \cref{corollary:analysis_random_order_streams}, 
it holds that
\begin{align}
    \widetilde{l}_j
    &= \min \left( (1+\epsilon)\mathbf{a}_j^\top \left( 
    \begin{pmatrix} \widetilde{\mathbf{M}}_{i-1} \\ \mathbf{a}_j^\top \end{pmatrix} ^\top
    \begin{pmatrix} \widetilde{\mathbf{M}}_{i-1} \\ \mathbf{a}_j^\top \end{pmatrix}
    \right)^+ \mathbf{a}_j, 1 \right) \nonumber \\
    &\leq \min \left( (1+\epsilon)\mathbf{a}_j^\top \left( 
    \widetilde{\mathbf{M}}_{i-1}^\top \widetilde{\mathbf{M}}_{i-1}
    \right)^+ \mathbf{a}_j, 1 \right) \nonumber \\
    &\leq \min \left( \frac{1+\epsilon}{1-\epsilon}\mathbf{a}_j^\top \left( 
    \mathbf{M}_{i-1}^\top \mathbf{M}_{i-1}
    \right)^+ \mathbf{a}_j, 1 \right) \nonumber \\
    &\leq \frac{1+\epsilon}{1-\epsilon} \widehat{\tau}_{i,j} \nonumber \\
    &\leq 3 \widehat{\tau}_{i,j}. \label{eqn:differencefor4.1.3_2}
\end{align}
Otherwise, i.e., if $\mathbf{a}_j \not\perp \mathrm{Ker} \left( \mathbf{M}_{i-1} \right)$,  $\widetilde{l}_j = 1 = \widehat{\tau}_{i,j}$. 
Thus we have $\widetilde{l}_j \le 3 \widehat{\tau}_{i,j}$ in any case.
Therefore the approximation size is $\mathrm{O}\left(\epsilon^{-2} \| \widetilde{\mathbf{l}} \|_1  \log d\right)=\mathrm{O} \left( d \epsilon^{-2} \log n \log d \right)$ with high probability.
\end{proof}

\subsubsection{Running time}

To prove \cref{theorem:scaled_sampling}, it remains to discuss the time complexity.

For each row $j$ in the $i$-th block,
we need to compute $\tau_j^{\widetilde{\mathbf{M}}_{i-1}}(\A)=\mathbf{a}_j^\top \left( \widetilde{\mathbf{M}}_{i-1}^\top \widetilde{\mathbf{M}}_{i-1} \right)^+ \mathbf{a}_j$.
This can be done efficiently with a well-known trick using the Johnson-Lindenstrauss lemma~\cite{DBLP:conf/pods/Achlioptas01}. 
Briefly speaking, the Johnson-Lindenstrauss lemma says that all-pair Euclidean distances between fixed $n$ vectors are preserved under a random projection  of the $n$ vectors onto an $\mathrm{O}\left( \log n \right)$ dimensional space. 
Define the random projection as $\mathbf{\Pi}$.
As $\widetilde{\mathbf{M}}_{i-1}$ has $\mathrm{O}\left( d \epsilon^{-2} \log n \log d \right)$ rows, $\mathbf{\Pi}$ is an $\mathrm{O}\left(\log n\right) \times \mathrm{O}\left( d \epsilon^{-2} \log n \log d \right)$ matrix. 

In the beginning of the $i$-th block, we compute the matrix $\mathbf{N}_i=\mathbf{\Pi} \widetilde{\mathbf{M}}_{i-1} \left( \widetilde{\mathbf{M}}_{i-1}^\top \widetilde{\mathbf{M}}_{i-1} \right)^+$.
This takes $\mathrm{O} \left( \left( d^\omega + d^2 \log n \right) \epsilon^{-2} \log n \log d \right)$ time, as $\mathbf{\Pi}\widetilde{\mathbf{M}}_{i-1}$ and $\left( \widetilde{\mathbf{M}}_{i-1}^\top \widetilde{\mathbf{M}}_{i-1} \right)^+$ can be computed in $\mathrm{O} \left( d^2 \epsilon^{-2} \log^2 n \log d \right)$ time and $\mathrm{O} \left( d^{\omega} \epsilon^{-2} \log n \log d \right)$ time respectively.
If we have $\mathbf{N}_i$, then we can compute an approximation of $\tau_j^{\widetilde{\mathbf{M}}_{i-1}} (\A)$ efficiently, since $\tau_j^{\widetilde{\mathbf{M}}_{i-1}} (\A)\approx \left\| \mathbf{N}_i \mathbf{a}_j \right\|_2^2$.
This takes $\mathrm{O} \left( \mathrm{nnz}\left( \mathbf{a}_j \right) \log n \right)$ time, where $\mathrm{nnz}\left( \mathbf{a}_j \right)$ is the number of nonzero entries in $\mathbf{a}_j$.
Since the number of blocks is $\mathrm{O}\left( \log n \right)$,
the total time complexity of \cref{algorithm:scaled_sampling} is $\mathrm{O}\left( \mathrm{nnz}(\mathbf{A}) \log n + \left( d^\omega + d^2 \log n \right) \epsilon^{-2} \right.$
$\left. \log^2 n \log d \right)$.

This completes the proof of \cref{theorem:scaled_sampling}.

\subsection{Fast $(1 \pm \epsilon)$-spectral approximation}
\label{subsection:less_memory}
In \cref{subsection:less_memory} and \cref{subsection:less_runtime_memory}, we improve \cref{algorithm:scaled_sampling} to obtain a time and space efficient algorithm.
In \cref{algorithm:scaled_sampling}, a $(1 \pm \epsilon)$-spectral approximation $\widetilde{\mathbf{M}}_{i-1}$ for $\mathbf{M}_{i-1}$ is used for computing a leverage score overestimate for the $i$-th block.
However, in the proof of \cref{theorem:scaled_sampling}~(see the inequalities \eqref{eqn:differencefor4.1.3_1} and \eqref{eqn:differencefor4.1.3_2}), in order to construct a $(1 \pm \epsilon)$-spectral approximation, a \emph{constant} spectral approximation for $\mathbf{M}_{i-1}$ suffices. Therefore, by computing leverage score overestimates with a constant spectral approximation instead of a $(1 \pm \epsilon )$-spectral approximation, we can reduce its running time and working memory.
Let $\mathsf{ConstApprox}$ denote a stream that runs a procedure to maintain a constant approximation. Then the algorithm is described as follows.


\begin{algorithm}[htbp]
\caption{$\mathsf{Improved Scaled Sampling} \left( \mathbf{A}, \epsilon \right)$}
\label{algorithm:improved_scaled_sampling}
\begin{algorithmic}
\STATE{{\bf Input:} a matrix $\mathbf{A} \in \mathbb{R}^{n \times d}$, an error parameter $\epsilon \in (0,1/2]$.}
\STATE{{\bf Output:} a $(1 \pm \epsilon)$-spectral approximation for $\mathbf{A}$.}
\STATE{Define $K = d \log d$, $c = 6 \epsilon^{-2} \log d$, $\alpha = \log_2 (n / K + 1) - 1$.}
\STATE{$\widetilde{\mathbf{A}}_0 \leftarrow \mathbf{O}$.}
\FOR{$j = 1, \dots, K$}
    \STATE{$\widetilde{\mathbf{A}}_j \leftarrow \begin{pmatrix} \widetilde{\mathbf{A}}_{j-1} \\ \mathbf{a}_j^\top \end{pmatrix}$.}
    \STATE{$\mathsf{ConstApprox}.\mathsf{add}(\mathbf{a}_j^\top)$}.
\ENDFOR
\FOR{$i = 1, \dots, \alpha$}
    \STATE{$\widetilde{\mathbf{M}}^c_{i-1} \leftarrow \mathsf{ConstApprox}.\mathsf{query}()$.}
    \STATE{$\tilde{l}_j \leftarrow \min\left(2 \tau_j^{\widetilde{\mathbf{M}}^c_{i-1}}\left( \mathbf{A} \right), 1\right)$.}
    \STATE{$p_j \leftarrow \min \left( c\tilde{l}_j, 1 \right)$.}
    \STATE{$\widetilde{\mathbf{A}}_j \leftarrow
    \left\{
        \begin{array}{cl}
            \begin{pmatrix} \widetilde{\mathbf{A}}_{j-1} \\ \mathbf{a}_j^\top /\sqrt{p_j} \end{pmatrix} & \text{with probability $p_j$,} \\
            \widetilde{\mathbf{A}}_{j-1} & \text{otherwise.} \\
        \end{array}
    \right. $
    }
    \STATE{$\mathsf{ConstApprox}.\mathsf{add}(\mathbf{a}_j^\top)$}.
\ENDFOR
\RETURN $\widetilde{\mathbf{A}}_n$.
\end{algorithmic}
\end{algorithm}

In \cref{algorithm:improved_scaled_sampling}, $\mathsf{ConstApprox}.\mathsf{add}(\mathbf{a}_j^\top)$ means that we add $\mathbf{a}_j^\top$ to the stream $\mathsf{ConstApprox}$.
Moreover, $\mathsf{ConstApprox}.\mathsf{query}()$ returns the current spectral approximation obtained by $\mathsf{ConstApprox}$. 
In the algorithm, we store $\widetilde{\mathbf{A}}_j$  in the output memory, as $\widetilde{\mathbf{A}}_j$ stores a family of output rows and is never referred.
In contrast, $\widetilde{\mathbf{M}}^c_{i-1}$ obtained by $\mathsf{ConstApprox}$ is stored in the working memory.
We prove that \cref{algorithm:improved_scaled_sampling} satisfies \cref{theorem:improved_scaled_sampling} when $\mathsf{ConstApprox}$ is set to $\mathsf{Scaled Sampling}\left( \mathbf{A}, 1/2 \right)$.



\begin{proof}[Proof of \cref{theorem:improved_scaled_sampling}]
\label{proof:improved_scaled_sampling}
Set $\mathsf{ConstApprox}$ in \cref{algorithm:improved_scaled_sampling} to $\mathsf{ScaledSampling}\left( \mathbf{A}, 1/2 \right)$.
We note that, for $1 \le i \le \alpha$, $\widetilde{\M}^c_{i-1}$ is a $\left( 1 \pm 1/2 \right)$-spectral approximation for $\widetilde{\M}_{i-1}$ with high probability from \cref{corollary:analysis_random_order_streams}.
Then, in a similar way to  \cref{algorithm:scaled_sampling}, we can prove that $\widetilde{\mathbf{A}}_n$ is a $(1 \pm \epsilon )$-spectral approximation, and that the approximation size of $\widetilde{\mathbf{A}}_n$ is bounded.
We omit the details.

It remains to show its time and space complexities.
Similarly to the proof of \cref{theorem:scaled_sampling}, 
we compute $\mathbf{N}_i = \mathbf{\Pi} \widetilde{\mathbf{M}}^c_{i-1} \left( \widetilde{\mathbf{M}}^{c \top}_{i-1} \widetilde{\mathbf{M}}^c_{i-1} \right)^+$ in the beginning of the $i$-th block, and $\tau_j^{\widetilde{\mathbf{M}}^c_{i-1}} (\A)\approx \left\| \mathbf{N}_i \mathbf{a}_j \right\|_2^2$ for each row $j$ in the $i$-th block.
Since $\widetilde{\mathbf{M}}^c_{i-1}$ now has $\mathrm{O}\left( d \log n \log d \right)$ rows, the time complexity to compute $\mathbf{N}_i$ is reduced to $\mathrm{O} \left( \left(d^\omega + d^2 \log n \right) \log n \log d \right)$ time.
 It takes $\mathrm{O} \left( \mathrm{nnz} (\A) \log n \right)$ time to compute $\left\| \mathbf{N}_i \mathbf{a}_j \right\|_2^2$ for all $i, j$.
Therefore, the total running time is $\mathrm{O} \left( \mathrm{nnz}(\A) \log n + \left(d^\omega + d^2 \log n \right) \log^2 n \log d \right)$ time. 
Regarding the space complexity, we only need to store $\mathrm{O} \left( d \log n \log d \right)$ rows for $\widetilde{\mathbf{M}}^c_i$ and an additional $\mathrm{O}\left( \log n \right)$ rows to keep $\mathbf{N}_i$ as the working memory.
\end{proof}

\subsection{Memory-efficient $(1 \pm \epsilon)$-spectral approximation}
\label{subsection:less_runtime_memory}
In this section, we achieve a further improvement in the working memory.
The required working space is $\mathrm{O} \left( d \log d \right)$ rows, which does not depend on $\epsilon$ and $n$. 
In the \cref{algorithm:improved_scaled_sampling}, instead of keeping a constant spectral approximation with $\mathsf{ScaledSampling} \left( \A, 1/2 \right)$, 
we run a spectral approximation algorithm in the semi-streaming setting in parallel to obtain a $(1 \pm 1/3)$-spectral approximation for $\mathbf{M}_{i-1}$. This is then used to compute a leverage score overestimate, which reduces the working memory space.

It is known as below that there exists an efficient semi-streaming algorithm for spectral approximation~\cite{DBLP:conf/soda/KyngPPS17}.

\begin{theorem} [Sparsification in the Semi-Streaming Setting \cite{DBLP:conf/soda/KyngPPS17}]
\label{theorem:resparsification}
    Let $\mathbf{A} \in \mathbb{R}^{n \times d}$ be a matrix, and $\epsilon \in (0, 1/2)$ be an error parameter.
    In the semi-streaming setting, we can construct, with high probability, a $(1 \pm \epsilon)$-spectral approximation for $\mathbf{A}$ with $\mathrm{O}(d \epsilon^{-2} \log d)$ rows by storing $\mathrm{O} (d \epsilon^{-2} \log d)$ rows in $\mathrm{O}\left( nd^{\omega-1} + nd \log \left(d \epsilon^{-1} \right) \right)$ time.
\end{theorem}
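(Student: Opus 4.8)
The plan is to reconstruct the resparsification algorithm of Kyng, Pachocki, Peng, and Sachdeva and verify its three guarantees. The algorithm maintains a weighted \emph{store} $\widetilde{\mathbf{A}}$ of rows together with a buffer: each arriving row of $\mathbf{A}$ is appended with weight $1$, and whenever the number of stored rows exceeds a threshold $T=\Theta(d\epsilon^{-2}\log d)$ we \emph{resparsify}. A resparsification treats the currently stored weighted rows as a matrix $\mathbf{B}$, computes for each stored row an (approximate) leverage-score overestimate $\tilde\tau_i$ of that weighted row with respect to $\mathbf{B}^\top\mathbf{B}$, sets $p_i=\min(1,c\,\tilde\tau_i)$ with $c=\Theta(\epsilon^{-2}\log d)$, and independently keeps row $i$ rescaled by $1/\sqrt{p_i}$ with probability $p_i$ (discarding it otherwise). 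By \cref{lemma:property_leverage_score}(ii) the expected number of survivors is $\sum_i p_i\le c\sum_i\tilde\tau_i=\mathrm{O}(cd)=\mathrm{O}(d\epsilon^{-2}\log d)$, and \cref{theorem:row_sampling} (applied at each resparsification with $\theta=\epsilon^{-2}$) both gives the spectral guarantee of that step and shows the stored count stays $\mathrm{O}(d\epsilon^{-2}\log d)$ throughout with high probability. To make each $\tilde\tau_i$ cheap I would use the Johnson--Lindenstrauss trick exactly as in \cref{subsection:scaled_sampling}: once per resparsification form $\mathbf{N}=\mathbf{\Pi}\mathbf{B}(\mathbf{B}^\top\mathbf{B})^{+}$ for a random $\mathrm{O}(\log d)\times\mathrm{O}(T)$ projection $\mathbf{\Pi}$ (using iterative solves preconditioned by the previous store so no $d^\omega$-per-step blow-up is incurred), and read off $\tilde\tau_i\approx\|\mathbf{N}\mathbf{a}_i\|_2^2$; amortizing the $\mathrm{O}(n/T)$ resparsifications, each touching $\mathrm{O}(T)$ rows, yields the claimed $\mathrm{O}\!\left(nd^{\omega-1}+nd\log(d\epsilon^{-1})\right)$ running time and $\mathrm{O}(d\epsilon^{-2}\log d)$ stored rows.

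The heart of the argument is that the final store is a $(1\pm\epsilon)$-spectral approximation, and I would prove it with a \emph{single} matrix martingale over all sampling decisions of all resparsifications, in the spirit of the proof of \cref{lemma:analysis_random_order_streams}. Writing $\mathbf{u}_j=(\mathbf{A}^\top\mathbf{A})^{+/2}\mathbf{a}_j$, let $\mathbf{Y}$ track $(\mathbf{A}^\top\mathbf{A})^{+/2}(\widetilde{\mathbf{A}}^\top\widetilde{\mathbf{A}}-\mathbf{A}_{\mathrm{pre}}^\top\mathbf{A}_{\mathrm{pre}})(\mathbf{A}^\top\mathbf{A})^{+/2}$, where $\widetilde{\mathbf{A}}$ is the current store and $\mathbf{A}_{\mathrm{pre}}$ the prefix of $\mathbf{A}$ read so far; each keep-and-rescale-versus-discard decision on a weighted row of weight $w$ contributes a difference $(w/p-w)\mathbf{u}_j\mathbf{u}_j^\top$ or $-w\mathbf{u}_j\mathbf{u}_j^\top$, frozen to $\mathbf{O}$ the first time $\|\mathbf{Y}\|_2$ reaches $\epsilon$. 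Feeding \cref{theorem:martingale} then requires: (a) each difference has operator norm $\mathrm{O}(1/c)$, which holds because while $\|\mathbf{Y}\|_2<\epsilon$ the store is a $(1\pm\epsilon)$-approximation of the true prefix, so by \cref{lemma:relative_leverage_score_tractable} the computed $\tilde\tau_i$ is a constant-factor overestimate of the true leverage score $w\,\mathbf{u}_j^\top\mathbf{u}_j$ and hence $p\gtrsim c\,w\,\mathbf{u}_j^\top\mathbf{u}_j$; and (b) the predictable variation satisfies $\|\mathbf{W}\|_2=\mathrm{O}(1/c)$. For (b) the point is to account row-by-row rather than step-by-step: a fixed original row $j$ contributes $\preceq(w_j^{(k)}/c)\,\mathbf{u}_j\mathbf{u}_j^\top$ at the $k$-th resparsification it survives, and its weight stabilizes --- after the first resparsification, $w_j^{(k)}=w_j^{(k-1)}/p_j^{(k-1)}=\Theta(1/(c\,\tau_j(\mathbf{A})))$ independently of its earlier weight, after which the sampling probability saturates at $1$ --- so the lifetime contribution of row $j$ telescopes to $\preceq\mathrm{O}(1/c)\,\mathbf{u}_j\mathbf{u}_j^\top$, and summing over $j$ gives $\|\mathbf{W}\|_2\preceq\mathrm{O}(1/c)\,(\mathbf{A}^\top\mathbf{A})^{+/2}\mathbf{A}^\top\mathbf{A}(\mathbf{A}^\top\mathbf{A})^{+/2}=\mathrm{O}(1/c)\,\mathbf{I}$. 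With $R=\mathrm{O}(1/c)$, $\sigma^2=\mathrm{O}(1/c)$ and $c=\Theta(\epsilon^{-2}\log d)$, \cref{theorem:martingale} yields $\mathbb{P}[\|\mathbf{Y}_{\mathrm{end}}\|_2\ge\epsilon]\le d\exp(-\Omega(c\epsilon^2))\le1/\mathrm{poly}(d)$, and on that event the final store --- and, by the usual equivalence, every intermediate store --- is a $(1\pm\epsilon)$-spectral approximation, which closes the induction used in (a).

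The main obstacle is precisely point (b): showing the predictable variation of the global martingale remains $\mathrm{O}(1/c)$ no matter how many resparsifications occur. A naive per-resparsification bound adds an $\Omega(1/c)$ term each time and would force $c$, and hence the store size, to scale with the number of phases --- reintroducing a $\log n$ factor. Avoiding this requires the weight-stabilization observation together with the fact that, step by step, the variation is dominated by the current store's quadratic form, which is never more than $(1+\epsilon)\mathbf{A}^\top\mathbf{A}$ under the $\|\mathbf{Y}\|_2<\epsilon$ conditioning; combining these across resparsifications via $\mathbf{I}$-domination at every step (rather than summation of norms) is the delicate part I expect to spend most effort on. The remaining ingredients --- the size bound from $\sum_i\tilde\tau_i=\mathrm{O}(d)$ via \cref{lemma:property_leverage_score} and \cref{theorem:row_sampling}, and the running time from the Johnson--Lindenstrauss leverage-score estimator with preconditioned solves --- are routine bookkeeping.
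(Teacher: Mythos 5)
First, a framing remark: the paper does not prove \cref{theorem:resparsification} at all --- it is imported verbatim from \cite{DBLP:conf/soda/KyngPPS17} and used as a black box in \cref{subsection:less_runtime_memory} --- so the comparison here is against that external proof. Your overall architecture (threshold-triggered resparsification, a single frozen matrix martingale over all sampling decisions across all rounds, \cref{theorem:row_sampling} and \cref{lemma:property_leverage_score} for the per-round size bound, Johnson--Lindenstrauss estimates for the running time) is the right one and matches the cited work.

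The gap is exactly at your point (b), and your proposed fix does not close it. You claim that after the first nontrivial subsampling the weight of row $j$ stabilizes at $\Theta(1/(c\,\tau_j(\mathbf{A})))$, ``after which the sampling probability saturates at $1$.'' This is false for the algorithm as you describe it: the score used at the $k$-th resparsification is the leverage score of the weighted row with respect to the \emph{current} store, i.e.\ essentially $w\,\tau_j^{(k)}$ where $\tau_j^{(k)}$ is the leverage score of $\mathbf{a}_j$ in the prefix read so far. These prefix leverage scores decrease monotonically as rows arrive, so after the weight is boosted to $1/(c\,\tau_j^{(k)})$ the next round's probability is $\min\bigl(1,\tau_j^{(k+1)}/\tau_j^{(k)}\bigr)$, which is strictly below $1$ whenever the leverage score has dropped; the row keeps being subsampled and its weight keeps growing toward $1/(c\,\tau_j(\mathbf{A}))$. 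Consequently the lifetime variance of row $j$ is a sum $\sum_k (w_j^{(k)}/c)\,\mathbf{u}_j\mathbf{u}_j^\top$ over an increasing, possibly long weight sequence, and it does not telescope to $\mathrm{O}(1/c)\,\mathbf{u}_j\mathbf{u}_j^\top$ by the mechanism you give. Even the correct deterministic telescoping --- writing $w^2(1/p-1)=w(\hat{w}-w)$ with $\hat{w}=w/p$ and summing along the surviving path to get at most $w_{\mathrm{final}}^2\le 1/(c\,\tau_j)^2$ --- is too weak: it yields a total predictable variation of order $c^{-2}\sum_j \tau_j^{-1}\mathbf{u}_j\mathbf{u}_j^\top$, which is not $\mathrm{O}(1/c)\,\mathbf{I}$ when many rows have small leverage score. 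What actually makes the argument work in \cite{DBLP:conf/soda/KyngPPS17} is that the per-row predictable variation telescopes \emph{in expectation}, namely $\mathbb{E}\bigl[\sum_k W_{k-1}^2(1/p_k-1)\bigr]=\mathbb{E}[W_{\mathrm{final}}^2]-1\le 1/(c\,\tau_j)$ because the weights form a martingale with $\mathbb{E}[W_{\mathrm{final}}]=1$ and $W_{\mathrm{final}}\le 1/(c\,\tau_j)$ deterministically, combined with a separate scalar tail bound upgrading this to a high-probability statement; the upgrade is unavoidable because \cref{theorem:martingale} needs a bound on $\|\mathbf{W}_n\|_2$ that holds with high probability, not merely in expectation. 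Your proof needs this lemma (or an equivalent) to be stated and proved; as written, step (b) is asserted rather than established.
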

Note that the above algorithm maintains a $\left( 1 \pm \epsilon \right)$-spectral approximation at all times and runs in $\mathrm{O}\left(n \log^2 d \right)$ time if $\mathbf{A}$ is the incidence matrix of a graph.

Let $\mathsf{StreamSparsify}\left( \mathbf{A}, \epsilon \right)$ denote an algorithm which satisfies the above theorem. We prove that \cref{algorithm:improved_scaled_sampling} satisfies the following theorem  when $\mathsf{ConstApprox}$ is set to $\mathsf{StreamSparsify}\left( \mathbf{A}, 1/3 \right)$. 



\begin{theorem}
\label{theorem:improved_scaled_sampling_less_memory}
    Let $\epsilon \in (0, 1/2]$ be an error parameter, and $\X$ be a family of $n$ row vectors in $\mathbb{R}^d$.
    Then $\A \sim \mathcal{A}(\X)$ satisfies the following with high probability:
    If we set $\mathsf{ConstApprox}$ to $\mathsf{StreamSparsify}\left( \mathbf{A}, 1/3 \right)$,
    \cref{algorithm:improved_scaled_sampling} returns a $(1 \pm \epsilon)$-spectral approximation for $\mathbf{A}$ with $\mathrm{O}\left( d \epsilon^{-2} \log n \log d \right)$ rows with high probability.
    It consumes $\mathrm{O} \left( nd^{\omega-1} + nd \log d + \mathrm{nnz}(\A) \log n \right)$ time, where $\mathrm{nnz}(\A)$ is the number of non-zero entries in $\A$,  and stores $\mathrm{O}\left( d \log d \right)$ rows as the working memory and $\mathrm{O}\left( d \epsilon^{-2} \log n \log d \right)$ rows as the output memory.
\end{theorem}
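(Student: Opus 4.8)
The plan is to instantiate \cref{algorithm:improved_scaled_sampling} with $\mathsf{ConstApprox}=\mathsf{StreamSparsify}(\mathbf{A},1/3)$ and to re-use the argument structure of \cref{theorem:scaled_sampling} (hence of \cref{lemma:analysis_random_order_streams,lemma:analysis_random_order_streams2}), so that only the constants and the resource accounting are new. First I would fix a permutation $\mathbf{A}\sim\mathcal{A}(\mathbf{X})$ and observe, using \cref{theorem:resparsification} (whose sparsifier is maintained at all times) together with a union bound over the $\alpha=\mathrm{O}(\log n)$ blocks, that with high probability over the algorithm's internal coins the queried matrix $\widetilde{\mathbf{M}}^c_{i-1}=\mathsf{ConstApprox}.\mathsf{query}()$ is a $(1\pm 1/3)$-spectral approximation for $\mathbf{M}_{i-1}$ for every $i\in\{1,\dots,\alpha\}$; this plays the role of \cref{corollary:analysis_random_order_streams} but requires storing only $\mathrm{O}(d\log d)$ rows.

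Next I would redo the two ingredients of the proof of \cref{theorem:scaled_sampling}. For the spectral guarantee, the matrix martingale of \cref{lemma:analysis_random_order_streams} goes through essentially verbatim once one checks that $\widetilde{\mathbf{l}}=(\widetilde{l}_j)$ is a leverage-score overestimate. Since the rows of $\mathbf{M}_{i-1}$ and the row $\mathbf{a}_j$ (which belongs to block $i$, not to $\mathbf{M}_{i-1}$) are distinct rows of $\mathbf{A}$,
\begin{displaymath}
(\widetilde{\mathbf{M}}^c_{i-1})^\top\widetilde{\mathbf{M}}^c_{i-1}+\mathbf{a}_j\mathbf{a}_j^\top \preceq \bigl(1+\tfrac{1}{3}\bigr)\bigl(\mathbf{M}_{i-1}^\top\mathbf{M}_{i-1}+\mathbf{a}_j\mathbf{a}_j^\top\bigr)\preceq \tfrac{4}{3}\,\mathbf{A}^\top\mathbf{A},
\end{displaymath}
so the hard-wired multiplier $2\ge \tfrac{4}{3}$ in \cref{algorithm:improved_scaled_sampling} gives, via the definition of the relative leverage score and \cref{lemma:pseudoinverse}, $\widetilde{l}_j\ge\min(\tfrac{3}{2}\tau_j(\mathbf{A}),1)\ge\tau_j(\mathbf{A})$ when $\mathbf{a}_j\perp\mathrm{Ker}(\mathbf{M}_{i-1})$, while $\widetilde{l}_j=1\ge\tau_j(\mathbf{A})$ otherwise by \cref{lemma:relative_leverage_score_tractable}. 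Consequently the predictable-variation bound $\|\mathbf{W}_n\|_2\le 1/c$ is unchanged and $\widetilde{\mathbf{A}}_n$ is a $(1\pm\epsilon)$-spectral approximation with high probability over the coins. For the size bound I would follow \cref{lemma:analysis_random_order_streams2}: the random-order assumption lets us regard $\mathbf{M}_{i-1}$ as a uniform sample of $\mathbf{M}_i$, so \cref{theorem:uniform_sampling_leverage_score_overestimate} supplies an overestimate $\widehat{\boldsymbol{\tau}}_i$ with $\sum_{j\in\text{block }i}\widehat{\tau}_{i,j}=\mathrm{O}(d)$ with high probability (union bound over blocks), while $(\widetilde{\mathbf{M}}^c_{i-1})^\top\widetilde{\mathbf{M}}^c_{i-1}\succeq \tfrac{2}{3}\mathbf{M}_{i-1}^\top\mathbf{M}_{i-1}$ together with the multiplier $2$ gives $\widetilde{l}_j=\mathrm{O}(\widehat{\tau}_{i,j})$; hence $\|\widetilde{\mathbf{l}}\|_1=\mathrm{O}(d\log n)$ and, by \cref{theorem:row_sampling} with $\theta=\epsilon^{-2}$, the output has $\mathrm{O}(d\epsilon^{-2}\log n\log d)$ rows.

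For the resources, $\mathsf{StreamSparsify}(\mathbf{A},1/3)$ stores $\mathrm{O}(d\log d)$ rows and runs in $\mathrm{O}(nd^{\omega-1}+nd\log d)$ time by \cref{theorem:resparsification}. Exactly as in the proof of \cref{theorem:improved_scaled_sampling}, at the start of block $i$ we compute, with a Johnson--Lindenstrauss map $\mathbf{\Pi}$ onto $\mathrm{O}(\log n)$ coordinates, the matrix $\mathbf{N}_i=\mathbf{\Pi}\widetilde{\mathbf{M}}^c_{i-1}\bigl((\widetilde{\mathbf{M}}^c_{i-1})^\top\widetilde{\mathbf{M}}^c_{i-1}\bigr)^+$ and then approximate $\tau_j^{\widetilde{\mathbf{M}}^c_{i-1}}(\mathbf{A})$ by $\|\mathbf{N}_i\mathbf{a}_j\|_2^2$. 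The key gain over \cref{algorithm:scaled_sampling} and over the instantiation in \cref{theorem:improved_scaled_sampling} is that $\widetilde{\mathbf{M}}^c_{i-1}$ now has only $\mathrm{O}(d\log d)$ rows, so forming each $\mathbf{N}_i$ costs $\mathrm{O}(d^\omega\log d+d^2\log n\log d)$, and the $\mathrm{O}(\log n)$ of these are absorbed into $nd^{\omega-1}+nd\log d$ under the standing assumptions ($n\ge K=d\log d$ and $c_1e^d>n$, so $\log n=\mathrm{O}(d)$); each row then contributes $\mathrm{O}(\mathrm{nnz}(\mathbf{a}_j)\log n)$ for its query, totaling $\mathrm{O}(\mathrm{nnz}(\mathbf{A})\log n)$. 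Summing yields the claimed running time. The working memory holds $\widetilde{\mathbf{M}}^c_{i-1}$ ($\mathrm{O}(d\log d)$ rows) together with the $\mathrm{O}(\log n)$-row matrix $\mathbf{N}_i$ (and the sketch $\mathbf{\Pi}$, stored implicitly), i.e.\ $\mathrm{O}(d\log d)$ rows overall; the output $\widetilde{\mathbf{A}}_j$ is never read back (scores use $\widetilde{\mathbf{M}}^c_{i-1}$), lives in append-only output memory, and by the size bound occupies $\mathrm{O}(d\epsilon^{-2}\log n\log d)$ rows.

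The main obstacle I anticipate is the correctness re-derivation: one must redo the overestimate inequality and the matrix martingale of \cref{lemma:analysis_random_order_streams} with the weaker \emph{constant-factor} approximation $\widetilde{\mathbf{M}}^c_{i-1}$ and the fixed multiplier $2$, checking in particular that $2$ is large enough that $\widetilde{\mathbf{l}}$ remains a genuine leverage-score overestimate so that \cref{theorem:row_sampling} still applies unchanged; and one must keep the two sources of randomness separate --- the random permutation $\mathbf{A}\sim\mathcal{A}(\mathbf{X})$, needed to invoke \cref{theorem:uniform_sampling_leverage_score_overestimate} in each block, and the algorithm's internal coins, needed for \cref{theorem:resparsification}, the martingale tail bound, and \cref{theorem:row_sampling} --- taking a union bound over the $\mathrm{O}(\log n)$ blocks in each case.
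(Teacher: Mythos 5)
Your proposal is correct and follows essentially the same route as the paper, which itself defers the correctness argument to Theorems 4.1 and 1.3 and only works out the resource accounting; your instantiation of the deferred details (the overestimate bound via $(\widetilde{\mathbf{M}}^c_{i-1})^\top\widetilde{\mathbf{M}}^c_{i-1}+\mathbf{a}_j\mathbf{a}_j^\top\preceq\frac{4}{3}\mathbf{A}^\top\mathbf{A}$ and the multiplier $2$, the per-block use of \cref{theorem:uniform_sampling_leverage_score_overestimate}, and the separation of the two sources of randomness) is exactly what the paper intends. The time and space accounting also matches the paper's proof.
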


\begin{proof}
\label{proof:improved_scaled_sampling_less_memory}
In a similar way to \cref{theorem:scaled_sampling} and \cref{theorem:improved_scaled_sampling}, we can prove that $\widetilde{\mathbf{A}}_n$ is a $(1 \pm \epsilon )$-spectral approximation, and that the approximation size of $\widetilde{\mathbf{A}}_n$ is bounded.
We omit the details.

It remains to show its time and space complexities. The semi-streaming algorithm consumes $\mathrm{O}\left( n d^{\omega-1} + nd \log d \right)$ time and stores $\mathrm{O} \left( d \log d \right)$ rows from \cref{theorem:resparsification} (when $\epsilon = 1/3$). We use the same notation as the proof of \cref{theorem:improved_scaled_sampling}. Since the number of rows in $\widetilde{\M}^c_{i-1}$ is $\mathrm{O} \left( d \log d \right)$, the time complexity to compute $\mathbf{N}_i$ is $\mathrm{O} \left( \left(d^\omega + d^2 \log n \right) \log d \right)$ time. It takes $\mathrm{O} \left( \mathrm{nnz} (\A) \log n \right)$ time to compute $\left\| \mathbf{N}_i \mathbf{a}_j \right\|_2^2$ for all $i, j$. Therefore, the total running time is $\mathrm{O} \left( nd^{\omega-1} + nd \log d + \mathrm{nnz}(\A) \log n \right)$. Regarding the space complexity, we only need to store $\mathrm{O} \left( d \log d \right)$ rows for $\widetilde{\mathbf{M}}^c_i$ and an additional $\mathrm{O}\left( \log n \right)$ rows to keep $\mathbf{N}_i$ as the working memory.
\end{proof}
Finally, for the case when an input matrix is the incidence matrix of a graph, the running time can be rewritten as follows.
\begin{corollary}
\label{corollary:online_random_order_setting}
    Let $\epsilon \in ( 0, 1/2 ]$ be an error parameter, and $\mathbf{X}$ be a family of $n$ row vectors in $\mathbb{R}^d$ corresponding to the incidence matrix of an undirected graph. If we set $\mathsf{ConstApprox}$ in \cref{algorithm:improved_scaled_sampling} to $\mathsf{StreamSparsify}\left( \mathbf{A}, 1/3 \right)$, \cref{algorithm:improved_scaled_sampling} runs in $\mathrm{O} \left( n \log n \right)$ time.
\end{corollary}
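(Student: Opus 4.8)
The plan is to trace the running-time bound of \cref{theorem:improved_scaled_sampling_less_memory}, which is $\mathrm{O}\left( nd^{\omega-1} + nd\log d + \mathrm{nnz}(\A)\log n\right)$, and specialize each term to the case where $\A$ is the incidence matrix of an undirected graph on $d$ vertices with $n$ edges. The key structural fact is that every row of an (unweighted or weighted) incidence matrix has exactly two nonzero entries, so $\mathrm{nnz}(\A) = \mathrm{O}(n)$, which immediately turns the last term $\mathrm{nnz}(\A)\log n$ into $\mathrm{O}(n\log n)$.

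For the first two terms, the point is that when $\mathsf{ConstApprox}$ is $\mathsf{StreamSparsify}(\A, 1/3)$ and $\A$ is a graph incidence matrix, the remark following \cref{theorem:resparsification} tells us that the semi-streaming sparsification algorithm runs in $\mathrm{O}(n\log^2 d)$ time rather than the generic $\mathrm{O}(nd^{\omega-1} + nd\log d)$ bound. So the $nd^{\omega-1}$ and $nd\log d$ contributions coming from the $\mathsf{ConstApprox}$ subroutine are replaced by $\mathrm{O}(n\log^2 d)$. The only other source of the $nd$-type terms in the proof of \cref{theorem:improved_scaled_sampling_less_memory} is the evaluation of $\left\lVert \mathbf{N}_i \mathbf{a}_j\right\rVert_2^2$ over all rows $j$ in all blocks $i$, which was bounded by $\mathrm{O}(\mathrm{nnz}(\A)\log n)$; with $\mathrm{nnz}(\A) = \mathrm{O}(n)$ this is $\mathrm{O}(n\log n)$. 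The matrix computations $\mathbf{N}_i = \mathbf{\Pi}\widetilde{\mathbf{M}}^c_{i-1}(\widetilde{\mathbf{M}}^{c\top}_{i-1}\widetilde{\mathbf{M}}^c_{i-1})^+$ cost $\mathrm{O}\left((d^\omega + d^2\log n)\log d\right)$ per block and there are $\mathrm{O}(\log n)$ blocks, contributing $\mathrm{O}\left((d^\omega + d^2\log n)\log n\log d\right)$; since $n = \mathrm{O}(d^2)$ for a simple graph (and more generally we only care about the regime where this term is dominated), this is $\mathrm{O}(n\,\mathrm{polylog}(n))$ and in fact absorbed into $\mathrm{O}(n\log n)$ up to polylog factors — here I would either state the bound as $\mathrm{O}(n\log n)$ under the standing assumption $c_1\exp(d) > n$ (so $d = \Omega(\log n)$, $\log n = \mathrm{O}(d)$) together with $n = \mathrm{O}(d^2)$, or more carefully note that $d^\omega\log n\log d = \mathrm{O}(d^2 \cdot d^{\omega-2}\log n\log d)$ and compare to $n$.

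Putting these together: every term is $\mathrm{O}(n\log n)$ (or smaller), giving the claimed total of $\mathrm{O}(n\log n)$. I would carry out the steps in the order: (1) observe $\mathrm{nnz}(\A) = \mathrm{O}(n)$ for an incidence matrix; (2) invoke the graph-case remark after \cref{theorem:resparsification} to bound the $\mathsf{StreamSparsify}$ cost by $\mathrm{O}(n\log^2 d)$; (3) recall from the proof of \cref{theorem:improved_scaled_sampling_less_memory} the leverage-score-evaluation cost $\mathrm{O}(\mathrm{nnz}(\A)\log n) = \mathrm{O}(n\log n)$; (4) bound the per-block linear-algebra cost $\mathrm{O}\left((d^\omega + d^2\log n)\log n\log d\right)$ and argue it is $\mathrm{O}(n\log n)$ in the relevant parameter regime; (5) sum. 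The main obstacle — really the only subtle point — is step (4): one must be a little careful that the $d^\omega$-type terms from computing the pseudo-inverses and random projections do not dominate, which requires using the relationship between $n$ and $d$ for graphs (a simple graph has $n \le \binom{d}{2}$ edges, and the standing assumption $n < c_1\exp(d)$ ensures $d = \Omega(\log n)$) to conclude these terms are polylogarithmically smaller than $n$; everything else is a direct substitution.
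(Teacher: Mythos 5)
Your steps (1)--(3) and (5) match the paper's proof: $\mathrm{nnz}(\A)=\mathrm{O}(n)$ for an incidence matrix, the graph-case remark after \cref{theorem:resparsification} bounds the $\mathsf{StreamSparsify}$ cost by $\mathrm{O}(n\log^2 d)$, and the evaluation of $\|\mathbf{N}_i\mathbf{a}_j\|_2^2$ over all rows costs $\mathrm{O}(\mathrm{nnz}(\A)\log n)=\mathrm{O}(n\log n)$. The gap is in your step (4). You keep the generic cost $\mathrm{O}\left((d^\omega+d^2\log n)\log n\log d\right)$ for computing $\mathbf{N}_i=\mathbf{\Pi}\widetilde{\mathbf{M}}^c_{i-1}\left(\widetilde{\mathbf{M}}^{c\top}_{i-1}\widetilde{\mathbf{M}}^c_{i-1}\right)^+$ and try to absorb it into $\mathrm{O}(n\log n)$ via $n=\mathrm{O}(d^2)$, but that inequality points the wrong way: $n\le\binom{d}{2}$ gives $d=\Omega(\sqrt{n})$, hence $d^\omega=\Omega(n^{\omega/2})=\Omega(n)$, and for a sparse graph with $n=\Theta(d)$ edges the term $d^\omega\log n\log d=\Theta(n^\omega\log^2 n)$ is polynomially larger than $n\log n$. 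The corollary makes no density assumption on the graph, so this term cannot be dominated by counting alone.

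The paper avoids this by changing how $\mathbf{N}_i$ is computed: since $\widetilde{\mathbf{M}}^{c\top}_{i-1}\widetilde{\mathbf{M}}^c_{i-1}$ is a Laplacian matrix, one never forms its pseudo-inverse explicitly but instead approximately solves the $\mathrm{O}(\log n)$ Laplacian systems corresponding to the rows of $\mathbf{\Pi}\widetilde{\mathbf{M}}^c_{i-1}$ with a nearly-linear-time Laplacian solver~\cite{DBLP:conf/focs/KoutisMP11}. Because $\widetilde{\mathbf{M}}^c_{i-1}$ has only $\mathrm{O}(d\log d)$ rows, each with two nonzero entries, this costs $\mathrm{O}(d\log^2 d\log n)$ and eliminates the $d^\omega$ and $d^2\log n$ contributions entirely. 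You should replace your step (4) with this observation; the rest of your argument then goes through.
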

\begin{proof}
    We use the same notation as the proof of \cref{theorem:improved_scaled_sampling}.
    The semi-streaming algorithm consumes $\mathrm{O}\left( n \log^2 d \right)$ time.
    Since $\widetilde{\mathbf{M}}^{c\top}_{i-1} \widetilde{\mathbf{M}}^c_{i-1}$ is a Laplacian matrix, we compute the approximation of $\mathbf{N}_i$ by solving $\mathrm{O}\left( \log n \right)$ Laplacian systems. It consumes $\mathrm{O}\left( d \log^2 d \log n \right)$ time~\cite{DBLP:conf/focs/KoutisMP11}.
    Lastly, it takes $\mathrm{O} \left( \mathrm{nnz} \left( \A \right) \log n \right) = \mathrm{O} \left( n \log n \right)$ time to compute $\left\| \mathbf{N}_i \mathbf{a}_j \right\|_2^2$ for all $i, j$.
    Therefore, the total running time is $\mathrm{O} \left( n \log n \right)$ time.
\end{proof}

\section{Lower bound in the online random order setting}
\label{section:optimality}

In this section, we prove \cref{theorem:optimal}, that is, in order to maintain a $(1 \pm \epsilon)$-spectral approximation in the online random order setting, any randomized algorithm requires to keep $\Omega \left( d \epsilon^{-2} \log n \right)$ rows in the worst case.
Recall that the lower bound on the approximation size without any restriction is $\Omega(d \epsilon^{-2})$~\cite{DBLP:conf/stoc/BatsonSS09}, and thus the online random order setting suffers an additional $\log n$ factor.
We assume the adversary who knows only how an algorithm works and does not know any result after we run the algorithm.
In other words, the worst input stream is determined in advance.

We define an input family of vectors $\X^\ast$ as follows.
Let $K_d$ be a complete graph on $d$ vertices.
Then $\X^\ast$ is defined to be the incidence matrix of $n/\binom{d}{2}K_d$, where, for a graph $G$ and a non-negative number $\alpha$, $\alpha G$ is a graph obtained from $G$ by making $\alpha -1$ copies of each edge.
That is, $\X^\ast$ has $n/\binom{d}{2}$ copies of each row in the incidence matrix $\B_{K_d}$.
We will prove that the family $\X^\ast$ gives a lower bound in \cref{theorem:optimal}.

We first show in \cref{lemma:approximate_complete_graph} below that there exists a constant $D$ such that $\A\sim\mathcal{A}(\X^\ast)$ satisfies with high probability that the submatrix $\mathbf{A}_D$ , which is the matrix composed of the first $D$ rows in $\mathbf{A}$, is a $(1 \pm \epsilon)$-spectral approximation for $\mathbf{B}_{D / \binom{d}{2} K_d}$.
This can be done by regarding sampling in the online random order setting as sampling without replacement from a finite population.
\begin{lemma}[Tail Bound for the Hypergeometric Distribution \cite{hypergeo}] \label{lemma:hyper}
  Let $C$ be a set of $M$ elements that contains $K$ $1$'s and $M-K$ $0$'s.
  Let $X_1, \dots, X_m$ denote the values drawn from $C$ without replacement. Define $S_i \overset{\mathrm{def}}{=} \sum_{j=1}^{i} X_j$ and $\mu \overset{\mathrm{def}}{=} K / M$. Then for all $t > 0$, we have
  \begin{displaymath}
    \mathbb{P} \left( | S_m - m \mu | \geq m t \right) \leq 2 \exp \left( - \frac{2 m t ^ 2}{1 - f^*_m} \right),
  \end{displaymath}
  where $f^*_m = (m-1) / M$.
\end{lemma}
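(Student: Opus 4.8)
This is Serfling's tail inequality for the hypergeometric distribution (equivalently, $0/1$ sampling without replacement), and since it is classical the plan is to reconstruct a proof along the lines of Serfling and its later simplifications. The first step is a reduction to the one-sided estimate $\mathbb{P}(S_m - m\mu \ge mt) \le \exp\!\big(-2mt^2/(1-f^*_m)\big)$: the lower tail comes from running the same argument on the complementary population (replace each $X_j$ by $1-X_j$, which is again a $0/1$ sample without replacement, with mean $1-\mu$), and a union bound over the two directions produces the leading factor $2$.

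For the one-sided bound I would exploit exchangeability of a uniformly random ordering $X_1,\dots,X_M$ of the fixed multiset $C$: a short computation shows that the running averages $\bar X_k := S_k/k$ form a \emph{reverse} martingale with respect to the decreasing filtration $\mathcal G_k = \sigma(X_{k+1},\dots,X_M)$, since, conditioned on the tail, the retained prefix is a uniformly random arrangement of a fixed number of ones. Re-indexing $j \mapsto M-j$ turns $Z_j := \bar X_{M-j}$ into an ordinary martingale with $Z_0 = \bar X_M = \mu$ and $Z_{M-m} = S_m/m$, and a one-line computation gives the increment $\bar X_k - \bar X_{k+1} = \tfrac{1}{k+1}(\bar X_k - X_{k+1})$, whose conditional range and variance \emph{shrink} as the retained sample grows. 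The technical heart is then to feed this martingale into an exponential inequality sharp enough to turn this shrinking-increment structure into the finite-population factor $1-f^*_m$: a plain Azuma bound on the increment ranges, or the Hoeffding convex-order comparison with i.i.d.\ $\mathrm{Bernoulli}(\mu)$ draws (which only yields $\mathbb{E}[e^{\lambda(S_m-m\mu)}] \le e^{m\lambda^2/8}$, hence $2e^{-2mt^2}$), are both too lossy; one needs Serfling's more careful argument — tracking the conditional variances of the martingale differences and exploiting the relation between the retained and discarded parts of the population — to recover the claimed factor $1-f^*_m$.

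I expect this last point to be the main obstacle: getting exactly $1-f^*_m = 1-(m-1)/M$ in the denominator — rather than no finite-population improvement at all, or a strictly weaker factor — is precisely what makes the inequality nontrivial, and it requires the delicate portion of Serfling's analysis rather than any off-the-shelf martingale concentration bound. Once the sharp exponential-moment estimate for $Z_{M-m}$ is in hand, the optimisation over the Chernoff parameter and the conversion back to $S_m$ via $S_m = mZ_{M-m}$ are routine, and combining with the complementary-population estimate yields the stated two-sided bound.
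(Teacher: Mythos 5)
The paper does not prove this lemma at all: it is quoted verbatim from the cited reference \cite{hypergeo} (Serfling's inequality for sampling without replacement) and used as a black box in the proof of \cref{lemma:approximate_complete_graph}. So there is no in-paper argument to compare yours against; the relevant benchmark is Serfling's original proof, and your plan does faithfully describe its architecture. The individual claims you make are all correct: the reduction of the two-sided bound to a one-sided one via the complementary population and a union bound, the fact that the running averages $\bar X_k = S_k/k$ form a reverse martingale under exchangeability (conditioning on $S_{k+1}$ makes each of $X_1,\dots,X_{k+1}$ have conditional mean $S_{k+1}/(k+1)$), the increment identity $\bar X_k - \bar X_{k+1} = \tfrac{1}{k+1}(\bar X_k - X_{k+1})$, and the observation that Hoeffding's convex-order comparison with i.i.d.\ draws only yields $\mathbb{E}[e^{\lambda(S_m-m\mu)}]\le e^{m\lambda^2/8}$ and hence the weaker bound $2e^{-2mt^2}$ with no finite-population gain.

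The genuine gap is the one you yourself flag: the exponential moment estimate for the reverse martingale that produces the denominator $1-f^*_m=1-(m-1)/M$ is never carried out, only announced as "the delicate portion of Serfling's analysis." That estimate is the entire content of the lemma beyond the classical Hoeffding bound, and it does not follow from Azuma applied to the increment ranges (which degrade too slowly) nor from any off-the-shelf martingale inequality you invoke. One must actually establish the supermartingale property of $\exp\bigl(\lambda k(\bar X_k-\mu) - \lambda^2 g(k)\bigr)$ for a suitable variance-tracking function $g$, which requires bounding the conditional MGF of each reverse-martingale increment using the fact that, given $S_{k+1}$, the increment is a centered two-point variable with an explicitly computable spread of order $1/(k+1)$. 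As submitted, your write-up is a correct and well-informed roadmap, but not a proof; to stand on its own it either needs that computation filled in or should simply cite Serfling as the paper does.
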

\begin{lemma}
\label{lemma:approximate_complete_graph}
    Let $\mathbf{A} \in \mathbb{R}^{n \times d}$ be a matrix chosen from the uniform distribution $\mathcal{A}(\X^\ast)$, and $\epsilon \in (0, 1)$ be an error parameter. 
    Set $D= d^4 \epsilon^{-2} \log d$ and $\alpha = D / \binom{d}{2}$.
    Then, with high probability, $\mathbf{A}_D$ is a $(1 \pm \epsilon)$-spectral approximation for $\mathbf{B}_{\alpha K_d}$.
\end{lemma}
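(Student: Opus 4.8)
The plan is to reduce the matrix inequality to one scalar concentration bound per edge of $K_d$. Write $\mathbf{b}_e \in \mathbb{R}^d$ for the row of the incidence matrix $\mathbf{B}_{K_d}$ corresponding to edge $e$, so that $\mathbf{L}_{K_d} = \sum_e \mathbf{b}_e\mathbf{b}_e^\top$ and $\mathbf{B}_{\alpha K_d}^\top\mathbf{B}_{\alpha K_d} = \alpha\mathbf{L}_{K_d} = \sum_e \alpha\,\mathbf{b}_e\mathbf{b}_e^\top$. Since $\X^\ast$ contains $\alpha_0 := n/\binom{d}{2}$ copies of each $\mathbf{b}_e^\top$ and $\mathbf{A}_D$ consists of the first $D$ rows of a uniformly random permutation of $\X^\ast$, we have $\mathbf{A}_D^\top\mathbf{A}_D = \sum_e N_e\,\mathbf{b}_e\mathbf{b}_e^\top$, where $N_e$ denotes the number of copies of $\mathbf{b}_e^\top$ among those $D$ rows (the restriction of a uniform permutation to its first $D$ positions is a uniform $D$-subset, so these counts are exactly those of sampling without replacement). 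The crucial and elementary point is a coefficient-wise comparison: if $(1-\epsilon)\alpha \le N_e \le (1+\epsilon)\alpha$ for every edge $e$, then $\mathbf{A}_D^\top\mathbf{A}_D - (1-\epsilon)\alpha\mathbf{L}_{K_d} = \sum_e\big(N_e-(1-\epsilon)\alpha\big)\mathbf{b}_e\mathbf{b}_e^\top \succeq \mathbf{O}$ and likewise $(1+\epsilon)\alpha\mathbf{L}_{K_d} - \mathbf{A}_D^\top\mathbf{A}_D \succeq \mathbf{O}$, which is exactly the assertion that $\mathbf{A}_D$ is a $(1\pm\epsilon)$-spectral approximation for $\mathbf{B}_{\alpha K_d}$. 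So it suffices to show that $|N_e-\alpha|\le\epsilon\alpha$ holds for all $e$ with high probability.

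The second step is to control each $N_e$. Each $N_e$ is a hypergeometric random variable: it counts type-$e$ items in a sample of size $D$ drawn without replacement from a population of size $n$ containing $\alpha_0$ type-$e$ items, and its mean is $D\alpha_0/n = D/\binom{d}{2} = \alpha$. I would apply \cref{lemma:hyper} with $M = n$, $K = \alpha_0$, $m = D$, $\mu = 1/\binom{d}{2}$, and $t = \epsilon/\binom{d}{2}$ so that $mt = \epsilon\alpha$; using $1 - f^*_m \le 1$ this gives
\[
  \mathbb{P}\!\left(|N_e-\alpha|\ge\epsilon\alpha\right) \le 2\exp\!\left(-\frac{2D\epsilon^2}{\binom{d}{2}^2}\right).
\]
Plugging in $D = d^4\epsilon^{-2}\log d$ and $\binom{d}{2}^2 \le d^4/4$ makes the exponent at most $-8\log d$, so a single edge violates its bound with probability at most $2d^{-8}$; a union bound over the at most $d^2/2$ edges shows that, with probability at least $1-d^{-6}$, all the inequalities $|N_e-\alpha|\le\epsilon\alpha$ hold simultaneously, which by the previous paragraph finishes the proof.

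All steps are routine, and I do not anticipate a genuine obstacle: the coefficient-wise monotonicity trick removes any need for matrix concentration, so the argument rests entirely on the scalar tail bound \cref{lemma:hyper}. The only things to keep in mind are (i) that the whole setup is meaningful only when $D \le n$, i.e.\ under the standing assumption $\epsilon^2 n = \Omega(d^4\log d)$ of this section, so that the hypergeometric sampling is well-defined and $f^*_m < 1$; and (ii) that $\alpha = D/\binom{d}{2}$ need not be an integer, so $\alpha K_d$ should be read as the complete graph with every edge of weight $\alpha$, which still satisfies $\mathbf{B}_{\alpha K_d}^\top\mathbf{B}_{\alpha K_d} = \alpha\mathbf{L}_{K_d}$.
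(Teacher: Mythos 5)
Your proposal is correct and follows essentially the same route as the paper: identify the edge counts $N_e$ in $\mathbf{A}_D$ as hypergeometric variables, apply \cref{lemma:hyper} with exactly the parameters $M=n$, $m=D$, $\mu=1/\binom{d}{2}$, $t=\epsilon/\binom{d}{2}$ to get a per-edge failure probability of $2d^{-8}$, and union bound over the $\mathrm{O}(d^2)$ edges. The only difference is that you spell out the coefficient-wise PSD comparison $\sum_e (N_e-(1\pm\epsilon)\alpha)\mathbf{b}_e\mathbf{b}_e^\top$ that the paper leaves implicit, which is a welcome clarification rather than a departure.
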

\begin{proof}
   Let $S_D(e)$ be the number of an edge $e$ of $K_d$ in $\mathbf{A}_D$.
   We show $(1-\epsilon) \alpha \leq S_D(e) \leq (1+\epsilon)\alpha$ with high probability, where we note that $\alpha$ is the expected value of $S_D(e)$.
   We apply \cref{lemma:hyper}, where $M=n$, $m=D$, $\mu  = 1 / \binom{d}{2}$, and $S_m = S_D(e)$.
   Setting $t = \epsilon /\binom{d}{2} = \epsilon \alpha /D$, we have
  \begin{align}
    \mathbb{P} \left( \left| S_D(e) - \alpha \right| \geq \epsilon \alpha  \right)
    &\le 2 \exp \left( - \frac{N}{N-D+1} \cdot \frac{2 d^4 \log d}{\epsilon ^2} \left( \frac{\epsilon}{\binom{d}{2}} \right)^2 \right) \nonumber \\
    &\le 2 \exp (-8 \log d) \nonumber \\
    &= \frac{2}{d^8}. \nonumber 
  \end{align}
  Thus $(1-\epsilon) \alpha \leq S_D(e) \leq (1+\epsilon)\alpha$ with probability at least $1-2/d^8$.
  Since $K_d$ has $O(d^2)$ edges, the above inequality holds for all edges in $K_d$ with high probability, by taking a union bound.

  Therefore, the graph corresponding to $\mathbf{A}_D$ satisfies that the weight of every edge is between $(1 - \epsilon) \alpha$ and $(1 + \epsilon) \alpha$ with high probability, which means that $\mathbf{A}_D$ is a $(1 \pm \epsilon)$-spectral approximation for $\mathbf{B}_{\alpha K_d}$.
\end{proof}

Let $R$ be an algorithm that returns a $(1 \pm \epsilon)$-spectral approximation. 
Define $s_i = 2^i D$ for $i=0,1,\dots, \left\lfloor \log_2 (n/D) \right\rfloor$.
Since $R$ returns a $(1 \pm \epsilon)$-spectral approximation for any instance, $R$ has to keep a $(1\pm \epsilon)$-spectral approximation for $\A_j$ for all $j=1,\dots, n$.
Hence, for all $i=0,\dots, \left\lfloor \log_2 (n/D) \right\rfloor$, our matrix $\mathbf{\widetilde{A}}_{s_i}$ is a $(1 \pm \epsilon)$-spectral approximation for $\mathbf{A}_{s_i}$.
On the other hand, it follows from  \cref{lemma:approximate_complete_graph} that $\A_{s_i}$ must be a $(1 \pm \epsilon)$-spectral approximation for $\mathbf{B}_{\alpha_i K_d}$ for some $\alpha_i$.
They imply that we have to sample $\Omega (d\epsilon^{-2})$ rows between $s_i$-th and $s_{i+1}$-th rows.


 \begin{proof}[Proof of \cref{theorem:optimal}]
 Set $\mathbf{X}$ to the family $\X^\ast$ defined above, and $A \sim \mathcal{A}(\X)$.
 We assume that $\epsilon^2 n > d^4 \log d$ and $\epsilon^2 d > c_1$ and $\exp \left( d^{c_2} \right) > n$ for fixed positive constants $c_1$ and $c_2$.
 Define $s_i = 2^i D$, $\alpha_i = s_i / \binom{d}{2}$ for $i=0, 1,\dots, \left\lfloor \log_2 (n/D) \right\rfloor$, where $D=d^4 \epsilon^{-2}\log d$.
 We remark that since $\epsilon^2 n > d^4 \log d$, it holds that $\left\lfloor \log_2 (n/D) \right\rfloor \geq 0$.
 
  Let $\tA_j$ be a matrix that an algorithm $R$ maintains after the $j$-th row arrived. 
  Since $R$ returns a $(1 \pm \epsilon)$-spectral approximation for any matrix, $\widetilde{\mathbf{A}}_{s_i}$ is a $(1 \pm \epsilon)$-spectral approximation for $\mathbf{A}_{s_i}$ for all $i = 0,1,\dots, \left\lfloor \log_2 (n/D) \right\rfloor$:
  \begin{align}
    (1 - \epsilon) \mathbf{A}_{s_i}^\top \mathbf{A}_{s_i} \preceq &\widetilde{\mathbf{A}}_{s_i}^\top \widetilde{\mathbf{A}}_{s_i} \preceq (1 + \epsilon) \mathbf{A}_{s_i}^\top \mathbf{A}_{s_i}. \label{equation:ieq2}
  \end{align}

   We first suppose that for all $i=0,1,\dots, \left\lfloor \log_2 (n/D) \right\rfloor$, $\mathbf{A}_{s_i}$ is a $(1 \pm \epsilon)$-spectral approximation for $\mathbf{B}_{\alpha_i K_d}$ simultaneously:
 \begin{align}
     (1 - \epsilon) \mathbf{L}_{\alpha_i K_d} \preceq \mathbf{A}_{s_i}^\top \mathbf{A}_{s_i} \preceq (1 + \epsilon) \mathbf{L}_{\alpha_iK_d}. \label{equation:ieq1}
 \end{align}
  (We evaluate the probability that it holds in the end of the proof.)
  From \cref{equation:ieq2} and \cref{equation:ieq1}, we obtain
  \begin{align}
    (1 - 3 \epsilon) \mathbf{L}_{\alpha_i K_d} \preceq &\widetilde{\mathbf{A}}_{s_i}^\top \widetilde{\mathbf{A}}_{s_i} \preceq (1 + 3 \epsilon) \mathbf{L}_{\alpha_i K_d}. \label{equation:ieq3}
  \end{align}
  Similarly, we obtain
  \begin{align}
    (1 - 3\epsilon) \mathbf{L}_{\alpha_{i+1} K_d} \preceq &\widetilde{\mathbf{A}}_{s_{i+1}}^\top \widetilde{\mathbf{A}}_{s_{i+1}} \preceq (1 + 3\epsilon) \mathbf{L}_{\alpha_{i+1} K_d}. \label{equation:ieq4}
  \end{align}
  Subtracting \cref{equation:ieq3} from \cref{equation:ieq4} in both sides, we obtain
  \begin{displaymath}
    (1 - 9 \epsilon) \mathbf{L}_{\alpha_i K_d} \preceq \widetilde{\mathbf{A}}_{s_{i+1}}^\top \widetilde{\mathbf{A}}_{s_{i+1}} - \widetilde{\mathbf{A}}_{s_i}^\top \widetilde{\mathbf{A}}_{s_i} \preceq (1 + 9 \epsilon) \mathbf{L}_{\alpha_i K_d}.
  \end{displaymath}
  Hence the rows sampled between $s_i$-th and $s_{i+1}$-th rows in algorithm $R$ form a $(1 \pm 9 \epsilon)$-spectral approximation for $\mathbf{B}_{\alpha_i K_d}$.
  Since it requires $\Omega \left( d \epsilon^{-2} \right)$ rows to construct a $\left( 1 \pm \mathrm{O} (\epsilon) \right)$-spectral approximation for $\mathbf{B}_{K_d}$ when $\epsilon^2 d > c_1$~\cite{DBLP:conf/stoc/BatsonSS09}, the number of rows sampled between $s_i$-th and $s_{i+1}$-th rows is $\Omega \left( d \epsilon^{-2} \right)$.
  Aggregating it for all $i=0,1,\dots,  \left\lfloor \log_2 (n/D) \right\rfloor - 1$, we see that the approximation size is $\Omega\left( d \epsilon^{-2} \log n \right)$.
  
  Finally, we evaluate the probability that for all $i=0, 1,\dots, \left\lfloor \log_2 (n/D) \right\rfloor$, $\mathbf{A}_{s_i}$ is a $(1 \pm \epsilon)$-spectral approximation for $\mathbf{B}_{\alpha_i K_d}$.
  By \cref{lemma:approximate_complete_graph}, $\mathbf{A}_{s_i}$ is with high probability a $(1 \pm \epsilon)$-spectral approximation for $\mathbf{B}_{\alpha_i K_d}$.
  Since we assume that $\exp \left( d^{c_2} \right) > n$, it holds that $\log_2 \left(n/D\right)=O(d)$.
  Taking a union bound, $\mathbf{A}_{s_i}$ is with high probability a $(1 \pm \epsilon)$-spectral approximation for $\mathbf{B}_{\alpha_i K_d}$ for all $i=0,1,\dots, \left\lfloor \log_2 \left( n / D \right) \right\rfloor$.
  In summary, $R$ must sample $\Omega \left( d \epsilon^{-2} \log n \right)$ rows with high probability.
\end{proof}
\appendix
\section{Optimal Algorithms in the Online and Random Order Settings}
\label{section:appendix}
In this section, we show that there exists an optimal spectral approximation algorithm in the online setting by applying a technique in \cite{DBLP:conf/approx/CohenMP16} to \cref{algorithm:online_row_sampling}.
This also leads to an optimal algorithm in the online random order setting.

\subsection{Online Setting}

In the algorithm below, we maintain an upper barrier and a lower barrier of our matrix as guideposts.
In the beginning of the $i$-th iteration, we have an upper barrier $\mathbf{B}_{i-1}^\mathrm{U}$ and a lower barrier $\mathbf{B}_{i-1}^\mathrm{L}$ of our matrix $\widetilde{\mathbf{A}}_{i-1}^\top \widetilde{\mathbf{A}}_{i-1}$.
That is, $\widetilde{\mathbf{A}}_{i-1}^\top \widetilde{\mathbf{A}}_{i-1}$ is located between $\mathbf{B}_{i-1}^\mathrm{L}$ and $\mathbf{B}_{i-1}^\mathrm{U}$.
Then we compute differences between $\widetilde{\mathbf{A}}_{i-1}^\top \widetilde{\mathbf{A}}_{i-1}$ and $\mathbf{B}_{i-1}^\mathrm{U}$~($\mathbf{B}_{i-1}^\mathrm{L}$, resp.), and 
sample the $i$-th row based on these differences.
In the end of the $i$-th iteration, we update $\mathbf{B}_{i-1}^\mathrm{U}$ and $\mathbf{B}_{i-1}^\mathrm{L}$ so that $\widetilde{\mathbf{A}}_{i}^\top \widetilde{\mathbf{A}}_{i}$ is located between them.

While the algorithm is optimal for the approximation size, we need to store $\mathrm{O}\left( d^2 \right)$ rows, which is worse than previous algorithms and runs slowly compared to \cref{algorithm:scaled_sampling}.
\begin{algorithm}[htbp]
\caption{$\mathsf{Optimal Online Row Sampling} \left( \mathbf{A}, \epsilon \right)$}
\label{algorithm:optimal}
\begin{algorithmic}
\STATE{{\bf Input:} a matrix $\mathbf{A} \in \mathbb{R}^{n \times d}$, an error parameter $\epsilon \in (0,1)$.}
\STATE{{\bf Output:} a $(1 \pm \epsilon)$-spectral approximation for $\mathbf{A}$.}
\STATE{Define $c_\mathrm{U} = 2 / \epsilon + 1$, $c_\mathrm{L} = 3 / \epsilon - 1$.}
\STATE{$\widetilde{\mathbf{A}}_0 \leftarrow \mathbf{O}, \mathbf{B}_0^\mathrm{U} \leftarrow \mathbf{O}, \mathbf{B}_0^\mathrm{L} \leftarrow \mathbf{O}$.}
\FOR{$i = 1, \dots, n$}
  \STATE{Let $\mathbf{X}^\mathrm{U}_{i-1} := \left( \mathbf{B}^\mathrm{U}_{i-1} - \widetilde{\mathbf{A}}^\top_{i-1} \widetilde{\mathbf{A}}_{i-1}\right) + \mathbf{a}_i\mathbf{a}_i^\top, \mathbf{X}^\mathrm{L}_{i-1} := \left( \widetilde{\mathbf{A}}^\top_{i-1} \widetilde{\mathbf{A}}_{i-1} - \mathbf{B}^\mathrm{L}_{i-1}\right) + \mathbf{a}_i\mathbf{a}_i^\top$.}
\STATE{$p_i \leftarrow \min \left( c_\mathrm{U} \mathbf{a}_i^\top \left( \mathbf{X}^\mathrm{U}_{i-1} \right)^+ \mathbf{a}_i + c_\mathrm{L} \mathbf{a}_i^\top \left( \mathbf{X}^\mathrm{L}_{i-1}\right)^+ \mathbf{a}_i  , 1 \right)$.}
  \STATE{$\widetilde{\mathbf{A}}_i \leftarrow
  \left\{
    \begin{array}{cl}
        \begin{pmatrix} \widetilde{\mathbf{A}}_{i-1} \\ \mathbf{a}_i^\top /\sqrt{p_i} \end{pmatrix} & \text{with probability $p_i$,} \\
        \widetilde{\mathbf{A}}_{i-1} & \text{otherwise.} \\
    \end{array}
  \right. $}
  \STATE{$\mathbf{B}^\mathrm{U}_i = \mathbf{B}^\mathrm{U}_{i-1} + \left(1 + \epsilon \right)\mathbf{a}_i\mathbf{a}_i^\top, \mathbf{B}^\mathrm{L}_i = \mathbf{B}^\mathrm{L}_{i-1} + \left(1 - \epsilon \right) \mathbf{a}_i \mathbf{a}_i^\top$.}
\ENDFOR
\RETURN $\widetilde{\mathbf{A}}_n$.
\end{algorithmic}
\end{algorithm}

\begin{lemma}
\label{lemma:optimal_appendix}
Let $\mathbf{A} \in \mathbb{R}^{n \times d}$ be a matrix and $\epsilon \in (0, 1)$ be an error parameter. Then, in the online setting, \cref{algorithm:optimal} returns a $(1 \pm \epsilon)$-spectral approximation for $\mathbf{A}$ with $\mathrm{O}\left( \epsilon^{-2} \sum_i \tau_i^{\mathbf{A}_{i-1}} \left(\mathbf{A}\right) \right)$rows in expectation.
\end{lemma}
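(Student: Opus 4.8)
The plan is to mimic the barrier-argument analysis of Cohen \emph{et al.}~\cite{DBLP:conf/approx/CohenMP16} for their online BSS-type algorithm, adapted to the pseudo-inverse setting so that it works directly with $\widetilde{\mathbf{A}}_{i-1}^\top \widetilde{\mathbf{A}}_{i-1}$ rather than a regularized matrix $\widetilde{\mathbf{A}}_{i-1}^\top \widetilde{\mathbf{A}}_{i-1} + \delta \mathbf{I}$. There are two things to prove: correctness, i.e. that $\widetilde{\mathbf{A}}_n^\top \widetilde{\mathbf{A}}_n$ stays sandwiched between $\mathbf{B}_n^{\mathrm L}$ and $\mathbf{B}_n^{\mathrm U}$ (which, since $\mathbf{B}_n^{\mathrm U} = (1+\epsilon)\mathbf{A}^\top\mathbf{A}$ and $\mathbf{B}_n^{\mathrm L} = (1-\epsilon)\mathbf{A}^\top\mathbf{A}$, gives the $(1\pm\epsilon)$-spectral approximation), and the bound on the expected number of sampled rows. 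First I would set up the invariant that at the start of iteration $i$ we have $\mathbf{B}_{i-1}^{\mathrm L} \preceq \widetilde{\mathbf{A}}_{i-1}^\top\widetilde{\mathbf{A}}_{i-1} \preceq \mathbf{B}_{i-1}^{\mathrm U}$, so that $\mathbf{X}_{i-1}^{\mathrm U}$ and $\mathbf{X}_{i-1}^{\mathrm L}$ are PSD and the relevant row lies in their images (so the quantities $\mathbf{a}_i^\top(\mathbf{X}_{i-1}^{\mathrm U})^+\mathbf{a}_i$ etc. are well-behaved). Maintaining this invariant across the random update of $\widetilde{\mathbf{A}}_i$ is the heart of the argument.

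For correctness I would argue separately about the upper and lower barriers. For the upper barrier: if the $i$-th row is sampled (probability $p_i$) the matrix grows by $\mathbf{a}_i\mathbf{a}_i^\top/p_i$, while $\mathbf{B}^{\mathrm U}$ grows by $(1+\epsilon)\mathbf{a}_i\mathbf{a}_i^\top$; if not sampled the matrix is unchanged while the barrier still grows. The key is that the ``breach'' of the upper barrier, measured by a potential like $\mathrm{tr}\big((\mathbf{B}_i^{\mathrm U} - \widetilde{\mathbf{A}}_i^\top\widetilde{\mathbf{A}}_i)^+\big)$ — more precisely, the Cohen \emph{et al.} barrier potential — does not increase in expectation, using the Sherman–Morrison formula (\cref{prop:sherman-morrison_Moore-Penrose}) to expand the rank-one update of the pseudo-inverse and the chosen multipliers $c_{\mathrm U} = 2/\epsilon + 1$. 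A symmetric computation handles the lower barrier with $c_{\mathrm L} = 3/\epsilon - 1$. One then concludes that with appropriate probability (or: deterministically, once the sampling probabilities are capped at $1$ whenever a row would otherwise breach a barrier) the barriers are never crossed, giving the spectral guarantee. Here one should be careful with the case $\mathbf{a}_i \not\perp \mathrm{Ker}(\mathbf{X}_{i-1}^{\mathrm U})$ or the degenerate low-rank situations, where \cref{lemma:relative_leverage_score_tractable} and the matrix pseudo-determinant lemma are the right tools.

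For the size bound, the expected number of sampled rows is $\sum_i p_i \le \sum_i \big(c_{\mathrm U}\,\mathbf{a}_i^\top(\mathbf{X}_{i-1}^{\mathrm U})^+\mathbf{a}_i + c_{\mathrm L}\,\mathbf{a}_i^\top(\mathbf{X}_{i-1}^{\mathrm L})^+\mathbf{a}_i\big)$, so it suffices to bound each sum by $\mathrm{O}\big(\epsilon^{-2}\sum_i \tau_i^{\mathbf{A}_{i-1}}(\mathbf{A})\big)$. Since $c_{\mathrm U}, c_{\mathrm L} = \Theta(1/\epsilon)$, I want each of $\mathbf{a}_i^\top(\mathbf{X}_{i-1}^{\mathrm U})^+\mathbf{a}_i$ and $\mathbf{a}_i^\top(\mathbf{X}_{i-1}^{\mathrm L})^+\mathbf{a}_i$ to be $\mathrm{O}(\epsilon^{-1}\tau_i^{\mathbf{A}_{i-1}}(\mathbf{A}))$. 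For the lower barrier this follows because, assuming correctness, $\widetilde{\mathbf{A}}_{i-1}^\top\widetilde{\mathbf{A}}_{i-1} - \mathbf{B}_{i-1}^{\mathrm L} \succeq \widetilde{\mathbf{A}}_{i-1}^\top\widetilde{\mathbf{A}}_{i-1} - (1-\epsilon)\mathbf{A}_{i-1}^\top\mathbf{A}_{i-1} \succeq \epsilon\,\mathbf{A}_{i-1}^\top\mathbf{A}_{i-1}$ when combined with the lower-barrier invariant, so $\mathbf{X}_{i-1}^{\mathrm L} \succeq \epsilon\,\mathbf{A}_{i-1}^\top\mathbf{A}_{i-1} + \mathbf{a}_i\mathbf{a}_i^\top \succeq \epsilon\big(\mathbf{A}_{i-1}^\top\mathbf{A}_{i-1} + \mathbf{a}_i\mathbf{a}_i^\top\big) = \epsilon\,\mathbf{A}_i^\top\mathbf{A}_i$ (using $\epsilon < 1$), whence $\mathbf{a}_i^\top(\mathbf{X}_{i-1}^{\mathrm L})^+\mathbf{a}_i \le \epsilon^{-1}\mathbf{a}_i^\top(\mathbf{A}_i^\top\mathbf{A}_i)^+\mathbf{a}_i = \epsilon^{-1}\tau_i^{\mathbf{A}_{i-1}}(\mathbf{A})$ by \cref{lemma:pseudoinverse} (monotonicity of the pseudo-inverse under PSD ordering, with the subspace condition checked via the correctness invariant). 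The upper-barrier term needs a slightly different estimate: here $\mathbf{X}_{i-1}^{\mathrm U} = (\mathbf{B}_{i-1}^{\mathrm U} - \widetilde{\mathbf{A}}_{i-1}^\top\widetilde{\mathbf{A}}_{i-1}) + \mathbf{a}_i\mathbf{a}_i^\top$, and since $\mathbf{B}_{i-1}^{\mathrm U} - \widetilde{\mathbf{A}}_{i-1}^\top\widetilde{\mathbf{A}}_{i-1} \succeq (1+\epsilon)\mathbf{A}_{i-1}^\top\mathbf{A}_{i-1} - \widetilde{\mathbf{A}}_{i-1}^\top\widetilde{\mathbf{A}}_{i-1} \succeq 2\epsilon\,\mathbf{A}_{i-1}^\top\mathbf{A}_{i-1} \succeq 0$ (again using correctness, $\widetilde{\mathbf{A}}_{i-1}^\top\widetilde{\mathbf{A}}_{i-1} \preceq (1-\epsilon)\mathbf{A}_{i-1}^\top\mathbf{A}_{i-1}$... — actually one uses the cruder $\succeq \epsilon\,\mathbf{A}_{i-1}^\top\mathbf{A}_{i-1}$), the same argument gives $\mathbf{X}_{i-1}^{\mathrm U} \succeq \epsilon\,\mathbf{A}_i^\top\mathbf{A}_i$ and hence $\mathbf{a}_i^\top(\mathbf{X}_{i-1}^{\mathrm U})^+\mathbf{a}_i \le \epsilon^{-1}\tau_i^{\mathbf{A}_{i-1}}(\mathbf{A})$. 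Summing, $\sum_i p_i = \mathrm{O}\big(\epsilon^{-2}\sum_i\tau_i^{\mathbf{A}_{i-1}}(\mathbf{A})\big)$.

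The main obstacle I anticipate is the correctness part — specifically, showing the barriers are never breached despite the randomness — since the clean potential-based deterministic argument from the BSS analysis needs to be turned into a (super)martingale argument, and the rank-deficient cases (when $\mathbf{a}_i$ is not in the image of $\mathbf{X}_{i-1}^{\mathrm U}$ or $\mathbf{X}_{i-1}^{\mathrm L}$) require careful handling via the pseudo-inverse identities and the pseudo-determinant lemma rather than ordinary matrix inverses. Since this analysis is essentially that of Cohen \emph{et al.}~\cite{DBLP:conf/approx/CohenMP16} with the regularizer $\delta\mathbf{I}$ replaced by working in $\mathrm{Im}(\mathbf{A}^\top\mathbf{A})$, I would present the adapted potential computations in detail only where they differ, and otherwise cite their argument.
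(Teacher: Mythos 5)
Your overall architecture (a barrier invariant for correctness, then a bound on $\sum_i \mathbb{E}[p_i]$) matches the paper's, and your correctness sketch is salvageable once you commit to the deterministic version: since $p_i \ge c_\mathrm{U}\, \mathbf{a}_i^\top \left( \mathbf{X}^\mathrm{U}_{i-1} \right)^+ \mathbf{a}_i$, a sampled row contributes $\mathbf{a}_i\mathbf{a}_i^\top / p_i \prec \mathbf{X}^\mathrm{U}_{i-1}$, so the upper barrier is never breached almost surely (and symmetrically for the lower one). No supermartingale potential is needed, and ``the potential does not increase in expectation'' would in any case not deliver the almost-sure sandwich that the lemma asserts.

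The size bound, however, has a genuine gap. You claim $\mathbf{X}^\mathrm{L}_{i-1} = \left( \widetilde{\mathbf{A}}_{i-1}^\top \widetilde{\mathbf{A}}_{i-1} - \mathbf{B}^\mathrm{L}_{i-1} \right) + \mathbf{a}_i\mathbf{a}_i^\top \succeq \epsilon\, \mathbf{A}_{i-1}^\top \mathbf{A}_{i-1} + \mathbf{a}_i\mathbf{a}_i^\top$, which requires $\widetilde{\mathbf{A}}_{i-1}^\top \widetilde{\mathbf{A}}_{i-1} \succeq \mathbf{A}_{i-1}^\top \mathbf{A}_{i-1}$; the invariant only gives $\succeq (1-\epsilon)\mathbf{A}_{i-1}^\top \mathbf{A}_{i-1}$, i.e.\ only that the gap is PSD, not that it is $\succeq \epsilon\,\mathbf{A}_{i-1}^\top \mathbf{A}_{i-1}$. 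Symmetrically, your upper-barrier estimate needs $\widetilde{\mathbf{A}}_{i-1}^\top \widetilde{\mathbf{A}}_{i-1} \preceq \mathbf{A}_{i-1}^\top \mathbf{A}_{i-1}$; the two together would force exact equality. And the gaps genuinely can shrink far below $\epsilon\,\mathbf{A}_{i-1}^\top \mathbf{A}_{i-1}$ in the direction of sampled rows: each sample consumes up to a $1/c_\mathrm{U}$ fraction of the remaining gap, and these losses compound over iterations, so no deterministic pointwise bound of your form holds. The missing idea --- and the heart of the paper's proof --- is to bound $\mathbf{a}_i^\top \left( \mathbf{X}^\mathrm{U}_{i-1} \right)^+ \mathbf{a}_i$ only \emph{in expectation}: one introduces the interpolating matrices $\mathbf{Y}_{i-1,j}^\mathrm{U}$, which replace the random part $\widetilde{\mathbf{A}}_j^\top \widetilde{\mathbf{A}}_j$ of $\mathbf{X}^\mathrm{U}_{i-1}$ by the deterministic $\left(1+\epsilon/2\right)\mathbf{A}_j^\top \mathbf{A}_j$ plus a slack term $\frac{\epsilon}{2}\mathbf{A}_{i-1}^\top \mathbf{A}_{i-1}$, and proves via the Sherman--Morrison computation of \cref{lemma:optimal_counterpart} that $\mathbb{E}\left[ \mathbf{a}_i^\top \left( \mathbf{Y}_{i-1,j+1}^\mathrm{U} \right)^+ \mathbf{a}_i \right] \le \mathbb{E}\left[ \mathbf{a}_i^\top \left( \mathbf{Y}_{i-1,j}^\mathrm{U} \right)^+ \mathbf{a}_i \right]$. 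Unrolling from $j=i-1$ down to $j=0$ gives $\mathbb{E}\left[ \mathbf{a}_i^\top \left( \mathbf{X}^\mathrm{U}_{i-1} \right)^+ \mathbf{a}_i \right] \le \frac{2}{\epsilon}\, \tau_i^{\mathbf{A}_{i-1}}(\mathbf{A})$, and likewise for the lower barrier, which is exactly the $\mathrm{O}\left(\epsilon^{-1}\tau_i^{\mathbf{A}_{i-1}}(\mathbf{A})\right)$ per-term bound you wanted but cannot obtain pointwise.
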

Before starting the proof, we mention the following lemma.

\begin{lemma}
\label{lemma:optimal_counterpart}
    Let $\mathbf{X} \in \mathbb{R}^{d \times d}$ be a $\mathrm{PSD}$ matrix, and $\mathbf{u}$, $\mathbf{v} \in \mathbb{R}^d$ be vectors such that $\mathbf{u} \perp \mathrm{Ker}(\mathbf{X})$ and $\mathbf{u}^\top \mathbf{X}^+ \mathbf{u} = 1$.
    For $a, b \in \mathbb{R}$ with $a, b \neq 1$, define the random variable $\mathbf{X}^{'}$ to be $\mathbf{X}-a \mathbf{u}\mathbf{u}^\top$ with probability $p$ and $\mathbf{X}-b\mathbf{u}\mathbf{u}^\top$ otherwise. Then
    \begin{displaymath}
        \mathbb{E} \left[ \mathbf{v}^\top \mathbf{X}^{'+} \mathbf{v} - \mathbf{v}^\top \mathbf{X}^{+} \mathbf{v} \right] = \left( \mathbf{v}^\top \mathbf{X}^{+} \mathbf{u} \right)^2 \frac{pa+(1-p)b-ab}{(1-a)(1-b)}.
    \end{displaymath}
\end{lemma}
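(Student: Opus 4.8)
The plan is to apply the Sherman–Morrison formula for the Moore–Penrose pseudo-inverse (\cref{prop:sherman-morrison_Moore-Penrose}) to each of the two realizations of $\mathbf{X}'$ and then average. Since $\mathbf{u}\perp\mathrm{Ker}(\mathbf{X})$, the proposition applies with base matrix $\mathbf{X}$ and multiplier $k=-a$; because $a\neq 1$ and $\mathbf{u}^\top\mathbf{X}^+\mathbf{u}=1$, the relevant denominator $1+(-a)\mathbf{u}^\top\mathbf{X}^+\mathbf{u}=1-a$ is nonzero, so
\[
    (\mathbf{X}-a\mathbf{u}\mathbf{u}^\top)^+ = \mathbf{X}^+ + \frac{a\,\mathbf{X}^+\mathbf{u}\mathbf{u}^\top\mathbf{X}^+}{1-a}.
\]
Conjugating both sides by $\mathbf{v}$ and using symmetry of $\mathbf{X}^+$ (so that $\mathbf{u}^\top\mathbf{X}^+\mathbf{v}=\mathbf{v}^\top\mathbf{X}^+\mathbf{u}$) yields $\mathbf{v}^\top(\mathbf{X}-a\mathbf{u}\mathbf{u}^\top)^+\mathbf{v}-\mathbf{v}^\top\mathbf{X}^+\mathbf{v}=\frac{a}{1-a}(\mathbf{v}^\top\mathbf{X}^+\mathbf{u})^2$, and the same computation with $b$ in place of $a$ gives $\frac{b}{1-b}(\mathbf{v}^\top\mathbf{X}^+\mathbf{u})^2$.

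Next I would take the expectation over the two cases with weights $p$ and $1-p$:
\[
    \mathbb{E}\left[\mathbf{v}^\top\mathbf{X}'^+\mathbf{v}-\mathbf{v}^\top\mathbf{X}^+\mathbf{v}\right]
    = (\mathbf{v}^\top\mathbf{X}^+\mathbf{u})^2\left( p\cdot\frac{a}{1-a} + (1-p)\cdot\frac{b}{1-b}\right),
\]
and conclude with the elementary identity $p\frac{a}{1-a}+(1-p)\frac{b}{1-b}=\frac{pa+(1-p)b-ab}{(1-a)(1-b)}$, obtained by placing both fractions over the common denominator $(1-a)(1-b)$, expanding the numerator $pa(1-b)+(1-p)b(1-a)$, and collecting the coefficient of $ab$, which is $p+(1-p)=1$.

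There is essentially no serious obstacle: the proof is a direct invocation of \cref{prop:sherman-morrison_Moore-Penrose} followed by a one-line algebraic simplification. The only points deserving explicit mention are (i) verifying the hypothesis of the proposition holds — $\mathbf{u}$ lies in the row space of $\mathbf{X}$ by $\mathbf{u}\perp\mathrm{Ker}(\mathbf{X})$ and symmetry of $\mathbf{X}$, and the update does not make the denominator vanish since $a,b\neq 1$ — and (ii) observing that the normalization $\mathbf{u}^\top\mathbf{X}^+\mathbf{u}=1$ is precisely what collapses $1-a\,\mathbf{u}^\top\mathbf{X}^+\mathbf{u}$ to $1-a$, producing the clean closed form in the statement.
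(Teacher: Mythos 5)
Your proposal is correct and matches the paper's own proof: both apply \cref{prop:sherman-morrison_Moore-Penrose} with multipliers $-a$ and $-b$, use the normalization $\mathbf{u}^\top\mathbf{X}^+\mathbf{u}=1$ to reduce the denominators to $1-a$ and $1-b$, average with weights $p$ and $1-p$, and combine the fractions over the common denominator $(1-a)(1-b)$. Your explicit checks of the proposition's hypotheses are a welcome addition but do not change the argument.
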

\begin{proof}
    Directly calculating by \cref{prop:sherman-morrison_Moore-Penrose}, we obtain
    \begin{align}
    \mathbb{E} \left[ \mathbf{v}^\top \mathbf{X}^{'+} \mathbf{v} - \mathbf{v}^\top \mathbf{X}^{+} \mathbf{v} \right]
        &= \mathbf{v}^\top \left( p \cdot \frac{a\mathbf{X}^+\mathbf{u}\mathbf{u}^\top\mathbf{X}^+}{1 - a} + (1-p) \cdot \frac{b\mathbf{X}^+\mathbf{u}\mathbf{u}^\top\mathbf{X}^+}{1 - b} \right) \mathbf{v} \nonumber \\
        &= \left( \mathbf{v}^\top \mathbf{X}^{+} \mathbf{u} \right)^2 \frac{pa+(1-p)b-ab}{(1-a)(1-b)}. \nonumber
    \end{align}
\end{proof}
\begin{proof}[Proof of \cref{lemma:optimal_appendix}]
First of all, we will prove that, in the end of the $i$-th iteration, $\widetilde{\mathbf{A}}_i^\top \widetilde{\mathbf{A}}_i$ is located between the upper barrier $\mathbf{B}_i^\mathrm{U}$ and the lower barrier $\mathbf{B}_i^\mathrm{L}$:
\begin{equation}\label{eq:app1}
    \mathbf{B}_i^\mathrm{L} \preceq \widetilde{\mathbf{A}}_i^\top \widetilde{\mathbf{A}}_i \preceq \mathbf{B}_i^\mathrm{U}
\end{equation}
Define the gap to the upper bound as $\widehat{\mathbf{X}}_i^\mathrm{U} = \mathbf{B}^\mathrm{U}_{i-1} - \widetilde{\mathbf{A}}^\top_{i-1} \widetilde{\mathbf{A}}_{i-1} = \mathbf{X}_i^\mathrm{U} - \mathbf{a}_{i+1}\mathbf{a}_{i+1}^\top$ and the gap to the lower bound as $\widehat{\mathbf{X}}_i^\mathrm{L} =  \widetilde{\mathbf{A}}^\top_{i-1} \widetilde{\mathbf{A}}_{i-1} - \mathbf{B}^\mathrm{L}_{i-1} = \mathbf{X}_i^\mathrm{L} - \mathbf{a}_{i+1}\mathbf{a}_{i+1}^\top$.
We say that a $\mathrm{PSD}$ matrix $\mathbf{M}$ \textit{increases} if $\mathbf{M}$ is changed to $\mathbf{M}'$ with $\mathbf{M} \preceq \mathbf{M}'$.

When $p_i = 1$, \cref{eq:app1} holds, since the gaps $\widehat{\mathbf{X}}_{i-1}^\mathrm{U}$ and $\widehat{\mathbf{X}}_{i-1}^\mathrm{L}$ strictly increase. When $p_i < 1$, $\widehat{\mathbf{X}}_{i-1}^\mathrm{U}$ can decrease by $\left( \mathbf{a}_i\mathbf{a}_i^\top \right) / p_i - (1 + \epsilon) \mathbf{a}_i \mathbf{a}_i^\top$. 
As $p_i > \mathbf{a}_i^\top \left( \mathbf{X}_{i-1}^\mathrm{U} \right)^+ \mathbf{a}_i$ and $\mathbf{a}_i\mathbf{a}_i^\top / p_i \prec \mathbf{X}_{i-1}^\mathrm{U}$, the amount of the decrease is at most
\begin{displaymath}
    \frac{\mathbf{a}_i\mathbf{a}_i^\top}{p_i} - (1 + \epsilon) \mathbf{a}_i \mathbf{a}_i^\top \prec \frac{\mathbf{a}_i\mathbf{a}_i^\top}{p_i} - \mathbf{a}_i \mathbf{a}_i^\top \prec \widehat{\mathbf{X}}_{i-1}^\mathrm{U}.
\end{displaymath}
Thus $\widetilde{\mathbf{A}}_i$ does not exceed the upper bound after the rank-1 update. We can also confirm the condition for the lower bound analogously. 

Since $\mathbf{B}_i^\mathrm{L}$ and $\mathbf{B}_i^\mathrm{U}$ are $(1\pm \epsilon)$-spectral approximations for $\A_i$, respectively, we see that, for all $i$, $\widetilde{\mathbf{A}}_i^\top \widetilde{\mathbf{A}}_i$ is located between $(1 \pm \epsilon)$ multiplicative bounds.
Hence \cref{algorithm:optimal} returns a $(1 \pm \epsilon)$-spectral approximation for $\mathbf{A}$. 

In the latter part of the proof, we bound the approximation size of $\widetilde{\mathbf{A}}_n$. We define $\mathbf{Y}_{i,j}^\mathrm{U}$, $\mathbf{Y}_{i,j}^\mathrm{L}$ as follows:
\begin{align}
    \mathbf{Y}_{i,j}^\mathrm{U} &= \frac{\epsilon}{2} \mathbf{A}^\top_i \mathbf{A}_i + \left( 1 + \frac{\epsilon}{2} \right) \mathbf{A}^\top_j \mathbf{A}_j - \widetilde{\mathbf{A}}_j^\top \widetilde{\mathbf{A}}_j + \mathbf{a}_{i+1} \mathbf{a}_{i+1}^\top, \nonumber \\
    \mathbf{Y}_{i,j}^\mathrm{L} &= \widetilde{\mathbf{A}}_j^\top \widetilde{\mathbf{A}}_j + \frac{\epsilon}{2} \mathbf{A}^\top_i \mathbf{A}_i - \left( 1 - \frac{\epsilon}{2} \right) \mathbf{A}^\top_j \mathbf{A}_j + \mathbf{a}_{i+1} \mathbf{a}_{i+1}^\top. \nonumber
\end{align}
Notice that $\mathbf{Y}_{i,i}^\mathrm{U} = \mathbf{X}_i^\mathrm{U}$  and $\mathbf{Y}_{i,i}^\mathrm{L} = \mathbf{X}_i^\mathrm{L}$ for all $i$, and $\mathbf{Y}_{i,j}^\mathrm{U} \succeq \mathbf{X}_j^\mathrm{U}$ and $\mathbf{Y}_{i,j}^\mathrm{L} \succeq \mathbf{X}_j^\mathrm{L}$ for any $j \le i$. We show the following two monotonicities for $j < i-1$:

\begin{claim}\label{clm:1}
\begin{align}
    \mathbb{E}\left[ \mathbf{a}_i^\top \left( \mathbf{Y}_{i-1,j+1}^\mathrm{U} \right)^+ \mathbf{a}_i \right] \le \mathbb{E}\left[ \mathbf{a}_i^\top \left( \mathbf{Y}_{i-1,j}^\mathrm{U} \right)^+ \mathbf{a}_i \right], \label{equation:monotonicity1} \\
    \mathbb{E}\left[ \mathbf{a}_i^\top \left( \mathbf{Y}_{i-1,j+1}^\mathrm{L} \right)^+ \mathbf{a}_i \right] \le \mathbb{E}\left[ \mathbf{a}_i^\top \left( \mathbf{Y}_{i-1,j}^\mathrm{L} \right)^+ \mathbf{a}_i \right]. \label{equation:monotonicity2}
\end{align}
\end{claim}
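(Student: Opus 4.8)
The plan is to fix $i$ and a single index $j$ with $j<i-1$ and to prove both inequalities after conditioning on the entire history $\mathcal{F}_j$ of the algorithm through its $j$-th iteration; taking expectations over $\mathcal{F}_j$ then removes the conditioning, since $\mathbf{Y}_{i-1,j}^\mathrm{U}$ is $\mathcal{F}_j$-measurable. Conditioned on $\mathcal{F}_j$, the matrix $\mathbf{Y}_{i-1,j}^\mathrm{U}$ and the probability $p_{j+1}$ used in iteration $j+1$ are fixed, and the only remaining randomness is whether $\mathbf{a}_{j+1}$ is sampled. Comparing the definitions of $\mathbf{Y}_{i-1,j+1}^\mathrm{U}$ and $\mathbf{Y}_{i-1,j}^\mathrm{U}$ and using that $\widetilde{\mathbf{A}}_{j+1}^\top\widetilde{\mathbf{A}}_{j+1}-\widetilde{\mathbf{A}}_j^\top\widetilde{\mathbf{A}}_j$ equals $\mathbf{a}_{j+1}\mathbf{a}_{j+1}^\top/p_{j+1}$ or $\mathbf{O}$, one sees that $\mathbf{Y}_{i-1,j+1}^\mathrm{U}$ is obtained from $\mathbf{Y}_{i-1,j}^\mathrm{U}$ by subtracting $(1/p_{j+1}-1-\epsilon/2)\,\mathbf{a}_{j+1}\mathbf{a}_{j+1}^\top$ with probability $p_{j+1}$ and $-(1+\epsilon/2)\,\mathbf{a}_{j+1}\mathbf{a}_{j+1}^\top$ otherwise, i.e.\ a random rank-one perturbation along $\mathbf{a}_{j+1}$. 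Since $j+1\le i-1$, the term $\tfrac{\epsilon}{2}\mathbf{a}_{j+1}\mathbf{a}_{j+1}^\top$ sits inside $\tfrac{\epsilon}{2}\mathbf{A}_{i-1}^\top\mathbf{A}_{i-1}$, so $\mathbf{Y}_{i-1,j}^\mathrm{U}\succeq\tfrac{\epsilon}{2}\mathbf{a}_{j+1}\mathbf{a}_{j+1}^\top$, hence $\mathbf{a}_{j+1}\perp\mathrm{Ker}(\mathbf{Y}_{i-1,j}^\mathrm{U})$; setting $s=\mathbf{a}_{j+1}^\top(\mathbf{Y}_{i-1,j}^\mathrm{U})^+\mathbf{a}_{j+1}>0$ and $\mathbf{u}=\mathbf{a}_{j+1}/\sqrt{s}$ we are positioned to apply \cref{lemma:optimal_counterpart} with $\mathbf{X}=\mathbf{Y}_{i-1,j}^\mathrm{U}$, $\mathbf{v}=\mathbf{a}_i$, $p=p_{j+1}$, $a=a_1:=s(1/p_{j+1}-1-\epsilon/2)$, $b=a_2:=-s(1+\epsilon/2)$. (If $\mathbf{a}_{j+1}=\mathbf{0}$ the perturbation is trivial; if $\mathbf{a}_{j+1}\not\perp\mathrm{Ker}$ of the current upper gap then the corresponding score equals $1$ by \cref{lemma:relative_leverage_score_tractable}, forcing $p_{j+1}=1$, a case handled below.)

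By \cref{lemma:optimal_counterpart},
\[
\mathbb{E}\!\left[\left.\mathbf{a}_i^\top(\mathbf{Y}_{i-1,j+1}^\mathrm{U})^+\mathbf{a}_i-\mathbf{a}_i^\top(\mathbf{Y}_{i-1,j}^\mathrm{U})^+\mathbf{a}_i\,\right|\mathcal{F}_j\right]=\left(\mathbf{a}_i^\top(\mathbf{Y}_{i-1,j}^\mathrm{U})^+\mathbf{u}\right)^2\!\left(p_{j+1}\frac{a_1}{1-a_1}+(1-p_{j+1})\frac{a_2}{1-a_2}\right),
\]
so it suffices to show the parenthesised scalar is non-positive. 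One has $a_2<0<1$, and $a_1<1$ (trivially if $p_{j+1}\ge\tfrac{1}{1+\epsilon/2}$ since then $a_1\le0$, and in the remaining case from the bound on $s$ derived below), so both denominators are positive and the claim reduces to $p_{j+1}a_1+(1-p_{j+1})a_2-a_1a_2\le0$. A short computation gives $p_{j+1}a_1+(1-p_{j+1})a_2=-\tfrac{\epsilon}{2}s$, so we must prove $a_1a_2\ge-\tfrac{\epsilon}{2}s$, i.e.\ $s(1+\tfrac{\epsilon}{2})\big(\tfrac{1}{p_{j+1}}-1-\tfrac{\epsilon}{2}\big)\le\tfrac{\epsilon}{2}$. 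If $p_{j+1}\ge\tfrac{1}{1+\epsilon/2}$ (in particular if $p_{j+1}=1$) the left side is non-positive and we are done. Otherwise $p_{j+1}<1$, so $p_{j+1}\ge c_\mathrm{U}\,\mathbf{a}_{j+1}^\top(\widehat{\mathbf{X}}^\mathrm{U}+\mathbf{a}_{j+1}\mathbf{a}_{j+1}^\top)^+\mathbf{a}_{j+1}=c_\mathrm{U}\,\tfrac{t}{1+t}$, where $\widehat{\mathbf{X}}^\mathrm{U}=\mathbf{B}^\mathrm{U}_j-\widetilde{\mathbf{A}}_j^\top\widetilde{\mathbf{A}}_j$ is the current upper gap, $t=\mathbf{a}_{j+1}^\top(\widehat{\mathbf{X}}^\mathrm{U})^+\mathbf{a}_{j+1}$, and the identity is \cref{prop:sherman-morrison_Moore-Penrose}; since $\mathbf{Y}_{i-1,j}^\mathrm{U}=\widehat{\mathbf{X}}^\mathrm{U}+\tfrac{\epsilon}{2}\sum_{k=j+1}^{i-1}\mathbf{a}_k\mathbf{a}_k^\top+\mathbf{a}_i\mathbf{a}_i^\top\succeq\widehat{\mathbf{X}}^\mathrm{U}+\tfrac{\epsilon}{2}\mathbf{a}_{j+1}\mathbf{a}_{j+1}^\top$, \cref{lemma:pseudoinverse} and \cref{prop:sherman-morrison_Moore-Penrose} give $s\le\tfrac{t}{1+(\epsilon/2)t}$. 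Substituting the resulting bounds on $s$ and $1/p_{j+1}$ in terms of $t$, the target inequality collapses (after clearing denominators and cancelling a factor $p_{j+1}$) to $(1+\tfrac{\epsilon}{2})^2\ge\tfrac{\epsilon}{2}(1-\tfrac{\epsilon}{2})$, which holds for every $\epsilon\in(0,1)$; this is exactly where the choice $c_\mathrm{U}=2/\epsilon+1$ is used. This proves \eqref{equation:monotonicity1}, and taking $\mathbb{E}$ over $\mathcal{F}_j$ gives the stated unconditional form.

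The lower-barrier inequality \eqref{equation:monotonicity2} is proved the same way: $\mathbf{Y}_{i-1,j+1}^\mathrm{L}$ is obtained from $\mathbf{Y}_{i-1,j}^\mathrm{L}$ by adding $(1/p_{j+1}-1+\epsilon/2)\,\mathbf{a}_{j+1}\mathbf{a}_{j+1}^\top$ with probability $p_{j+1}$ and $-(1-\epsilon/2)\,\mathbf{a}_{j+1}\mathbf{a}_{j+1}^\top$ otherwise, again $\mathbf{Y}_{i-1,j}^\mathrm{L}\succeq\tfrac{\epsilon}{2}\mathbf{a}_{j+1}\mathbf{a}_{j+1}^\top$, and \cref{lemma:optimal_counterpart} reduces the claim to $s(1-\tfrac{\epsilon}{2})\big(\tfrac{1}{p_{j+1}}-1+\tfrac{\epsilon}{2}\big)\le\tfrac{\epsilon}{2}$, which is verified with $p_{j+1}\ge c_\mathrm{L}\tfrac{t}{1+t}$ for the lower-gap score $t$ and $c_\mathrm{L}=3/\epsilon-1$. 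I expect the only delicate point to be this final arithmetic estimate in each case: the inequality is essentially tight as $p_{j+1}\to0$, so one must choose the monotone bounds relating $s$, $p_{j+1}$ and $t$ in exactly the right way (and the specific constants $c_\mathrm{U},c_\mathrm{L}$ were tuned for precisely this). Everything else is routine bookkeeping — identifying which rank-one update is applied, normalising the perturbation direction, and dispatching the degenerate cases $\mathbf{a}_{j+1}=\mathbf{0}$, $\mathbf{a}_{j+1}$ not orthogonal to the current kernel, and $p_{j+1}=1$.
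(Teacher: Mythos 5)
Your proof is correct and follows essentially the same route as the paper's: you identify the same random rank-one update linking $\mathbf{Y}_{i-1,j+1}^\mathrm{U}$ to $\mathbf{Y}_{i-1,j}^\mathrm{U}$, apply \cref{lemma:optimal_counterpart} with the same normalized direction and the same parameters $a,b$ (the paper writes them via $s_{j+1}^{\mathrm{U}}=s/p_{j+1}$), and reduce to the same scalar inequality through the identity $pa+(1-p)b=-\tfrac{\epsilon}{2}s$. The only cosmetic difference is the closing estimate, where the paper bounds $s\le p_{j+1}/c_\mathrm{U}$ directly from $\mathbf{Y}_{i-1,j}^\mathrm{U}\succeq\mathbf{X}_j^\mathrm{U}$ and expands the numerator, whereas you detour through the pre-update score $t$ via two Sherman--Morrison computations; both yield the same conclusion.
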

\begin{proof}[Proof of \cref{clm:1}]
If $p_{j+1} = 1$, then $\mathbf{Y}_{i-1,j+1}^\mathrm{U}$ and $\mathbf{Y}_{i-1,j+1}^\mathrm{L}$ increase:
\begin{displaymath}
    \mathbf{Y}_{i-1,j+1}^\mathrm{U} = \mathbf{Y}_{i-1,j}^\mathrm{U} + \frac{\epsilon}{2} \mathbf{a}_{j+1} \mathbf{a}_{j+1}^\top, \hspace{10pt} \mathbf{Y}_{i-1,j+1}^\mathrm{L} = \mathbf{Y}_{i-1,j}^\mathrm{L} + \frac{\epsilon}{2} \mathbf{a}_{j+1} \mathbf{a}_{j+1}^\top.
\end{displaymath}
Hence two monotonicities hold.

Next assume that $p_{j+1}<1$, where $\mathbf{a}_{j+1} \perp \mathrm{Ker} \left( \mathbf{X}_j^\mathrm{U} \right)$ and $\mathbf{a}_{j+1} \perp \mathrm{Ker} \left( \mathbf{X}_j^\mathrm{L} \right)$ hold.
First of all, since for $j \le i$ $\mathbf{Y}_{i,j}^\mathrm{U} \succeq \mathbf{X}_j^\mathrm{U}$ and $\mathbf{Y}_{i,j}^\mathrm{L} \succeq \mathbf{X}_j^\mathrm{L}$ hold, we have
\begin{align}
    p_{j+1} \ge c_\mathrm{U} \mathbf{a}_{j+1}^\top \left( \mathbf{X}_j^\mathrm{U} \right)^+ \mathbf{a}_{j+1} \ge c_\mathrm{U} \mathbf{a}_{j+1}^\top \left( \mathbf{Y}_{i-1,j}^\mathrm{U} \right)^+ \mathbf{a}_{j+1}, \nonumber \\
    p_{j+1} \ge c_\mathrm{L} \mathbf{a}_{j+1}^\top \left( \mathbf{X}_j^\mathrm{L} \right)^+ \mathbf{a}_{j+1} \ge c_\mathrm{L} \mathbf{a}_{j+1}^\top \left( \mathbf{Y}_{i-1,j}^\mathrm{L} \right)^+ \mathbf{a}_{j+1}. \nonumber
\end{align}
We define the vector $\mathbf{w}_{j+1} \overset{\mathrm{def}}{=}  \mathbf{a}_{j+1} / \sqrt{p_{j+1}}$ and get:
\begin{align}
    \mathbf{w}_{j+1}^\top \left( \mathbf{Y}_{i-1,j}^\mathrm{U} \right)^+ \mathbf{w}_{j+1} \le \frac{1}{c_\mathrm{U}}, \hspace{10pt}
    \mathbf{w}_{j+1}^\top \left( \mathbf{Y}_{i-1,j}^\mathrm{L} \right)^+ \mathbf{w}_{j+1} \le \frac{1}{c_\mathrm{L}}. \label{equation:s_inequlity}
\end{align}
Moreover, we define $s_{j+1}^\mathrm{U}$, $s_{j+1}^\mathrm{L}$, $\mathbf{u}_{j+1}^\mathrm{U}$, $\mathbf{u}_{j+1}^\mathrm{L}$ as follows:
\begin{align}
    s_{j+1}^\mathrm{U} &\overset{\mathrm{def}}{=} \mathbf{w}_{j+1}^\top \left( \mathbf{Y}_{i-1,j}^\mathrm{U} \right)^+ \mathbf{w}_{j+1}, \hspace{10pt}
    s_{j+1}^\mathrm{L} \overset{\mathrm{def}}{=} \mathbf{w}_{j+1}^\top \left( \mathbf{Y}_{i-1,j}^\mathrm{L} \right)^+ \mathbf{w}_{j+1}, \nonumber \\
    \mathbf{u}_{j+1}^\mathrm{U} &\overset{\mathrm{def}}{=} \frac{\mathbf{w}_{j+1}}{\sqrt{s_{j+1}^\mathrm{U}}}, \hspace{10pt}
    \mathbf{u}_{j+1}^\mathrm{L} \overset{\mathrm{def}}{=} \frac{\mathbf{w}_{j+1}}{\sqrt{s_{j+1}^\mathrm{L}}}. \nonumber
\end{align}

Applying \cref{lemma:optimal_counterpart} where $\mathbf{X}=\mathbf{Y}_{i-1,j}^\mathrm{U}$, $\mathbf{u}= \mathbf{u}_{j+1}^\mathrm{U}$, $\mathbf{v}=\mathbf{a}_i$, $a=s_{j+1}^\mathrm{U} \left(1 - p_{j+1} \left( 1 + \epsilon / 2 \right) \right)$, $b=-s_{j+1}^\mathrm{U} p_{j+1} \left( 1 + \epsilon / 2 \right)$, and $p=p_{j+1}$, respectively, we have
\begin{align}
    \mathbb{E} \left[ \mathbf{a}_i^\top \left( \mathbf{Y}_{i-1,j}^{\mathrm{U}'} \right)^+ \mathbf{a}_i - \mathbf{a}_i^\top \left( \mathbf{Y}_{i-1,j}^\mathrm{U} \right)^+ \mathbf{a}_i \right]
    = \left( \mathbf{a}_i^\top \left( \mathbf{Y}_{i-1,j}^\mathrm{U} \right)^+ \mathbf{u}_{j+1}^\mathrm{U} \right)^2
    \frac{pa+(1-p)b-ab}{(1-a)(1-b)} \label{equation:optimal_compicated}
\end{align}
where
\begin{align}
    \mathbf{Y}_{i-1,j}^{\mathrm{U}'}\overset{\mathrm{def}}{=}
    \left\{
    \begin{array}{cl}
            \mathbf{Y}_{i-1,j}^\mathrm{U} - s_{j+1}^\mathrm{U} \left(1 - p_{j+1} \left( 1 + \epsilon / 2 \right) \right) \mathbf{u}_{j+1}^\mathrm{U} \mathbf{u}_{j+1}^{\mathrm{U}\top} & \text{with probability } p_{j+1}, \\
            \mathbf{Y}_{i-1,j}^\mathrm{U} + s_{j+1}^\mathrm{U} p_{j+1} \left( 1 + \epsilon / 2 \right) \mathbf{u}_{j+1}^\mathrm{U} \mathbf{u}_{j+1}^{\mathrm{U}\top} & \text{otherwise.}
        \end{array}
    \right. \nonumber
\end{align}
From the inequality \eqref{equation:s_inequlity}, $s_{j+1}^\mathrm{U} \le 1 / c_\mathrm{U} < \epsilon / 2$ holds. 

Concerning the numerator in \cref{equation:optimal_compicated}, the following inequality holds:
\begin{align}
    p a + (1-p) b - ab
    &= p_{j+1}s_{j+1}^\mathrm{U} \left(1 - p_{j+1} \left( 1 + \frac{\epsilon}{2} \right) \right) - (1-p_{j+1})s_{j+1}^\mathrm{U} p_{j+1} \left( 1 + \frac{\epsilon}{2} \right) \nonumber \\
    &\hspace{20pt} + s_{j+1}^\mathrm{U} \left(1 - p_{j+1} \left( 1 + \frac{\epsilon}{2} \right) \right) s_{j+1}^\mathrm{U} p_{j+1} \left( 1 + \frac{\epsilon}{2} \right) \nonumber \\
    &= - \frac{\epsilon}{2} p_{j+1} s_{j+1}^\mathrm{U} + \left( 1 + \frac{\epsilon}{2} \right) p_{j+1} \left( s_{j+1}^\mathrm{U} \right)^2 - \left( 1 + \epsilon + \frac{\epsilon^2}{4} \right) p_{j+1}^2 \left( s_{j+1}^\mathrm{U} \right)^2 \nonumber \\
    &\le \frac{\epsilon}{2} p_{j+1} \left( s_{j+1}^\mathrm{U} \right)^2 - \left( 1 + \epsilon + \frac{\epsilon^2}{4} \right) p_{j+1}^2 \left( s_{j+1}^\mathrm{U} \right)^2 \nonumber \\
    &\le 0. \label{equation:optimal1}
\end{align}
Concerning the denominator in \cref{equation:optimal_compicated}, the following inequality holds:
\begin{align}
    (1-a)(1-b) &= \left( 1 - s_{j+1}^\mathrm{U} \left(1 - p_{j+1} \left( 1 + \frac{\epsilon}{2} \right) \right) \right) \left(1 + s_{j+1}^\mathrm{U} p_{j+1} \left( 1 + \frac{\epsilon}{2} \right) \right) \nonumber \\
    &= \left(1 - s_{j+1}^\mathrm{U} + p_{j+1} s_{j+1}^\mathrm{U} + \frac{\epsilon}{2} p_{j+1} s_{j+1}^\mathrm{U} \right) \left( 1 + p_{j+1} s_{j+1}^\mathrm{U} + \frac{\epsilon}{2} p_{j+1} s_{j+1}^\mathrm{U} \right) \nonumber \\
    &> 0 \label{equation:optimal2}
\end{align}
In addition, $\mathbf{Y}_{i-1,j}^{\mathrm{U}'} = \mathbf{Y}_{i-1,j+1}^\mathrm{U}$ holds, since it follows from the definitions of $\mathbf{w}_{j+1}$, $s_{j+1}^\mathrm{U}$, and $\mathbf{u}_{j+1}^\mathrm{U}$ that
\begin{align}
    \mathbf{Y}_{i-1,j}^{\mathrm{U}'}
    &= 
    \left\{
    \begin{array}{cl}
            \mathbf{Y}_{i-1,j}^\mathrm{U} + \left(1 + \epsilon / 2 \right)  \mathbf{a}_{j+1} \mathbf{a}_{j+1}^\top - \left( \mathbf{a}_{j+1}\mathbf{a}_{j+1}^\top \right) / p_{j+1} & \text{with probability } p_{j+1} \\
            \mathbf{Y}_{i-1,j}^\mathrm{U} + \left(1 + \epsilon / 2 \right) \mathbf{a}_{j+1} \mathbf{a}_{j+1}^\top & \text{otherwise.}
        \end{array}
    \right. \nonumber \\
    &= \mathbf{Y}_{i-1,j+1}^\mathrm{U} \label{equation:optimal3}
\end{align}
Therefore, it follows from \eqref{equation:optimal1}, \eqref{equation:optimal2}, and \eqref{equation:optimal3} that \cref{equation:optimal_compicated} is equal to 
\begin{align}
    \mathbb{E} \left[ \mathbf{a}_i^\top \left( \mathbf{Y}_{i-1,j+1}^{\mathrm{U}} \right)^+ \mathbf{a}_i - \mathbf{a}_i^\top \left( \mathbf{Y}_{i-1,j}^\mathrm{U} \right)^+ \mathbf{a}_i \right]
    \le \left( \mathbf{a}_i^\top \left( \mathbf{Y}_{i-1,j}^\mathrm{U} \right)^+ \mathbf{u}_{j+1}^\mathrm{U} \right)^2 \leq 0.
\end{align}
Thus the monotonicity \eqref{equation:monotonicity1} holds. The other monotonicity \eqref{equation:monotonicity2} holds similarly.
\end{proof}

Therefore, we have
\begin{align}
    \mathbb{E}\left[ p_i \right]
    &= c_\mathrm{U} \mathbb{E} \left[\mathbf{a}_i^\top \left( \mathbf{X}_{i-1}^\mathrm{U} \right)^+ \mathbf{a}_i \right] + c_\mathrm{L} \mathbb{E} \left[ \mathbf{a}_i^\top \left( \mathbf{X}_{i-1}^\mathrm{L} \right)^+ \mathbf{a}_i \right] \nonumber \\
    &= c_\mathrm{U} \mathbb{E} \left[\mathbf{a}_i^\top \left( \mathbf{Y}^\mathrm{U}_{i-1,i-1} \right)^+ \mathbf{a}_i \right] + c_\mathrm{L} \mathbb{E} \left[ \mathbf{a}_i^\top \left( \mathbf{Y}^\mathrm{L}_{i-1,i-1} \right)^+ \mathbf{a}_i \right] \nonumber \\
    &\le c_\mathrm{U} \mathbb{E} \left[\mathbf{a}_i^\top \left( \mathbf{Y}^\mathrm{U}_{i-1,0} \right)^+ \mathbf{a}_i \right] + c_\mathrm{L} \mathbb{E} \left[ \mathbf{a}_i^\top \left( \mathbf{Y}^\mathrm{L}_{i-1,0} \right)^+ \mathbf{a}_i \right] \nonumber \\
    &= \frac{2 c_\mathrm{U}}{\epsilon} \tau_i^{\mathbf{A}_{i-1}} \left(\mathbf{A}\right) + \frac{2 c_\mathrm{L}}{\epsilon} \tau_i^{\mathbf{A}_{i-1}} \left(\mathbf{A}\right) \nonumber \\
    &= \frac{10}{\epsilon^2} \tau_i^{\mathbf{A}_{i-1}} \left(\mathbf{A}\right). \nonumber
\end{align}
As the expected value of the sum of random variables is equal to the sum of their expectations, the row-size of $\widetilde{\mathbf{A}}$ is $\mathrm{O} \left( \epsilon^{-2} \sum_i \tau_i^{\mathbf{A}_{i-1}} \left(\mathbf{A}\right) \right)$ in expectation.
\end{proof}

\begin{proof}[Proof of \cref{theorem:approximation_size_in_online_row_sampling2}]
From \cref{lemma:optimal_appendix}, it suffices to prove $\sum_i \tau_i^{\mathbf{A}_{i-1}} \left(\mathbf{A}\right) = \mathrm{O} \left( r \log \mu(\mathbf{A}) \right)$.
Define index sets $U, V$ as $U = \{ i \mid \mathbf{a}_i \perp \mathrm{Ker} (\mathbf{A}_{i-1}) \}$, $V = \{ i \mid \mathbf{a}_i \not \perp \mathrm{Ker} (\mathbf{A}_{i-1}) \}$.
  If $\mathbf{a}_i \perp \mathrm{Ker}(\mathbf{\mathbf{A}}_{i-1})$, by \cref{lemma:Det}, we have
  \begin{align}
      \mathrm{Det} \left( \mathbf{A}_i^\top \mathbf{A}_i \right)
      &= \mathrm{Det} \left( \mathbf{A}_{i-1}^\top \mathbf{A}_{i-1} \right) \left(1 + \mathbf{a}_i^\top \left( \mathbf{A}_{i-1}^\top \mathbf{A}_{i-1} \right) ^+ \mathbf{a}_i \right) \nonumber \\
      &\geq \mathrm{Det} \left( \mathbf{A}_{i-1}^\top \mathbf{A}_{i-1} \right) \left(1 + \mathbf{a}_i^\top \left( \mathbf{A}_i^\top \mathbf{A}_i \right) ^+ \mathbf{a}_i \right) \nonumber \\
      &\geq \mathrm{Det} \left( \mathbf{A}_{i-1}^\top \mathbf{A}_{i-1} \right) \left(1 + \tau_i^{\mathbf{A}_{i-1}} \left(\mathbf{A}\right) \right) \nonumber \\
      &\geq \mathrm{Det} \left( \mathbf{A}_{i-1}^\top \mathbf{A}_{i-1} \right) \exp \left( \frac{\tau_i^{\mathbf{A}_{i-1}} \left(\mathbf{A}\right)}{2} \right). \nonumber
  \end{align}
  Otherwise, by \cref{lemma:Detequation:ieq}, 
  \begin{displaymath}
    \mathrm{Det} \left( \mathbf{A}_i^\top \mathbf{A}_i \right) \geq \lambda_{\mathrm{min}} \left( \mathbf{A}_i^\top \mathbf{A}_i \right) \mathrm{Det} \left( \mathbf{A}_{i-1}^\top \mathbf{A}_{i-1} \right).
  \end{displaymath}
  Combining the two inequalities, we have
  \begin{displaymath}
    \mathrm{Det} \left( \mathbf{A}^\top \mathbf{A} \right) \geq \exp \left( \sum_{i \in U} \frac{\tau_i^{\mathbf{A}_{i-1}} \left(\mathbf{A}\right)}{2} \right) \prod_{i \in V} \lambda_{\min} \left( \mathbf{A}_i^\top \mathbf{A}_i \right) \mathrm{Det} \left( \mathbf{O} \right).
  \end{displaymath}
  Note that $\mathrm{Det} \left(\mathbf{A}^\top \mathbf{A} \right) \leq \left( \| \mathbf{A} \|_2^2 \right)^r$ holds. 
  Taking the logarithm of both sides, we have
  \begin{align}
    r \log \left( \| \mathbf{A} \|^2_2 \right)
    &\geq \sum_{i \in U} \frac{\tau_i^{\mathbf{A}_{i-1}} \left(\mathbf{A}\right)}{2} + \sum_{i \in V} \log \left( \lambda_{\min} \left( \mathbf{A}_i^\top \mathbf{A}_i \right) \right) \nonumber \\
    &\geq \sum_{i \in U} \frac{\tau_i^{\mathbf{A}_{i-1}} \left(\mathbf{A}\right)}{2} + r \log \left( \underset{1 \le i \le n}{\min}\  \lambda_{\min} \left( \mathbf{A}_i^\top \mathbf{A}_i \right) \right). \nonumber
  \end{align}
  as $| V | = \mathrm{rank} (\mathbf{A}) = r$.
  Since $l_i = 1$ for all $i \in V$, we obtain
  \begin{align}
    \sum_{i=1}^n \tau_i^{\mathbf{A}_{i-1}} \left(\mathbf{A}\right)
      &\leq 2 r \log \left( \frac{\| \mathbf{A} \|^2_2}{\underset{1 \le i \le n}{\min}\  \lambda_{\min} \left( \mathbf{A}_i^\top \mathbf{A}_i \right)} \right) + r. \nonumber \\
    &= \mathrm{O} \left( r \log \mu (\A) + r \right). \nonumber
  \end{align}
\end{proof}

\subsection{Online Random Order Setting}

\begin{theorem}
\label{theorem:optimal_random_order_streams}
    Let $\epsilon \in (0, 1)$ be an error parameter and $\X$ be a family of $n$ vectors in $\mathbb{R}^d$.
    \cref{algorithm:optimal} constructs in the online random order setting $\A \sim \mathcal{A}(\X)$ which satisfies the following with high probability:
    The algorithm returns a $(1 \pm \epsilon)$-spectral approximation for $\mathbf{A}$ with $\mathrm{O}\left( d \epsilon^{-2} \log n \right)$ rows in expectation.
\end{theorem}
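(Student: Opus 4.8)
The plan is to combine \cref{lemma:optimal_appendix} with the uniform-sampling leverage score estimate of \cref{theorem:uniform_sampling_leverage_score_overestimate}, reusing the block decomposition from \cref{section:main}. By \cref{lemma:optimal_appendix}, \cref{algorithm:optimal} always outputs a $(1\pm\epsilon)$-spectral approximation for its input, and for a fixed matrix $\mathbf{A}$ its expected number of sampled rows (over the algorithm's coin flips) is $\mathrm{O}\!\left(\epsilon^{-2}\sum_{i=1}^n \tau_i^{\mathbf{A}_{i-1}}(\mathbf{A})\right)$. Since $\sum_i \tau_i^{\mathbf{A}_{i-1}}(\mathbf{A})$ is a deterministic function of the realized row order, it suffices to prove that $\mathbf{A}\sim\mathcal{A}(\mathbf{X})$ satisfies $\sum_{i=1}^n \tau_i^{\mathbf{A}_{i-1}}(\mathbf{A}) = \mathrm{O}(d\log n)$ with high probability; the claimed expected approximation size $\mathrm{O}(d\epsilon^{-2}\log n)$ then follows immediately.

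To bound $\sum_i \tau_i^{\mathbf{A}_{i-1}}(\mathbf{A})$, I would partition the rows of $\mathbf{A}$ exactly as in \cref{section:main}: set $K=d\log d$, assume $n=(2^{\alpha+1}-1)K$, let block $0$ be rows $1,\dots,K$, and for $b\ge 1$ let block $b$ consist of rows $(2^b-1)K+1,\dots,(2^{b+1}-1)K$; write $\mathbf{M}_b$ for the submatrix given by the first $b+1$ blocks, so that $\mathbf{M}_{b-1}$ is exactly the submatrix of $\mathbf{A}$ formed by the rows preceding block $b$. For the $K=\mathrm{O}(d\log n)$ rows in block $0$, bound each term by $\tau_i^{\mathbf{A}_{i-1}}(\mathbf{A})\le 1$ using \cref{lemma:relative_leverage_score_tractable}. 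For a row $j$ in block $b\ge 1$, the prefix $\mathbf{A}_{j-1}$ contains $\mathbf{M}_{b-1}$ as a submatrix, hence
\[
\begin{pmatrix}\mathbf{M}_{b-1}\\ \mathbf{a}_j^\top\end{pmatrix}^{\top}\begin{pmatrix}\mathbf{M}_{b-1}\\ \mathbf{a}_j^\top\end{pmatrix}\preceq \begin{pmatrix}\mathbf{A}_{j-1}\\ \mathbf{a}_j^\top\end{pmatrix}^{\top}\begin{pmatrix}\mathbf{A}_{j-1}\\ \mathbf{a}_j^\top\end{pmatrix},
\]
and as $\mathbf{a}_j$ lies in the image of the left-hand matrix, \cref{lemma:pseudoinverse} gives $\tau_j^{\mathbf{A}_{j-1}}(\mathbf{A})\le \tau_j^{\mathbf{M}_{b-1}}(\mathbf{A})\le \widehat\tau_{b,j}$, where $\widehat\tau_{b,j}=\min\!\big(\mathbf{a}_j^\top(\mathbf{M}_{b-1}^\top\mathbf{M}_{b-1})^+\mathbf{a}_j,\,1\big)$ if $\mathbf{a}_j\perp\mathrm{Ker}(\mathbf{M}_{b-1})$ and $\widehat\tau_{b,j}=1$ otherwise (the second inequality is \cref{lemma:relative_leverage_score_tractable}). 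Conditioned on the set of rows forming $\mathbf{M}_b$, the prefix $\mathbf{M}_{b-1}$ is a uniformly random sample of $(2^b-1)K$ of them, so \cref{theorem:uniform_sampling_leverage_score_overestimate} applied with $m=(2^b-1)K$ shows that, with high probability, $\sum_{j\in\text{block }b}\widehat\tau_{b,j}\le\sum_{j\le (2^{b+1}-1)K}\widehat\tau_{b,j}=\mathrm{O}\!\big(\tfrac{(2^{b+1}-1)K}{(2^b-1)K}\,d\big)=\mathrm{O}(d)$. A union bound over the $\mathrm{O}(\log n)$ blocks $b\ge 1$, together with the block-$0$ estimate, yields $\sum_{i=1}^n \tau_i^{\mathbf{A}_{i-1}}(\mathbf{A})=\mathrm{O}(d\log n)$ with high probability, which by the first paragraph completes the proof.

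The step requiring the most care is the appeal to \cref{theorem:uniform_sampling_leverage_score_overestimate}: the individual blocks of $\mathbf{A}$ are not uniform samples of a fixed matrix, but after conditioning on which rows land in $\mathbf{M}_b$ the earlier block $\mathbf{M}_{b-1}$ becomes a genuine uniform subsample of $\mathbf{M}_b$ of the correct size --- this is the standard fact that a uniformly random permutation, restricted to a prefix and conditioned on the prefix's multiset of entries, is a uniformly random ordering of that (uniformly random) subset. The remaining ingredients --- the monotonicity $\tau_j^{\mathbf{A}_{j-1}}(\mathbf{A})\le \tau_j^{\mathbf{M}_{b-1}}(\mathbf{A})$ under appending rows, and turning the per-block high-probability bounds into a single high-probability statement over $\mathrm{O}(\log n)$ blocks --- are routine and parallel the proof of \cref{lemma:analysis_random_order_streams2}.
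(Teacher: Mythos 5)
Your proposal is correct and matches the paper's own argument: the paper likewise combines \cref{lemma:optimal_appendix} with the bound $\sum_j \widehat{\tau}_{i,j} = \mathrm{O}(d\log n)$ already established via \cref{theorem:uniform_sampling_leverage_score_overestimate} in the proof of \cref{lemma:analysis_random_order_streams2}, and uses the same monotonicity $\tau_j^{\mathbf{A}_{j-1}}(\mathbf{A}) \le \widehat{\tau}_{i,j}$ from \cref{lemma:pseudoinverse}. The only cosmetic difference is that you pass through $\tau_j^{\mathbf{M}_{b-1}}(\mathbf{A})$ as an intermediate quantity and handle block $0$ explicitly, whereas the paper bounds $\mathbf{a}_j^\top(\mathbf{A}_j^\top\mathbf{A}_j)^+\mathbf{a}_j$ directly against $\widehat{\tau}_{i,j}$.
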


\begin{proof}
We use the same notation as \cref{algorithm:scaled_sampling}. From \cref{eq:tau_ij} in \cref{lemma:analysis_random_order_streams2}, we have
\begin{align}
    \sum_{i=1}^{\alpha} \sum_{j=\left(2^i-1\right)K + 1}^{\left(2^{i+1} - 1 \right)K} \widehat{\tau}_{i,j} = \mathrm{O} \left( d \log n \right) \label{equation:tau_ij2}
\end{align}
We will show that $\tau_j^{\mathbf{A}_{j-1}} \left( \A \right) \le \widehat{\tau}_{i,j}$ holds for $1 \le i \le \alpha$ and $\left( 2^i - 1 \right)K + 1 \le j \le \left( 2^{i+1} - 1 \right)K$, which implies that $\sum \tau_j^{\mathbf{A}_{j-1}} \left( \A \right) = \mathrm{O} \left( d \log n \right)$ by \cref{equation:tau_ij2}. If $\mathbf{a}_j \perp \mathrm{Ker} \left( \mathbf{M}_{i-1} \right)$, we have
\begin{displaymath}
    \mathbf{a}_j^\top \left( \mathbf{A}_j^\top \mathbf{A}_j \right)^+ \mathbf{a}_j
    = \min \left( \mathbf{a}_j^\top \left( \mathbf{A}_j^\top \mathbf{A}_j \right)^+ \mathbf{a}_j, 1 \right)
    \le \min \left( \mathbf{a}_j^\top \left( \mathbf{M}_{i-1}^\top \mathbf{M}_{i-1} \right)^+ \mathbf{a}_j, 1 \right)
    = \widehat{\tau}_{i,j}.
\end{displaymath}
Otherwise $\mathbf{a}_j^\top \left( \mathbf{A}_j^\top \mathbf{A}_j \right)^+ \mathbf{a}_j = 1 = \widehat{\tau}_{i,j}$. Hence we have $\sum \tau_j^{\mathbf{A}_{j-1}} \left( \A \right) = \mathrm{O} \left( d \log n \right)$.
Combining it with \cref{lemma:optimal_appendix}, $\widetilde{\mathbf{A}}$ has $\mathrm{O} \left( d \epsilon^{-2} \log n \right)$ rows in expectation.
\end{proof}

\section{Properties of Pseudo-Inverse}\label{app:pseudoinverse}
In this section, we prove two lemmas regarding positive semidefinite ordering of matrices used frequently in the paper.
\begin{lemma}
\label{lemma:appendixB_ralation1}
Let $\mathbf{A}, \mathbf{B} \in \mathbb{R}^{d \times d}$ be $\mathrm{PSD}$ matrices. For any matrix $\mathbf{V} \in \mathbb{R}^{d \times d}$, we have
\begin{displaymath}
    \mathbf{A} \preceq \mathbf{B} \Rightarrow \mathbf{V}^\top \mathbf{A} \mathbf{V} \preceq \mathbf{V}^\top \mathbf{B} \mathbf{V}.
\end{displaymath}
\end{lemma}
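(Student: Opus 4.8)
The plan is to reduce the claim directly to the definition of the positive semidefinite (Loewner) ordering. Recall that $\mathbf{A} \preceq \mathbf{B}$ means precisely that $\mathbf{B} - \mathbf{A}$ is a PSD matrix, i.e. $\mathbf{x}^\top (\mathbf{B} - \mathbf{A}) \mathbf{x} \geq 0$ for every $\mathbf{x} \in \mathbb{R}^d$. The goal $\mathbf{V}^\top \mathbf{A} \mathbf{V} \preceq \mathbf{V}^\top \mathbf{B} \mathbf{V}$ amounts, by the same definition, to showing that $\mathbf{V}^\top \mathbf{B} \mathbf{V} - \mathbf{V}^\top \mathbf{A} \mathbf{V}$ is PSD.

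First I would rewrite this difference using linearity as $\mathbf{V}^\top \mathbf{B} \mathbf{V} - \mathbf{V}^\top \mathbf{A} \mathbf{V} = \mathbf{V}^\top (\mathbf{B} - \mathbf{A}) \mathbf{V}$. Then, for an arbitrary $\mathbf{x} \in \mathbb{R}^d$, I would set $\mathbf{y} = \mathbf{V}\mathbf{x}$ and observe that $\mathbf{x}^\top \mathbf{V}^\top (\mathbf{B} - \mathbf{A}) \mathbf{V} \mathbf{x} = \mathbf{y}^\top (\mathbf{B} - \mathbf{A}) \mathbf{y} \geq 0$, where the inequality is exactly the hypothesis $\mathbf{A} \preceq \mathbf{B}$ evaluated at the vector $\mathbf{y}$. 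Since $\mathbf{x}$ was arbitrary, the matrix $\mathbf{V}^\top (\mathbf{B} - \mathbf{A}) \mathbf{V}$ is PSD, which is the desired conclusion.

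There is essentially no obstacle here: the statement is an immediate consequence of the definition of the Loewner order, and in fact it does not even use the individual assumptions that $\mathbf{A}$ and $\mathbf{B}$ are PSD — only that their difference is. The sole point worth a remark is that $\mathbf{V}$ need not be invertible (or even square) for the argument to go through: the vectors $\mathbf{y} = \mathbf{V}\mathbf{x}$ merely range over a subspace of $\mathbb{R}^d$, and nonnegativity of the quadratic form $\mathbf{y}^\top (\mathbf{B}-\mathbf{A}) \mathbf{y}$ on all of $\mathbb{R}^d$ trivially implies its nonnegativity on that subspace.
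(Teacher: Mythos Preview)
Your proof is correct and is essentially the same argument as the paper's: both reduce the claim to the definition of the Loewner order by observing that $\mathbf{x}^\top \mathbf{V}^\top(\mathbf{B}-\mathbf{A})\mathbf{V}\mathbf{x} = (\mathbf{V}\mathbf{x})^\top(\mathbf{B}-\mathbf{A})(\mathbf{V}\mathbf{x}) \geq 0$ for every $\mathbf{x}$. Your remark that the PSD assumption on $\mathbf{A}$ and $\mathbf{B}$ individually is not needed is also correct.
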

\begin{proof}
The lemma follows from:
\begin{align}
    \mathbf{A} \preceq \mathbf{B} &\Leftrightarrow \underset{\mathbf{x} \in \mathbb{R}^d}{\min}\  \mathbf{x}^\top \left( \mathbf{B} - \mathbf{A} \right) \mathbf{x} \ge 0 \nonumber \\
    &\Rightarrow \underset{\mathbf{y} \in \mathbb{R}^d}{\min}\  \left( \mathbf{V}\mathbf{y} \right)^\top \left( \mathbf{B} - \mathbf{A} \right) \mathbf{V}\mathbf{y} \ge 0 \hspace{7pt} \left( \because \mathbf{x} \text{ is written as } \mathbf{V} \mathbf{y} \right) \nonumber \\
    &\Leftrightarrow \mathbf{V}^\top \mathbf{A} \mathbf{V} \preceq \mathbf{V}^\top \mathbf{B} \mathbf{V}. \nonumber
\end{align}
\end{proof}
\begin{lemma}
\label{lemma:pseudoinverse}
Let $\mathbf{A}, \mathbf{B} \in \mathbb{R}^{d \times d}$ be $\mathrm{PSD}$ matrices. We have
\begin{displaymath}
    \A \preceq \B \Rightarrow \mathbf{x}^\top \B^+ \mathbf{x} \le \mathbf{x}^\top \A^+ \mathbf{x} \hspace{7pt} \text{for any $\mathbf{x} \in \mathrm{Im} \left( \A \right)$}.
\end{displaymath}
\end{lemma}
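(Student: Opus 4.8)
The plan is to reduce the lemma to an elementary variational identity for quadratic forms involving the pseudo-inverse. First I would record the auxiliary fact that, for any $\mathrm{PSD}$ matrix $\mathbf{M}\in\mathbb{R}^{d\times d}$ and any $\mathbf{x}\in\mathrm{Im}(\mathbf{M})$,
\[
  \mathbf{x}^\top\mathbf{M}^+\mathbf{x}=\max_{\mathbf{y}\in\mathbb{R}^d}\bigl(2\mathbf{x}^\top\mathbf{y}-\mathbf{y}^\top\mathbf{M}\mathbf{y}\bigr).
\]
Writing $g_{\mathbf{M}}(\mathbf{y}):=2\mathbf{x}^\top\mathbf{y}-\mathbf{y}^\top\mathbf{M}\mathbf{y}$, the function $g_{\mathbf{M}}$ is concave since its Hessian $-2\mathbf{M}$ is negative semidefinite, so it suffices to exhibit a stationary point. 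Taking $\mathbf{y}_0=\mathbf{M}^+\mathbf{x}$, the gradient $2\mathbf{x}-2\mathbf{M}\mathbf{M}^+\mathbf{x}$ vanishes because $\mathbf{M}\mathbf{M}^+$ is the orthogonal projector onto $\mathrm{Im}(\mathbf{M})$ and $\mathbf{x}\in\mathrm{Im}(\mathbf{M})$; and $g_{\mathbf{M}}(\mathbf{y}_0)=2\mathbf{x}^\top\mathbf{M}^+\mathbf{x}-\mathbf{x}^\top\mathbf{M}^+\mathbf{M}\mathbf{M}^+\mathbf{x}=\mathbf{x}^\top\mathbf{M}^+\mathbf{x}$ by the Moore-Penrose identity $\mathbf{M}^+\mathbf{M}\mathbf{M}^+=\mathbf{M}^+$.

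Granting this, I would finish as follows. Fix $\mathbf{x}\in\mathrm{Im}(\mathbf{A})$. From $\mathbf{A}\preceq\mathbf{B}$ we have $\mathbf{y}^\top\mathbf{A}\mathbf{y}\le\mathbf{y}^\top\mathbf{B}\mathbf{y}$ for every $\mathbf{y}$, hence $g_{\mathbf{A}}(\mathbf{y})\ge g_{\mathbf{B}}(\mathbf{y})$ pointwise. Applying the identity above to $\mathbf{A}$ (legitimate because $\mathbf{x}\in\mathrm{Im}(\mathbf{A})$) and then evaluating at the particular point $\mathbf{y}=\mathbf{B}^+\mathbf{x}$,
\[
  \mathbf{x}^\top\mathbf{A}^+\mathbf{x}=\max_{\mathbf{y}}g_{\mathbf{A}}(\mathbf{y})\ \ge\ g_{\mathbf{A}}(\mathbf{B}^+\mathbf{x})\ \ge\ g_{\mathbf{B}}(\mathbf{B}^+\mathbf{x})=2\mathbf{x}^\top\mathbf{B}^+\mathbf{x}-\mathbf{x}^\top\mathbf{B}^+\mathbf{B}\mathbf{B}^+\mathbf{x}=\mathbf{x}^\top\mathbf{B}^+\mathbf{x},
\]
using $\mathbf{B}^+\mathbf{B}\mathbf{B}^+=\mathbf{B}^+$ once more. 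Note that this route never needs $\mathbf{x}\in\mathrm{Im}(\mathbf{B})$: the hypothesis $\mathbf{x}\in\mathrm{Im}(\mathbf{A})$ is used only to guarantee that the supremum defining $\mathbf{x}^\top\mathbf{A}^+\mathbf{x}$ is attained and equal to that quadratic form.

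I expect no real obstacle; the single point needing care is the auxiliary identity, since for a general $\mathbf{x}\notin\mathrm{Im}(\mathbf{M})$ the supremum on the right is $+\infty$, so the image hypothesis cannot be dropped, and the concavity-plus-stationary-point argument must be stated cleanly. An alternative plan would be to observe first that $\mathbf{A}\preceq\mathbf{B}$ forces $\mathrm{Ker}(\mathbf{B})\subseteq\mathrm{Ker}(\mathbf{A})$, restrict both matrices to $\mathrm{Im}(\mathbf{B})$, perturb $\mathbf{A}$ by a vanishing multiple of the projector onto $\mathrm{Im}(\mathbf{B})$ to make it invertible there, invoke the classical inequality $\mathbf{O}\prec\mathbf{A}\preceq\mathbf{B}\Rightarrow\mathbf{B}^{-1}\preceq\mathbf{A}^{-1}$, and pass to the limit; but justifying that limit for the pseudo-inverse is fiddlier than the one-shot variational computation, so I would favor the variational proof.
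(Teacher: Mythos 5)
Your proof is correct, and it takes a genuinely different route from the paper's. The paper proves the lemma by a spectral argument: it first applies the congruence fact $\A \preceq \B \Rightarrow \mathbf{V}^\top \A \mathbf{V} \preceq \mathbf{V}^\top \B \mathbf{V}$ with $\mathbf{V} = \B^{+/2}$ to get $\B^{+/2}\A\B^{+/2} \preceq \B^{+/2}\B\B^{+/2}$, and then transfers the resulting bound on $\max_{\x \in \mathrm{Im}(\B),\,\|\x\|_2=1} \x^\top \B^{+/2}\A\B^{+/2}\x$ to the quantity $\max_{\y \in \mathrm{Im}(\A),\,\|\y\|_2=1} \y^\top \A^{1/2}\B^+\A^{1/2}\y$ via an explicit eigenvector correspondence between $\A^{1/2}\B^+\A^{1/2}$ and $\B^{+/2}\A\B^{+/2}$ (the vectors $\mathbf{v}_i = \B^{+/2}\A^{1/2}\mathbf{u}_i$), which occupies most of Appendix B. Your argument instead rests on the variational identity $\x^\top\M^+\x = \max_{\y}\left(2\x^\top\y - \y^\top\M\y\right)$ for $\x \in \mathrm{Im}(\M)$ — essentially the Fenchel conjugate of the quadratic form — after which antitonicity of the pseudo-inverse is a two-line consequence of the pointwise inequality $g_{\A} \ge g_{\B}$. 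All the steps check out: the stationary point $\y_0 = \M^+\x$ of the concave function $g_{\M}$ is a global maximizer because $\M\M^+$ is the orthogonal projector onto $\mathrm{Im}(\M)$, and the final evaluation uses only $\B^+\B\B^+ = \B^+$. Your approach is shorter, avoids square roots and eigendecompositions entirely, and makes transparent exactly where the hypothesis $\x \in \mathrm{Im}(\A)$ enters (attainment of the maximum for $\A$) and why $\x \in \mathrm{Im}(\B)$ is never needed; the paper's approach is a self-contained linear-algebra computation that requires no appeal to concavity or first-order optimality conditions. Either proof is acceptable.
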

\begin{proof}
From \cref{lemma:appendixB_ralation1}, we get
\begin{displaymath}
    \mathbf{B}^{+/2} \mathbf{A} \mathbf{B}^{+/2} \preceq \mathbf{B}^{+/2} \mathbf{B} \mathbf{B}^{+/2}.
\end{displaymath}
If $\mathbf{x} \in \mathrm{Im} \left( \B \right)$, we see $\B^{1/2} \B^{+/2} \mathbf{x} = \mathbf{x}$. Hence we have
\begin{displaymath}
    \mathbf{x}^\top \mathbf{B}^{+/2} \mathbf{A} \mathbf{B}^{+/2} \mathbf{x} \le \mathbf{x}^\top \mathbf{x} \hspace{7pt} \text{for any $\mathbf{x} \in \mathrm{Im} \left( \B \right)$}.
\end{displaymath}
We can rewrite it as follows:
\begin{align}
    \underset{\mathbf{x} \in \mathrm{Im} \left( \B \right), \| \mathbf{x} \|_2 = 1}{\max} \mathbf{x}^\top \B^{+/2}\A \B^{+/2} \mathbf{x} \le 1. \label{inequality:appendixB_lemma}
\end{align}
We will show
\begin{align}
    \underset{\mathbf{y} \in \mathrm{Im} \left( \A \right), \| \mathbf{y} \|_2 = 1}{\max} \mathbf{y}^\top \A^{1/2} \B^+ \A^{1/2} \mathbf{y} \le 1. \label{inequality:appendixB_target}
\end{align}
The inequality \eqref{inequality:appendixB_target} is equivalent to the desired inequality from the following relation:
\begin{align}
    \mathbf{y}^\top \A^{1/2} \B^+ \A^{1/2} \mathbf{y} \le \mathbf{y}^\top \mathbf{y} \hspace{7pt} \text{for any $\mathbf{y} \in \mathrm{Im} \left( \A \right)$}
    \Leftrightarrow \mathbf{z}^\top \B^+ \mathbf{z} \le \mathbf{z}^\top \A^+ \mathbf{z} \hspace{7pt} \text{for any $\mathbf{z} \in \mathrm{Im} \left( \A \right)$}. \nonumber
\end{align}
Let $\lambda_1, \lambda_2, \dots, \lambda_d$ be eigenvalues of $\A^{1/2} \B^+ \A^{1/2}$ and ${\mathbf{u}_i}$ be orthonormal eigenvectors such that for all $i$ $\mathbf{u}_i$ is corresponding to $\lambda_i$. 
Define the $d$ dimensional vector $\mathbf{v}_i \overset{\mathrm{def}}{=} \B^{+/2}\A^{1/2}\mathbf{u}_i$ for $1 \le i \le n$.

Then, we obtain for $1 \le i \le n$
\begin{align}
    \A^{1/2}\B^{+}\A^{1/2} \mathbf{u}_i = \lambda_i \mathbf{u}_i &\Leftrightarrow \A^{1/2} \B^{+/2} \mathbf{v}_i = \lambda_i \mathbf{u}_i \nonumber \\
    &\Rightarrow \B^{+/2} \A \B^{+/2} \mathbf{v}_i = \lambda_i \B^{+/2} \A^{1/2} \mathbf{u}_i \nonumber \\
    &\Leftrightarrow \B^{+/2} \A \B^{+/2} \mathbf{v}_i = \lambda_i \mathbf{v}_i \label{equation:appendixB_eigenvalue}.
\end{align}
We define $\mathbf{y}^*$ as follows:
\begin{align}
    \mathbf{y}^* \overset{\mathrm{def}}{=} \underset{\mathbf{y} \in \mathrm{Im} \left( \A \right), \| \mathbf{y} \|_2 = 1}{\mathrm{arg} \max} \mathbf{y}^\top \A^{1/2} \B^+ \A^{1/2} \mathbf{y} \nonumber
\end{align}
and assume that $\mathbf{y}^*$ is represented as $\mathbf{y}^* = \sum_i q_i \mathbf{u}_i \nonumber$ with $q_i \in \mathbb{R}$.
We get
\begin{align}
    \mathbf{y}^{*\top} \A^{1/2} \B^+ \A^{1/2} \mathbf{y}^*
    &= \left( \sum_{i=1}^n q_i \mathbf{u}_i \right)^\top \A^{1/2} \B^+ \A^{1/2} \left( \sum_{i=1}^n q_i \mathbf{u}_i \right) \nonumber \\
    &= \sum_{i=1}^n  q_i^2 \mathbf{u}_i^\top \A^{1/2} \B^+ \A^{1/2} \mathbf{u}_i \nonumber \\
    &= \sum_{i=1}^n  q_i^2 \mathbf{v}_i^\top \mathbf{v}_i \nonumber \\
    &= \sum_{i=1}^n \frac{q_i^2}{\lambda_i} \mathbf{v}_i^\top \B^{+/2} \A \B^{+/2} \mathbf{v}_i \nonumber \\
    &= \left( \sum_{i=1}^n \frac{q_i}{\sqrt{\lambda_i}} \mathbf{v}_i \right)^\top \B^{+/2} \A \B^{+/2} \left( \sum_{i=1}^n \frac{q_i}{\sqrt{\lambda_i}} \mathbf{v}_i \right) \label{equation:appendixB_middle}
\end{align}
since $\mathbf{v}_i^\top \B^{+/2} \A \B^{+/2} \mathbf{v}_j = \lambda_j \mathbf{v}_i^\top \mathbf{v}_j = \lambda_j \mathbf{u}_i^\top \A^{1/2} \B^{+} \A^{1/2} \mathbf{u}_j = \lambda_j^2 \mathbf{u}_i^\top \mathbf{u}_j = 0$ for $i \neq j$.
Define the vector $\mathbf{t} \overset{\mathrm{def}}{=} \sum_{i=1}^n \left( q_i / \sqrt{\lambda_i} \right) \mathbf{v}_i$.
We have $\mathbf{t} \in \mathrm{Im}\left( \B \right)$ and
\begin{align}
    \| \mathbf{t} \|_2 =
    \left( \sum_{i=1}^n \frac{q_i}{\sqrt{\lambda_i}} \mathbf{v}_i \right)^\top  \left( \sum_{i=1}^n \frac{q_i}{\sqrt{\lambda_i}} \mathbf{v}_i \right) = \sum_{i=1}^n \frac{q_i^2}{\lambda_i} \mathbf{u}_i^\top \A^{1/2} \B^{+} \A^{1/2} \mathbf{u}_i = \sum_{i=1}^n \frac{q_i^2}{\lambda_i} \lambda_i \mathbf{u}_i^\top \mathbf{u}_i = \| \mathbf{y}^* \|_2 = 1.
\end{align}
Combining it with the inequality \eqref{inequality:appendixB_lemma} and \cref{equation:appendixB_middle}, we obtain \cref{inequality:appendixB_target} as follows:
\begin{align}
    \underset{\mathbf{y} \in \mathrm{Im} \left( \A \right), \| \mathbf{y} \|_2 = 1}{\max} \mathbf{y}^\top \A^{1/2} \B^+ \A^{1/2} \mathbf{y}
    &= \mathbf{y}^{*\top} \A^{1/2} \B^+ \A^{1/2} \mathbf{y}^* \nonumber \\
    &= \mathbf{t}^\top \B^{+/2} \A \B^{+/2} \mathbf{t} \nonumber \\
    &\le \underset{\mathbf{x} \in \mathrm{Im} \left( \B \right), \| \mathbf{x} \|_2 = 1}{\max} \mathbf{x}^\top \B^{+/2}\A \B^{+/2} \mathbf{x} \nonumber \\
    &\le 1. \nonumber
\end{align}
\end{proof}

\bibliographystyle{alpha}
\bibliography{references}

\end{document}